\documentclass[reqno]{amsart}

\usepackage[T1]{fontenc}
\usepackage{lmodern}
\usepackage{bold-extra}

\usepackage[foot]{amsaddr}
\usepackage{amsmath,amssymb,amsfonts,mathrsfs}
\usepackage{comment}
\usepackage{verbatim}
\usepackage{bbold}
\usepackage{tikz-cd}
\usepackage{calligra}
\usepackage[english]{babel}
\usepackage[utf8]{inputenc}
\usepackage{mathtools}
\usepackage{comment}
\usepackage{cancel}
\usepackage{esint}

\makeatletter
\renewcommand{\part}{\@startsection{part}{1}%
  \z@{.7\linespacing\@plus\linespacing}{.5\linespacing}%
  {\large\bfseries\centering}} 
\makeatother

\usepackage[
top=3cm,
bottom=3cm,
left=3cm,
right=3cm,
marginparwidth=2.5cm
]{geometry}

\usepackage{todonotes}
\usepackage[hypertexnames=false]{hyperref}
\hypersetup{
    colorlinks,
    linkcolor={red!50!black},
    citecolor={blue!50!black},
    urlcolor={blue!80!black},
}

\usepackage[shortlabels]{enumitem}
\setlist[1]{itemsep=2pt}
\usepackage[capitalise]{cleveref}

\usepackage{autonum} 

\newtheorem{theorem}{Theorem}[section]
\newtheorem{prop}[theorem]{Proposition}
\newtheorem{corollary}[theorem]{Corollary}
\newtheorem{lemma}[theorem]{Lemma}
\newtheorem{defi}[theorem]{Definition}

\theoremstyle{remark} 
\newtheorem{remark}[theorem]{Remark}
\newtheorem{notation}[theorem]{Notation}
\newtheorem{example}[theorem]{Example}

\AddToHook{env/prop/begin}{\crefalias{theorem}{prop}}
\AddToHook{env/corollary/begin}{\crefalias{theorem}{corollary}}
\AddToHook{env/lemma/begin}{\crefalias{theorem}{lemma}}
\AddToHook{env/defi/begin}{\crefalias{theorem}{defi}}
\AddToHook{env/remark/begin}{\crefalias{theorem}{remark}}
\AddToHook{env/notation/begin}{\crefalias{theorem}{notation}}
\AddToHook{env/example/begin}{\crefalias{theorem}{example}}

\crefname{theorem}{Theorem}{Theorems}
\crefname{prop}{Proposition}{Propositions}
\crefname{corollary}{Corollary}{Corollaries}
\crefname{lemma}{Lemma}{Lemmas}
\crefname{defi}{Definition}{Definitions}
\crefname{remark}{Remark}{Remarks}
\crefname{notation}{Notation}{Notations}
\crefname{example}{Example}{Examples}

\newcommand{\R}{\mathbb{R}}
\newcommand{\N}{\mathbb{N}}
\newcommand{\dis}{\mathcal D}
\newcommand{\eps}{\ve}
\newcommand{\hei}{\mathbb{H}^1}
\newcommand{\ve}{\varepsilon}

\newcommand{\foc}{\mathrm{foc}}

\newcommand{\cut}{\mathrm{cut}}
\newcommand{\de}{{\mathrm d}}
\DeclareMathOperator{\vol}{\mathrm{vol}}

\DeclareMathOperator{\Si}{Si}
\newcommand{\owedge}{\mathbin{\mathpalette\owedge@{}}}
\newcommand{\Leb}{\mathscr L}
\newcommand{\sr}{sub-Riemannian }
\newcommand{\Max}{{\rm Max}}
\newcommand{\proj}{{\mathfrak p}}

\newcommand{\supp}{\operatorname{supp}}
\newcommand{\D}{\mathcal{D}}

\newcommand{\Fp}{\operatorname{Fp}}

\title[]
{Curvature distributions and the Steiner formula \\   for characteristic surfaces in the Heisenberg group}
\author[D. Barilari]{Davide Barilari}
\address[Davide Barilari, Eugenio Bellini]{Dipartimento di Matematica ``Tullio Levi-Civita'', Universit\`a degli Studi di Padova,\ 
\textbf{{davide.barilari@unipd.it,\, eugenio.bellini@unipd.it}}}

\author[E. Bellini]{Eugenio Bellini}

\author[T.~Rossi]{Tommaso Rossi}
\address[Tommaso Rossi]{Universit\`a degli Studi dell'Aquila, \ 
\textbf{tommaso.rossi1@univaq.it}}

\begin{document}
 \begin{abstract}

We establish a new Steiner-type formula for smooth surfaces in the sub-Riemannian Heisenberg group.  The main novelty is that we admit characteristic points, whose presence produces nontrivial singularities both of the distance from the surface and of its tubular neighborhood. 

 Such singularities are reflected in the Steiner formula: the expansion up to a given order $N\geq 4$ of the volume of the $\eps$-tubular neighborhood $\mathcal T_S(\ve)$ of a surface $S$ is no longer polynomial, in general, taking the form
\begin{equation}
        \mathrm{vol}(\mathcal T_S(\ve)) = \sum_{j=1}^3\alpha_j\ve^j+\sum_{n=4}^N \left(\alpha_n+\beta_n\log\ve\right)\ve^n+o(\ve^N),\qquad\text{as }\ve\to0^+ .
    \end{equation}  
More precisely, under a \emph{geometric stability} condition on characteristic points, we prove that the first three coefficients are integrals over $S$ of locally integrable curvature densities, coinciding with those known in the non-characteristic case \cite{BaloghSteiner,BarBoss2024}. 

A new phenomenon appears at order $4$, equal to the homogeneous dimension of the Heisenberg group, where logarithmic terms may occur.    
We show that the higher-order coefficients are naturally described by  a distributional analogue of Federer’s curvature measures, whose logarithmic components are supported on the characteristic set.  We compute the fourth-order coefficients explicitly.

The proof is based on different ingredients of independent interest: (i) an explicit formula for the  cut time of homogeneous surfaces, (ii) a fine analysis of the normal exponential map through blow-ups, (iii) the study of the stability of the surface cut locus under perturbations. 
\end{abstract}

\maketitle

\tableofcontents

\section{Introduction}
\noindent A classical result in integral geometry is Steiner's formula \cite{Steiner,WeylTubes,GrayTubes}. In its Euclidean form, it describes the volume of a small neighborhood of a regular domain as a polynomial in the radius. If $\Omega\subset \mathbb R^n$ is a bounded domain with smooth boundary $S=\partial\Omega$, and $\Omega_\ve$ denotes its $\ve$-neighborhood, then there is $\ve_0>0$ such that
\begin{equation}
	\mathrm{vol}(\Omega_\ve)
	=
	\mathrm{vol}(\Omega)
	+
	\sum_{k=1}^n a_k(S)\ve^k,\qquad\forall\,\ve\in[0,\ve_0],
\end{equation}
The coefficients $a_k(S)$, also known as Minkowski quermassintegrals, can be expressed as integrals over $S$ of symmetric homogeneous polynomials of its principal curvatures.

A closely related statement is the so-called half-tube formula. If $S\subset \mathbb R^n$ is a compact embedded hypersurface, boundary of a regular domain $S=\partial\Omega$, one may consider the \emph{half-tube} of radius $\ve>0$
\begin{equation}
	\mathcal T_S(\ve)
	=
	\Omega_\ve\setminus \Omega,
\end{equation}
which is the set of points whose distance from $S$ is at most $\ve$, lying outside of $\Omega$. In this case the  half-tube formula reduces to the non-constant part of Steiner's formula:
\begin{equation}\label{eq:steiner_riem}
	\mathrm{vol}(\mathcal T_S(\ve))	=	\sum_{k=1}^n a_k(S)\ve^k,\qquad\forall\,\ve\in[0,\ve_0].
\end{equation}
In the  simplest case of a surface $S$ embedded in $\mathbb R^3$, the half-tube formula for small radii is written as
\begin{equation}\label{eq:steiner_riem2}
	\mathrm{vol}(\mathcal{T}_S(\ve)) = \ve\int _SdA-\ve^2\int _SHdA+\frac{\ve^3}{3}\int_SKdA,
\end{equation}
where $H$ and $K$ are the mean and Gaussian curvature of $S$ and $dA$ is the Riemannian area measure. 

If the ambient space is an $n$-dimensional Riemannian manifold, the Riemannian volume of $\mathcal T_S(\ve)$ is no longer, in general, a polynomial in $\ve$. Nevertheless, it admits an asymptotic expansion in powers of $\ve$, as $\ve\to 0$. The coefficients are expressed in terms of the Riemannian curvature of the ambient manifold and of the extrinsic geometry of
the submanifold; see, for instance, \cite{GrayTubes}.

Tube formulae have been studied beyond the smooth setting. Federer’s theory of sets of positive reach provides a powerful framework in which Steiner-type formulae continue to hold \cite{FedererCM} and coefficients can be interpreted as the total mass of the so-called \emph{curvature measures}.

\subsection{The sub-Riemannian setting} In the sub-Riemannian setting, the study of embedded surfaces and of the volumes of their tubular neighborhoods has recently begun to attract new attention. For general results on embedded surfaces in contact sub-Riemannian manifolds see e.g.\ \cite{Daniele,BellBosc,ABBR2025,Lucia,Bellini2026}, while for volumes of tubes around non-characteristic submanifolds see \cite{BaloghSteiner,RitoreTubes,BarBoss2024,BossioRizziRossi2024}. One of the main motivations is to understand whether curvature-like invariants of submanifolds can be recovered from the coefficients of the corresponding asymptotic expansions.

This theme is particularly interesting even in the simplest case of a surface $S$ embedded in the sub-Riemannian Heisenberg group $\mathbb{H}^1$, which is $\mathbb R^3$ endowed with the contact distribution 
\begin{equation} \label{eq:framei}
	\mathcal D=\mathrm{span}\{X_1,X_2\},\qquad X_1:=\partial_x-\frac{y}{2}\partial_z,\qquad X_2:=\partial_y+\frac{x}{2}\partial_z,
\end{equation}
and the sub-Riemannian metric $g$ is defined by promoting the pair $X_1,X_2$ to an orthonormal frame.

The main source of difficulty in extending Steiner formula to the sub-Riemannian setting is given by the presence of
\emph{characteristic points}, namely points at which the horizontal distribution
is tangent to the surface. These points define the characteristic set
\begin{equation}
	\Sigma(S):=\{p\in S:\ T_pS=\mathcal D_p\}.
\end{equation}
As we observe in \cref{sec:distance_from_S}, in any neighborhood of a characteristic point there is at least one point reached by more than one minimizing geodesic. Thus, the presence of characteristic points forces the surface $S$ to have zero (sub-Riemannian) reach in the sense of Federer, falling outside the positive reach framework.

For a compact surface $S$ embedded in the Heisenberg group $\mathbb{H}^1$, under the assumption that $S$ is \emph{non-characteristic}, i.e., $\Sigma(S)=\emptyset$, a formula analogous to \eqref{eq:steiner_riem2} was obtained in \cite{BaloghSteiner}. Their formula takes the form
\begin{equation}\label{eq:balogh_formula}
	\mathrm{vol}(\mathcal{T}_S(\ve)) = \sum_{n=1}^\infty  a_n(S)\, \ve^n, 
\end{equation}
with coefficients expressed as integrals of the so called \emph{curvature forms}, introduced below.

In this paper we obtain a Steiner type formula for smooth compact surfaces with non-degenerate, geometrically stable characteristic points, with these conditions specified below. The essential issue in computing the half-tube volume integral is that characteristic points produce non trivial singularities both of the distance from the surface and of its tubular neighborhood, as we now explain.

\subsection{The tube integral and the curvature forms}

Let $S=\partial\Omega\subset\hei$ be a smooth embedded surface oriented towards the complement of $\Omega$. We denote by $\mathrm{dist}_S$ the signed sub-Riemannian distance from $\Omega$, positive outside of it, and for $\ve> 0$, consider the positive half-tube
\begin{equation}
	\mathcal T_S(\ve)=\{q\in\hei:0<\mathrm{dist}_S(q)\leq\ve\}.
\end{equation}
At every $p\in S\setminus\Sigma(S)$ there is a unique unit-speed normal
geodesic $\gamma_p$ starting from $p$, which for sufficiently short times is valued in $\hei\setminus\Omega$ and realizes the distance from
$S$. The normal exponential map is then defined as 
\begin{equation}\label{eq:intro-normal-exp}
	\exp^\perp_S:\mathbb R^+\times S\setminus\Sigma(S)\to \hei,\qquad  \exp_S^\perp(t,p):=\gamma_p(t).
\end{equation}
Although smooth on its domain, the normal exponential map does not admit a smooth extension across the characteristic set (see \cref{sec:normal_exp}). This is consistent
with the fact that no unit-speed geodesic realizing the distance
from $S$ starts at a characteristic point (see \cref{lem:no_abn}).
The maximal interval of length-minimality of a normal geodesic defines the cut time
\begin{equation}
	t_\cut(p)
	:=\sup\{t>0:\mathrm{dist}_S(\gamma_p(t))=t\},\qquad p\in S\setminus\Sigma(S).
\end{equation}
Since every point in $\hei\setminus \Omega$ is reached by at least one minimizing geodesic, we can parametrize the tube as
\begin{equation}
	\mathcal T_S(\ve)=\{\exp_S^\perp(t,p)\,:\, p\in S\setminus\Sigma(S),\,0<t\leq \min\{\ve,t_\cut(p)\}\}.
\end{equation}
The same formula, with $S$ replaced by a compact subset $U\subset S$, defines the localized half-tube $\mathcal T_U(\ve)$, consisting of points reached by minimizers based in $U\subset S$. In \cref{sec:distance_from_S} we show that such parametrization is actually a diffeomorphism outside of a negligible set. The change of variables formula then gives
\begin{equation}\label{eq:intro-tube-integral}
	\vol(\mathcal T_U(\ve))	=\int_{U\setminus\Sigma(S)}	\int_0^{\min\{\ve,t_\cut(p)\}}	J_S(t,p)\,dt\wedge dp,
\end{equation}
where we have denoted 
\begin{equation}
    J_S(t,p)\,dt\wedge dp:=(\exp_S^\perp)^* (dx\wedge dy\wedge dz),\qquad p\in S\setminus\Sigma(S).
\end{equation}
 Note that, by the change of variable formula, $J_S(t,p)$ is the Jacobian determinant of the normal exponential map computed in the coordinates $(t,p)\in \mathbb R^+\times (S\setminus\Sigma(S))$. 
 
 If $U$ is non-characteristic, the cut time is uniformly bounded below. Thus, for all sufficiently small $\ve$, the minimum in \eqref{eq:intro-tube-integral} can be replaced by $\ve$. Then, as rigorously established in \cite{BaloghSteiner,BossioRizziRossi2024}, a Steiner formula for non-characteristic surfaces can be obtained by Taylor expanding $J_S(t,p)$ in time along the normal geodesics and integrating term by term. More explicitly, writing
\begin{equation}\label{eq:intro-curv-fomrs}
	\sigma_S^{(n)}:=\partial_t^nJ_S(0,p)dp,\qquad p\in S, \,n\in\mathbb N,
\end{equation}
using $U\setminus\Sigma(S)=U$ and $\min\{t_\cut(p),\ve\}=\ve$, and exchanging the order of integration in \eqref{eq:intro-tube-integral} one finds
\begin{equation}\label{eq:intro-non-char-tube}
		\vol(\mathcal T_U(\ve))
	=\sum_{n=1}^\infty\ve^n a_n(U),\qquad a_n(U):=\frac{1}{n!}\int_U\sigma_S^{(n-1)}.
\end{equation}
We call $\sigma_S^{(n)}\in \Lambda^2(S\setminus\Sigma(S))$ the $n$-th \emph{curvature form}. In particular,
$\sigma_S^{(0)}=\sigma_H$ is the horizontal area form. If $N$ denotes the horizontal gradient of the signed distance from $S$, which restricts to $S$ to the sub-Riemannian \emph{unit normal}, then
\begin{equation}\label{eq:div_k-1}
	\sigma_S^{(n)}=\operatorname{div}^{n}(N)\,\sigma_H,
\end{equation}
where $\operatorname{div}^{n}(N)$ is the $n$-th divergence of $N$ (cf. \cref{ssec:curve_form}). Following  \cite{BarBoss2024}, the first three curvature forms can be expressed explicitly as 
\begin{equation}\label{eq:trecoeff}
	\sigma^{(0)}_S=\sigma_H,\qquad \sigma^{(1)}_S=-\mathcal H \,\sigma_H,\qquad \sigma^{(2)}_S=(-N(\mathcal H)+\mathcal H^2) \sigma_H,
\end{equation}
where $\mathcal H$ is the sub-Riemannian mean curvature.

In the presence of characteristic points, however, in computing the half-tube volume one encounters two new main difficulties.
\begin{enumerate}
	\item The singular behavior of $\exp_S^\perp(t,p)$ as $p$ approaches $\Sigma(S)$ is reflected in a singular behavior of the integrand $J_S(t,p)\,dt\wedge dp$. Computing the volume asymptotics thus requires control of these singularities and of their contribution to the integral.
	\item The cut time $t_\cut(p)$ tends to zero as $p$ approaches $\Sigma(S)$ (cf. \cref{cor:t_cut_continuous}). Hence, for any fixed $\ve>0$, the minimum cannot be replaced by $\ve$ uniformly near the characteristic set. The cut time therefore determines a	nontrivial integration domain whose dependence on $\ve$ must be controlled. 
\end{enumerate}
We face these difficulties for surfaces having \emph{non-degenerate} characteristic points (cf. \cref{def:non_deg_op}). We start by analyzing the simplest example, the horizontal plane in the Heisenberg group.

\subsubsection{The horizontal plane} \label{s:thp}
Let us consider the surface $S_0$ in $\hei$ given by
\begin{equation}
    S_0:=\{(x,y,z)\in \hei : z=0\},
\end{equation}
whose only characteristic point is the origin and it is non-degenerate (see \cref{lem:ab+1neq0}). For such model surface, the Jacobian determinant of the normal exponential map and the cut time can be computed explicitly. In polar coordinates defined by $(x,y)=(r\cos\theta,r\sin\theta)$ we have
\begin{equation}\label{eq:intro-Jac-plane}
    J_{S_0}(t,r,\theta)=\frac{r^2}4\left[1+\cos\left(\frac{2t}{r}\right)+\left(\frac{t}{r}\right)\sin\left(\frac{2t}{r}\right)\right],\qquad t_\cut(r,\theta)=\frac{\pi}{2}r.
\end{equation}
These formulas are derived in \cref{l:det_exp_perp} and \cref{thm:intro-homogeneous-cut-time} and agree with \cite[Thm.\ 1.8]{RizziRossi}, where the authors compute explicitly the distance from $S_0$. In particular, denoting $B_R=\{(x,y,0)\in S_0: x^2+y^2\leq R^2\}$ and applying \eqref{eq:intro-tube-integral} with $U=B_R$, we obtain the following asymptotics as $\ve\to 0^+$ for the volume of the localized half-tube 
\begin{equation}\label{eq:intro-piano}
    \mathrm{vol}(\mathcal T_{B_R}(\ve))=\left(\frac{\pi R^3}{3}\right)\ve
	+C\ve^4+o(\ve^4),
	\qquad
	C=\frac{1-\pi\operatorname{Si}(\pi)}{12}<0.
\end{equation}
To compare this expansion with the non-characteristic formula
\eqref{eq:intro-non-char-tube}, we have computed the first four curvature
forms
\begin{equation}
\label{eq:trivially-zero}
    \sigma_{S_0}^{(0)}=\frac{r^2}{2}dr\wedge d\theta,\qquad \sigma_{S_0}^{(1)}= \sigma_{S_0}^{(2)}= \sigma_{S_0}^{(3)} = 0.
\end{equation}
For $n=1,2,3$, the coefficient of $\ve^n$ has the same expression as in \eqref{eq:intro-non-char-tube}
\begin{equation}
    a_n(B_R)=\frac{1}{n!}\int_{B_R}\sigma_S^{(n-1)},\qquad n=1,2,3.
\end{equation}
However, at order four, this rule fails: the coefficient of $\ve^4$ is nonzero although $\sigma_{S_0}^{(3)}$ vanishes identically. Moreover, the constant $C<0$ is independent of the radius $R$, i.e., of the size of the localizing domain, suggesting a curvature contribution concentrated at the characteristic point. We stress that $Q=4$ is the homogeneous dimension of $\hei$. 

\subsection{Homogeneous surfaces}The example of the horizontal plane shows that, beyond order $\ve^3$,
the half-tube volume contains contributions that are not captured by
the usual curvature integrals. To better understand this phenomenon,
we study the localized half-tube volume for a more general class of models, the \emph{homogeneous surfaces}.
Given $a,b\in\mathbb R$, we define
\begin{equation}
    S_{a,b}
    :=\left\{(x,y,z)\in\hei:
    z=\frac14(ax^2+by^2)\right\}.
\end{equation}
These surfaces have a characteristic point at the origin, which is non-degenerate when $1+ab\neq0$, see \cref{lem:ab+1neq0}. The horizontal plane is
recovered by taking $a=b=0$. These surfaces are \emph{homogeneous}, namely they are invariant under the Heisenberg dilations
\begin{equation}
    \delta_\ve:\hei\to\hei,\qquad
    \delta_\ve(x,y,z):=(\ve x,\ve y,\ve^2z),
    \qquad \ve>0.
\end{equation}
This property yields a scaling identity for the normal
exponential map, useful to address the two difficulties (1) and (2) described above.
\subsubsection{The tube integrand of homogeneous surfaces} On a surface $S_{a,b}$ with a non degenerate characteristic point, i.e., with $1+ab\neq 0$, we introduce adapted polar coordinates $(\rho,\vartheta)$ defined by 
\begin{equation}\label{eq:intro-homogeneous-polar}
 	(ax+y,by-x)=(\rho\cos\vartheta,\rho\sin\vartheta).
 \end{equation}
For $\ve>0$, since $S_{a,b}$ is invariant under dilations, $\delta_\ve$ maps its outward normal geodesics to
 outward normal geodesics, dilating their base point and their length by $\ve$. In terms of the normal exponential map \eqref{eq:intro-normal-exp}, in the coordinates $(t,\rho,\vartheta)$, this fact translates to 
 \begin{equation}\label{eq:intro-homogeneous-exponential}
 	\delta_\ve(\exp_{S_{a,b}}^\perp(t,\rho,\vartheta))
 	=\exp_{S_{a,b}}^\perp(\ve t,\ve\rho ,\vartheta).
 \end{equation}
  Taking Jacobian determinants of both sides of \eqref{eq:intro-homogeneous-exponential} with
$\ve>0$ fixed, then setting $\ve=1/\rho$ and multiplying both sides by $\rho^4$ we find
\begin{equation}\label{eq:intro-homogeneous-jacobian}
    J_{S_{a,b}}(t,\rho,\vartheta)
    =\rho^2J_{S_{a,b}}(t/\rho,1,\vartheta).
\end{equation}
where $J_{S_{a,b}}(t,\rho,\vartheta)$ denotes the Jacobian determinant of $\exp^\perp_{S_{a,b}}(t,\rho,\vartheta)$ computed with respect to the coordinates $(t,\rho,\vartheta)$.
The function $
J_{S_{a,b}}(\tau,1,\vartheta)$ depends smoothly on $(\tau,\vartheta)$, and its explicit expression is computed in \cref{l:det_exp_perp}. Thus, the singular behavior of the Jacobian as $\rho$ approaches $0$ is encoded in the ratio $t/\rho$.
Differentiating \eqref{eq:intro-homogeneous-jacobian} with respect to $t$ we may compute the $n$-th curvature form \eqref{eq:intro-curv-fomrs} 
 \begin{equation}\label{eq:intro-homogeneous-curvature-derivatives}
 	\sigma_{S_{a,b}}^{(n)}
 	=\rho^{2-n}\phi_n^{(a,b)}(\vartheta)d\rho\wedge d\vartheta,\qquad \phi_n^{(a,b)}(\vartheta):=\partial_t^n J_{S_{a,b}}(0,1,\vartheta).
 \end{equation}
The angular coefficients $\phi_n^{(a,b)}(\vartheta)$ are computed explicitly in \cref{prop:hom_curv_form}. Equation \eqref{eq:intro-homogeneous-curvature-derivatives} shows that, in general, due to the factor $\rho^{2-n}$, for $n\geq 3$ the form $\sigma_{S_{a,b}}^{(n)}$ is not locally integrable near the characteristic point, explaining why, in the presence of characteristic points, the coefficients of the half-tube expansion
cannot be obtained through the ordinary curvature integrals
appearing in \eqref{eq:intro-non-char-tube}.
\subsubsection{The cut time of homogeneous surfaces}
The cut time is classically characterized in terms of critical points and loss of injectivity of the normal exponential map. We prove such a characterization for surfaces in the Heisenberg setting in \cref{sec:cut_time}, and we compute explicitly the cut time for homogeneous surfaces. The following result is established in \cref{part:cut-time} of the paper.
\begin{theorem}
	\label{thm:intro-homogeneous-cut-time}
	Let $S_{a,b}$ be a homogeneous surface with a non-degenerate characteristic point, then 
	\begin{equation}\label{eq:intro-homogeneous-cut-time}
		t_\cut^{a,b}(x,y)=\left(\frac{\pi}{2}-\arctan\bigl(\max\{a,b\}\bigr)\right)\sqrt{(ax+y)^2+(by-x)^2}.
	\end{equation}
\end{theorem}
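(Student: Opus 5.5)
The plan is to reduce the statement to a one-variable problem using homogeneity, compute the normal geodesics explicitly, and then apply the characterization of the cut time from \cref{sec:cut_time}. That characterization says $t_\cut(p)$ is the first time at which either $\exp^\perp_S$ becomes singular (a focal time, i.e.\ $J_S=0$) or $\gamma_p$ meets another minimizing normal geodesic of equal length.

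\textbf{Step 1 (homogeneity).} Relation \eqref{eq:intro-homogeneous-exponential} says that $\delta_\ve$ maps normal geodesics to normal geodesics and rescales both lengths and base points. Since $\mathrm{dist}_S\circ\delta_\ve=\ve\,\mathrm{dist}_S$, we get $t_\cut(\ve\rho,\vartheta)=\ve\,t_\cut(\rho,\vartheta)$. Hence $t_\cut=c(\vartheta)\rho$ in the adapted polar coordinates \eqref{eq:intro-homogeneous-polar}, and it suffices to prove
\begin{equation}
c(\vartheta)\equiv\frac{\pi}{2}-\arctan(\max\{a,b\}).
\end{equation}

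\textbf{Step 2 (explicit geodesics).} Take $f=z-\tfrac14(ax^2+by^2)$. Then $(X_1f,X_2f)=-\tfrac12(ax+y,\,x-by)$, so the horizontal gradient has norm $\rho/2$ and the normalized initial covector has vertical component $w=\pm 2/\rho$. The normal geodesic is a Heisenberg geodesic: its horizontal projection is a circle of radius $\rho/2$ through $(x,y)$, tangent to the horizontal unit normal, and $z$ grows by $\tfrac12$ of the swept signed area. I will write $\exp^\perp(\tau\rho,\rho,\vartheta)$ in closed form with $\tau=t/\rho$. Using the Jacobian from \cref{l:det_exp_perp}, I will find the first zero $\tau_\foc(\vartheta)$ of $J_{S_{a,b}}(\tau,1,\vartheta)$. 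For $a=b=0$ this is the first zero of $1+\cos 2\tau+\tau\sin 2\tau$, which exceeds $\pi/2$. So I expect focal times never to be the binding constraint, and I only aim to show $\tau_\foc(\vartheta)\ge c$.

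\textbf{Step 3 (collisions).} I look for a pair of distinct base points whose geodesics meet at equal times. In the plane case the circle of diameter $r$ reaches the $z$-axis after arclength $\pi r/2$. There, geodesics from all points of the circle $\{r=\mathrm{const}\}$ coincide, which gives $c=\pi/2$. For general $(a,b)$ I expect the collision set to be a curve generated by geodesics from points symmetric under $(x,y)\mapsto(-x,-y)$ or under the reflections preserving $S_{a,b}$. Writing the meeting condition $\exp^\perp(t,p)=\exp^\perp(t,p')$ with $p'$ the symmetric point reduces to a trigonometric equation in $\tau$ of the form $\cot\tau=\max\{a,b\}$. Its first solution is $\tau=\tfrac{\pi}{2}-\arctan(\max\{a,b\})$, and it is independent of $\vartheta$ after the normalization by $\rho$. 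This gives the upper bound $t_\cut\le c\rho$.

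\textbf{Step 4 (main obstacle: lower bound).} I must show that for $\tau<c$ the map $(\tau,\rho,\vartheta)\mapsto\exp^\perp$ is injective and nonsingular. Nonsingularity follows from Step 2. For injectivity, I would first try to show that the $z$-coordinate and the horizontal projection along each geodesic determine $(\rho,\vartheta)$ uniquely while $\tau<c$. A second option is a level-set argument: the candidate function defined by inverting $\exp^\perp$ on $\{\tau<c\}$ is a viscosity solution of the eikonal equation, and it equals $\mathrm{dist}_S$ by uniqueness and continuity of $t_\cut$ (\cref{cor:t_cut_continuous}). The role of $\max\{a,b\}$ suggests a separate check that the competing collision on the other symmetry axis, governed by $\min\{a,b\}$, happens later. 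This is where I expect most of the work, particularly the case distinctions according to the signs of $a,b$ and $1+ab$.
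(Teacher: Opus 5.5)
Step 1 is correct, and the focal bound you want in Step 2 does hold. The other two steps have gaps.

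\textbf{Upper bound (Step 3).} The collision partner is wrong. For $a\neq b$, the only nontrivial isometry of $\hei$ preserving $S_{a,b}$ and its positive side is $(x,y,z)\mapsto(-x,-y,z)$. Geodesics from antipodal base points are rotated copies of each other, so at equal times they can meet only on the $z$-axis. At $\tau=c$ this happens for just two directions $\vartheta$. So $\cot\tau=\max\{a,b\}$ is not $\vartheta$-independent for your pairs, and you only get $t_{\cut}\le c\rho$ on two rays. For $a\ge b$, the true partner of $(\rho,\vartheta)$ is the base point $(\rho,\,2\arctan(a)-\vartheta)$. This reflection of the adapted angle is not induced by any isometry. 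For example, on $S_{1,0}$ the normal geodesics from $(0,1,0)$ and $(-1,1,\tfrac14)$ meet at time $\pi/4$, both at $(-\tfrac12,\tfrac12,\tfrac{\pi/2+1}{8})$.

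\textbf{Focal points.} Your belief that focal times never bind is false. For $a>b$, the cut point is focal in the two directions fixed by this reflection, and for $a=b$ it is focal in every direction. For $a=b=0$, the first zero of $1+\cos2\tau+\tau\sin2\tau$ is exactly $\pi/2$, not beyond it. This is harmless only because you need $\tau_{\foc}\ge c$, which does hold.

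\textbf{Lower bound (Step 4).} This is the heart of the theorem, and neither option supplies it. The first restates the claim. The second is circular: the candidate function exists only after injectivity on $\{\tau<c\}$ is known, and only on its image, so invoking eikonal uniqueness presupposes the result.

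\textbf{Missing idea: a normal form.} Use source coordinates $s=2t/\rho$, $w=\mathfrak R(-\tfrac s2)(ax+y,\,by-x)$, and target coordinates $\psi(x,y,z)=(ax+y,\,by-x,\,8(z-\tfrac14(ax^2+by^2)))$. Then $\exp^\perp_{S_{a,b}}$ becomes $\bigl(P(s)w,\ s|w|^2+2\sin\tfrac s2\langle P(s)w,w\rangle\bigr)$, where $P(s)=\mathrm{diag}(p_a(s),p_b(s))$ and $p_c(s)=\cos\tfrac s2-c\sin\tfrac s2$. The length is $s|w|/2$ (\cref{l:belle_coords}).

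The Maxwell time only asks for equal endpoints and equal lengths; full injectivity is more than you need. Combine those two conditions with the identity $\sin\tfrac t2\,p_c(s)-\sin\tfrac s2\,p_c(t)=\sin\tfrac{t-s}2$. This gives $t-s=2\sin\tfrac{t-s}2\,\langle w,\tilde w\rangle/(|w||\tilde w|)$. Since $|\sin x|<|x|$ for $x\neq0$, Cauchy--Schwarz forces $s=t$. Then $P(s)$ is invertible for $s<s_a=\pi-2\arctan a$, so $w=\tilde w$. Hence there are no double points below $s_a$, and colliding base points always share the same $\rho$.

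\textbf{Concluding.} Cut points lie in the closure of double points (\cref{thm:t_cut_front}), so $t_{\cut}\ge\tfrac{s_a}{2}\rho$ follows with no case analysis on the signs of $a$, $b$, $1+ab$. If you keep $t_{\cut}=\min\{t_{\foc},t_{\Max}\}$ instead, also check that $p_b m_a\cos^2\phi+p_a m_b\sin^2\phi<0$ on $[0,s_a)$, where $m_c=sp_c'-p_c$. This holds because $p_a,p_b>0$ there and $m_c'=-\tfrac s4 p_c$ with $m_c(0)=-1$. The same normal form gives the upper bound directly: $p_a(s_a)=0$ makes $(s_a,u,v)$ and $(s_a,-u,v)$ collide.
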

 \noindent We note that, setting $a=b=0$ we recover the cut time of the horizontal plane displayed in \eqref{eq:intro-Jac-plane}. Denoting $c_{a,b}=\frac{\pi}{2}-\arctan\bigl(\max\{a,b\}\bigr)$, in the polar coordinates of \eqref{eq:intro-homogeneous-polar}, the above result rewrites as 
 \begin{equation}\label{eq:intro-t_cut_a,b}
     t_\cut^{a,b}(\rho,\vartheta)=c_{a,b}\,\rho.
 \end{equation}
We see that the cut time is linear in $\rho$, and it is thus smooth in polar coordinates $(\rho,\vartheta)$.
\subsubsection{The half-tube volume of homogeneous surfaces}
Combining the analysis of the tube integrand with the regularity of
the cut time, we obtain the following  Steiner formula for homogeneous surfaces, proved in \cref{part:volume}. The expansion reveals a phenomenon absent in the case of the plane: when $a+b\neq0$, the leading correction beyond order $\ve^3$ is proportional to $\ve^4\log\ve$.
\begin{theorem}[Steiner formula for homogeneous surfaces]
	\label{thm:intro-steiner-hom}
	Let $S_{a,b}$ be a homogeneous surface with a non-degenerate characteristic point, i.e., $1+ab\neq 0$, and let $U\subset S_{a,b}$ be compact and having the origin in its interior. Then there exist
	real coefficients $\alpha_n(U)$, $n\geq4$, such that,
	for every integer $N\geq4$
 \begin{equation}\label{eq:intro-main-expansion_homogeneous}
		\begin{aligned}
			\vol(\mathcal T_{U}(\ve))
			={}&\sum_{n=1}^3\frac{\ve^n}{n!}
			\int_U\sigma_{S_{a,b}}^{(n-1)}+\frac{\pi(a+b)}{12|1+ab|}\ve^4\log\ve+\sum_{n=4}^N\ve^n
			\alpha_n(U)+
			o(\ve^N).
		\end{aligned}
	\end{equation}
\end{theorem}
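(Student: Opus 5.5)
Starting from the tube integral \eqref{eq:intro-tube-integral}, I will use the polar coordinates $(\rho,\vartheta)$ of \eqref{eq:intro-homogeneous-polar}, the scaling law \eqref{eq:intro-homogeneous-jacobian} and the cut time \eqref{eq:intro-t_cut_a,b}. Choose $r_0>0$ such that the coordinate disc $D_{r_0}=\{\rho\le r_0\}$ lies in the interior of $U$, and split $U=D_{r_0}\cup (U\setminus D_{r_0})$. On the non-characteristic set $U\setminus D_{r_0}$ the cut time is bounded below by $c_{a,b}r_0$. For $\ve<c_{a,b}r_0$ the non-characteristic argument of \eqref{eq:intro-non-char-tube} then applies: expand $J$ in $t$ and integrate. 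This gives the smooth polynomial contribution $\sum_n \frac{\ve^n}{n!}\int_{U\setminus D_{r_0}}\sigma^{(n-1)}$ with exact Taylor remainders.

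On $D_{r_0}$, writing $\rho_0(\vartheta)$ for the Jacobian factor of the coordinate change so that $dp=\rho\,d\rho\,d\vartheta$ up to a smooth angular weight, the contribution is
\begin{equation}
I(\ve)=\int_0^{2\pi}\!\!\int_0^{r_0}\int_0^{\min\{\ve,c_{a,b}\rho\}}\rho^2 F(t/\rho,\vartheta)\,dt\,d\rho\,d\vartheta,\qquad F(\tau,\vartheta):=J_{S_{a,b}}(\tau,1,\vartheta).
\end{equation}
Substitute $t=\rho\tau$ and then $\rho=\ve s$ to get
\begin{equation}
I(\ve)=\ve^4\int_0^{2\pi}\!\!\int_0^{r_0/\ve}s^3\int_0^{\min\{1/s,\,c_{a,b}\}}F(\tau,\vartheta)\,d\tau\,ds\,d\vartheta.
\end{equation}
For $s\le 1/c_{a,b}$ the inner integral is the constant $\int_0^{c_{a,b}}F$, so this region contributes a pure multiple of $\ve^4$. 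For $s>1/c_{a,b}$, Taylor expand $F(\tau,\vartheta)=\sum_k \phi_k(\vartheta)\tau^k/k!$, with $\phi_k=\phi_k^{(a,b)}$ from \eqref{eq:intro-homogeneous-curvature-derivatives}, up to order $N$ with a remainder $O(\tau^{N+1})$. The inner integral becomes $\sum_k \phi_k s^{-k-1}/(k+1)!+O(s^{-N-2})$, and we integrate $s^{2-k}$ from $1/c_{a,b}$ to $r_0/\ve$:
\begin{itemize}
\item For $k\le 2$ this produces powers $\ve^{4-(3-k)}=\ve^{k+1}$ together with constants times $\ve^4$.
\item For $k=3$ it produces $\log(r_0/\ve)$, i.e.\ a term $-\frac{\ve^4}{24}\log\ve\int_0^{2\pi}\phi_3\,d\vartheta$ plus constants times $\ve^4$.
\item For $k\ge4$ it produces constants times $\ve^4$ plus terms $\ve^{k+1}$.
\end{itemize}
The remainder is integrable at $s=\infty$ and yields $\ve^4\cdot$ const plus $O(\ve^{N+2})$, so everything is $o(\ve^N)$. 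The $\ve^{k+1}$ terms, $k\le 2$, from the disc combine with the outer region to give exactly $\frac{\ve^{k+1}}{(k+1)!}\int_U\sigma^{(k)}$. This uses local integrability of $\sigma^{(k)}=\rho^{2-k}\phi_k\,d\rho\,d\vartheta$ for $k\le2$. For $k\ge3$ the terms $\ve^{k+1}$ together with the constants define $\alpha_n(U)$, independent of $r_0$ since the total is.

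Two points remain. First, the polar Jacobian weight: in the adapted coordinates, \eqref{eq:intro-homogeneous-curvature-derivatives} is already written with respect to $d\rho\wedge d\vartheta$, so I will take $\rho^2$ to include it as in \eqref{eq:intro-homogeneous-jacobian}, and the exponents above stand. Second, the log coefficient: it is $-\frac{1}{24}\int_0^{2\pi}\phi^{(a,b)}_3(\vartheta)\,d\vartheta$, and we must show it equals $\frac{\pi(a+b)}{12|1+ab|}$. This uses the explicit formula for $\phi_3^{(a,b)}$ from \cref{prop:hom_curv_form} and a trigonometric integral. I expect this to be the main computational obstacle: the angular average must collapse to $-2\pi(a+b)/|1+ab|$, which has to be checked carefully. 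The sign of $1+ab$ enters through the orientation of the coordinates $(\rho,\vartheta)$. A sanity check is $a=b=0$, where $\phi_3\equiv0$ by \eqref{eq:trivially-zero}, consistent with \eqref{eq:intro-piano}.

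A secondary technical issue is justifying the uniform Taylor remainder of $F$ on $[0,c_{a,b}]\times S^1$. This follows from the smoothness of $F$ stated after \eqref{eq:intro-homogeneous-jacobian}.
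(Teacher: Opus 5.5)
Your proposal is correct and is essentially the paper's argument: you split off a disc around the origin, use $t_\cut=c_{a,b}\rho$ and $J_{S_{a,b}}(t,\rho,\vartheta)=\rho^2J_{S_{a,b}}(t/\rho,1,\vartheta)$ there, treat the complement as non-characteristic, and read the logarithm off the $\tau^3$ Taylor coefficient, giving $-\frac{1}{24}\int_0^{2\pi}\mathrm{sign}(1+ab)\,\phi_3^{(a,b)}(\vartheta)\,d\vartheta=\frac{\pi(a+b)}{12|1+ab|}$ once the orientation factor $\mathrm{sign}(1+ab)$ is inserted (it is missing from your displayed $I(\ve)$, though you flag the orientation issue yourself). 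The only difference is that you expand the disc integral by hand through $t=\rho\tau$, $\rho=\ve s$, using that in the homogeneous case the integrand factorizes as $r^3\Psi(s)$, whereas the paper feeds it into the general logarithmic-expansion Lemma~\ref{l:integrato}; your route is the heuristic sketched in the paper's introduction, made rigorous by your $O(\ve^{N+2})$ tail estimate.
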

\noindent We stress that the coefficient of such logarithmic term depends only on $a,b$ and is independent of the localization set $U$, suggesting a curvature contribution concentrated at the characteristic point.  For the horizontal plane, corresponding to $a=b=0$, this $\ve^4\log\ve$ correction vanishes. 

\subsubsection{Derivation of the $\ve^4\log\ve$ correction.} We now briefly explain how this logarithmic term arises, postponing the analysis of the higher-order coefficients (including the explanation of the $\ve^4$ term in the example of the horizontal plane) to the following subsections \ref{s:general} and \ref{s:curvature}.  

For $R>0$, in polar coordinates \eqref{eq:intro-homogeneous-polar} we set $B_R=\{\rho \leq R\}\subset S_{a,b}$. We  compute the localized half-tube volume around $B_R$. According to \eqref{eq:intro-homogeneous-jacobian} and \eqref{eq:intro-t_cut_a,b}, the Jacobian and the cut time satisfy 
\begin{equation}
\label{eq:nonlabel}
    J_{S_{a,b}}(t,\rho,\vartheta)=\rho^2 J_{S_{a,b}}(t/\rho,1,\vartheta),\qquad t_\cut^{a,b}(\rho,\vartheta)=c_{a,b}\rho.
\end{equation}
To simplify the exposition, in the following we assume that the coordinates $(\rho,\vartheta)$ are positively oriented (see \cref{rmk:hom_or} for details on orientation), and that $c_{a,b}=1$, so that $t_\cut^{a,b}(\rho,\vartheta)=\rho$. An application of equation \eqref{eq:intro-tube-integral} using \eqref{eq:nonlabel}  gives 
\begin{equation}\label{eq:intro-tube-vol-hom}
    \vol(\mathcal T_{B_R}(\ve))=\int_0^{2\pi}\int_0^R\int_0^{\min\{\ve,\rho\}}\rho^2J_{S_{a,b}}(t/\rho,1,\vartheta)\,dt\,d\rho\,d\vartheta.
\end{equation}
 Thus, making the change of coordinates $t=u\rho$ and defining the smooth function
\begin{equation}\label{eq:intro-Phi}
	\Phi(v):=\int_0^{2\pi}\int_0^v J_{S_{a,b}}(u,1,\vartheta)\,du\,d\vartheta,
\end{equation}
equation \eqref{eq:intro-tube-vol-hom} becomes
\begin{equation}\label{eq:intro-volume-splitting}
	\vol(\mathcal T_{B_R}(\ve))=\int_0^{\ve}\rho^3\Phi(1)\,d\rho+\int_{\ve}^{R}\rho^3\Phi\left(\frac{\ve}{\rho}\right)\,d\rho.
\end{equation}
The first integral is proportional to $\ve^4$. The logarithmic term arises from the second integral. To see this, note that only monomials of the kind $\rho^{3}v^k$, $k\geq 0$, appear in the Taylor expansion of $\rho^3\Phi(v)$. After setting $v=\ve/\rho$, for every $k\geq 0$ the contribution of such monomial to the integral is 
\begin{equation}
	\ve^k\int_\ve^{R}\rho^{3-k}\,d\rho.
\end{equation}
A logarithm appears precisely when $k=4$, in which case this expression becomes
\begin{equation}
	\ve^{4}\bigl(\log R-\log\ve\bigr).
\end{equation}
For rigorous Taylor expansions, with suitable remainder estimates, see \cref{l:integrato}.

\subsection{The general case} \label{s:general}
We now consider a general smooth oriented embedded surface $S=\partial\Omega\subset\hei$. Let $p\in\Sigma(S)$ be a non-degenerate characteristic point. After applying an isometry of $\hei$, we may assume that $p$ is the origin and that $S$ is locally given by
\begin{equation}\label{eq:intro-homogeneous-graph}
    S=\left\{(x,y,z)\in\hei:z=\frac12u(x,y)\right\},
    \qquad
    u(x,y)=\frac12(ax^2+by^2)+O\bigl(|(x,y)|^3\bigr),
\end{equation}
where $u$ is smooth near the origin and $a,b\in\mathbb R$. Note that the quadratic part of \eqref{eq:intro-homogeneous-graph} defines the homogeneous surface $S_{a,b}$. Thus, homogeneous surfaces are not merely examples for which the tube geometry can be computed explicitly: they also provide local approximations to general surfaces in the vicinity of their non-degenerate characteristic points. 

 \subsubsection{The tube integrand of a general surface}
 Since $S$ can be seen as a smooth perturbation of $S_{a,b}$, it is possible to express the Jacobian $J_{S}(t,\rho,\vartheta)$ of the normal exponential map of $S$ as a smooth perturbation of $J_{S_{a,b}}(t,\rho,\vartheta)$. In particular we prove that 
 \begin{equation}\label{eq:intro-homogeneous-jacobian-approximation}
 	\begin{aligned}
 		J_S(t,\rho,\vartheta)=\rho^2\bigl(J_{S_{a,b}}(t/\rho,1,\vartheta)+\rho R_J(t/\rho,\rho,\vartheta)\bigr),
 	\end{aligned}
 \end{equation}
 where $J_{S_{a,b}}(\tau,\rho,\vartheta)$ and $R_J(\tau,\rho,\vartheta)$ are smooth (see \cref{prop:iter_diver_singularities} and \cref{rmk:in_hom_coords}). As in the homogeneous case, the singularities of the Jacobian are encoded in the ratio $t/\rho$. Such singularities are reflected in the curvature forms.
 \begin{prop}\label{prop:intro-curvature-approximation}
 	For every integer $n\geq0$, the $n$-th curvature form of $S$ satisfies
 	\begin{equation}\label{eq:intro-curvature-approximation}
 		\sigma_S^{(n)}=\rho^{2-n}\left(\phi_n^{a,b}(\vartheta)+\rho\psi_n(\rho,\vartheta)\right)d\rho\wedge d\vartheta,
 	\end{equation}
 	where $\phi_n^{a,b}$ is the angular coefficient of the homogeneous model and $\psi_n$ is smooth, including at $\rho=0$.
 \end{prop}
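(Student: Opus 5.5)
The plan is to derive the statement directly from the Jacobian expansion \eqref{eq:intro-homogeneous-jacobian-approximation}. That expansion says
\[
J_S(t,\rho,\vartheta)=\rho^2\bigl(J_{S_{a,b}}(t/\rho,1,\vartheta)+\rho R_J(t/\rho,\rho,\vartheta)\bigr),
\]
where $J_{S_{a,b}}(\tau,1,\vartheta)$ and $R_J(\tau,\rho,\vartheta)$ are smooth in $(\tau,\rho,\vartheta)$, including at $\rho=0$. By definition \eqref{eq:intro-curv-fomrs}, $\sigma_S^{(n)}=\partial_t^nJ_S(0,\rho,\vartheta)\,d\rho\wedge d\vartheta$ in the coordinates $(\rho,\vartheta)$. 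Here the area form $dp$ is identified with $d\rho\wedge d\vartheta$, using the same convention as for the Jacobian.

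For fixed $\rho>0$ we differentiate $n$ times in $t$. Each derivative of a function of $\tau=t/\rho$ produces a factor $\rho^{-1}$, by the chain rule. This gives
\[
\partial_t^nJ_S(t,\rho,\vartheta)=\rho^{2-n}\Bigl(\partial_\tau^nJ_{S_{a,b}}(t/\rho,1,\vartheta)+\rho\,\partial_\tau^nR_J(t/\rho,\rho,\vartheta)\Bigr).
\]
Evaluating at $t=0$, i.e.\ at $\tau=0$, the first term becomes $\phi_n^{a,b}(\vartheta)=\partial_t^nJ_{S_{a,b}}(0,1,\vartheta)$, as defined in \eqref{eq:intro-homogeneous-curvature-derivatives}. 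We then set $\psi_n(\rho,\vartheta):=\partial_\tau^nR_J(0,\rho,\vartheta)$. This function is smooth up to $\rho=0$ because $R_J$ is jointly smooth there. This yields \eqref{eq:intro-curvature-approximation}. As a consistency check, the same computation with $R_J\equiv0$ reproduces \eqref{eq:intro-homogeneous-curvature-derivatives} for the homogeneous model.

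The real content is therefore the expansion \eqref{eq:intro-homogeneous-jacobian-approximation} itself, which \cref{prop:iter_diver_singularities} and \cref{rmk:in_hom_coords} provide; here I take it as given. One point needs care: \eqref{eq:intro-homogeneous-jacobian-approximation} must hold as an identity of smooth functions near $t=0$, for each fixed $\rho>0$, so that $t$-derivatives at $t=0$ can be taken. This holds because $\exp_S^\perp$ is smooth on $[0,\infty)\times(S\setminus\Sigma(S))$. The smoothness of $R_J$ in $\tau$ at $\tau=0$, uniformly down to $\rho=0$, is exactly what makes $\psi_n$ smooth at $\rho=0$.

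If only the description \eqref{eq:div_k-1}, $\sigma_S^{(n)}=\operatorname{div}^n(N)\sigma_H$, were available, I would argue differently. In the blown-up coordinates $(x,y)=\rho(\cos,\sin)(\vartheta)$, up to the linear change \eqref{eq:intro-homogeneous-polar}, the horizontal unit normal $N$ is a smooth function of $(\rho,\vartheta)$, and $\sigma_H=\rho^2(\cdots)\,d\rho\wedge d\vartheta$. Each application of $\operatorname{div}$ differentiates horizontally and so costs one factor $\rho^{-1}$ at leading order. Its leading part agrees with the homogeneous model, by the homogeneity of degree $-1$ of horizontal derivatives under $\delta_\ve$. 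Proving that the corrections are one order higher in $\rho$ would be the main obstacle in that route.
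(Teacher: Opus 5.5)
Your proposal is correct and follows essentially the same route as the paper. In \cref{cor:curv_forms1} the paper likewise combines $\sigma_S^{(n)}=\partial_t^nJ_S(0,\cdot)\,dx\wedge dy$ from \cref{lem:area_curv} with the scaling form $J_S(t,\rho,\vartheta)=\rho^2\bigl(J_{S_{a,b}}(t/\rho,1,\vartheta)+\rho R_J(t/\rho,\rho,\vartheta)\bigr)$ from \cref{prop:iter_diver_singularities} and \cref{rmk:in_hom_coords}, then differentiates $n$ times in $t$ and evaluates at $t=0$; your closing alternative via $\operatorname{div}^n(N)$ is not needed.
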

\noindent As a consequence, the curvature forms $\sigma_S^{(n)}$ are, in general, locally integrable near the characteristic point if and only if $n=0,1,2$. 

\subsubsection{Cut time and stability}\label{ss:intro-cut-time}
The natural idea to compute the cut time for a general surface $S$ is  to exploit homogeneous approximations: since $S$ is a perturbation of $S_{a,b}$ near the origin, $\exp^\perp_{S}$ is a perturbation of $\exp^\perp_{S_{a,b}}$. However, the latter fact does not by itself ensure that $t_\cut$ can be smoothly approximated by $t_\cut^{a,b}$. To obtain such a result, we must also establish stability of the critical points and self intersections of the map $\exp^\perp_{S_{a,b}}$, within a suitable class of mappings.

A fine analysis of the normal exponential maps of homogeneous models, carried out in \cref{part:cut-time} of the paper, shows that the cut-locus configuration of $S_{a,b}$ is stable under perturbations only if $a\neq b$. This motivates the following definition. 

\begin{defi}\label{def:intro-geometric-stability}
	A characteristic point $p\in\Sigma(S)$ is called \emph{geometrically stable} if it is non-degenerate and the parameters of its homogeneous approximation satisfy $a\neq b$.
\end{defi}
\noindent  We remark that geometric stability is a generic property of characteristic points: an arbitrarily small smooth perturbation of a characteristic surface makes its characteristic points geometrically stable. 

The following theorem, which computes the cut time near a geometrically stable characteristic point, is the second main result of \cref{part:cut-time} of the paper.
\begin{theorem}\label{thm:intro-cut-time-stability}
	Let $S=\partial\Omega\subset\hei$ be a smooth oriented embedded surface. Let $p \in \Sigma(S)$ be a geometrically stable characteristic point, and let $S_{a,b}$ be its homogeneous approximation. Then there exists a neighborhood of $p$ where, in polar coordinates	\eqref{eq:intro-homogeneous-polar}, the cut time is smooth and satisfies
    \begin{equation}\label{eq:intro-cut-time-approximation}
		t_\cut(\rho,\vartheta)=c_{a,b}\rho+\rho^2R_\cut(\rho,\vartheta),
	\end{equation}
	where $c_{a,b}=\pi/2-\arctan(\max\{a,b\})$ and $R_\cut\in C^\infty(\R^+\times \mathbb S^1)$ is a smooth remainder.
\end{theorem}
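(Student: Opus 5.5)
The plan is to use a blow-up in the homogeneous variables, so that the cut time becomes the first singular time of a smooth family of maps parametrized by the blow-up parameter $\rho$. I would introduce rescaled coordinates $\tau=t/\rho$ and set
\begin{equation}
	E_\rho(\tau,\vartheta):=\delta_{1/\rho}\bigl(\exp^\perp_S(\rho\tau,\rho,\vartheta)\bigr).
\end{equation}
By the dilation identity \eqref{eq:intro-homogeneous-exponential} applied to $S_{a,b}$, and by writing $S$ as the graph \eqref{eq:intro-homogeneous-graph}, $E_\rho$ should be the normal exponential map of the rescaled surface $\delta_{1/\rho}(S)$. This surface is the graph of $\frac12\rho^{-2}u(\rho x,\rho y)=\frac14(ax^2+by^2)+O(\rho)$. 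Hence $E_\rho$ extends smoothly to $\rho=0$, with $E_0=\exp^\perp_{S_{a,b}}(\cdot,1,\cdot)$. This is the same mechanism that produces \eqref{eq:intro-homogeneous-jacobian-approximation}, and I would take it from \cref{prop:iter_diver_singularities}. In these variables the claim \eqref{eq:intro-cut-time-approximation} is equivalent to saying that the rescaled cut time $\tau_\cut(\rho,\vartheta)=t_\cut/\rho$ is smooth up to $\rho=0$ with $\tau_\cut(0,\vartheta)=c_{a,b}$. Taylor expanding in $\rho$ then gives the form $c_{a,b}\rho+\rho^2R_\cut$.

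The second step is to describe the cut mechanism of the model $S_{a,b}$ precisely, using the explicit formulas of \cref{l:det_exp_perp} and the proof of \cref{thm:intro-homogeneous-cut-time}. I expect the picture for $a\neq b$ to be as follows. Take $a>b$, so that $\max\{a,b\}=a$. At $\tau=c_{a,b}$, geodesics issuing from symmetric pairs of angles (for instance $\vartheta$ and a reflected angle $\bar\vartheta$) meet transversally. The meeting happens strictly before the first conjugate (focal) time, which means $\partial_\tau J_{S_{a,b}}\neq 0$-type nondegeneracy at the collision. The only exceptions are possibly the isolated directions where the two branches merge.

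The cut time is then characterized, via the characterization of \cref{sec:cut_time}, as the first time at which either the Jacobian vanishes or injectivity is lost. In the rescaled model both conditions become equations of the form
\begin{equation}
	F(\tau,\vartheta,\rho)=0,\qquad F(\tau,\vartheta,\rho):=E_\rho(\tau,\vartheta)-E_\rho(\tau',\vartheta'),
\end{equation}
with unknowns $(\tau,\tau',\vartheta')$ to be solved as functions of $(\vartheta,\rho)$.

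The third step, which I expect to be the main obstacle, is stability. I would show that at $\rho=0$ the relevant derivative of $F$ with respect to $(\tau,\tau',\vartheta')$ is invertible for every $\vartheta$. This invertibility is exactly the transversality of the colliding sheets. The implicit function theorem, together with compactness of $\mathbb S^1$, then gives smooth solutions $\tau(\rho,\vartheta)$ for small $\rho$. Two further points are needed:

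\begin{itemize}
\item No new earlier singularities appear: no zero of the Jacobian and no other self-intersection occurs for $\tau<\tau(\rho,\vartheta)$. This follows from a uniform positive lower bound on the Jacobian of $E_0$ on $[0,c_{a,b}+\delta]$, away from the collision set, together with $C^1$-closeness of $E_\rho$ to $E_0$.
\item The degenerate directions where the symmetric pairs coincide must be handled, since there the collision becomes a fold/cusp. I would treat these with a normal-form (Whitney fold) argument, or by a desingularization that divides out the vanishing factor, which is smooth because of the reflection symmetry.
\end{itemize}

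The hypothesis $a\neq b$ is precisely what should prevent the whole circle of directions from being degenerate. When $a=b$ the model has rotational symmetry and the collision is non-transversal, which is why stability fails in that case. Once $\tau_\cut$ is smooth on $[0,\rho_0)\times\mathbb S^1$, the expansion follows, with $R_\cut(\rho,\vartheta)=\int_0^1\partial_\rho\tau_\cut(s\rho,\vartheta)\,ds$.
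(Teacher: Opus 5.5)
There is a genuine gap; in fact there are two.

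First, the equation you propose to solve does not encode the cut time. Your family parameter is the radius of the base point, so $E_\rho(\tau,\vartheta)=E_\rho(\tau',\vartheta')$ only compares geodesics issuing from the same circle $\{\rho=\mathrm{const}\}$, and it imposes no relation between their lengths. A Maxwell cut point requires two geodesics of the \emph{same} length. If $\tau'\neq\tau$, the intersection only shows that the longer geodesic has already stopped minimizing. On the other hand, the two base points need not lie at the same radius.

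For $S_{a,b}$ the colliding pairs happen to have both equal radius and equal length: they are $(s_a,\rho,\phi)$ and $(s_a,\rho,\pi-\phi)$ in the coordinates of \cref{rmk:pi}. So your system has the right solution at $\rho=0$. For $\rho>0$, however, the true partner generally sits at a different radius. The $\tau(\rho,\vartheta)$ produced by the implicit function theorem then parametrizes intersections of geodesics of different lengths, which is not $t_\cut/\rho$ beyond leading order. Focal cut points are not of this form either.

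What you need is the extended map $\mathscr F=(\exp^\perp_S,\ell)$ on the three-dimensional source, with the partner's radius as an unknown. The closure of its double locus then captures Maxwell and focal cut points at once (\cref{thm:t_cut_front}). To get a one-parameter family of two-dimensional maps, the paper slices at the target level $Z=\ve^2$ (\cref{lem:Z}, \cref{l:union}). Both members of a double pair share this level automatically, whereas they do not share a source radius.

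Second, the two directions in which the colliding pair merges are the heart of the proof, and your treatment of them does not go through. There the cut point is conjugate: the model Jacobian vanishes exactly at the collision, since $\Lambda(s_a,\phi)=-\tfrac12(a-b)s_a\cos^2\phi$ (proof of \cref{thm:F0_stab}). Consequences:
\begin{itemize}
\item A lower bound on the Jacobian away from the collision set gives no control near these directions. Near them, $C^1$-closeness alone cannot exclude focal points or extra double points of the perturbed map before your $\tau(\rho,\vartheta)$.
\item The local model is not a Whitney fold. The sliced front has a swallowtail there, and its projection $(s,\phi)\mapsto(x,y)$ has a Whitney cusp.
\item The reflection symmetry you would use to divide out the vanishing factor is lost for $\rho>0$. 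Even after dividing by the separation of the pair, the resulting zero set contains both the double-point curve and the critical curve, which cross at that point, so the implicit function theorem still fails.
\item More fundamentally, critical points of $\mathscr F$ are unstable under general smooth perturbations. Smooth normal forms also do not respect the length component, which is what selects the Maxwell set.
\end{itemize}

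The missing idea is to treat the sliced maps $F^\ve$ as a smooth family of Legendrian fronts whose lifts are smooth up to $\ve=0$ (\cref{lem:front_family}). One then checks that $F^0$ has only transverse double points and two swallowtails (\cref{thm:F0_stab}). Finally, one uses stability of these singularities within the class of Legendrian fronts (\cref{lem:stability}, \cref{cor:stability}). This shows that the closure of the double locus remains a smooth graph over $\mathbb S^1$, including through the swallowtails, which is what makes $t_\cut$ smooth in every direction. Together with the localization of \cref{lem:singular_pos}, it also rules out earlier collisions.
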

\noindent The proof of this theorem relies on a regularization of the exponential map inspired by \cite{agrexp} (see also \cite[Chapter 19]{Agrachev}), and on the theory of Legendrian singularities \cite[$\S$ 6]{arnold}.
\subsubsection{The half-tube volume in the general case}
Combining the analysis of the tube integrand with the regularity of the cut time, we obtain the following  Steiner formula, which is established in \cref{part:volume}.
\begin{theorem}[Steiner formula for characteristic surfaces]
	\label{thm:intro-steiner}
	Let $S=\partial\Omega\subset\hei$ be a smooth embedded	surface, whose characteristic points are geometrically stable. Let $U\subset S$ be a compact set having all its characteristic points in its interior. Then there exist real coefficients $\alpha_n(U),\beta_n(U)$, $n\geq4$, such that, for every integer $N\geq4$, as $\ve\to0^+$,
	\begin{equation}\label{eq:intro-main-expansion}
			\vol(\mathcal T_U(\ve))=\sum_{n=1}^3\frac{\ve^n}{n!}\int_U\sigma_S^{(n-1)}+\sum_{n=4}^N\ve^n\bigl(\alpha_n(U)+\beta_n(U)\log\ve\bigr)
			+o(\ve^N).
	\end{equation}
\end{theorem}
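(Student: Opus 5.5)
Our plan is to localize, using a partition of unity on $U$ (or, equivalently, a decomposition of $U$ into pieces), and to reduce to the homogeneous computation already sketched for \cref{thm:intro-steiner-hom}. Characteristic points are non-degenerate, hence isolated, and $S$ is compact, so $\Sigma(S)\cap U=\{p_1,\dots,p_m\}$ is finite. We write $U=U_0\cup\bigcup_i B^{(i)}_{R}$, where $B^{(i)}_R=\{\rho_i\le R\}$ is a small disk in the adapted polar coordinates \eqref{eq:intro-homogeneous-polar} around $p_i$, and $U_0$ is compact and non-characteristic. The boundaries between pieces are negligible, so by \eqref{eq:intro-tube-integral} the volume splits additively.

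On $U_0$ the cut time is bounded below by a positive constant (continuity of $t_\cut$ away from $\Sigma(S)$, \cref{cor:t_cut_continuous}). For small $\ve$ the minimum in \eqref{eq:intro-tube-integral} therefore equals $\ve$, and Taylor expanding $J_S$ in $t$ gives the polynomial expansion \eqref{eq:intro-non-char-tube} with coefficients $\frac1{n!}\int_{U_0}\sigma_S^{(n-1)}$.

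Near each $p_i$ we take $R$ small enough that \cref{thm:intro-cut-time-stability} applies, so $t_\cut=c\rho+\rho^2R_\cut=\rho\,\kappa(\rho,\vartheta)$ with $\kappa$ smooth and $\kappa(0,\vartheta)=c>0$. We also use \eqref{eq:intro-homogeneous-jacobian-approximation} in the form $J_S=\rho^2 F(t/\rho,\rho,\vartheta)$ with $F$ smooth. Substituting $t=u\rho$ gives
\begin{equation}
\int_0^{2\pi}\!\!\int_0^R \rho^3\int_0^{\min\{\ve/\rho,\kappa(\rho,\vartheta)\}} F(u,\rho,\vartheta)\,du\,d\rho\,d\vartheta .
\end{equation}
For $\rho$ small relative to $\ve$ (roughly $\rho\lesssim \ve/c$) the upper limit is $\kappa$. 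For $\rho$ larger it is $\ve/\rho$. The transition curve $\rho=\rho_\ve(\vartheta)$ solves $\rho\kappa(\rho,\vartheta)=\ve$, and by the implicit function theorem $\rho_\ve(\vartheta)=\ve\,g(\ve,\vartheta)$ with $g$ smooth. This regularity of the cut time is exactly where geometric stability enters.

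On the inner region we rescale $\rho=\ve s$. The integrand becomes $\ve^4 s^3\int_0^{\kappa(\ve s,\vartheta)}F(u,\ve s,\vartheta)\,du$ over $s\in[0,g(\ve,\vartheta)]$, which is smooth in $\ve$ and so yields a power series $\sum_{n\ge4}\ve^n(\cdot)$. On the outer region we define $G(v,\rho,\vartheta)=\int_0^vF(u,\rho,\vartheta)du$, Taylor expand it jointly in $(v,\rho)$ to high order with remainder, and put $v=\ve/\rho$. A monomial $v^k\rho^j$ contributes $\ve^k\int_{\rho_\ve}^R\rho^{3+j-k}d\rho$. When $3+j-k\ne-1$ this gives $\ve^k R^{4+j-k}$ plus a term $\ve^k\rho_\ve^{4+j-k}=\ve^{4+j}g^{4+j-k}$, the latter smooth in $\ve$ of order $\ge4$. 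When $k=4+j$ it gives $\ve^{4+j}(\log R-\log(\ve g))$, which is the source of $\beta_n\log\ve$ for $n\ge4$. The terms $\ve^kR^{4+j-k}$ with $k\le3$ combine, by \cref{prop:intro-curvature-approximation}, with the $U_0$ contribution into $\frac1{k!}\int_U\sigma_S^{(k-1)}$: for $k\le 3$ we have $\int_0^R\rho^{3+j-k}$ convergent, so adding back $\int_0^{\rho_\ve}$ only costs $O(\ve^4)$ terms. The terms with $k\ge4$ and $j$ large are genuine $\ve^n$ coefficients depending on $U$ through $R$; they are compensated by the corresponding divergent pieces from $U_0$, since the total is independent of $R$.

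The main obstacle is the remainder control in the outer region, where $v=\ve/\rho$ ranges up to about $1/c$ and is not small. The fix is to expand $G$ only in $\rho$, not in $v$, up to order $N$, using that $G$ is smooth on the compact set $v\in[0,2/c]$. We then write each coefficient $G_j(v,\vartheta)$ as a Taylor polynomial in $v$ of degree $N$ plus $v^{N+1}h_j(v,\vartheta)$ with $h_j$ bounded. The remainder term $\ve^{N+1}\int_{\rho_\ve}^R\rho^{2+j-N}\,d\rho$ is $O(\ve^{N+1}\log\ve)+O(\ve^{j+4})$, which is $o(\ve^N)$ after choosing the $\rho$-expansion order large; this is the estimate formalized in \cref{l:integrato}. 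Finally, summing over the $p_i$ and $U_0$ gives \eqref{eq:intro-main-expansion}, and independence of the auxiliary $R$ follows from uniqueness of asymptotic expansions in the scale $\ve^n,\ve^n\log\ve$.
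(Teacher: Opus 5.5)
Your architecture differs from the paper's in one point. You keep the adapted coordinates $(\rho,\vartheta)$, so the switch in $\min\{\ve,t_\cut\}$ occurs along the $\vartheta$-dependent curve $\rho=\ve g(\ve,\vartheta)$, and you handle the inner region by rescaling $\rho=\ve s$. The paper instead uses \cref{lem:rho_0} to take $r=t_\cut$ as radial coordinate. The switch is then at $r=\ve$ for every $\theta$, and each ball contributes $\int_0^\ve\Phi(1,r)\,dr+\int_\ve^{r_0}\Phi(\ve/r,r)\,dr$ with a single smooth two-variable $\Phi$, to which \cref{l:integrato} applies verbatim. Your inner-region argument and your monomial bookkeeping (the origin of $\ve^{4+j}\log\ve$, the identification of the first three coefficients) are fine.

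The gap is the remainder control in the outer region, which is exactly the obstacle you identified. You correctly find that the $v$-remainder of the $j$-th coefficient, $\ve^{N+1}\int_{\rho_\ve}^R\rho^{2+j-N}h_j\,d\rho$, is of size $\ve^{j+4}$ for $j\le N-4$. But this is not $o(\ve^N)$: the small indices (e.g.\ $j=0$) are present whatever the $\rho$-order. Raising the $v$-degree $M$ does not help either, because $\ve^{M+1}\rho_\ve^{3+j-M}\sim\ve^{4+j}$ for every $M$. Near $\rho\sim\ve$ one has $v\sim c$, so Taylor expanding in $v$ never yields a small error there. Moreover, the remainder $\rho^{N'+1}\widetilde R(v,\rho,\vartheta)$ of your $\rho$-expansion contributes $\int_{\rho_\ve}^R\rho^{4+N'}\widetilde R(\ve/\rho,\rho,\vartheta)\,d\rho$, which is $O(1)$ and is left untreated.

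The missing observation is that these pieces are genuine contributions, not errors. Substituting $v=\ve/\rho$,
\[
\ve^{N+1}\int_{\rho_\ve}^R\rho^{2+j-N}h_j\bigl(\tfrac{\ve}{\rho},\vartheta\bigr)\,d\rho=\ve^{4+j}\int_{\ve/R}^{1/g(\ve,\vartheta)}v^{N-4-j}h_j(v,\vartheta)\,dv .
\]
For $j\le N-4$ this equals $\ve^{4+j}$ times a smooth function of $\ve$, plus $O(\ve^{N+1})$, so it must be kept and expanded. The $\rho$-remainder must in turn be expanded in $v$; for $N'\ge N$ this gives powers of $\ve$ plus $O(\ve^{N+1})$. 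This separation is what \cref{l:euge} encodes: $G(s,r)=\sum_k(sr)^k\bigl(f_k(r)+sg_k(s)\bigr)+(sr)^{N+1}R_{N+1}(s,r)$. The $f_k$-terms are expanded only in $r$, the $g_k$-terms only in $s$, and the last term gives $O(\ve^{N+1})$.

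Finally, \cref{l:integrato} cannot be quoted for your outer integral as it stands, because its lower limit $\rho_\ve(\vartheta)$ is not $\ve$. You can either pass to the coordinates of \cref{lem:rho_0}, or split $\int_{\rho_\ve(\vartheta)}^R=\int_{\rho_\ve(\vartheta)}^{\ve/c}+\int_{\ve/c}^R$. The first piece is $\ve^4$ times a smooth function of $\ve$ after $\rho=\ve s$. The second is of the form of \cref{l:integrato} after $\rho=r/c$, once the $\vartheta$-integral is taken inside.
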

\noindent The first three coefficients retain the same expression as in the non-characteristic case, in accordance with the fact that the corresponding curvature forms remain locally integrable at characteristic points. We see that, differently from the homogeneous case, also corrections of the form $\ve^n\log\ve$ for $n>4$ may appear (we stress once again that $Q=4$ is the homogeneous dimension of $\hei$). This is due to the correction terms $\rho R_J$ and $\rho^2R_\cut$ in the identities \eqref{eq:intro-homogeneous-jacobian-approximation} and \eqref{eq:intro-cut-time-approximation}. 

We additionally remark that, in contrast with the non-characteristic case, \cref{thm:intro-steiner} shows that, in general, the volume around a characteristic surface is not analytic in $\ve$.

\subsection{Curvature distributions} \label{s:curvature}
A natural question is whether the coefficients $\alpha_n(S),\beta_n(S)$ in \eqref{eq:intro-main-expansion} are also the total masses of suitably defined curvature measures. Following Federer's viewpoint \cite{FedererCM}, we investigate this question by localizing the tube volume through the metric projection onto $S$.

Outside the negligible cut locus, every $q\in\hei\setminus\cut(S)$ has a unique nearest point on $S$, denoted $\mathsf p(q)$. Equivalently, $\mathsf p(q)$ is the initial point of the unique minimizing normal geodesic from $S$ to $q$. The metric projection $\mathsf p_\ve:=\mathsf p|_{\mathcal T_S(\ve)}: \mathcal T_S(\ve)\to S$ is smooth outside the cut locus and admits a measurable extension across it. Any such extension defines the same push-forward measure
\begin{equation}
	\mu_\ve:=(\mathsf p_\ve)_\#
	\bigl(\vol|_{\mathcal T_S(\ve)}\bigr).
\end{equation}
For $\varphi\in C^\infty(S)$, its pairing is the weighted tube volume
\begin{equation}\label{eq:intro-pairing}
	\langle\mu_\ve,\varphi\rangle
	:=\int_S \varphi\, \de\mu_\ve=\int_{\mathcal T_S(\ve)}
	(\varphi\circ\mathsf p_\ve)\,\de\vol.
\end{equation}
The interpretation \emph{\`a la Federer} of the asymptotics of the family of measures $\mu_\ve$ shows that the coefficients are curvature distributions, rather than curvature measures. To describe the resulting coefficients, we first specify the regularization of the divergent curvature integrals. For $n\geq4$,
the integral
\begin{equation}
	\int_{\{t_\cut>\delta\}}\varphi\,\sigma_S^{(n-1)}
\end{equation}
has an expansion, as $\delta\to0^+$, consisting of negative powers of $\delta$, a possible logarithm, a constant term, and $o(1)$, see \cref{lem:finite_part_boring}. The constant term of this expansion is the \emph{finite part}, denoted by
\begin{equation}
	\bigl\langle\Fp(\sigma_S^{(n-1)}),\varphi\bigr\rangle.
\end{equation}
The next theorem is the final main result of \cref{part:volume} of the paper. 

\begin{theorem}[Curvature distributions]
	\label{thm:intro-distributions}
	Let $S=\partial\Omega\subset\hei$ be a smooth embedded surface, whose characteristic points are geometrically stable. Then, there exist distributions $\alpha_n,\beta_n\in\mathcal D'(S)$, such that, for every $\varphi\in C_c^\infty(S)$ and every integer $N\geq4$, as $\ve\to0^+$,
	\begin{equation}\label{eq:intro-weighted-expansion1}
			\langle\mu_\ve,\varphi\rangle=\sum_{n=1}^3\frac{\ve^n}{n!}\int_S\varphi\,\sigma_S^{(n-1)}+\sum_{n=4}^N\ve^n			\bigl(\alpha_n(\varphi)+\beta_n(\varphi)\log\ve\bigr)
			+o(\ve^N).
	\end{equation}
	Moreover, for every $n\geq4$ it holds 
    \begin{enumerate}
        \item the distribution $\alpha_n$ has order at most $n-3$. In addition, there is a distribution $T_n\in\D'(S)$ of order at most $n-4$ such that
	\begin{equation}\label{eq:intro-finite-part}
		\alpha_n(\varphi)=\frac1{n!}\langle\Fp(\sigma_S^{(n-1)}),\varphi\rangle+T_n(\varphi),\qquad\operatorname{supp}T_n
		\subset\Sigma(S);
	\end{equation}
        \item the distribution $\beta_n$ is of order at most $n-4$ and $\supp\beta_n\subset\Sigma(S)$. 
    \end{enumerate} 
    Finally, if $S$ is homogeneous, the results hold requiring only non-degenerate characteristic points.
\end{theorem}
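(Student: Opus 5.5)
The plan is to reduce to the tube integral \eqref{eq:intro-tube-integral} with weight and to localize with a partition of unity. Write $\langle\mu_\ve,\varphi\rangle=\int_{S\setminus\Sigma(S)}\varphi(p)\int_0^{\min\{\ve,t_\cut(p)\}}J_S(t,p)\,dt\,dp$. This uses the fact that $\exp_S^\perp$ is a diffeomorphism off a negligible set and that $\mathsf p_\ve\circ\exp^\perp_S(t,p)=p$ there.

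Choose $\chi\in C_c^\infty$, equal to $1$ near each (isolated, since non-degenerate) characteristic point, and split $\varphi=\chi\varphi+(1-\chi)\varphi$.
\begin{itemize}
\item On $\supp((1-\chi)\varphi)$ the cut time is bounded below. Hence for small $\ve$ the minimum equals $\ve$, and Taylor expanding $J_S$ in $t$ gives the non-characteristic expansion $\sum_n\frac{\ve^n}{n!}\int(1-\chi)\varphi\,\sigma_S^{(n-1)}$ with no logarithms. Its coefficients are distributions of order $0$ in $\varphi$.
\item Near a characteristic point we work in the polar coordinates \eqref{eq:intro-homogeneous-polar}. Using \eqref{eq:intro-homogeneous-jacobian-approximation} and \cref{thm:intro-cut-time-stability} (or \cref{thm:intro-homogeneous-cut-time} in the homogeneous case, where only non-degeneracy is needed), we substitute $t=u\rho$.
\end{itemize}

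After the substitution, the localized piece becomes
\[
\int_0^{2\pi}\!\!\int_0^{R}\rho^3\,(\chi\varphi)(\rho,\vartheta)\int_0^{\min\{\ve/\rho,\,c_{a,b}+\rho R_\cut\}}\bigl(J_{S_{a,b}}(u,1,\vartheta)+\rho R_J(u,\rho,\vartheta)\bigr)\,du\,d\rho\,d\vartheta .
\]
The crossover $\ve/\rho=c_{a,b}+\rho R_\cut(\rho,\vartheta)$ defines $\rho=\rho_\ve(\vartheta)$. By the implicit function theorem this is a smooth function of $(\ve,\vartheta)$ with expansion $\rho_\ve=\ve/c_{a,b}+O(\ve^2)$. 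It is equivalently the level set $\{t_\cut=\ve\}$. The integral then splits into two regions.
\begin{itemize}
\item On the inner region $\rho<\rho_\ve$, rescaling $\rho=\ve s$ gives a smooth function of $\ve$ times $\ve^4$. Taylor expanding $\chi\varphi$ in Cartesian coordinates yields only powers $\ve^n$. The coefficient of $\ve^n$ involves derivatives of $\varphi$ at the characteristic point of order at most $n-4$, so it contributes to $T_n$.
\item On the outer region $\rho>\rho_\ve$ the upper limit is $\ve/\rho$. Expand $F(v,\rho,\vartheta)=\int_0^v(\ldots)du$ in Taylor series in $v$ with coefficients $\frac{1}{k!}\partial_t^{k-1}J_S$, i.e.\ $\rho^{3-k}$ times the $\sigma^{(k-1)}$ densities of \cref{prop:intro-curvature-approximation}. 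This gives $\sum_k\frac{\ve^k}{k!}\int_{\{t_\cut>\ve\}}\chi\varphi\,\sigma_S^{(k-1)}$ plus a remainder, controlled as in \cref{l:integrato}.
\end{itemize}

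Now apply \cref{lem:finite_part_boring} to each $\int_{\{t_\cut>\delta\}}\chi\varphi\,\sigma_S^{(k-1)}$ at $\delta=\ve$. It gives the finite part, negative powers $\ve^{-j}$ and possibly $\log\ve$. Multiplying by $\ve^k$ produces terms $\ve^{k-j}$ and $\ve^k\log\ve$.
\begin{itemize}
\item The divergent coefficients come from the Taylor expansion of $\chi\varphi$ near $\rho=0$ against $\rho^{2-k+1}\rho^m$. The $\rho^{-1}$ power produces the log, and it involves derivatives of $\varphi$ at the characteristic point of order at most $k-4$.
\item Collecting powers, $\beta_n$ is a finite sum of derivatives of $\varphi$ at $\Sigma(S)$ of order at most $n-4$. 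Likewise the singular $\ve^{n}$ contributions (from $j>0$ and from the inner region) define $T_n$, supported in $\Sigma(S)$ and of order at most $n-4$.
\item The finite part term $\frac1{n!}\langle\Fp(\sigma_S^{(n-1)}),\varphi\rangle$ is a distribution of order at most $n-3$. Regularizing $\rho^{-(n-3)}\rho\,d\rho$ requires subtracting a Taylor polynomial of $\varphi$ of degree $n-4$ and controlling the remainder, which costs $n-3$ derivatives. Hence $\alpha_n$ has order at most $n-3$.
\item For $n\le 3$ the forms are locally integrable, no finite part is needed, and the claimed first three terms follow.
\end{itemize}

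I expect the main obstacle to be making the remainder estimates uniform. One must truncate the $v$-Taylor expansion at order $N$ with a remainder $O((\ve/\rho)^{N+1}\rho^3)$, integrate over $\rho>\rho_\ve$, and check that it is $o(\ve^N)$ up to log terms. This forces expanding one order higher and absorbing terms. One must also handle the $\vartheta$-dependence of $\rho_\ve$ when the inner and outer regions are matched, so that all $\log\ve$ terms and all derivative counts are tracked consistently. Here I would first treat $\varphi$ as a polynomial in $(x,y)$ plus a remainder vanishing to order $N$ at the origin, for which all contributions are $o(\ve^N)$ directly.
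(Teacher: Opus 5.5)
\paragraph{Verdict.} Your architecture parallels the paper: the weighted tube integral, localization near $\Sigma(S)$, the split at the crossover $\{t_\cut=\varepsilon\}$, and finite parts of $\int_{\{t_\cut>\varepsilon\}}\chi\varphi\,\sigma_S^{(k-1)}$. However, there is a genuine gap. The step you flag as the main obstacle is not merely delicate, it fails as stated.

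\paragraph{Why the truncation fails.} On the outer region, near the crossover $\rho\approx\rho_\varepsilon\approx\varepsilon/c_{a,b}$, the variable $v=\varepsilon/\rho$ is close to $c_{a,b}$, not small. Truncating the $v$-expansion at order $K$ leaves a remainder of size $(\varepsilon/\rho)^{K+1}\rho^3$. For every $K\ge4$,
\[
\varepsilon^{K+1}\int_{\rho_\varepsilon}^{R}\rho^{2-K}\,d\rho\sim\varepsilon^{K+1}\rho_\varepsilon^{3-K}\sim\varepsilon^{4}.
\]
So the remainder is only $O(\varepsilon^4)$, and in general no smaller; expanding to higher order does not help. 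Splitting $\varphi$ into a polynomial plus a flat part does not help either, because the problem is already present for $\varphi\equiv1$. For $S_{a,b}$, for instance, the outer integral is exactly
\[
\int_{\varepsilon/c_{a,b}}^{R}\rho^3g(\varepsilon/\rho)\,d\rho=\varepsilon^4\int_{\varepsilon/R}^{c_{a,b}}v^{-5}g(v)\,dv,
\]
with $g$ smooth and $g(0)=0$. Its $\varepsilon^4$-coefficient depends on $g$ over the whole interval $[0,c_{a,b}]$, not on finitely many of its Taylor coefficients. Equivalently, the divergent part of $\varepsilon^k\int_{\{t_\cut>\varepsilon\}}\chi\varphi\,\sigma_S^{(k-1)}$ feeds the $\varepsilon^n$-coefficient for every $k>n$, so no finite truncation captures that coefficient. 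The constant $\tau_{0,0}=(1-\pi\Si(\pi))/12$ of \cref{lem:beta4_dirac} is a symptom of this: it comes from integrating over the full range of $v$.

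\paragraph{The missing ingredient.} You need a genuinely two-scale expansion of the outer integral. In the paper this is \cref{l:integrato}, proved through the decomposition of \cref{l:euge},
\[
G(s,r)=\sum_k (sr)^k\bigl(f_k(r)+s\,g_k(s)\bigr)+(sr)^{N+1}R_{N+1}(s,r).
\]
The pieces $s^{k+1}r^kg_k(s)$ are exactly what a pure $v$-Taylor expansion misses. They produce the extra term $\frac{1}{(n-1)!}\Fp_{\delta\to0^+}\int_\delta^1\partial_r^{n-1}G(s,0)\,s^{-n-1}\,ds$ in $\gamma_n$. In your setup you could recover it by substituting $\rho=\varepsilon/v$ in the remainder and expanding in $\rho$ at fixed $v$.

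This term involves only $\partial_r^{j}\varphi$ at $p$ with $j\le n-4$. Together with your inner-region contribution it gives $T_n$ (cf.\ \cref{lem:string_theory}), with the claimed support and order. So your structural conclusions, including your description of $\beta_n$, can be rescued, but your accounting of $T_n$ is incomplete without this term.

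\paragraph{A simplification.} The paper avoids the $\vartheta$-dependence of $\rho_\varepsilon$ by passing to coordinates in which $t_\cut(r,\theta)=r$ (\cref{lem:rho_0}). There the crossover is the circle $r=\varepsilon$, and \cref{l:integrato} applies directly to $\int_\varepsilon^{r_0}\Phi_\varphi(\varepsilon/r,r)\,dr$.
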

\noindent Note that if $\Sigma(S)=\emptyset$ then the finite part coincides with the integral $\int_S\varphi\sigma_S^{(n-1)}$ and the support of $T_n$ and $\beta_n$ is empty, thus we recover the non characteristic expansion \eqref{eq:intro-non-char-tube}.

For $n=4$ the distributions $\beta_4$ and $T_4$ in \eqref{eq:intro-weighted-expansion1} have order 0. In \cref{sec:beta4}, we compute them explicitly showing that they depend only on the homogeneous approximation of $S$ at the characteristic points. 

\begin{prop}\label{lem:intro-beta4-T4}
		Let $S=\partial\Omega\subset\hei$ be a smooth embedded	surface, whose characteristic points are geometrically stable. Let $p\in\Sigma(S)$ and let $S_{a,b}$ be its oriented homogeneous approximation. Let $\varphi\in C_c^\infty(S)$ such that $\supp\varphi\cap\Sigma(S)=\{p\}$, then 
	\begin{equation}
		\beta_4(\varphi)=\beta_{a,b}\,\varphi(p),\qquad T_4(\varphi)=\tau_{a,b}\,\varphi(p),
	\end{equation}
    where $\beta_{a,b}$ and $\tau_{a,b}$ are constants depending only on the parameters $a,b$. Finally, if $S$ is homogeneous, the results hold requiring only non-degenerate characteristic points.
\end{prop}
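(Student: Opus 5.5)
Our plan is to localize near $p$ and repeat the blow-up computation of the $\ve^4\log\ve$ term for homogeneous surfaces, now with a test function and with the perturbative remainders of \cref{prop:intro-curvature-approximation} and \cref{thm:intro-cut-time-stability}.

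\textbf{Step 1: setup and integral representation.} Since $\supp\varphi\cap\Sigma(S)=\{p\}$ and $\beta_4,T_4$ are supported on $\Sigma(S)$, we may replace $\varphi$ by $\chi\varphi$, with $\chi$ a cutoff equal to $1$ near $p$ and supported in the neighborhood given by \cref{thm:intro-cut-time-stability}; the difference is supported away from $\Sigma(S)$, where $\beta_4=T_4=0$. In polar coordinates \eqref{eq:intro-homogeneous-polar}, $\langle\mu_\ve,\varphi\rangle$ equals
\[
\int_0^{2\pi}\int_0^{R}\varphi(\rho,\vartheta)\int_0^{\min\{\ve,t_\cut\}}J_S\,dt\,d\rho\,d\vartheta.
\]
Substituting $t=u\rho$, using \eqref{eq:intro-homogeneous-jacobian-approximation}, and writing $t_\cut=\rho(c_{a,b}+\rho R_\cut)$, the integrand becomes $\rho^3\varphi\,[J_{S_{a,b}}(u,1,\vartheta)+\rho R_J]$, integrated in $u$ up to $\min\{\ve/\rho,\,c_{a,b}+\rho R_\cut\}$.

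\textbf{Step 2: split the integral.} Let $\rho_\ve(\vartheta)$ denote the solution of $\ve=t_\cut(\rho,\vartheta)$, so that $\rho_\ve=\ve/c_{a,b}+O(\ve^2)$.

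On the region $\rho<\rho_\ve$, the integrand is $O(\rho^3)$. After Taylor expanding $\varphi=\varphi(p)+O(\rho)$, its contribution at order $\ve^4$ is
\[
\varphi(p)\int_0^{2\pi}\!\int_0^{\ve/c_{a,b}}\rho^3\Phi_\vartheta(c_{a,b})\,d\rho\,d\vartheta,
\]
where $\Phi_\vartheta(v)=\int_0^v J_{S_{a,b}}(u,1,\vartheta)\,du$. The terms $\rho R_J$ and $\rho R_\cut$, and the error from replacing $\rho_\ve$ by $\ve/c_{a,b}$, all contribute $O(\ve^5)$.

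On the region $\rho>\rho_\ve$, we have $\ve<t_\cut$, and the integral equals
\[
\int\varphi\sum_{n\ge1}\frac{\ve^n}{n!}\,\sigma_S^{(n-1)}
\]
over the set $\{t_\cut>\ve\}$, up to $O(\ve^{N})$ with a Taylor remainder in $\ve/\rho$ as in \cref{l:integrato}.
- For $n\le3$, the forms are integrable, so extending the integration to all of $S$ costs $O(\ve^4)$. That error, $-\frac{\ve^n}{n!}\int_{\{t_\cut\le\ve\}}\varphi\,\sigma_S^{(n-1)}$, has leading part $\varphi(p)\ve^4\cdot(\text{const in }a,b)$, computed from $\phi_{n-1}^{a,b}$.
- For $n=4$, by definition of the finite part (\cref{lem:finite_part_boring}), $\int_{\{t_\cut>\ve\}}\varphi\,\sigma_S^{(3)}=\langle\Fp(\sigma_S^{(3)}),\varphi\rangle+\kappa\log\ve+o(1)$. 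Using \eqref{eq:intro-curvature-approximation}, we get $\kappa=-\varphi(p)\int_0^{2\pi}\phi_3^{a,b}\,d\vartheta$; here only the leading term $\rho^{-1}\phi_3^{a,b}\varphi(p)$ diverges.
- For $n\ge5$, the region $\rho>\rho_\ve$ gives $\ve^n\int_{\rho_\ve}\rho^{3-n}\cdots=O(\ve^4)$, with leading part $\varphi(p)\ve^4\cdot(\text{const})$ coming from $\phi_{n-1}^{a,b}$ and $\rho_\ve\approx\ve/c_{a,b}$. The corrections (from $\psi_n$, the $O(\rho)$ part of $\varphi$, and $R_\cut$) are $O(\ve^5\log\ve)$.

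Summing over $n\ge5$ amounts to evaluating $\int_{\rho>\ve/c}\rho^3$ applied to the tail of the Taylor series of $\Phi_\vartheta(\ve/\rho)$. This sum converges because $\Phi_\vartheta$ is smooth (analytic) on $[0,c_{a,b}]$; equivalently, we handle the whole sum directly through the exact formula of \eqref{eq:intro-volume-splitting} type.

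\textbf{Step 3: identify the constants.} Collecting the $\ve^4$ and $\ve^4\log\ve$ terms gives
\[
\beta_4(\varphi)=\varphi(p)\,\frac{1}{24}\int_0^{2\pi}\phi^{a,b}_3\,d\vartheta,
\]
up to an orientation sign, consistent with $\frac{\pi(a+b)}{12|1+ab|}$ from \cref{thm:intro-steiner-hom}. Also $T_4(\varphi)=\tau_{a,b}\varphi(p)$, where $\tau_{a,b}$ is built from $c_{a,b}$, $\Phi_\vartheta$, and the $\phi_{n}^{a,b}$; hence it depends only on $a,b$.

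The main obstacle is bookkeeping: showing that every correction (from $R_J$, $R_\cut$, $\psi_n$, and $\varphi-\varphi(p)$) is $o(\ve^4)$ including logs, and that the finite-part convention (cutting at $t_\cut>\delta$ instead of $\rho>\delta$) only shifts the constant by $\varphi(p)\log c_{a,b}$-type terms depending on $a,b$. The homogeneous case needs only non-degeneracy, since there $R_J=R_\cut=0$ by \cref{thm:intro-homogeneous-cut-time}.
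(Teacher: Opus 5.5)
Your architecture is sound, and it parallels how the paper derives \cref{lem:string_theory}: you localize at $p$, split along $\{t_\cut=\ve\}$, extract $\varphi(p)\ve^4\cdot\mathrm{const}$ from the inner region, and get $\frac1{24}\langle\Fp(\sigma_S^{(3)}),\varphi\rangle$ from the outer region using exactly the cutoff $\{t_\cut>\ve\}$ of \cref{def:finite_part_boring}. The gap is in the outer region. There $t/\rho\le\ve/\rho$ ranges over all of $[0,c_{a,b})$, so Taylor expanding $J_S$ in $t$ has no small parameter. The order-$N$ remainder is of size $\ve^{N+1}\rho^{2-N}$, and its integral over $\rho>\rho_\ve\approx\ve/c_{a,b}$ is of order $\ve^4$ for every $N$, not $O(\ve^N)$ as you claim. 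That is why every $n\geq5$ feeds the $\ve^4$ coefficient, and it forces you to sum infinitely many terms \emph{together with their corrections}. Each correction from $\psi_{n-1}$, $R_\cut$ and $\varphi-\varphi(p)$ is $O(\ve^5\log\ve)$ with an $n$-dependent constant. Showing that their sum is $o(\ve^4)$ needs bounds on $\partial_u^n\mathcal J_S(0,\rho,\vartheta)$ that are uniform in $\rho$ and have controlled growth in $n$. Nothing you cite gives this: \cref{prop:intro-curvature-approximation} only makes each $\psi_n$ smooth, and smoothness does not make a Taylor series converge. You call this ``bookkeeping'', but the argument rests on it, and your fallback (``the exact formula'') is left undone.

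One fix is to never expand past order four. In the outer region, write the contribution as $\int\varphi\,\rho^3\int_0^{\ve/\rho}\mathcal J_S(u,\rho,\vartheta)\,du\,d\rho\,d\vartheta$. Expand the inner integral in $v=\ve/\rho$ only up to $v^4$, keeping a smooth remainder $v^5\mathcal R(v,\rho,\vartheta)$. The $v^n$ terms with $n\le4$ are the curvature integrals you already treat. After substituting $\rho=\ve/v$, the remainder becomes $\ve^4\int_0^{2\pi}\int_{\ve/R}^{\ve/\rho_\ve(\vartheta)}\varphi(\ve/v,\vartheta)\,\mathcal R(v,\ve/v,\vartheta)\,dv\,d\vartheta$. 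By dominated convergence this equals $\ve^4\varphi(p)\int_0^{2\pi}\int_0^{c_{a,b}}\mathcal R(v,0,\vartheta)\,dv\,d\vartheta+o(\ve^4)$, and $\mathcal R(\cdot,0,\cdot)$ involves only $J_{S_{a,b}}$.

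The paper avoids redoing any expansion. \cref{lem:string_theory}, resting on \cref{l:integrato}, whose splitting from \cref{l:euge} is exactly what absorbs your infinite sum, already gives $T_4(\varphi)=\frac{h_{\varphi,4}(1)}4+\Fp_{\delta\to0^+}\int_\delta^1h_{\varphi,4}(s)s^{-5}\,ds$. Here $h_{\varphi,4}(s)=\varphi(p)\int_0^{2\pi}\int_0^s\mathcal J_S(u,0,\theta)\,du\,d\theta$ in cut-time coordinates. The only new input is $\mathcal J_S(u,0,\theta)=\pm c_{a,b}^{-3}J_{S_{a,b}}(c_{a,b}u,1,\pm\theta)$. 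This comes from the change of variables $r=t_\cut(\rho,\vartheta)$, using $q(r,\theta)/r\to c_{a,b}^{-1}$ and $\partial_rq(0,\theta)=c_{a,b}^{-1}$. The coefficient $\beta_4$ follows from \cref{prop:beta_n_polar}.

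Finally, check your sign. Your own $\kappa$ gives $\beta_4(\varphi)=-\frac{\varphi(p)}{24}\int_0^{2\pi}\phi_3(0,\vartheta)\,d\vartheta$, not $+\frac1{24}$. In positively oriented adapted coordinates the integral equals $-2\pi(a+b)/|1+ab|$, and this is what yields $\frac{\pi(a+b)}{12|1+ab|}$.
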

\noindent Both constants can be explicitly computed. The first one coincides with the coefficient of $\ve^4\log\ve$ found for the homogeneous expansion, i.e., 
\begin{equation}
    \beta_{a,b}=\frac{\pi(a+b)}{12|1+ab|}.
\end{equation}
The second one has a more involved expression, but can nonetheless be computed
when $a=b=0$  (cf.\ \cref{lem:beta4_dirac}). One has 
\begin{equation}
\tau_{0,0}=\frac{1-\pi\operatorname{Si}(\pi)}{12},
\end{equation}
while the finite part of $\sigma^{(3)}_{S_0}$ is identically zero, cf.\ \eqref{eq:trivially-zero}, recovering the coefficient of the $\ve^4$ correction for the horizontal plane described in Section~\ref{s:thp}.

\subsection{Organization of the paper}
Section~\ref{s:prel} recalls preliminaries on the Heisenberg group and on characteristic points. In \cref{part:volume}, \cref{sec:distance_from_S} establishes formula \eqref{eq:intro-tube-integral} for the tube volume. In \cref{sec:normal_exp}, we study the normal exponential map and the curvature forms, proving \cref{prop:intro-curvature-approximation}. This analysis leads to the volume expansion in \cref{sec:volume_half-tube}, which contains the proof of \cref{thm:intro-steiner-hom,thm:intro-steiner}, and its distributional refinement in \cref{s:distri}, where we prove \cref{thm:intro-distributions}. The results of \cref{part:volume} are obtained assuming the validity of the results of \cref{part:cut-time}, which is devoted to the cut time.

In \cref{sec:eem}, we introduce and study the extended exponential map. In \cref{sec:cut-time-hom}, we focus on the homogeneous case proving \cref{thm:intro-homogeneous-cut-time}. Finally, in \cref{sec:cut_time_general,sec:proof_of_cut_time_general}, we show how the theory of Legendrian singularities can be used to prove the perturbative result on the cut time of \cref{thm:intro-cut-time-stability}. \cref{part:appendix} contains appendices which include explicit computations for homogeneous surfaces in Appendix~\ref{a:f}, the technical results on logarithmic expansions and their coefficients in Appendix~\ref{app:noname}, the required facts about distributions in Appendix~\ref{app:distributions}, and Legendrian fronts in Appendix~\ref{app:Legendrian}.

\subsection*{AI disclosure} AI tools were used for English-language proofreading and double-checking of some algebraic computations. They were not used to generate or develop mathematical content. 

\subsection*{Acknowledgments} This project has received partial funding from the European Research Council (ERC) under the European Union’s Horizon 2020 research and innovation programme (grant agreement GEOSUB, No. 945655). The authors are grateful to SISSA and the University of Padua, where part of this project was developed.  

\section{Preliminaries: the Heisenberg group, surfaces and characteristic points}\label{s:prel}
Recall that the \emph{Heisenberg group}, denoted with $\mathbb H^1$, is a Lie group, whose underlying smooth manifold is $\mathbb R^3$ and has group law
\begin{equation}\label{eq:group_law}
    (x,y,z)\star(x',y',z')=\left(x+x',y+y',z+z'+\frac{1}{2}(-y,x)\cdot\begin{pmatrix}
        x' \\ y'
    \end{pmatrix}\right),\qquad (x,y,z),(x',y',z')\in\mathbb R^3.
\end{equation}
Defining the following left-invariant frame of vector fields
\begin{equation}\label{eq:fields_Xi}
	X_1=\partial_x-\frac{y}{2}\partial_z,\qquad X_2=\partial_y+\frac{x}{2}\partial_z,\qquad X_0=\partial_z,
\end{equation}
we endow $\mathbb H^1$ with the sub-Riemannian structure $(\dis, g)$, where $\dis$ is generated by the frame $\{X_1,X_2\}$ and $g$ is obtained by declaring $X_1,X_2$ orthonormal. Note that the dual frame associated with $X_1,X_2,X_0$ is given by the 1-forms
\begin{equation}\label{eq:the_coframe}
	\nu_1=dx,\qquad \nu_2=dy,\qquad \nu_0=dz+\frac{1}{2}(ydx-xdy).
\end{equation}
The differential form $\nu_0$ is the so called \emph{(normalized) contact form}. In this setting, we say that a curve $\gamma:[0,T]\to \mathbb H^1$ is \emph{horizontal} if it is absolutely continuous and for a.e.\ $t\in [0,T]$, 
\[\dot{\gamma}(t)\in \mathcal D_{\gamma(t)}=\mathrm{span}\{X_1,X_2\}|_{\gamma(t)}.
\]
For such curves we define the length as $\ell(\gamma)=\int_{0 }^{T} \| \dot{\gamma} (t)\|_gdt$. Note that the length of a horizontal curve is invariant by reparametrization. The \emph{sub-Riemannian distance} $d_{SR}$ between two points $p,q\in \mathbb H^1$ is defined as
\begin{equation}
\label{eq:distance}
d_{SR}(p,q)=\inf \left\{ \ell(\gamma) : \gamma\in \mathcal{C}_{p,q}\right\}
\end{equation}
where $\mathcal{C}_{p,q}$ denotes the set of horizontal curves connecting $p,q$. By Chow-Rashevsky theorem, $d_{SR}$ is finite, continuous and induces the manifold topology. We conclude this preliminary section recal\-ling that $d_{SR}$ is $1$-homogeneous with respect to the (anisotropic) dilations given by 
\begin{equation}
\label{eq:dilations}
    \delta_\ve :\hei\to \hei,\qquad \delta_\ve(x,y,z) = (\ve x,\ve y,\ve^2z),
\end{equation}
for every $\ve>0$. More precisely, $d_{SR}(\delta_\ve(p),\delta_\ve(q))=\ve d_{SR}(p,q)$, for every $p,q\in\hei$ and every $\ve>0$. For further details we refer to \cite{Agrachev}.

\subsection{Geodesics and exponential map}
In this subsection we recall some basic facts concerning geodesics of the Heisenberg group. We say that a curve $\gamma:[0,T]\to \mathbb H^1$ is \emph{length-minimizing} (or a \emph{minimizing geodesic}) if  it is parametrized by constant speed and
\[
d_{SR}(\gamma(0),\gamma(T))= \ell(\gamma).
\]
A \emph{geodesic} is a curve $\gamma:[0,T]\to \mathbb H^1$  that is locally length-minimizing. We define the linear Hamiltonian functions associated to the frame $\{X_0,X_1,X_2\}$ as follows
\begin{equation}
    h_i:T^*\mathbb H^1\to \mathbb R,\qquad h_i(\lambda)=\langle\lambda, X_i\rangle,\qquad i=0,1,2.
\end{equation}
Then, the \emph{\sr Hamiltonian} of $\hei$ is the smooth function $H\in C^\infty(T^*\hei)$, defined as
\begin{equation}\label{eq:sub-Riemann_Hamilton}
    H=\frac12\left(h_1^2+h_2^2\right).
\end{equation}
Denoting with $\sigma\in \Omega^2(T^*\mathbb H^1)$ the standard symplectic form, we define the Hamiltonian vector field $\vec{H}\in\mathrm{vec}(T^*\mathbb H^1)$ as the symplectic gradient of $H$, i.e., 
\begin{equation}
    \iota_{\vec{H}}\sigma=dH.
\end{equation}
Note that the functions $(x,y,z,h_1,h_2,h_0)$ define global coordinates on $T^*\mathbb H^1$. In these coordinates, the Hamiltonian vector field can be explicitly expressed as
\begin{equation}
    \vec H=h_0(h_1\partial_{h_2}-h_{2}\partial_{h_1})+h_1X_1+h_2X_2.
\end{equation}
The Hamiltonian system associated to $H$ is then 
\begin{equation}
\label{eq:ham_sys}
    \dot\lambda(t) = \vec H(\lambda(t)), 
\end{equation}
for $\lambda:[0,T]\to T^*\hei$. Solutions to \eqref{eq:ham_sys} are called \emph{normal extremals} and are contained in a fixed level set of $H$. In particular, they are parametrized by arc-length if, for every $t\in [0,T]$,
\begin{equation}
    \label{eq:hamiltoniana1}
2H(\lambda(t))=1.
\end{equation}
Their projections are locally length-minimizing curves \cite[Thm.\ 4.65]{Agrachev}.
Conversely, since in the Heisenberg group there are no non-trivial abnormal geodesics, any length-minimizing curve $\gamma$ is the projection of a normal extremal $\gamma(t)=\pi\circ\lambda(t)$. The \emph{exponential map} of $\hei$ is then 
\begin{equation}
    \label{eq:expmaporiginal}
    \exp:T^*\mathbb H^1\to \mathbb H^1,\qquad \exp(\lambda)=\pi\circ e^{\vec H}(\lambda),
\end{equation}
where $e^{\vec{H}}:T^*\hei\to T^*\hei$ is the flow of $\vec H$ at time $t=1$ and $\pi:T^*\hei\to\hei$ is the standard projection. For a fixed $p\in\mathbb H^1$, we also define the exponential map at $p$, $\exp_p:T^*_p\mathbb H^1\to \mathbb H^1$, defined as the restriction of $\exp$ to the fiber $T_p^*\hei$.   
By the left-invariance, it is enough to study the exponential map at the identity, indeed it holds: 
\begin{equation}
\label{eq:exp_left-invariance}
	\exp_p(\lambda)=L_p\circ \exp_0\circ L_p^*(\lambda),\qquad \forall\, \lambda\in T_p^*\hei,
\end{equation}
where $L_p:\hei\to\hei$ is the left multiplication by $p$. By direct integration of \eqref{eq:ham_sys}, one can easily compute $\exp_0:T_0^* \mathbb R^3\to \hei$ (see e.g.\ \cite[Sec.\,7.1]{BarRiz2019}): given $\lambda = h_1\nu_1+h_2\nu_2+h_0\nu_0\in T_0^*\hei$ and $t\in\R$, we have 
\begin{equation}
\label{eq:exp_0_explicit}
    \exp_0(t\lambda)=\left(\frac{h_1 \sin(h_0 t) + h_2 (\cos(h_0 t) - 1)}{h_0},\frac{h_1 (1 - \cos(h_0 t)) + h_2 \sin(h_0 t)}{h_0},(h_1^2 + h_2^2) \frac{h_0 t - \sin(h_0 t)}{2 h_0^2}\right).
\end{equation}
\subsection{Surfaces and characteristic points}
We study surfaces embedded in $\hei$ and their local structure near \emph{characteristic points}. 
\subsubsection{Characteristic points}\label{ssec:char_points}
Let $S\subset \hei$ be a smooth embedded orientable surface. For a given $p\in S$, we say that $f\in C^\infty(\hei)$ is a locally defining function for $S$ near $p$, if there exists an open neighborhood $U\subset \hei$ of $p$ such that $f|_U$ is a submersion and $S\cap U=f^{-1}(0)\cap U$.

\begin{defi}
    A point $p\in S$ is called a \emph{characteristic point} if $\dis_p = T_pS$. Moreover we set 
\begin{equation}
    \Sigma(S):=\{p\in S\,:\,\mathcal \dis_p=T_pS\},\qquad S^\times :=S\setminus\Sigma(S).
\end{equation}
\end{defi}

It is not difficult to show that, since $\dis$ is a contact distribution, the characteristic set is a closed subset of $S$ contained in an embedded curve (see, e.g., \cite[Lemma 2.4]{Daniele}; for a more general discussion on the size of the characteristic set, we refer to \cite{Bal03} and references therein). In addition, there exists a smooth vector field $X$ tangent to $S$, called the \emph{characteristic vector field}, such that
        \begin{equation}\label{eq:char_vf}
            \mathrm{span}_{\mathbb R}\{X|_{q}\}=\begin{cases}
                \mathcal D_q\cap T_q S \qquad &q\in S^\times, \\
                \{0\} \qquad & q\in\Sigma(S).
            \end{cases}
        \end{equation}
In particular, given a smooth differential form $\omega\in \Omega^2(S)$ inducing the orientation of $S$, the characteristic vector field is defined by the equation 
\begin{equation}
    \iota_X\omega=\nu_0|_S,
\end{equation}
where $\nu_0$ is the contact form introduced in \eqref{eq:the_coframe}. Note that the characteristic vector field is not unique: it depends on the choice of $\omega$. However all characteristic vector fields differ by a never vanishing multiplicative factor.

In terms of a locally defining function $f$, a point $p\in S$ is characteristic if and only if the horizontal gradient $\nabla f$ of $f$ is zero at $p$, i.e., if $d_pf$ vanishes on vectors of $\mathcal D_p$. Recall that, given a smooth function $f:\hei\to\mathbb R$, its \emph{horizontal gradient} is defined as the unique horizontal vector field $\nabla f$ satisfying the following equality, for every horizontal vector field $Y$
\begin{equation}
    g(\nabla f, Y)=df(Y).
\end{equation}
It is then natural to introduce $ \mathrm{Hess}_{H}(f)$ the \emph{horizontal Hessian} of $f$, which in the orthonormal frame $\{X_1,X_2\}$ is expressed as 
  \begin{equation}
            \mathrm{Hess}_{H}(f)=\begin{pmatrix}
            X_1X_1(f) & X_2 X_1(f)\\
            X_1X_2(f) & X_2 X_2(f)
        \end{pmatrix}.
        \end{equation}
        
    \begin{defi}\label{def:non_deg_op}
        Let $f\in C^\infty(\hei)$ be a locally defining function near $p\in\Sigma(S)$. Then, we say that $p$ is a \emph{non-degenerate characteristic point} if and only if $\det \mathrm{Hess}_{H}(f)|_{p}\neq 0$.
    \end{defi}
    We stress that the notion of non-degenerate characteristic point does not depend on the choice of the locally defining function $f$ and on the choice of the orthonormal frame \cite{Daniele,BellBosc,BBP25}.
    Furthermore, non-degenerate characteristic points are necessarily isolated, see for instance \cite{BellBosc}. For a study of degenerate characteristic points see e.g.\ \cite{RossiIntegrability,BBP25}.
    
    We now study the local structure of a surface $S$ near a characteristic point $p\in \Sigma(S)$. Since left-translations are isometries of $\hei$, it is not restrictive to assume that $p=(0,0,0)$.
    \begin{lemma}\label{lem:nice_f}
    Let $S\subset \hei$ be a smooth, oriented, embedded surface with a characteristic point at $p=(0,0,0)$. Then, a locally defining function for $S$ near $p$ is given by 
        \begin{equation}\label{eq:local_S}
            f(x,y,z)=z-\frac{1}{2}u(x,y)
        \end{equation}
        where $u\in C^\infty(\mathbb R^2)$. Moreover, up to an isometry of $\hei$, there exist $a,b\in\mathbb R$ such that 
        \begin{equation}\label{eq:ab_u}
            u(x,y)=\frac{1}{2}(ax^2+by^2)+o(x^2+y^2),\qquad \text{as } (x,y)\to (0,0).
        \end{equation}
    \end{lemma}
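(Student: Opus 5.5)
Since $p=(0,0,0)$ is characteristic, $T_pS=\mathcal D_0=\operatorname{span}\{\partial_x,\partial_y\}$, because at the origin $X_1=\partial_x$ and $X_2=\partial_y$. Hence $T_pS$ is transverse to $\partial_z$, and the implicit function theorem lets us write $S$ near $p$ as a graph $z=\frac12 u(x,y)$ with $u\in C^\infty$ and $u(0,0)=0$. This gives the locally defining function $f(x,y,z)=z-\frac12u(x,y)$; it is a submersion since $\partial_zf=1$, and we may replace $f$ by $-f$ if the orientation requires it. The characteristic condition $\nabla f(0)=0$ reads $X_1f(0)=-\frac12u_x(0)=0$ and $X_2f(0)=-\frac12u_y(0)=0$, because the $\mp\frac y2,\frac x2$ terms vanish at the origin. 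Thus $u(0)=0$ and $\nabla u(0)=0$, so Taylor's formula gives
\begin{equation}
u(x,y)=\tfrac12\bigl(Ax^2+2Bxy+Cy^2\bigr)+o(x^2+y^2).
\end{equation}

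The remaining task is to remove the mixed term $B$ using an isometry fixing the origin. I would use the rotations $R_\theta(x,y,z)=(x\cos\theta-y\sin\theta,\,x\sin\theta+y\cos\theta,\,z)$. These preserve $\mathcal D$ and the metric, since they preserve the contact form $\nu_0=dz+\frac12(y\,dx-x\,dy)$ (the form $y\,dx-x\,dy$ is rotation invariant) and they rotate the frame $X_1,X_2$ orthonormally. Moreover they act on graphs by precomposing $u$ with a planar rotation. The quadratic form $\begin{pmatrix}A&B\\B&C\end{pmatrix}$ is symmetric, so a suitable rotation diagonalizes it, giving $\frac12(ax^2+by^2)$ with $a,b$ its eigenvalues. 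The image of $S$ is again a graph of the same type, with $u\circ R_{-\theta}$, and the remainder is still $o(x^2+y^2)$.

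The main point to check carefully is that the rotations really are sub-Riemannian isometries. One verifies that $(R_\theta)_*X_1=\cos\theta\,X_1+\sin\theta\,X_2$ and similarly for $X_2$, which is a direct computation. If one also allows the reflection $(x,y,z)\mapsto(y,x,-z)$, which is an isometry, one could additionally order $a\ge b$; this is not needed here. Beyond that, the argument is routine.
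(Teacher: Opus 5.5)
Your argument is correct and follows the paper's proof. The paper realizes $S$ near $p$ as the graph $z=\frac12u(x,y)$ by inverting the projection $\mathfrak p|_S$, which is the same implicit-function step as yours; it then uses $T_pS=\mathcal D_p$ to get $u(0)=0$ and $\nabla u(0)=0$, and diagonalizes the Hessian by a rotation about the $z$-axis, whose isometric character you verify correctly.

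One slip in your side remark, which does not affect the lemma: the reflection $(x,y,z)\mapsto(y,x,-z)$ sends $(a,b)$ to $(-b,-a)$, so it preserves the sign of $a-b$ and cannot be used to order the coefficients. The ordering $a\ge b$ comes instead from the rotation $(x,y,z)\mapsto(-y,x,z)$, which swaps $a$ and $b$.
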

    \begin{proof}
        Let $\proj:\R^{3}\to\mathbb R^2$ be the map $\proj(x,y,z)=(x,y)$. Observe that $\proj|_{S}:S\to \R^{2}$ is a local diffeomorphism near $p=(0,0,0)$. Indeed, since $p\in S$ is assumed to be characteristic, by \eqref{eq:fields_Xi}
        \begin{equation} \label{eq:horis}
            T_pS=\mathcal D_p=\mathrm{span}\{X_{1},X_{2}\}|_{p}=\mathrm{span}\{\partial_x,\partial_y\},
        \end{equation}
        therefore $d_p\mathfrak p|_S$ is the identity.
        Let $B\subset \mathbb R^2$ be a small enough neighborhood of the origin in $\mathbb R^2$ and let 
        \begin{equation}
            \varphi:B\to S,\qquad \varphi =(\proj|_{S})^{-1}.
        \end{equation}
        Then, being $\varphi\circ \proj|_{S}$ equal to the identity, we can write $\varphi(x,y)=(x,y,\frac{1}{2}u(x,y))$ for some smooth function $u:B\to \mathbb R$. Hence, the claim \eqref{eq:local_S} follows. We now show \eqref{eq:ab_u}. Since $p=(0,0,0)\in \Sigma(S)$, it must hold $f(0,0,0)=0$, which implies $u(0,0)=0$. Additionally, \eqref{eq:horis} forces  $\nabla u (0,0)=0$. Therefore $u$ is quadratic at the origin 
        \begin{equation}
            u(x,y)=\frac{1}{2}(ax^2+by^2+cxy)+o(x^2+y^2).
        \end{equation}
        Up to a rotation in the $(x,y)$-plane, which is an isometry of $\hei$, we may diagonalize the (standard) Hessian of $u$, which results in $c=0$.
    \end{proof}

    \begin{remark}
    \label{rmk:embedding}
        As a byproduct of the previous lemma we find an explicit embedding of $S$ around a characteristic point. Namely, if $p=(0,0,0)\in\Sigma(S)$, then there is a neighborhood $U\subset \hei$ of $p$ and a smooth embedding $\Phi:B\to \hei$, where $B\subset\R^2$ is a neighborhood of the origin in $\R^2$, such that $\Phi(B) = U\cap S$. In addition, $\Phi(x,y) = (x,y, \frac12 u(x,y))$, where $u\in C^\infty(\R^2)$ has the  expansion \eqref{eq:ab_u}.
    \end{remark}

    The non-degeneracy of a  characteristic point can be expressed through the numbers $a,b$ in \eqref{eq:ab_u}.    
    \begin{lemma}\label{lem:ab+1neq0}
        Let $S\subset \hei$ be a smooth, oriented,   embedded surface with a characteristic point at the origin $p=(0,0,0)$. Then  $p$ is non-degenerate if and only if $ab+1\neq 0$.  
    \end{lemma}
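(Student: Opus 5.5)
We use the local defining function of \cref{lem:nice_f}, namely $f(x,y,z)=z-\frac12 u(x,y)$ with $u$ as in \eqref{eq:ab_u}, and compute its horizontal Hessian at the origin directly from the frame \eqref{eq:fields_Xi}. Since non-degeneracy does not depend on the choice of defining function or of orthonormal frame, and since an isometry of $\hei$ maps characteristic points to characteristic points and preserves non-degeneracy, we may assume $u$ already has the diagonal form \eqref{eq:ab_u}.

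First we compute the horizontal derivatives:
\begin{equation}
X_1 f=-\tfrac{y}{2}-\tfrac12 u_x,\qquad X_2 f=\tfrac{x}{2}-\tfrac12 u_y .
\end{equation}
Differentiating once more and evaluating at the origin, the $\partial_z$-components of $X_1,X_2$ act trivially on these expressions, which do not depend on $z$. This gives
\begin{equation}
X_1X_1f=-\tfrac12 u_{xx},\qquad X_2X_1 f=-\tfrac12-\tfrac12u_{xy},\qquad X_1X_2f=\tfrac12-\tfrac12 u_{xy},\qquad X_2X_2 f=-\tfrac12 u_{yy}.
\end{equation}
By \eqref{eq:ab_u} we have $u_{xx}(0)=a$, $u_{yy}(0)=b$ and $u_{xy}(0)=0$. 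Hence
\begin{equation}
\mathrm{Hess}_H(f)|_0=\frac12\begin{pmatrix}-a&-1\\ 1&-b\end{pmatrix},
\end{equation}
whose determinant is $\frac14(ab+1)$.

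By \cref{def:non_deg_op}, $p$ is non-degenerate if and only if $ab+1\neq0$. The only points requiring care are the independence of the non-degeneracy notion from the choice of $f$ and from the isometry used to diagonalize $u$; both were recalled before \cref{lem:nice_f}. No real obstacle is expected.
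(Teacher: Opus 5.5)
Your proposal is correct and follows the paper's own proof: you apply \cref{def:non_deg_op} to the defining function $f=z-\tfrac12u(x,y)$ of \cref{lem:nice_f} and find $\det\mathrm{Hess}_H(f)|_p=\tfrac14(ab+1)$. Your entries of the horizontal Hessian are all correct. You also appeal explicitly to the independence of the notion from the choice of $f$, the frame, and the diagonalizing isometry; the paper leaves this implicit.
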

    \begin{proof}
        We apply \cref{def:non_deg_op} to the locally defining function given in \cref{lem:nice_f}:
        \begin{equation}
            \det \left.\begin{pmatrix}
            X_1X_1(f) & X_2 X_1(f)\\
            X_1X_2(f) & X_2 X_2(f)
        \end{pmatrix}\right|_p= \det \begin{pmatrix}
            -\frac{a}{2} & -\frac{1}{2}\\
            \frac{1}{2} & -\frac{b}{2}
        \end{pmatrix}=\frac{1}{4}(ab+1).
        \end{equation}
        It follows   that $p$ is non-degenerate if and only if $ab+1\neq 0$.
    \end{proof}
 
      Hence, a characteristic point $p$ is a stable singularity of the characteristic vector field $X$   if and only if $ab+1\neq 0$. We also introduce the notion of \emph{geometric stability} of a characteristic point, that we are going to need in what follows. 
    
    \begin{defi}
    \label{def:geometrically_stable_char_pts}
        Let $S\subset \hei $ be a smooth embedded surface with a characteristic point at the origin $p=(0,0,0)$ and locally expressed as in \eqref{eq:local_S}-\eqref{eq:ab_u}. Then $p$ is called \emph{geometrically stable} if the following inequalities hold 
        \begin{equation}
            ab+1\neq 0,\qquad a\neq b.
        \end{equation}
    \end{defi}
    Observe that, according to Lemma~\ref{lem:ab+1neq0}, a geometrically stable characteristic point is non-degenerate. As we show in \cref{part:cut-time}, the extra condition $a\neq b$  is related to the stability of the cut locus of the surface near the characteristic point.  
\subsubsection{Homogeneous surfaces}\label{ssec:hom_surf} We introduce a family of surfaces that constitute models for characteristic surfaces in the Heisenberg group. 

\begin{defi}
\label{def:homogeneous_surf}
    We define the oriented homogeneous surface $S_{a,b}$ of parameters $a,b\in \mathbb R$ as 
    \begin{equation}
        S_{a,b} : = \left\{(x,y,z)\in\hei:z=\frac1 4 (ax^2+by^2)\right\}\subset\hei,
    \end{equation}
    oriented towards the side $\{z>(ax^2+by^2)/4\}$.
\end{defi}
Note that the surface $S_{a,b}$ is smooth, orientable and embedded. If $ab+1\neq 0$, it has a single characteristic point at the origin $p=(0,0,0)$ which is non-degenerate, while if $ab+1=0$ it contains the following curve of degenerate characteristic point $\{(x,y,z)\in S_{a,b}\,:\,y=-ax\}$. Finally, it holds:   
    \begin{equation}
\label{eq:homogeneity_S_ab}
    \delta_r(S_{a,b}) = S_{a,b},\qquad\forall\,r>0,
\end{equation}
where $\delta_r$ is the dilation of \eqref{eq:dilations}.
These properties follow by the definition of $S_{a,b}$ and of \cref{lem:ab+1neq0}.
\part{The volume of the half-tube around characteristic surfaces}
\label{part:volume}

In this part, we delve into the study of characteristic surfaces, and tubes around them. From now on, we will always consider surfaces that are smooth, embedded and bound an open set $\Omega\subset \hei$. In this way, $S=\partial\Omega$ is without boundary. 

\section{The half-tube and the sub-Riemannian distance from a surface}\label{sec:distance_from_S}

We fix a surface $S\subset \hei$ and we study the distance from $S$. Since $S$ bounds a regular domain $\Omega\subset \hei$, so that $S=\partial \Omega$, we introduce the \emph{signed distance function}: 
\begin{equation}
    \mathrm{dist}_S:\hei\to \mathbb R,\qquad \mathrm{dist}_S(p)
    =\begin{cases}
        \inf_{q\in S}d_{SR}(p,q),\qquad &p\in \hei\setminus \Omega,\\
        -\inf_{q\in S}d_{SR}(p,q),\qquad &p\in \Omega,
    \end{cases}
\end{equation}
where $d_{SR}$ denotes the sub-Riemannian distance. 
 \begin{remark}\label{rmk:1_lip} The distance from $S$ is $1$-Lipschitz with respect to the sub-Riemannian distance. Indeed if $p,q$ lie on the same side of $\hei\setminus S$, the triangle inequality gives
$|\mathrm{dist}_S(p)-\mathrm{dist}_S(q)|\le d_{SR}(p,q)$.
Otherwise, for $p$ and $q$ on opposite sides of $S$, every horizontal curve $\gamma$ joining $p$ to $q$ meets $S$, so
\begin{equation}
    |\mathrm{dist}_S(p)-\mathrm{dist}_S(q)|
    =|\mathrm{dist}_S(p)|+|\mathrm{dist}_S(q)|
    \le \operatorname{length}(\gamma),
\end{equation}
and taking the infimum over $\gamma$ proves the claim.
\end{remark}
We can now define the (positive) half-tube.
\begin{defi}
    Let $S=\partial\Omega\subset\hei$ be an embedded smooth surface, and fix $\ve>0$. Then, the \emph{(positive) half-tube} of size $\ve>0$ around $S$ is the set  
    \begin{equation}
        \mathcal T_S(\ve) := \{p\in\hei\,:\,0<\mathrm{dist}_S(p)\leq\ve\}.
    \end{equation}
\end{defi}
To better describe the set $\mathcal T_S(\ve)$ let us introduce some terminology. We say that a horizontal curve $\gamma : [0,T] \to \hei$ is a
\emph{minimizing geodesic from $S$} if it is a unit speed
length-minimizing curve, such that
\[
\gamma(0)\in S, 
\qquad
\mathrm{dist}_S(\gamma(t)) = t,\qquad \forall \,t\in[0,T].
\]
If $\gamma(0)=p$, we say that the minimizing geodesic $\gamma$ \emph{is based} at $p$. If $\gamma_p$ is a minimizing geodesic from $S$ based at $p$, by \cite[Thm.~12.4]{book_control} there exists an initial covector $\lambda^\perp(p)\in T_p^*\hei$ satisfying the following
normalization and transversality conditions
\begin{equation}\label{eq:twocond}
2H(\lambda^\perp(p))=1,\qquad \langle \lambda^\perp(p), T_p S\rangle=0,
\end{equation}
and such that the following equality is satisfied 
\begin{equation}
    \label{eq:gammapi}\gamma_p(t)=\exp_p(t \lambda^\perp(p)).
\end{equation}
We refer to $\lambda^\perp$ as the \emph{normal covector} of $S$. Note that, for a fixed $p\in S$ which is non-characteristic, the two conditions given in \eqref{eq:twocond} determine $\lambda^\perp(p)$ up to a sign. The additional requirement that the signed distance is positive along $\gamma_p$ fixes the sign of $\lambda^\perp(p)$. Furthermore, if $p$ is a characteristic point, then $T_pS=\mathcal{D}_p$, therefore the two conditions in \eqref{eq:twocond} are incompatible since  $\langle\lambda,\mathcal{D}_p\rangle=0$ implies $H(\lambda)=0$. Therefore, we have the following.

\begin{lemma}
\label{lem:no_abn}
    Every nonconstant minimizing geodesic from $S$ is based at $S^\times=S\setminus \Sigma(S)$.
\end{lemma}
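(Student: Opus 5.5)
The plan is to argue by contradiction, using the Pontryagin maximum principle with transversality exactly as in the discussion preceding the statement. Let $\gamma:[0,T]\to\hei$ with $T>0$ be a nonconstant minimizing geodesic from $S$ based at $p=\gamma(0)$, and suppose $p\in\Sigma(S)$. We will show that no admissible initial covector can exist, so $p$ must lie in $S^\times$.

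First we recall why a covector exists at all. Since $\mathrm{dist}_S(\gamma(t))=t$ for all $t\in[0,T]$, the curve $\gamma$ minimizes length among horizontal curves joining $S$ to $\gamma(T)$. We can therefore apply \cite[Thm.~12.4]{book_control} to the optimal control problem with initial point constrained to the submanifold $S$ and fixed endpoint. This yields a Lipschitz extremal $\lambda(t)\in T^*_{\gamma(t)}\hei$, together with a constant $\nu\in\{0,1\}$ and $(\lambda,\nu)\neq 0$, satisfying the transversality condition $\langle\lambda(0),T_pS\rangle=0$. Two cases then arise.

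In the normal case ($\nu=1$), $\lambda$ solves \eqref{eq:ham_sys}. Since $\gamma$ has unit speed, $2H(\lambda(t))=1$, and in particular $h_1^2+h_2^2\neq 0$ at $t=0$. On the other hand, $T_pS=\mathcal D_p$ because $p$ is characteristic, so transversality gives $h_1(\lambda(0))=h_2(\lambda(0))=0$, hence $H(\lambda(0))=0$. This is a contradiction.

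In the abnormal case ($\nu=0$), $\lambda(t)\neq 0$ annihilates $\mathcal D_{\gamma(t)}$ for all $t$, i.e. $h_1=h_2=0$ along the curve. The adjoint equation then gives $\dot h_1=-h_0h_2$ up to sign conventions, while $\dot h_1$ also contains the term $h_0\,u_2$ coming from $[X_1,X_2]=X_0$. More directly, $h_1\equiv h_2\equiv 0$ with $h_0\neq 0$ forces $u_1=u_2=0$ a.e., so $\gamma$ is constant, contrary to our assumption. This is the standard fact, already recalled in Section~\ref{s:prel}, that $\hei$ has no nontrivial abnormal curves.

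Both cases are therefore impossible, and $p\notin\Sigma(S)$. The only step requiring care is the correct use of the transversality condition for a curve with initial point on $S$. In particular we must check that $\mathrm{dist}_S(\gamma(t))=t$ makes $\gamma$ optimal for the problem ``from $S$'', which is immediate from the definition of $\mathrm{dist}_S$ and from the fact that $\gamma$ stays in $\{\mathrm{dist}_S\geq 0\}$. Once that is in place, the remaining two steps are short computations.
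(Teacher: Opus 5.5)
Your argument is correct and is essentially the paper's: Pontryagin's principle with transversality at the initial point gives a covector with $2H(\lambda(0))=1$ that annihilates $T_pS=\mathcal D_p$, which is impossible. The abnormal alternative is ruled out because $\hei$ has no nontrivial abnormal curves; the paper leaves this step implicit, and you spell it out.

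One small fix in your abnormal-case paragraph: along an abnormal extremal the equations are $\dot h_1=-h_0u_2$ and $\dot h_2=h_0u_1$ (the formula $\dot h_1=-h_0h_2$ is the normal one), and these, together with $h_1\equiv h_2\equiv 0$ and $h_0\neq 0$, give $u\equiv 0$ exactly as you conclude.
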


Conversely, if a unit-speed horizontal curve can be written as in \eqref{eq:gammapi} for some initial covector satisfying \eqref{eq:twocond} then it is a minimizing geodesic up to its cut time. 

\begin{defi}
Let $S=\partial\Omega\subset\hei$ be an embedded smooth surface, and let $p\in S^\times$. Then, the \emph{cut time} of the unique minimizing geodesic from $S$ based at $p$ is
\begin{equation}
t_{\mathrm{cut}}(p):=\sup\{T>0: \mathrm{dist}_S(\gamma_p(t))=t , \ \forall\, t\in [0,T]\}.
\end{equation}
The \emph{cut locus} of $S$ is the set of points where minimizing geodesics from $S$ lose minimality, namely  
\begin{equation}
    \cut(S):=\{\gamma_p(t_\cut(p)): p\in S^\times,  t_\cut(p)<\infty\}\subset\hei,
\end{equation} 
\end{defi}

Recalling that $\hei$ is complete, the half-tube around $S$ can then be explicitly described once the normal exponential map and the cut time are known. Indeed, the following equality holds 
\begin{equation}
\label{eq:preloc}
\mathcal T_S(\ve)=\{\exp^\perp_S(t,p)\,:\,p\in S^\times, \ 0< t \leq \min\{\eps,t_\mathrm{cut}(p)\}\},
\end{equation}
where we define the \emph{normal exponential map} on the complement 
  of the characteristic set $S^\times=S\setminus \Sigma(S)$ 
$$\exp^\perp_S:\mathbb R^+\times S^\times\to \hei,\qquad \exp^\perp_S(t,p)=\exp_p(t\lambda^\perp(p)).$$
\begin{remark}\label{rmk:orientation}
    The normal exponential map is a smooth map between oriented manifolds. Indeed, endowing $\mathbb R^+\times S^\times$ with the product orientation, the following map is an isomorphism of oriented vector bundles
    \begin{equation}
        d\exp^\perp|_{\{0\}\times S^\times}:T(\mathbb R^+\times S^\times)|_{\{0\}\times S}\to T\hei|_S,
    \end{equation}
    and $\exp^\perp_S$ is an orientation preserving immersion close to $\{0\}\times S^\times$.
\end{remark}

\begin{remark}\label{rmk:preloc3} The characterization given in \eqref{eq:preloc} permits us to define the (positive) half-tube around an open set $U\subset S$ as follows
\begin{equation}
\label{eq:preloc3}
\mathcal T_U(\ve)=\{\exp^\perp_S(t,p)\,:\,p\in U\cap S^\times, \ 0< t \leq \min\{\eps,t_\mathrm{cut}(p)\}\},
\end{equation}
namely, the set of points in the half-tube reached by minimizing geodesics from $S$ based at $U$.  All our results on the tube around $S$ will hold also for its local counterpart.
\end{remark}

With the final goal of achieving a formula for the volume of the tube, we further investigate the general properties of the cut time function.

\subsection{General properties of the cut time function}\label{sec:cut_time}

Let $S=\partial\Omega\subset\hei$ be a smooth, embedded, oriented surface. Let us define the notion of \emph{Maxwell time} of a point $p\in S^\times$ as 
    \begin{equation}\label{eq:t_Max}
        t_{\Max}(p)=\inf\{t>0\,:\,\exists\,\, \bar p\in S^\times\setminus\{p\}\,\text{ \rm s.t. }\exp^\perp_S(t,p) = \exp^\perp_S(t,\bar p)\},
    \end{equation}
 and the notion of \emph{focal time} (or \emph{conjugate time}) of a point $p\in S^\times$ as 
 \begin{equation}\label{eq:t_foc}
        t_{\foc}(p)=\inf\{t>0\,:\,d_{(t,p)}\exp^\perp_S \text{ is not surjective}\}.
    \end{equation}
    
    We have the following classical characterization of the cut time function. We include the proof for the sake of completeness.
\begin{theorem}\label{thm:t_cut=min_t_max_t_foc}
    For every $p\in S^\times$, it holds $t_\cut(p)=\min\{t_\foc(p),t_\Max(p)\}$.
\end{theorem}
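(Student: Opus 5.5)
We prove the two inequalities $t_\cut(p)\le\min\{t_\foc(p),t_\Max(p)\}$ and $t_\cut(p)\ge\min\{t_\foc(p),t_\Max(p)\}$ separately, following the classical Riemannian argument. The ingredients are: \cref{lem:no_abn}, which says every minimizer from $S$ is based at $S^\times$; the absence of abnormal geodesics in $\hei$; and the fact that the normal covector $\lambda^\perp$ is smooth on $S^\times$.

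\emph{Step 1: $t_\cut\le t_\Max$.} Suppose $t>t_\Max(p)$. Then there are $s<t$ (arbitrarily close to $t_\Max(p)$) and $\bar p\neq p$ with $\exp^\perp_S(s,p)=\exp^\perp_S(s,\bar p)=:q$.
\begin{itemize}
\item If $\gamma_p$ were minimizing on $[0,t]$, then $\mathrm{dist}_S(q)=s$, so $\gamma_{\bar p}|_{[0,s]}$ is also a minimizer from $S$.
\item The concatenation of $\gamma_{\bar p}|_{[0,s]}$ with $\gamma_p|_{[s,t]}$ is then a minimizer from $S$ of length $t$.
\item Every minimizer is a normal geodesic, since $\hei$ has no nontrivial abnormals, and is therefore smooth. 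Both pieces reach $q$ with unit speed, and at time $s$ their covectors both lie in the normal cone through $q$.
\item Smoothness forces the two velocities to agree at $q$; together with $h_0$ being constant along each extremal, the covectors agree. By uniqueness of solutions of \eqref{eq:ham_sys}, $\gamma_{\bar p}=\gamma_p$, hence $\bar p=p$, a contradiction.
\end{itemize}
So $t_\cut(p)\le t_\Max(p)$.

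\emph{Step 2: $t_\cut\le t_\foc$.} We use the standard focal-point argument. Past the first focal time, $\gamma_p$ is not a local minimizer of the distance to $S$. The cleanest route is the second variation / index form for the problem with initial point constrained to $S$: at a focal time a Jacobi field with the transversality boundary condition vanishes, and beyond it the index form becomes negative. Alternatively, we can argue via the degree of $\exp^\perp_S$ near $(t,p)$, since a fold or higher singularity produces nearby points with two preimages at shorter time. I expect this to be the main obstacle. I would invoke the general sub-Riemannian result \cite[Ch.~8 or Thm.~12.x]{book_control} that a normal extremal from a submanifold in the absence of abnormals loses local minimality after its first focal time. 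I would first check that this applies here because $\lambda^\perp(p)$ is a strictly normal, non-abnormal covector.

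\emph{Step 3: $t_\cut\ge\min\{t_\foc,t_\Max\}$.} Set $T=t_\cut(p)$ and suppose $T<\infty$ with $T<t_\foc(p)$; we show $t_\Max(p)\le T$. Take $t_k\downarrow T$. Since $\gamma_p$ is not minimizing on $[0,t_k]$, the point $q_k=\gamma_p(t_k)$ is reached by a minimizer $\gamma_{p_k}$ from $S$, with $p_k\in S^\times$ by \cref{lem:no_abn}, and of length $s_k=\mathrm{dist}_S(q_k)<t_k$.

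By compactness, the $p_k$ stay in a compact set: $\mathrm{dist}_S$ is bounded along $\gamma_p$, and minimizers have bounded covectors away from $\Sigma(S)$. The delicate case is $p_k\to\Sigma(S)$. Here I would use that minimizers based near characteristic points have short cut time (cf.\ \cref{cor:t_cut_continuous}), so that $s_k\to T>0$ prevents this; I expect to need a uniform lower bound on $|h_0|^{-1}$ to make this rigorous.

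Passing to a subsequence, $(s_k,p_k)\to(T,\bar p)$ and $\exp^\perp_S(T,\bar p)=\gamma_p(T)$.
\begin{itemize}
\item If $\bar p\neq p$, then $t_\Max(p)\le T$.
\item If $\bar p=p$, then $(s_k,p_k)$ and $(t_k,p)$ are distinct points converging to $(T,p)$ with the same image. This contradicts local injectivity of $\exp^\perp_S$ at $(T,p)$, which holds by the inverse function theorem since $T<t_\foc(p)$.
\end{itemize}
Hence $t_\cut(p)\ge\min\{t_\foc(p),t_\Max(p)\}$, which together with Steps 1 and 2 gives $t_\cut(p)=\min\{t_\foc(p),t_\Max(p)\}$.
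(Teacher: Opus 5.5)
Your proposal is correct and follows the paper's proof essentially step by step. The steps match as follows: the concatenation argument gives $t_\cut\le t_\Max$, where the paper invokes real-analyticity of minimizers and you invoke smoothness plus uniqueness for \eqref{eq:ham_sys}; an appeal to the second-variation result in the literature gives $t_\cut\le t_\foc$; and the compactness dichotomy gives the reverse inequality, since $\bar p\neq p$ yields a Maxwell point and $\bar p=p$ contradicts local injectivity at $(t_\cut(p),p)$.

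Two small repairs are needed. In Step 1, constancy of $h_0$ along each extremal does not by itself give $h_0(\bar p)=h_0(p)$. That equality follows because the concatenation is a single smooth normal geodesic: its $xy$-projection satisfies $\tfrac{d}{dt}(h_1,h_2)=h_0(-h_2,h_1)$, so matching second derivatives at time $s$ forces equal $h_0$. In Step 3, no bound on $|h_0|^{-1}$ is needed when $T>0$. The Ascoli--Arzel\`a limit of the minimizers $\gamma_{p_k}|_{[0,s_k]}$ is itself a nonconstant minimizer from $S$ based at $\bar p$, so $\bar p\in S^\times$ by \cref{lem:no_abn}; this is how the paper handles it.
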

\begin{proof} 
We first prove the lower bound $t_\cut(p)\geq\min\{t_\foc(p),t_\Max(p)\}$ and it is not restrictive to assume $t_\cut(p)<\infty$. Fix $p\in S^\times$ and let $\gamma(t)=\exp^\perp_S(t,p)$, $t\in \R^+:=[0,+\infty)$. Take a monotone decreasing sequence of positive real numbers $\{\ve_n\}$ convergent to $0$, and set $$q_n = \gamma(t_\cut(p)+\ve_n).$$ By definition of cut time 
  \begin{equation}
        \mathrm{dist}_S(q_n)<t_\cut(p)+\ve_n.
  \end{equation}
 By completeness of the Heisenberg group, we can now find a sequence $\{\sigma_n\}$ of constant speed geodesics realizing the distance between $S$ and $\{q_n\}$, i.e., 
\begin{equation}
     \sigma_n:[0,1]\to \hei,\qquad\sigma_n(0)\in S,\qquad \sigma_n(1)=q_n,\qquad\ell(\sigma_n)=\mathrm{dist}_S(q_n).
 \end{equation} 
 The $\sigma_n$'s are parametrized by constant speed, and their lengths are uniformly bounded. Therefore by the Ascoli-Arzel\`a theorem, up to sub-sequences, there exists a limit 
 \begin{equation}
     \sigma=\lim_{n}\sigma_n,
 \end{equation}
 in the uniform topology. Moreover $\sigma$ is Lipschitz and 
 \begin{equation}
\ell(\sigma)\leq\liminf_{n\to\infty} \ell(\sigma_n)=\liminf_{n\to\infty} \mathrm{dist}_S(q_n)=\mathrm{dist}_S(\sigma(1)).
 \end{equation}
 It follows that $\sigma$ is a length minimizing curve and therefore it is smooth. We reparametrize $\sigma$ by unit speed, thus $\sigma:[0,t_\cut(p)]\to \hei$, $\ell(\sigma)=t_\cut(p)$.
 If $\sigma\neq \gamma$ then, $\sigma$ is a length minimizer realizing the distance from $S$, thus there exists $\bar p\in S^\times$, $\bar p\neq p$ such that 
 \begin{equation}
     \sigma(t)=\exp^\perp_S(t,\bar p),\qquad \sigma(t_\cut(p))=\gamma(t_\cut(p)).
 \end{equation}
 It follows that  $t_\cut(p)\geq t_{\Max}(p)$. If otherwise $\sigma=\gamma$, then, up to reparametrizing $\sigma_n$ by constant speed, there exists $p_n\in S^\times$ converging to $p$ such that 
 \begin{equation}
      \sigma_n(t)=\exp^\perp_S(t,p_n),\qquad \forall\, t\in [0,\mathrm{dist}_S(q_n)].
 \end{equation}
By definition of $q_n$ and $\sigma_n$ we have
		\begin{equation}
	    \exp^\perp_S(\mathrm{dist}_S(q_n),p_n)=\exp^\perp_S(t_\cut(p)+\ve_n,p).
		\end{equation}
Therefore the map $\exp^\perp_S$ coincides on the two sequences $(\mathrm{dist}_S(q_n),p_n)\neq(t_\cut(p)+\ve_n,p)$ with the common limit $(t_\cut(p),p)$. It follows that $d_{(t_\cut(p),p)}\exp^\perp_S$ is not injective (thus non surjective) and therefore $t_\cut(p)\geq t_{\foc}(p)$. 

We now prove the upper bound $t_\cut(p)\leq\min\{t_\foc(p),t_\Max(p)\}$, for every $p\in S^\times$. The proof of $t_\cut(p)\leq t_\Max(p)$ is similar to the case where $S$ is a point, given for instance in \cite[Lem.\ 9]{RizziSerres}. Assume by contradiction that there exists $\ve>0$ such that $t_{\cut}(p)\geq t_{\Max}(p)+\ve$. By definition of $t_\Max(p)$, there is $\tau\in [t_{\Max}(p),t_{\Max}(p)+\ve)$ which is a Maxwell time, i.e., we can find $\bar p\in S^\times$, $\bar p\neq p$ such that $\exp^\perp_S(\tau, \bar p)=\exp^\perp_S(\tau, p)$.
Therefore the curve
\begin{equation}
    \sigma(t):[0,t_{\Max}(p)+\ve]\to \mathbb H^1,\qquad \sigma(t):=\begin{cases}
        \exp^\perp_S(t,\bar p),\qquad &0\leq t\leq \tau,\\
        \exp^\perp_S(t,p),\qquad &\tau\leq t\leq t_{\Max}(p)+\ve ,
    \end{cases}
\end{equation} 
is continuous, piecewise smooth, but not real-analytic. Furthermore, since $t_{\cut}(p)\geq t_\Max(p)+\ve$, $\sigma$ is a length minimizing curve. This gives a contradiction, since minimizing geodesics must be real-analytic. 

Finally, to prove that $t_\cut(p)\leq t_\foc(p)$, one has to show that a geodesic from $S$, based at $p$, is no longer minimizing after its first focal (or conjugate) point. This can be deduced studying the second variation of the length as done in \cite[Ch.\ 8]{Agrachev} for sub-Riemannian minimizing geodesics from a point, in absence of abnormal minimizers. The case of minimizers from a surface has been proved in \cite[Thm.\ 4.3]{GPT2003}.
\end{proof}
\begin{remark} Theorem~\ref{thm:t_cut=min_t_max_t_foc} says that cut points are either focal points or points reached by at least two minimizers. This dichotomy is classical in Riemannian geometry, see e.g.\ \cite{DoCarmo}, and proved in \cite[Ch.\ 8]{Agrachev} for sub-Riemannian minimizing geodesics from a point, in absence of abnormal minimizers. Here we extend such a result for minimizers from a surface $S$ in the Heisenberg group. 
\end{remark}

We now show that $t_\cut(p)>0$ for every $p\in S^\times$. This fact is already contained in \cite[Thm.\ 1.1]{BossioRizziRossi2024}, however our proof is different and gives additional information on $t_\foc$ and $t_\Max$.

\begin{corollary}\label{cor:pos_cut_time}
    For every $p\in S^\times$, it holds $t_\foc(p)$, $t_\Max(p)>0$. In particular, $t_\cut(p)>0$.
\end{corollary}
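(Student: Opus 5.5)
The plan is to treat the focal time and the Maxwell time separately, using in both cases the local structure of the normal exponential map near $\{0\}\times S^\times$. Then $t_\cut(p)>0$ follows from \cref{thm:t_cut=min_t_max_t_foc}.

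\emph{Positivity of $t_\foc$.} By \cref{rmk:orientation}, the normal exponential map extends smoothly to $t=0$. Its differential at $(0,p)$ is the isomorphism $\partial_t\mapsto\dot\gamma_p(0)$, $v\mapsto v$ for $v\in T_pS$. This uses $p\in S^\times$: then $\dot\gamma_p(0)$ is horizontal and transverse to $T_pS$, because $\langle\lambda^\perp(p),T_pS\rangle=0$ and $\langle\lambda^\perp(p),\dot\gamma_p(0)\rangle=2H=1$. Since $\lambda^\perp$ is smooth on $S^\times$, the map $\exp^\perp_S$ is smooth on $(-\delta,\infty)\times S^\times$. Being a local diffeomorphism is an open condition, so there are $\delta_p>0$ and a neighborhood $V_p\subset S^\times$ of $p$ on which $d\exp^\perp_S$ is invertible for $t\in[0,\delta_p)$. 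Hence $t_\foc\ge\delta_p>0$ on $V_p$, which in fact gives a locally uniform lower bound.

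\emph{Positivity of $t_\Max$.} I argue by contradiction. Suppose $t_n\downarrow0$ and $\bar p_n\in S^\times\setminus\{p\}$ satisfy $\exp^\perp_S(t_n,\bar p_n)=\exp^\perp_S(t_n,p)=:q_n$. Then $d_{SR}(\bar p_n,q_n)\le t_n$, so $\bar p_n\to p$. The inverse function theorem gives a neighborhood $(-\delta,\delta)\times V$ of $(0,p)$ on which $\exp^\perp_S$ is injective. Eventually $(t_n,\bar p_n)$ and $(t_n,p)$ both lie in this neighborhood, which forces $\bar p_n=p$, a contradiction. The key point is that $\bar p_n$ must stay close to $p$. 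This is guaranteed because both geodesics have length $t_n$ and are unit speed, so their endpoints are within $t_n$ of their base points, and the sub-Riemannian distance induces the manifold topology.

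The main obstacle is this last localization. A priori $\bar p_n$ could approach the characteristic set $\Sigma(S)$, where $\exp^\perp_S$ has no smooth extension. However, $\bar p_n\to p\in S^\times$, and $S^\times$ is open in $S$, so the sequence eventually stays in the compact neighborhood $V$ away from $\Sigma(S)$ and the injectivity argument applies. Combining the two parts with \cref{thm:t_cut=min_t_max_t_foc} gives $t_\cut(p)=\min\{t_\foc(p),t_\Max(p)\}>0$.
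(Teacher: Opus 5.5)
Your proposal is correct and follows essentially the same route as the paper. Invertibility of $d_{(0,p)}\exp^\perp_S$ (from transversality of $\dot\gamma_p(0)$ to $T_pS$) gives $t_\foc(p)>0$, and for $t_\Max$ you argue by contradiction exactly as the paper does: the bound $d_{SR}(\bar p_n,p)\le 2t_n$ localizes $\bar p_n$ near $p$, which contradicts local injectivity of $\exp^\perp_S$ near $(0,p)$. Your explicit extension to $t\in(-\delta,0)$, needed to apply the inverse function theorem at the boundary point $(0,p)$, is a detail the paper leaves implicit.
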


\begin{proof}
    We show that both $t_\foc(p)$ and $t_\Max(p)$ are strictly positive. By definition of the normal exponential map, $\exp^\perp_S|_{t=0}:S^\times\to \hei$ coincides with the inclusion of $S^\times $ in $\hei$. Furthermore $\exp^\perp_{S*}\partial_t|_{t=0}$ is transverse to $S^\times$, therefore $d_{(0,p)}\exp^\perp_S$ is injective and $t_\foc(p)>0$. Now assume by contradiction that $t_\Max(p)=0$. Then there exists $t_n>0$, $t_n\to 0$, and $p_n$ in $S^\times \setminus\{p\}$ such that 
    \begin{equation}
        \exp^\perp_S(t_n,p_n)=\exp^\perp_S(t_n,p).
    \end{equation}
    Note that $p_n\to p$, indeed 
    \begin{equation}
    \begin{aligned}
        d_{SR}(p_n,p)&\leq d_{SR}(p_n,\exp^\perp_S(t_n,p))+d_{SR}(\exp^\perp_S(t_n,p),p)\\
        &=d_{SR}(p_n,\exp^\perp_S(t_n,p_n))+d_{SR}(\exp^\perp_S(t_n,p),p)\leq 2t_n.
    \end{aligned}
    \end{equation}
    Therefore, in any neighborhood of $(0,p)$ there exists $(t_n,p_n)$ such that $\exp^\perp_S(t_n,p)=\exp^\perp_S(t_n,p_n)$, contradicting the fact that $d_{(0,p)}\exp^\perp_S$ is injective.
\end{proof}
\begin{corollary}\label{c:umaiuscolo}
    Let $U\subset S$ be an open set and $T:U\to \R$ be a continuous function such that $T(q)\leq t_\cut(q)$ for all $q\in U$. Then defining $\mathcal{U}=\{(t,q)\in\R^+\times U\,:t<T(q)\}$ the restriction
    \begin{equation}
\exp_S^\perp\big|_\mathcal{U}:\mathcal{U}\to \hei
    \end{equation}
    is a diffeomorphism onto its image. 
\end{corollary}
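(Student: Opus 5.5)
To prove \cref{c:umaiuscolo}, I would show that $\exp^\perp_S|_{\mathcal U}$ is an injective local diffeomorphism on the open set $\mathcal U$; the conclusion then follows from the inverse function theorem. First, $\mathcal U$ is open in $\R^+\times U$ because $T$ is continuous. For interior points with $t>0$ I use the definition of the focal time. Since $t<T(q)\le t_\cut(q)\le t_\foc(q)$ by \cref{thm:t_cut=min_t_max_t_foc}, and $t_\foc(q)$ is the infimum of the times where the differential fails to be surjective, $d_{(t,q)}\exp^\perp_S$ is surjective. Because the source and target are both three-dimensional, it is an isomorphism. At $t=0$ the differential is injective, as shown in the proof of \cref{cor:pos_cut_time}, so the map is also an immersion up to the boundary $\{0\}\times U$. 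Thus $\exp^\perp_S|_{\mathcal U}$ is a local diffeomorphism.

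Next comes injectivity. Suppose $\exp^\perp_S(t,q)=\exp^\perp_S(t',q')=:x$ with $(t,q)\neq(t',q')$ both in $\mathcal U$. Since $t<t_\cut(q)$, the geodesic $\gamma_q$ is minimizing from $S$ on $[0,t]$, so $\mathrm{dist}_S(x)=t$. The same argument applied to $(t',q')$ gives $\mathrm{dist}_S(x)=t'$, hence $t=t'$ and $q\neq q'$.

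Then $t\ge t_\Max(q)$ by definition of the Maxwell time, contradicting $t<t_\cut(q)\le t_\Max(q)$. The case $t=0$ is trivial, since there $\exp^\perp_S(0,\cdot)$ is the inclusion of $S^\times$.

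Alternatively, without invoking \cref{thm:t_cut=min_t_max_t_foc}, one can use the broken-curve argument from its proof. Take $s\in(t,t_\cut(q))$ and follow $\gamma_{q'}$ up to time $t$, then $\gamma_q$ from $t$ to $s$. This is a curve from $S$ of length $s$ realizing $\mathrm{dist}_S(\gamma_q(s))=s$, so it is a minimizer, but it is not analytic at $t$ unless $\gamma_{q'}=\gamma_q$ on $[0,t]$, which would force $q=q'$.

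Finally, a continuous injective local diffeomorphism is an open map, hence a homeomorphism onto its open image, and its inverse is smooth by the inverse function theorem. The main point needing care is that $T(q)\le t_\cut(q)$ is only a non-strict inequality. This is harmless because $\mathcal U$ uses the strict inequality $t<T(q)$, so every point of $\mathcal U$ lies strictly before both its focal time and its Maxwell time.
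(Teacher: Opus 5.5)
Your proposal is correct and follows the paper's proof. Injectivity comes from $t=t'=\mathrm{dist}_S(x)$ together with $t_\cut\le t_\Max$, and the immersion property comes from $t_\cut\le t_\foc$, after which an injective immersion between equidimensional manifolds is a diffeomorphism onto its image. Your separate treatment of $t=0$ and the broken-curve alternative are harmless extras. One small correction: near $\{0\}\times U$ the image is not open in $\hei$, so continuity of the inverse there should be checked directly, e.g.\ via $t_n=\mathrm{dist}_S(x_n)\to 0$.
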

\begin{proof}
    Indeed, assume that there exists $(t,p),(\bar t,\bar p)\in \mathcal U$ such that 
     \begin{equation}
         \exp^\perp_S(t,p)=\exp^\perp_S(\bar t,\bar p)=:x.
     \end{equation}
     Since $t<t_\cut(p)$ and $\bar t< t_\cut(\bar p)$, then $t=\bar t=\mathrm{dist}_S(x)$. But then, by \cref{thm:t_cut=min_t_max_t_foc}, $t\geq t_\Max(p)\geq t_\cut(p)$, which is a contradiction. This shows that $\exp^\perp_S$ is injective on $\mathcal U$. In addition, since $t_\cut(p)\leq t_\foc(p)$ for every $p\in S^\times$ by \cref{thm:t_cut=min_t_max_t_foc}, then $\exp^\perp_S|_\mathcal U$ is an immersion. Thus, $\exp^\perp_S|_\mathcal U$ is an injective immersion between manifolds of the same dimension and hence a diffeomorphism onto its image.
\end{proof}
An important consequence of \cref{thm:t_cut=min_t_max_t_foc} is the continuity of the cut time function. 

\begin{corollary}\label{cor:t_cut_continuous}
    The cut time function $t_\cut:S^\times\to (0,\infty]$ is continuous. Moreover, extending $t_\cut$ by zero on $\Sigma(S)$, then $t_\cut:S\to [0,\infty]$ is continuous.  
\end{corollary}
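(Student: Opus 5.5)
Continuity on $S^\times$ and the behaviour at $\Sigma(S)$ are two separate claims, and I will handle them in that order.

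\textbf{Continuity on $S^\times$.} I prove upper and lower semicontinuity separately.

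For upper semicontinuity, let $p_n\to p$ in $S^\times$ with $t_\cut(p_n)\geq T$; I need $t_\cut(p)\geq T$. For every $t<T$ we have $\mathrm{dist}_S(\exp^\perp_S(t,p_n))=t$. Now $\exp^\perp_S$ is continuous on $\R^+\times S^\times$, because $\lambda^\perp$ is smooth on $S^\times$. Moreover $\mathrm{dist}_S$ is continuous by \cref{rmk:1_lip}. Passing to the limit gives $\mathrm{dist}_S(\exp^\perp_S(t,p))=t$ for all $t<T$, hence $t_\cut(p)\geq T$.

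For lower semicontinuity, suppose $t_n:=t_\cut(p_n)\to \bar t<t_\cut(p)$, possibly along a subsequence. By \cref{thm:t_cut=min_t_max_t_foc}, for each $n$ either $t_n=t_\foc(p_n)$ or $t_n=t_\Max(p_n)$.

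In the focal case, the set where $d\exp^\perp_S$ is not surjective is closed. So $d_{(\bar t,p)}\exp^\perp_S$ is degenerate, giving $t_\foc(p)\leq\bar t<t_\cut(p)$, which contradicts \cref{thm:t_cut=min_t_max_t_foc}.

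In the Maxwell case, I first need that the infimum defining $t_\Max(p_n)$ is attained, possibly as a limit that degenerates to a focal point. This gives points $\bar p_n\neq p_n$ with $\exp^\perp_S(t_n,\bar p_n)=\exp^\perp_S(t_n,p_n)=:q_n$. These $\bar p_n$ lie within distance $2t_n$ of $p_n$, so they stay in a compact set. There are two sub-cases.

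If $\bar p_n$ accumulates at $\bar p\neq p$ with $\bar p\in S^\times$, then $\exp^\perp_S(\bar t,\bar p)=\exp^\perp_S(\bar t,p)$, so $t_\Max(p)\leq\bar t$. This is a contradiction.

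If $\bar p_n\to p$, then $\exp^\perp_S$ fails to be injective near $(\bar t,p)$, so $d_{(\bar t,p)}\exp^\perp_S$ is singular, exactly as in the proof of \cref{thm:t_cut=min_t_max_t_foc}. Again this is a contradiction.

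The remaining possibility is that $\bar p_n$ approaches $\Sigma(S)$. I rule this out as follows. Both $\exp^\perp_S(\cdot,p_n)$ and $\exp^\perp_S(\cdot,\bar p_n)$ minimize up to time $t_n$, and $\mathrm{dist}_S(q_n)=t_n$. The limiting curve, by Ascoli–Arzelà as in the proof of \cref{thm:t_cut=min_t_max_t_foc}, would be a nonconstant minimizing geodesic from $S$ based at a characteristic point, contradicting \cref{lem:no_abn}.

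\textbf{Extension by zero.} The claim is that $t_\cut(p_n)\to0$ whenever $p_n\to p_0\in\Sigma(S)$. Suppose instead that $t_\cut(p_n)\geq\delta>0$ along a subsequence. The curves $\gamma_{p_n}$ restricted to $[0,\delta]$ are unit-speed minimizers from $S$. By Ascoli–Arzelà they converge to a unit-speed horizontal curve $\gamma$ with $\gamma(0)=p_0$ and $\mathrm{dist}_S(\gamma(t))=t$, using continuity of $\mathrm{dist}_S$. Then $\gamma$ is a nonconstant minimizing geodesic from $S$ based at $p_0\in\Sigma(S)$, contradicting \cref{lem:no_abn}.

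For the value $+\infty$, continuity is understood in $(0,\infty]$, and the semicontinuity arguments above cover it.

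\textbf{Main obstacle.} The delicate step is the Maxwell case of lower semicontinuity: controlling the partner points $\bar p_n$ and excluding that they escape to $\Sigma(S)$ or to infinity. I would handle it uniformly by working with minimizers rather than with $\exp^\perp_S$. For each $q$ slightly beyond the cut point, I take minimizers from $S$, pass to limits, and invoke \cref{lem:no_abn} together with the closedness of the focal condition.
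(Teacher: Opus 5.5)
Your proposal is correct and follows the paper's route. The second part is exactly the paper's Ascoli--Arzel\`a argument combined with \cref{lem:no_abn}. For the first part, the paper simply invokes the separate upper/lower semicontinuity argument of \cite[Prop.~8.76]{Agrachev}, and your write-up reproduces that argument with compactness of base points plus \cref{lem:no_abn} standing in for compactness of covectors. The attainment issue you flag in the Maxwell case is settled by the lower-bound part of the proof of \cref{thm:t_cut=min_t_max_t_foc}: at time $t_\cut(p_n)$ there is either a second minimizer from some $\bar p_n\in S^\times\setminus\{p_n\}$ reaching the same point, or $d\exp^\perp_S$ is singular there.
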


\begin{proof}
    The first part of the statement can be shown following the same arguments as \cite[Prop.\ 8.76]{Agrachev} in which one proves separately the upper and the lower semicontinuity. The second part of the statement instead follows if we prove that, for every $p\in\Sigma(S)$ and for every sequence $\{p_n\}_{n\in\N}\subset S^\times$ converging to $p$, it holds 
    \begin{equation}
        t_\cut(p_n) \xrightarrow{n\to\infty} 0.
    \end{equation}
    Assume by contradiction that there is a subsequence $t_\cut(p_n)\geq \ve>0$, for every $n\in \N$. By definition of $t_\cut$, this means that, for every $n\in \N$, the unique minimizing geodesic from $S$ based at $p_n$ satisfies 
    \begin{equation}
    \label{eq:geod_p_n_k}
        \mathrm{dist}_S(\gamma_{p_n}(t))=t,\qquad\forall\,t\in [0,\ve].
    \end{equation}
    By a standard Ascoli-Arzel\`a argument, up to extracting a subsequence, the sequence of curves $\{\gamma_{{p_n}|_{[0,\ve]}}\}$ converges to $\bar\gamma:[0,\ve]\to \hei$. By \eqref{eq:geod_p_n_k} and by the continuity of $\mathrm{dist}_S$, $\bar \gamma$ is a minimizing geodesic from $S$. Moreover, it is based at $\lim_{n\to\infty}\gamma_{p_n}(0) = \lim_{n\to\infty} p_n = p$, which is a contradiction with \cref{lem:no_abn}.   
\end{proof}
In turn, the continuity of the cut time implies that the cut locus has zero Lebesgue measure in $\hei$. 

\begin{corollary}
\label{cor:cut_S_negligible}
    The cut locus of $S$ is negligible.
\end{corollary}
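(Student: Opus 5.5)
We need to prove that the cut locus $\cut(S)$ has zero Lebesgue measure. Our plan is to realize it as a piece of the graph of the cut time, pushed forward by a smooth map, and use the fact that graphs of continuous functions are negligible.

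Consider the extended set $\Gamma:=\{(t_\cut(p),p): p\in S^\times,\ t_\cut(p)<\infty\}\subset \R^+\times S^\times$. By definition, $\cut(S)=\exp^\perp_S(\Gamma)$. By \cref{cor:t_cut_continuous}, $t_\cut$ is continuous on $S^\times$. Hence $\Gamma$ is the graph of a continuous real function over the $2$-dimensional manifold $S^\times$.

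We then show that $\Gamma$ has zero $3$-dimensional measure in $\R^+\times S^\times$. First cover $S^\times$ by countably many coordinate charts with compact closures. In each chart, apply Fubini: every vertical fiber $\{t\}\times\{p\}$ meets $\Gamma$ in at most one point, which has one-dimensional measure zero. The set $\Gamma$ is closed in $\{t_\cut<\infty\}$, hence Borel, so Fubini applies. Integrating over $p$ gives $|\Gamma\cap(\R^+\times V)|=0$ for each chart $V$. Countable subadditivity then gives that $\Gamma$ is null.

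Finally, $\exp^\perp_S:\R^+\times S^\times\to\hei$ is smooth, hence locally Lipschitz, between manifolds of the same dimension $3$. Such maps send null sets to null sets. Writing $\R^+\times S^\times$ as a countable union of compact sets on which the map is Lipschitz, we conclude that $\cut(S)=\exp^\perp_S(\Gamma)$ is a countable union of null sets, hence negligible.

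No step is a serious obstacle. The only points needing care are the measurability of $\Gamma$ (handled by continuity of $t_\cut$) and the non-compactness of $S^\times$ near $\Sigma(S)$ (handled by the countable exhaustion). We do not need the extension of $t_\cut$ to $\Sigma(S)$ here, since by \cref{lem:no_abn} no minimizer starts at a characteristic point, so $\cut(S)$ involves only base points in $S^\times$.
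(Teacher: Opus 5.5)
Your argument is correct and matches the paper's proof: you write $\cut(S)=\exp^\perp_S(\Gamma)$, where $\Gamma$ is the graph of the continuous cut time over $S^\times$; this graph is null, and the smooth map $\exp^\perp_S$ between $3$-manifolds sends null sets to null sets. You also spell out the Fubini, Borel-measurability and locally-Lipschitz details that the paper leaves implicit; note that the vertical fiber in your Fubini step should be $\R^+\times\{p\}$, not $\{t\}\times\{p\}$.
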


\begin{proof}
    As the cut time is a continuous function, cf.\ \cref{cor:t_cut_continuous}, its graph over $S^\times$, namely the set
    \begin{equation}
        \Gamma:=\{(t_\cut(p),p): p\in S^\times,   t_\cut(p)<\infty\}\subset\R^+\times S^\times, 
    \end{equation}
    has zero Lebesgue measure. Furthermore, the cut locus of $S$ satisfies the identity:
    \begin{equation}
        \cut(S) = \{\exp^\perp_S(t_\cut(p),p): p\in S^\times, t_\cut(p)<\infty\} = \exp^\perp_S(\Gamma).
    \end{equation}
    Since $\exp^\perp_S$ is a smooth function on $\R^+\times S^\times$, we conclude that $\cut(S)$ has zero Lebesgue measure.
\end{proof}

\subsection{A formula for the volume of the tube}
\label{sec:tube1}
We can now prove the formula for the volume of $\mathcal T_U(\ve)$ localized in an open set  $U\subset S$. Recall that, in $\hei$ there is a canonical measure, which we denote by $\vol$, given by the bi-invariant Haar measure. This measure coincides with the Lebesgue measure $\Leb^3$ and can be described in terms of the dual frame \eqref{eq:the_coframe} as 
\[
\vol(A) = \int_A \nu_0\wedge\nu_1\wedge \nu_2,
\]  
for every Lebesgue-measurable subset $A\subset\hei$. 
\begin{theorem}\label{thm:formula_for_volume}
    Let $S=\partial\Omega\subset\hei$ be a smooth, oriented, embedded surface. Let $U\subset S$ be a Borel subset, then 
    \begin{equation}
    \label{eq:formula_for_volume}
        \mathrm{vol}(\mathcal T_U(\ve))=\int_U\int_0^{\min\{\ve,t_\cut\}}\exp^{\perp*}_S(\nu_0\wedge\nu_1\wedge\nu_2).
    \end{equation}
\end{theorem}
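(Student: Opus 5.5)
The strategy is to reduce \eqref{eq:formula_for_volume} to the change of variables formula for a diffeomorphism, after discarding negligible sets. Consider the domain
\begin{equation}
\mathcal U_\ve:=\{(t,p)\in\R^+\times (U\cap S^\times)\,:\,0<t<\min\{\ve,t_\cut(p)\}\},
\end{equation}
together with the set $\mathcal V_\ve$ obtained by allowing $t\le\min\{\ve,t_\cut(p)\}$. By \eqref{eq:preloc3}, $\mathcal T_U(\ve)=\exp^\perp_S(\mathcal V_\ve)$. The difference $\mathcal V_\ve\setminus\mathcal U_\ve$ is contained in the union of the slice $\{t=\ve\}$ and the graph $\Gamma$ of $t_\cut$. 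Since $t_\cut$ is continuous on $S^\times$ (\cref{cor:t_cut_continuous}), both sets are Lebesgue-null in $\R^+\times S^\times$. Their images under the smooth map $\exp^\perp_S$ are therefore null in $\hei$; for the graph this is exactly the argument of \cref{cor:cut_S_negligible}. Hence $\vol(\mathcal T_U(\ve))=\vol(\exp^\perp_S(\mathcal U_\ve))$. The same null sets have zero measure for the integrand on the right-hand side. Finally, $U\cap\Sigma(S)$ contributes nothing to the right-hand side because the integrand is only defined on $S^\times$, equivalently because the upper limit vanishes there when $t_\cut$ is extended by zero.

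Next we show that $\exp^\perp_S$ restricted to $\mathcal U_\ve$ is injective with invertible differential. We argue exactly as in \cref{c:umaiuscolo}. If $\exp^\perp_S(t,p)=\exp^\perp_S(\bar t,\bar p)=x$ with $t<t_\cut(p)$ and $\bar t<t_\cut(\bar p)$, then both geodesics are minimizing up to time $t$ (respectively $\bar t$). This gives $t=\bar t=\mathrm{dist}_S(x)$. If $p\neq\bar p$, then $t\ge t_\Max(p)\ge t_\cut(p)$ by \cref{thm:t_cut=min_t_max_t_foc}, a contradiction. Moreover $t<t_\cut(p)\le t_\foc(p)$, so the differential is invertible. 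Since $\min\{\ve,t_\cut\}$ is continuous on $S^\times$, we could apply \cref{c:umaiuscolo} directly with $U$ open. For a general Borel $U$ we instead apply it with $U=S^\times$ and $T=\min\{\ve,t_\cut\}$, and then restrict to the Borel subset $\mathcal U_\ve$.

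The main technical point is the sign of the pulled-back form. We must have $\exp^{\perp*}_S(\nu_0\wedge\nu_1\wedge\nu_2)=J_S\,dt\wedge dp$ with $J_S>0$ on $\mathcal U_\ve$, so that the oriented integral equals the unsigned change-of-variables integral. By \cref{rmk:orientation}, the map $\exp^\perp_S$ is orientation preserving near $\{0\}\times S^\times$, so $J_S>0$ for small $t$. Along each segment $\{(t,p):0<t<t_\cut(p)\}$ the Jacobian is continuous, and it never vanishes because $t<t_\foc(p)$. Hence it keeps its sign, and $J_S>0$ throughout. The classical change of variables formula for the diffeomorphism $\exp^\perp_S|_{\mathcal U_\ve}$ onto its (open) image then gives
\begin{equation}
\vol(\exp^\perp_S(\mathcal U_\ve))=\int_{\mathcal U_\ve}\exp^{\perp*}_S(\nu_0\wedge\nu_1\wedge\nu_2).
\end{equation}
By Fubini, writing the integrand as $J_S(t,p)\,dt\wedge dp$, this equals the iterated integral in \eqref{eq:formula_for_volume}, including the case $t_\cut=\infty$ where the upper limit is $\ve$. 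The only delicate points are this orientation/sign argument and the measurability bookkeeping for a Borel $U$; I expect the sign argument to be the one that needs the most care.
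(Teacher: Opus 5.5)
Your proof is correct and follows the paper's argument. You use \cref{c:umaiuscolo} for injectivity and invertibility of the differential, the negligibility of the image of the graph of $\min\{\ve,t_\cut\}$, \cref{rmk:orientation} for the sign, and then change of variables and Fubini. The differences are minor. You handle a Borel $U$ directly, by applying \cref{c:umaiuscolo} on all of $S^\times$ and using change of variables on Borel subsets of the domain; the paper instead proves the open case and then approximates by open sets. Your continuity argument for the sign along each normal segment, where $J_S$ cannot vanish before $t_\foc$, makes explicit what the paper only sketches.
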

\begin{proof}
Assume $U\subset S$ is open. Set $T_\ve:=\{(t,p)\in\mathbb R^+\times S^\times : p\in S^\times\cap U,  0<t<\min\{\ve, t_\cut(p)\}\}$. By \cref{c:umaiuscolo} $\exp^\perp_S|_{T_\ve}$ is a diffeomorphism onto its image.
 Now, observe that, by construction, we have 
\begin{equation}
    \mathcal T_U(\ve) \setminus \exp^\perp_S(T_\ve) = \{\exp^\perp_S(\min\{t_\cut(p),\ve\},p):p\in S^\times\cap U\}.
\end{equation}
The latter set is the image through the smooth map $\exp^\perp_S$ of the graph of the continuous map $p\mapsto \min\{t_\cut(p),\ve\}$, therefore it is negligible. Thus, 
\begin{equation}\label{eq:cut_negligible}
    \vol(\mathcal T_U(\ve)) = \vol(\exp^\perp_S(T_\ve)) = \int_{\exp^\perp_S(T_\ve)}\nu_0\wedge\nu_1\wedge\nu_2. 
\end{equation}
According to \cref{rmk:orientation}, $\exp^\perp_S$ is orientation preserving close to $\{0\}\times S^\times$. Since $\exp^\perp_S|_{T_\ve}$ is an embedding, and $T_\ve$ intersects every open neighborhood of $\{0\}\times U$, we deduce that $\exp^\perp_S|_{T_\ve}$ is an orientation preserving diffeomorphism onto its image, therefore we may apply the change of variable formula, finding 
\begin{equation}
\begin{aligned}
    \vol(\mathcal T_U(\ve))=\int_{T_\ve}\exp^{\perp*}(\nu_0\wedge\nu_1\wedge\nu_2)=\int_U\int_0^{\min\{\ve,t_\cut(p)\}}\exp^{\perp*}_S(\nu_0\wedge\nu_1\wedge\nu_2),
\end{aligned}
\end{equation}
where the last equality follows from Fubini's Theorem. Finally, if $U\subset S$ is a Borel set, it is enough to approximate it by open sets and exploit \eqref{eq:formula_for_volume} and dominated convergence theorem to conclude.
\end{proof}

\begin{remark}\label{rmk:additivity_of_vol}
    A consequence of the above theorem is that if $U,V\subset S$ are disjoint Borel subsets we have 
    \begin{equation}
        \vol(\mathcal T_{U\cup V}(\ve))=\vol(\mathcal T_{U}(\ve))+\vol(\mathcal T_{V}(\ve)).
    \end{equation}
\end{remark}
\begin{notation}\label{notation:determinant}
    Given a $C^1$ map $\varphi:M\to \mathbb R^n$, where $M$ is a smooth $n$-dimensional manifold, and local coordinates $(x_1,\dots,x_n)$, we fix the following notation for the Jacobian determinant of $\varphi$ evaluated at the point $p\in M$ of coordinates $(y_1,\dots,y_n)$ as
    \begin{equation}
        \left.\det\left(\frac{\partial \varphi(x_1,\dots,x_n)}{\partial(x_1,\dots,x_n)}\right)\right|_{(y_1,\dots,y_n)}:=\det\Big(\partial_{x_1}\varphi(y_1,\dots,y_n),\dots,\partial_{x_n}\varphi(y_1,\dots,y_n)\Big).
    \end{equation}
   We omit the evaluation whenever $(y_1,\dots,y_n)=(x_1,\dots,x_n)$.
\end{notation}
\begin{remark}\label{rmk:determinant}
    Let $(x,y)$ be an oriented pair of local coordinates on $S$, then computing the pullback of $\exp^{\perp*}_S(\nu_0\wedge\nu_1\wedge\nu_2)$ with respect to the coordinates $(t,x,y)\in\mathbb R^+\times S^\times$ gives 
    \begin{equation}
         \mathrm{vol}(\mathcal T_U(\ve))=\int_U\int_0^{\min\{\ve,t_\cut(p)\}}\det\left(\frac{\partial\exp^\perp_S(t,x,y)}{\partial(t,x,y)}\right)dtdxdy.
    \end{equation}
\end{remark}
 \section{Normal exponential map and curvature forms near characteristic points}\label{sec:normal_exp} We now study the normal exponential map near a non-degenerate characteristic point $p$ of $S$. Making use of the results of \cref{ssec:char_points} on the local structure of $S$ near $p$, and in particular of the embedding of \cref{rmk:embedding}, we may assume that $p=(0,0,0)$ and that
\begin{equation}\label{eq:S_near_p}
    S=\left\{(x,y,z)\in\hei\,:\, z=\frac{1}{2}u(x,y)\right\},
\end{equation}
where $u\in C^\infty(\mathbb R^2)$ satisfies the following for some $a,b\in\mathbb R$
\begin{equation}\label{eq:ab_u1}
            u(x,y)=\frac{1}{2}(ax^2+by^2)+o(x^2+y^2),\qquad \text{as } (x,y)\to (0,0).
\end{equation}
 We stress that we work under the non-degenerate assumption $ab+1\neq 0$. We orient $S$ so that its positive side is $\{z>\tfrac{1}{2}u(x,y)\}$. To compute the normal exponential map for a surface of the form \eqref{eq:S_near_p}, we first compute its positive normal covector $\lambda^\perp$.
\begin{lemma}\label{lem:lambda_perp}
    Let $h_i:S^\times \to\mathbb R$, for $i=0,1,2$, be the components of $\lambda^\perp$ with respect to the dual frame  given by  \eqref{eq:the_coframe}, i.e. $h_i=\langle\lambda^\perp,X_i\rangle$. Then the following equalities hold
    \begin{equation}
        h_0=\frac{2}{\sqrt{(\partial_x u+y)^2+(\partial_y u-x)^2}},\qquad h_1=-\frac{h_0}{2}(\partial_x u+y),\qquad h_2=-\frac{h_0}{2}(\partial_y u-x).
    \end{equation}
\end{lemma}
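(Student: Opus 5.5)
The plan is to characterize $\lambda^\perp(p)$ by the two conditions in \eqref{eq:twocond} plus the sign condition, and to solve them explicitly in the coframe \eqref{eq:the_coframe}. Since $S$ is the zero set of $f=z-\frac12 u(x,y)$, a covector at $p\in S$ annihilating $T_pS$ must be a multiple of $df$. So we may write $\lambda^\perp(p)=c\,d_pf$ for some scalar $c$. Its components are then $h_i=c\,X_i(f)$ for $i=0,1,2$.

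First I compute the components of $df$ in the frame \eqref{eq:fields_Xi}:
\begin{equation}
X_1 f=-\tfrac12\partial_x u-\tfrac y2=-\tfrac12(\partial_x u+y),\qquad X_2 f=-\tfrac12\partial_y u+\tfrac x2=-\tfrac12(\partial_y u-x),\qquad X_0 f=1.
\end{equation}
This gives $h_0=c$, $h_1=-\frac{c}{2}(\partial_xu+y)$ and $h_2=-\frac c2(\partial_yu-x)$, which already matches the claimed relations between $h_1,h_2$ and $h_0$. Non-characteristic points are exactly those where $\nabla f\neq0$, i.e.\ where $(\partial_xu+y)^2+(\partial_yu-x)^2\neq0$, so the normalization below makes sense on $S^\times$.

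Next I impose $2H(\lambda^\perp)=h_1^2+h_2^2=1$. This reads $\frac{c^2}{4}\bigl((\partial_xu+y)^2+(\partial_yu-x)^2\bigr)=1$, so $|c|=2/\sqrt{(\partial_xu+y)^2+(\partial_yu-x)^2}$.

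The only delicate step is fixing the sign: we must show $c>0$ gives geodesics entering $\{z>\frac12u\}$, i.e.\ along which $\mathrm{dist}_S$ is positive. I would argue via the initial velocity. For the normal extremal, $\dot\gamma(0)=h_1X_1+h_2X_2$, hence
\begin{equation}
\frac{d}{dt}\Big|_{t=0} f(\gamma(t))=h_1X_1f+h_2X_2f=c\bigl((X_1f)^2+(X_2f)^2\bigr)=\frac{h_1^2+h_2^2}{c}=\frac1c.
\end{equation}
So $f(\gamma(t))$ increases initially precisely when $c>0$. Since $f>0$ is the positive side, the orientation convention forces $c>0$, which yields the stated formula for $h_0$.
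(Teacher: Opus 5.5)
Your proof is correct and follows the paper's route: $\lambda^\perp$ is the normalized multiple of $df$, and its coframe components are $X_i f$. Your initial-velocity computation $\frac{d}{dt}f(\gamma(t))\big|_{t=0}=1/c$ is a welcome addition, because the paper simply writes $\lambda^\perp=df/\sqrt{2H(df)}$ without justifying the choice of sign that makes the geodesics enter the positive side $\{z>\tfrac12 u\}$.
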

\begin{proof}
    Let $f(x,y,z)=z-u(x,y)/2$.  First observe that, thanks to \eqref{eq:twocond} the covector $\lambda^\perp(p)$ is parallel to $df|_{p}$ and the fact that $H$ is homogeneous of degree two gives the normalization
    \begin{equation}\label{eq:lambdaperpd}
        \lambda^\perp(p)=\frac{df|_{p}}{\sqrt{2H(df|_{p})}}.
    \end{equation}
    We then write $df$ with respect to the dual frame  \eqref{eq:the_coframe}  as follows
    \begin{equation}\label{eq:df}
    \begin{aligned}
         df=dz-\frac{\partial_x u}{2}\,dx-\frac{\partial_y u}{2}\,dy &=dz+\frac{1}{2}(ydx-xdy)-\frac{1}{2}(ydx-xdy)-\frac{\partial_x u}{2}\,dx-\frac{\partial_y u}{2}dy\\
         &=\nu_0-\frac{1}{2}(\partial_x u+y)\nu_1-\frac{1}{2}(\partial_y u-x)\nu_2.
    \end{aligned}
    \end{equation}
    We now compute the denominator in \eqref{eq:lambdaperpd}
    \begin{equation}\label{eq:2H(df)}
        \sqrt{2H(df)}=\sqrt{\langle df,X_1\rangle^2+\langle df,X_2\rangle^2}=\frac{1}{2}\sqrt{(\partial_x u+y)^2+(\partial_y u-x)^2}.
    \end{equation}
    Plugging \eqref{eq:df} and \eqref{eq:2H(df)} into \eqref{eq:lambdaperpd} and decomposing $\lambda^\perp=h_0\nu_0+h_1\nu_1+h_2\nu_2$, we obtain the result.
\end{proof}
\begin{example}\label{ex:homog2}
    For $a,b\in\R$ with $ab+1\neq 0$, consider the homogeneous surface 
    \[
    S_{a,b} = \left\{(x,y,z)\in\hei : z = \frac{1}{4}(ax^2+by^2)\right\}.
    \]
    Substituting the explicit expression of the defining function $u$ into the formulae of \cref{lem:lambda_perp}, we find 
    \begin{equation}
        h_0=\frac{2}{\sqrt{(ax+y)^2+(by-x)^2}},\quad h_1=\frac{-(ax+y)}{\sqrt{(ax+y)^2+(by-x)^2}},\quad h_2=\frac{-(by-x)}{\sqrt{(ax+y)^2+(by-x)^2}}.
    \end{equation}
\end{example}
\begin{remark}\label{rmk:diver_h0}
    We stress that, while $h_1(x,y),h_2(x,y)$ remain bounded, $h_0(x,y)$ diverges as $(x,y)\to (0,0)$, indeed from the asymptotic expansion \eqref{eq:ab_u1} we see that 
    \begin{equation}
        h_0(x,y)=\frac{2}{\sqrt{(ax+y)^2+(by-x)^2}}+O(1).
    \end{equation}
    Notice in particular that the quantity $h_0^{-2}$ is smooth in a neighborhood of the characteristic point.  Similar expansions hold for $h_1$ and $h_2$, indeed denoting $r=\sqrt{x^2+y^2}$ we have 
    \begin{equation}
        h_1(x,y)=\frac{-(ax+y)}{\sqrt{(ax+y)^2+(by-x)^2}}+O(r),\qquad h_2=\frac{-(by-x)}{\sqrt{(ax+y)^2+(by-x)^2}}+O(r),
    \end{equation}
    where the remainders are smooth in polar coordinates.
\end{remark}
We now compute the normal exponential map for a general $S$ in terms of the normal covector. In the following proposition, for $s\in \mathbb R$ we employ the notation
\begin{equation}
\label{eq:rotation}
        \mathfrak{R}(s)=\begin{pmatrix}
            \cos(s) &\sin(s)\\
            -\sin(s) & \cos(s)
        \end{pmatrix}.
    \end{equation}
    In the next statement, to lighten the notation, we use the shorthand $h_i=h_i(x,y)$, for $i=0,1,2$.

\begin{prop}\label{prop:normal_exponential_map}
    Let $h_0,h_1,h_2$ be the components of the normal covector and let $(X,Y,Z)$ be the components of the normal exponential map in coordinates, i.e., 
    $$\exp^\perp_S:\mathbb  R^+\times \mathbb R^2\setminus\{(0,0)\}\to \hei,\qquad \exp^\perp_S=(X,Y,Z).$$
    Then, the first two components are
\begin{equation}
\label{eq:simplified_xy}
\begin{pmatrix}
    X(t,x,y) \\ Y(t,x,y)
\end{pmatrix}=\begin{pmatrix}
    x \\ y
\end{pmatrix}+\frac{1}{h_0}\Big[\mathfrak{R}(-\pi/2)+\mathfrak{R}(\pi/2-h_0t)\Big]\begin{pmatrix}
    h_1 \\ h_2
\end{pmatrix}
\end{equation}
whereas the third one reads  
\begin{equation}
\label{eq:simplified_z}
    Z(t,x,y)=\frac{u(x,y)}{2}+\frac{h_0 t - \sin(h_0t)}{2 h_0^2}
			+ \frac{1}{2h_0}\begin{pmatrix}
        x & y
    \end{pmatrix}\Big[\mathrm{Id} +\mathfrak{R}\left(\pi-h_0t\right)\Big]\begin{pmatrix}
        h_1 \\ h_2
    \end{pmatrix}.
    \end{equation}
\end{prop}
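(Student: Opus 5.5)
The plan is to compute $\exp^\perp_S(t,p)=\exp_p(t\lambda^\perp(p))$ directly, using the left-invariance identity \eqref{eq:exp_left-invariance} to reduce to the explicit formula \eqref{eq:exp_0_explicit} for $\exp_0$, and then to rewrite the result in matrix form. Fix $p=(x,y,\tfrac12 u(x,y))\in S^\times$, where we use the graph parametrization of \cref{rmk:embedding}.

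\textbf{Reduction to the origin.} Since $X_0,X_1,X_2$ are left-invariant, the dual coframe $\nu_0,\nu_1,\nu_2$ is left-invariant too. Hence the covector $L_p^*\lambda^\perp(p)\in T_0^*\hei$ has the same components $h_0,h_1,h_2$ in the frame at the origin as $\lambda^\perp(p)$ has at $p$; these components are given by \cref{lem:lambda_perp}. By \eqref{eq:exp_left-invariance},
\begin{equation}
\exp^\perp_S(t,p)=p\star \exp_0\bigl(t(h_1\nu_1+h_2\nu_2+h_0\nu_0)\bigr).
\end{equation}
We also record two facts needed below. First, $h_1^2+h_2^2=2H(\lambda^\perp)=1$ by \eqref{eq:twocond}. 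Second, $h_0\neq0$ on $S^\times$.

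\textbf{Horizontal components.} Set $s=h_0t$. By \eqref{eq:exp_0_explicit}, the first two components of $\exp_0$ are
\begin{equation}
\frac{1}{h_0}\bigl(h_1\sin s+h_2(\cos s-1),\ h_1(1-\cos s)+h_2\sin s\bigr).
\end{equation}
From \eqref{eq:rotation} we compute $\mathfrak R(-\pi/2)(h_1,h_2)^T=(-h_2,h_1)^T$ and $\mathfrak R(\pi/2-s)(h_1,h_2)^T=(h_1\sin s+h_2\cos s,\ -h_1\cos s+h_2\sin s)^T$. Adding these two vectors reproduces the displayed pair. The group law \eqref{eq:group_law} adds the first two coordinates, which gives \eqref{eq:simplified_xy}.

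\textbf{Vertical component.} The group law gives
\begin{equation}
Z=\tfrac12u(x,y)+z_{\exp_0}+\tfrac12(-y,x)\cdot(X',Y'),
\end{equation}
where $(X',Y')$ is the horizontal part of $\exp_0$ computed in the previous step. Using $h_1^2+h_2^2=1$, the middle term is $z_{\exp_0}=\frac{s-\sin s}{2h_0^2}$. For the last term, write $(-y,x)=(x,y)\,\mathfrak R(\pi/2)$. Since $\mathfrak R$ is additive in its argument, $\mathfrak R(\pi/2)\mathfrak R(-\pi/2)=\mathrm{Id}$ and $\mathfrak R(\pi/2)\mathfrak R(\pi/2-s)=\mathfrak R(\pi-s)$. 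This yields \eqref{eq:simplified_z}.

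There is no real obstacle. The only point needing care is justifying that the components $h_i$ are unchanged under $L_p^*$, which is exactly the left-invariance of the coframe. The remaining work is keeping the rotation-matrix sign conventions consistent.
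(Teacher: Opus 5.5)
Your proposal is correct and follows the paper's proof essentially step for step. Both arguments reduce to $\exp_0$ via left-invariance of the coframe, plug into \eqref{eq:exp_0_explicit} using $h_1^2+h_2^2=1$, and then apply the group law together with $(-y,x)=(x,y)\,\mathfrak R(\pi/2)$ and the additivity of $\mathfrak R$.
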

\begin{proof}
For fixed $(x,y)\in \mathbb R^2\setminus\{(0,0)\}$, let $q=(x,y, \frac12 u(x,y))$. Let $L_q:\mathbb H^1\to\mathbb H^1$ denote the left-translation by $q$. By \eqref{eq:exp_left-invariance}, it holds
\begin{equation}
	\exp^\perp_S(t,x,y) = \exp_q(t\lambda^\perp(x,y))=L_q\circ \exp_0\circ L_q^*(t\lambda^\perp(x,y)),
\end{equation}
where $\exp_0:T_0^* \mathbb H^1\to\mathbb H^1$ is the exponential map at the identity. Since the dual frame  \eqref{eq:the_coframe} is left-invariant, the computation of $L_q^*(\lambda^\perp(x,y))$ is straightforward and gives
\begin{equation}
	\begin{aligned}
		L_q^*(\lambda^\perp(x,y))
        &=h_0(x,y)dz+h_1(x,y)dx+h_2(x,y)dy.
	\end{aligned}
\end{equation}
Combining the latter with the explicit expression of $\exp_0$, given in \eqref{eq:exp_0_explicit}, we may compute $\exp^\perp_S$. More precisely, collecting the first two components into a single $\mathbb R^2$-valued vector and using $h_1^2+h_2^2=1$, we find
\begin{equation}
\begin{aligned}
    L_q\circ \exp_0\circ L_q^*(t\lambda^\perp(x,y))
    &=\left(x,y,\frac{u(x,y)}{2}\right)\star\left(\frac{1}{h_0}\left(\mathfrak{R}(-\tfrac\pi2)+\mathfrak{R}(\tfrac\pi2-h_0 t)\right)\begin{pmatrix}
        h_1 \\
        h_2
    \end{pmatrix},\,\frac{h_0 t - \sin(h_0 t)}{2 h_0^2}\right).
\end{aligned}
\end{equation}
Recalling that the group law is given by \eqref{eq:group_law}, we compute the first two coordinates of the above multiplication, finding \eqref{eq:simplified_xy}. To prove \eqref{eq:simplified_z}, we compute 
\begin{equation}
\begin{aligned}
    Z&=\frac{u(x,y)}{2}+\frac{h_0 t - \sin(h_0 t)}{2 h_0^2}+\frac{1}{2h_0}(-y,x)\cdot\left(\mathfrak{R}(-\pi/2)+\mathfrak{R}(\pi/2-h_0t)\right)\begin{pmatrix}
    h_1 \\ h_2
\end{pmatrix}\\
&=\frac{u(x,y)}{2}+\frac{h_0 t - \sin(h_0t)}{2 h_0^2}+\frac{1}{2h_0}(x,y)\cdot \mathfrak{R}(\pi/2)\left(\mathfrak{R}(-\pi/2)+\mathfrak{R}(\pi/2-h_0t)\right)\begin{pmatrix}
    h_1 \\ h_2
\end{pmatrix}\\
&=\frac{u(x,y)}{2}+\frac{h_0 t - \sin(h_0 t)}{2 h_0^2}+\frac{1}{2h_0}(x,y)\cdot \left(\mathrm{Id}+\mathfrak{R}(\pi-h_0t)\right)\begin{pmatrix}
    h_1 \\ h_2
\end{pmatrix}. 
\end{aligned}
\end{equation}
\end{proof}
The above proposition shows that $\exp^\perp_S(t,x,y)$ does not extend smoothly to the origin $(x,y)=(0,0)$, since, by \cref{rmk:diver_h0}, $h_0(x,y)$ diverges as $(x,y)\to (0,0)$. Nevertheless, the resulting singularity can be removed changing the time parameter $t$, or equivalently, by a suitable reparametrization of the normal geodesics from $S$.
\begin{prop}\label{lem:smooth_E}
    Let $S\subset \hei$ be the surface of \eqref{eq:S_near_p}, then the following map is smooth:
    \begin{equation} \label{eq:smooth_E}
        E:\R^+\times \mathbb R^2\to \hei,\qquad E(s,x,y):=\exp^\perp_S\left(\frac{s}{h_0(x,y)},x,y\right).
    \end{equation}
\end{prop}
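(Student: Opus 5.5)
The plan is to substitute $t=s/h_0(x,y)$ directly into the explicit formulas of \cref{prop:normal_exponential_map}. Then we check that every occurrence of the divergent quantity $h_0$ cancels, leaving expressions built only from smooth functions of $(s,x,y)$. The key observation is that, by \cref{lem:lambda_perp}, the only combinations of $h_0,h_1,h_2$ that survive the substitution are
\begin{equation}
\frac{h_1}{h_0}=-\frac12(\partial_x u+y),\qquad \frac{h_2}{h_0}=-\frac12(\partial_y u-x),\qquad \frac{1}{h_0^2}=\frac14\Bigl((\partial_x u+y)^2+(\partial_y u-x)^2\Bigr).
\end{equation}
All three are polynomial in $x,y,\partial_xu,\partial_yu$, hence smooth on all of $\mathbb R^2$, including the origin (cf.\ \cref{rmk:diver_h0}).

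First, the horizontal components. Setting $h_0t=s$ in \eqref{eq:simplified_xy} gives
\begin{equation}
\begin{pmatrix} X\\ Y\end{pmatrix}=\begin{pmatrix} x\\ y\end{pmatrix}-\frac12\Bigl[\mathfrak R(-\pi/2)+\mathfrak R(\pi/2-s)\Bigr]\begin{pmatrix}\partial_x u+y\\ \partial_y u-x\end{pmatrix}.
\end{equation}
The rotation matrix depends smoothly (indeed analytically) on $s$, so this is smooth in $(s,x,y)\in\R^+\times\R^2$.

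Next, the vertical component. In \eqref{eq:simplified_z} the term $\frac{h_0t-\sin(h_0t)}{2h_0^2}$ becomes $\frac{1}{2}(s-\sin s)\,h_0^{-2}$, which is a product of smooth functions. The last term becomes
\begin{equation}
-\tfrac14\,(x,y)\bigl[\mathrm{Id}+\mathfrak R(\pi-s)\bigr](\partial_xu+y,\ \partial_yu-x)^T,
\end{equation}
which is again smooth, and $u/2$ is smooth by assumption.

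Hence on $\R^+\times(\R^2\setminus\{0\})$ the map $E$ coincides with an explicit map that is smooth on $\R^+\times\R^2$. We take this map as the definition of $E$ on the whole domain, i.e.\ $E$ is the unique continuous extension across $\{x=y=0\}$. At the origin it gives $E(s,0,0)=(0,0,0)$, consistent with the characteristic point being fixed.

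The only delicate point is formal rather than analytic. Formula \eqref{eq:simplified_xy} was derived using $h_1^2+h_2^2=1$, which holds only off the characteristic set, so the statement must be read as asserting that $E$ admits this smooth extension. No genuine obstacle is expected beyond verifying that the substitution removes every negative power of $h_0^{-1}$. In particular, no odd power $h_0^{-1}$ survives without being paired with $h_1$ or $h_2$, and this pairing is what makes the cancellation work.
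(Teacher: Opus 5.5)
Your proposal is correct and follows the paper's proof essentially verbatim: you substitute $t=s/h_0$ into \cref{prop:normal_exponential_map}, use \cref{lem:lambda_perp} to rewrite $h_1/h_0$ and $h_2/h_0$ in terms of $\partial_x u+y$ and $\partial_y u-x$, and observe that $h_0$ otherwise survives only through $h_0^{-2}$, which extends smoothly to the origin. Your reading of the statement as asserting a smooth extension across the characteristic point matches the paper's conclusion that $E$ is ``well-defined and smooth on $\R^+\times\R^2$''.
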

\begin{proof}
    We write the normal exponential map of $S$ componentwise as $\exp^\perp_S=(X,Y,Z)$ and observe that $E(s,x,y)=(X,Y,Z)|_{(s/h_0,x,y)}$. Then by \cref{prop:normal_exponential_map} and \cref{lem:lambda_perp} we have
    \begin{equation}
\left.\begin{pmatrix}
    X \\ Y
\end{pmatrix}\right|_{(s/h_0,x,y)}=\begin{pmatrix}
    x \\ y
\end{pmatrix}-\frac{1}{2}\Big[\mathfrak{R}(-\pi/2)+\mathfrak{R}(\pi/2-s)\Big]\begin{pmatrix}
   \partial_x u+y\\\partial_y u-x
\end{pmatrix},
\end{equation}
from which we see that the first two components are smooth and well defined at $(0,0)$. Similarly for the third component we find 
\begin{equation}
    Z(s/h_0,x,y)=\frac{u(x,y)}{2}+\frac{s - \sin(s)}{2 h_0^2}
			-\frac{1}{4}\begin{pmatrix}
        x & y
    \end{pmatrix}\Big[\mathrm{Id} +\mathfrak{R}\left(\pi-s\right)\Big]\begin{pmatrix}
   \partial_x u+y\\\partial_y u-x
\end{pmatrix}.
    \end{equation}
    By \cref{lem:lambda_perp}, the function $h_0^{-2}$ extends smoothly to $(x,y)=(0,0)$, therefore also the third component is smooth. This shows that the map $E$ is well-defined and smooth on $\R^+\times\R^2$.
\end{proof}  
 
\subsection{Normal exponential map for homogeneous surfaces}\label{ssec:exp_regularity}

Let $S_{a,b}$ be a homogeneous surface. The normal exponential of $S_{a,b}$ inherits the homogeneity property \eqref{eq:homogeneity_S_ab}. To better describe the latter property, we introduce a pair of polar coordinates adapted to $S_{a,b}$: define $(\rho,\vartheta)\in \mathbb R^+\times \mathbb S^1$ such that
        \begin{equation}\label{eq:x-y_polar}
            \begin{pmatrix}
            \rho\cos\vartheta\\
            \rho\sin\vartheta
            \end{pmatrix}=\begin{pmatrix}
                ax+y\\
                by-x
            \end{pmatrix}.
        \end{equation}

\begin{remark}\label{rmk:hom_or}
Under the assumption $1+ab\neq 0$, the above polar coordinates are well-defined. The parametrization  $(\rho,\vartheta)\mapsto(x,y)$ determined by~\eqref{eq:x-y_polar} has Jacobian determinant $\rho/(1+ab)$.
Hence, it is orientation-reversing when $1+ab<0$.
\end{remark}

\noindent As a consequence of \cref{ex:homog2}, in such coordinates, the components of the normal covector $\lambda^\perp$ are
        \begin{equation}
        \label{eq:components_covector}
            h_0=\frac{2}\rho,\qquad h_1=-\cos\vartheta,\qquad h_2=-\sin\vartheta.
        \end{equation} 
We use the notation $\exp^\perp_{S_{a,b}}(t,\rho,\vartheta)$ for the normal exponential map expressed in coordinates \eqref{eq:x-y_polar}.  
\begin{lemma}
    \label{lem:homogeneity_normal_exp}
    Let $a,b\in\mathbb R$ such that $ab+1\neq 0$ and let $S_{a,b}\subset\hei$ be the homogeneous surface as defined in \cref{def:homogeneous_surf}. Then, for every $(t,\rho,\vartheta)\in\mathbb R^+\times (0,+\infty)\times\mathbb S^1$ it holds
    \begin{equation}
        \exp^\perp_{S_{a,b}}(t,\rho,\vartheta)=\delta_\ve(\exp^\perp_{S_{a,b}}(t/\ve,\rho/\ve,\vartheta)),
    \end{equation}
    where $\delta_\ve$ is the dilation defined in \eqref{eq:dilations}.
\end{lemma}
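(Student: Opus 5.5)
The plan is to verify the identity by a direct computation from the explicit formulas of \cref{prop:normal_exponential_map}, using that every ingredient is homogeneous in $\rho$ of the right degree. A more conceptual argument is also available. The dilation $\delta_\ve$ preserves $S_{a,b}$ by \eqref{eq:homogeneity_S_ab}, and it is a contact map scaling $d_{SR}$ by $\ve$. Hence it maps unit-speed horizontal curves to curves of speed $\ve$, normal geodesics to normal geodesics, and sends the positive side of $S_{a,b}$ to itself. However, matching the base points and the sign of the normal covector needs some bookkeeping, so I will present the explicit check.

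First I record the scaling of the coordinates. By \eqref{eq:x-y_polar}, the point $(x,y)$ with adapted polar coordinates $(\rho,\vartheta)$ is $(x,y)=\rho\,A^{-1}(\cos\vartheta,\sin\vartheta)^T$, where $A=\begin{pmatrix} a & 1\\ -1 & b\end{pmatrix}$ is invertible because $\det A=1+ab\neq0$. So $(x,y)$ is linear in $\rho$ at fixed $\vartheta$. Replacing $\rho$ by $\rho/\ve$ therefore replaces $(x,y)$ by $(x,y)/\ve$ and leaves $\vartheta$ unchanged. By \eqref{eq:components_covector}, $h_0=2/\rho$ becomes $2\ve/\rho$, while $h_1=-\cos\vartheta$ and $h_2=-\sin\vartheta$ are unchanged. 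The key observation is that the phase $h_0t=2t/\rho$ is invariant under $(t,\rho)\mapsto(t/\ve,\rho/\ve)$.

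Next I substitute into \eqref{eq:simplified_xy}. We have
\begin{equation}
(X,Y)^T=(x,y)^T+\tfrac{\rho}{2}\bigl[\mathfrak R(-\pi/2)+\mathfrak R(\pi/2-2t/\rho)\bigr](h_1,h_2)^T .
\end{equation}
Under $(t,\rho)\mapsto(t/\ve,\rho/\ve)$ both summands are multiplied by $1/\ve$, and applying $\delta_\ve$ multiplies the horizontal components by $\ve$, which restores the original values.

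For the vertical component \eqref{eq:simplified_z}, I check that each of the three terms is multiplied by $\ve^{-2}$, so that $\delta_\ve$, which multiplies $z$ by $\ve^2$, restores them.
\begin{itemize}
\item The term $u(x,y)/2=\tfrac14(ax^2+by^2)$ is quadratic in $(x,y)$, hence picks up $\ve^{-2}$.
\item The term $(h_0t-\sin h_0t)/(2h_0^2)=\tfrac{\rho^2}{8}\bigl(2t/\rho-\sin(2t/\rho)\bigr)$ picks up $\ve^{-2}$.
\item The last term $\tfrac{\rho}{4}(x,y)\bigl[\mathrm{Id}+\mathfrak R(\pi-2t/\rho)\bigr](h_1,h_2)^T$ is a product of two factors, each linear in $\rho$, and a bracket depending only on $t/\rho$ and $\vartheta$, so it also picks up $\ve^{-2}$.
\end{itemize}
This proves the identity for all $t\ge0$, $\rho>0$, $\vartheta\in\mathbb S^1$.

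There is no real obstacle here. The only point requiring care is that the adapted polar coordinates are linear in $(x,y)$, which relies on $1+ab\neq0$, and that the components $h_1,h_2$ depend only on $\vartheta$, so that the phase $2t/\rho$ is dilation invariant. Both facts follow immediately from \cref{ex:homog2} and \eqref{eq:components_covector}.
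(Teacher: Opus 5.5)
Your proof is correct, but it takes a different route from the paper's. The paper works at the level of the Hamiltonian flow. It uses the identity $\delta_\ve^{-1}(\exp_p(t\lambda))=\exp_{\delta_\ve^{-1}(p)}(t\ve^{-2}\delta_\ve^*\lambda)$, which holds because $\delta_\ve^*$ multiplies $h_1,h_2$ by $\ve$ and $h_0$ by $\ve^2$. It then reads off from \eqref{eq:components_covector} that $\delta_\ve^*\lambda^\perp(p)=\ve\,\lambda^\perp(\delta_\ve^{-1}(p))$, i.e.\ that the normal covector field itself is dilation-equivariant. This is essentially the conceptual argument you sketched and set aside.

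You instead substitute $(t/\ve,\rho/\ve,\vartheta)$ into the closed-form expressions \eqref{eq:simplified_xy}--\eqref{eq:simplified_z} and track the homogeneity degree of each term. This uses that $(x,y)$ is linear in $\rho$ and that the phase $h_0t=2t/\rho$ is invariant. All your degree counts are right, including $u/2=\tfrac14(ax^2+by^2)$ and $1/(2h_0)=\rho/4$.

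Your version is fully elementary and avoids the cotangent-lift bookkeeping that the paper calls ``a standard computation''. The price is that it depends on the explicit formula for $\exp_0$ through \cref{prop:normal_exponential_map}. The paper's version transfers almost verbatim to \cref{lem:approximation_exp_new}. There the surface is no longer dilation-invariant, so exact term-by-term scaling fails. The same flow identity is instead applied to the perturbed covector $\ve^{-1}\delta_\ve^*\lambda^\perp=\lambda^\perp_{S_{a,b}}+\ve G$, and only smooth dependence of the flow on the covector is needed.
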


\begin{proof}
    Fix $\ve>0$ and $p=(\rho,\vartheta)\in S_{a,b}$. A standard computation shows that 
    \begin{equation}
    \label{eq:dilation_exp}
        \delta_\ve^{-1}(\exp_{S_{a,b}}^\perp(t,\rho,\vartheta))= \delta_\ve^{-1}\left(\exp_p(t\lambda^\perp(p))\right) = \exp_{\delta_\ve^{-1}(p)} (t\ve^{-2}\delta_\ve^*(\lambda^\perp(p))).
    \end{equation}
    Now, in polar coordinates, we compare $\lambda^\perp(p)$ and $\delta_\ve^*(\lambda^\perp(p))$ using \eqref{eq:components_covector}: 
    \begin{align}
        \delta_\ve^*(\lambda^\perp(p)) &= \delta_\ve^* \left.\left(\frac{2}{\rho}\nu_0 - \cos\vartheta\nu_1 -\sin\vartheta\nu_2\right)\right|_p =  \left.\left(\frac{2}{\rho}\ve^2\nu_0 - \ve\cos\vartheta\nu_1 -\ve\sin\vartheta\nu_2\right)\right|_{\delta_\ve^{-1}(p)}\\
        &=\ve\left.\left(\frac{2\ve}{\rho}\nu_0 - \cos\vartheta\nu_1 -\sin\vartheta\nu_2\right)\right|_{\delta_\ve^{-1}(p)}= \ve\lambda^\perp(\delta_\ve^{-1}(p)). 
    \end{align}
    Substituting the latter identity into \eqref{eq:dilation_exp}, we obtain
    \[
    \delta_\ve^{-1}(\exp_{S_{a,b}}^\perp(t,\rho,\vartheta)) =   \exp_{\delta_\ve^{-1}(p)}(t\ve^{-1}\lambda^\perp(\delta_\ve^{-1}(p))) = \exp_{S_{a,b}}^\perp(t/\ve,\rho/\ve,\vartheta),
    \]
    which is the claimed identity.  
\end{proof}
\begin{remark}\label{rmk:E_ab_approx}
    Let $E_{a,b}(s,\rho,\vartheta)$ be the map defined in \cref{lem:smooth_E} associated to the surface $S_{a,b}$, expressed in  the coordinates of \eqref{eq:x-y_polar}. By definition of $E_{a,b}$ and by \eqref{eq:components_covector} we have 
    \begin{equation}
    \begin{aligned}
                E_{a,b}(s,\rho,\vartheta)=\exp^\perp_{S_{a,b}}(\tfrac{s\rho}{2},\rho,\vartheta)=\delta_\ve(\exp^\perp_{S_{a,b}}(\tfrac{s\rho}{2\ve},\tfrac{\rho}\ve,\vartheta))=\delta_\ve(\exp^\perp_{S_{a,b}}(\tfrac{s}{h_0(\rho/\ve,\vartheta)},\tfrac{\rho}\ve,\vartheta))=\delta_\ve(E_{a,b}(s,\tfrac\rho\ve,\vartheta)).
    \end{aligned}
    \end{equation}
\end{remark}

We conclude the section with an approximation lemma for the normal exponential map of a general surface $S$, in terms of the normal exponential map of $S_{a,b}$.  In particular, we show that the  singularities of $\exp^\perp_S$ at a characteristic point $p$ behave, at first order, as the ones of the homogeneous model at $p$.

\begin{lemma}\label{lem:approximation_exp_new}
    Let $S=\partial\Omega\subset\hei$ be a smooth, oriented, embedded surface with a non-degenerate characteristic point at $p=(0,0,0)$ and let $a,b\in\mathbb R$ be the numbers identified by \cref{lem:nice_f}. Then, there exists a smooth function $R_S:\R^+\times (0,\infty)\times\R^+\times\mathbb S^1\to  \hei$ such that, in coordinates \eqref{eq:x-y_polar}, 
    \begin{equation}
        \exp^\perp_{S}(t,\rho,\vartheta)=\delta_\ve(\exp^\perp_{S_{a,b}}(t/\ve,\rho/\ve,\vartheta)+\ve R_S(t/\ve,\rho/\ve,\rho,\vartheta)).
    \end{equation}
    for every $(t,\rho,\vartheta)\in\mathbb R^+\times (0,+\infty)\times\mathbb S^1$. 
\end{lemma}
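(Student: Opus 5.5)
The plan is to define $R_S$ directly by the identity in the statement and then check that it extends smoothly. Since $\delta_\ve$ is invertible, set
\begin{equation}
\ve R_S(t/\ve,\rho/\ve,\rho,\vartheta):=\delta_\ve^{-1}\bigl(\exp^\perp_S(t,\rho,\vartheta)\bigr)-\exp^\perp_{S_{a,b}}(t/\ve,\rho/\ve,\vartheta).
\end{equation}
Renaming $\tau=t/\ve$, $r=\rho/\ve$ and keeping $\rho$ as an independent variable (so $\ve=\rho/r$), this defines
\begin{equation}
R_S(\tau,r,\rho,\vartheta)=\frac{r}{\rho}\Bigl[\delta_{r/\rho}\bigl(\exp^\perp_S(\tau\rho/r,\rho,\vartheta)\bigr)-\exp^\perp_{S_{a,b}}(\tau,r,\vartheta)\Bigr].
\end{equation}
The content of the lemma is that this expression is smooth in $(\tau,r,\rho,\vartheta)$ on $\R^+\times(0,\infty)\times\R^+\times\mathbb S^1$, including at $\rho=0$.

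First, I write $u=u_{a,b}+w$, where $u_{a,b}=\frac12(ax^2+by^2)$ and $w=O(|(x,y)|^3)$ is smooth. In the polar coordinates \eqref{eq:x-y_polar} I have $(x,y)=\rho\,A(\vartheta)$, with $A$ linear and smooth since $1+ab\neq0$.

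Next, I apply \cref{prop:normal_exponential_map} with the components from \cref{lem:lambda_perp}. Then $\partial_xu+y=\rho\cos\vartheta+\rho^2 g_1(\rho,\vartheta)$ and $\partial_yu-x=\rho\sin\vartheta+\rho^2g_2(\rho,\vartheta)$, with $g_i$ smooth in polar coordinates because $\nabla w=O(\rho^2)$ is smooth. Hence
\begin{equation}
h_0=\frac{2}{\rho}\bigl(1+\rho k(\rho,\vartheta)\bigr)^{-1/2},\qquad h_i=h_i^{a,b}+\rho\,m_i(\rho,\vartheta),
\end{equation}
where $k,m_i$ are smooth near $\rho=0$. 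The square root is harmless since $|(\cos\vartheta,\sin\vartheta)+\rho g|^2=1+\rho k$.

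Then I substitute $x=\rho A(\vartheta)$, $t=\tau\rho/r$ into \eqref{eq:simplified_xy}–\eqref{eq:simplified_z} and apply $\delta_{r/\rho}$, which multiplies the first two components by $r/\rho$ and the third by $r^2/\rho^2$. The key observation is that the phase $h_0t=\frac{2\tau}{r}(1+\rho k)^{-1/2}$ is a smooth function of $(\tau,r,\rho,\vartheta)$ for $r>0$, equal to the homogeneous phase $2\tau/r$ at $\rho=0$. Likewise, $(r/\rho)\cdot(1/h_0)=\frac r2(1+\rho k)^{1/2}$ is smooth. For the $xy$-components, $\frac r\rho\cdot\rho A(\vartheta)=rA(\vartheta)$ and $\frac r\rho h_0^{-1}[\cdots](h_1,h_2)$ are therefore smooth, and at $\rho=0$ they reduce exactly to the homogeneous expression $\exp^\perp_{S_{a,b}}(\tau,r,\vartheta)$, which is itself obtained by the same formula with $k=m_i=w=0$.

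For the $z$-component I check each term separately. The term $\frac{r^2}{\rho^2}\cdot\frac12 u=\frac{r^2}{2}\bigl(u_{a,b}(A)+\rho\,\tilde w\bigr)$ is smooth. The term $\frac{r^2}{\rho^2}\frac{h_0t-\sin h_0t}{2h_0^2}$ equals $r^2(1+\rho k)\frac{\phi-\sin\phi}{8}$, with $\phi=h_0t$, and is smooth. The bilinear term $\frac{r^2}{\rho^2}\cdot\frac{1}{2h_0}\,\rho A\cdot[\cdots]h$ is also smooth. Thus $F(\tau,r,\rho,\vartheta):=\delta_{r/\rho}(\exp^\perp_S(\tau\rho/r,\rho,\vartheta))$ extends smoothly to $\rho\ge0$, and $F(\tau,r,0,\vartheta)=\exp^\perp_{S_{a,b}}(\tau,r,\vartheta)$.

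Finally, by Hadamard's lemma in the variable $\rho$,
\begin{equation}
F(\tau,r,\rho,\vartheta)-F(\tau,r,0,\vartheta)=\rho\int_0^1\partial_\rho F(\tau,r,s\rho,\vartheta)\,ds,
\end{equation}
so $R_S=r\int_0^1\partial_\rho F(\tau,r,s\rho,\vartheta)\,ds$ is smooth. Multiplying back by $\ve=\rho/r$ recovers the stated identity. The main point to verify carefully is the claim $F|_{\rho=0}=\exp^\perp_{S_{a,b}}$ together with the smoothness of the phase and of $h_0^{-1}$ in polar coordinates. This is where the error terms must be organized as $\rho\times$(smooth in polar coordinates), which I expect to be the only delicate bookkeeping step.
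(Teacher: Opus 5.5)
Your argument is correct, but it takes a different route from the paper. The paper never uses the closed-form expression of $\exp^\perp_S$. It exploits the homogeneity of $H$ under dilations to write $\delta_\ve^{-1}\exp^\perp_S(t,\rho,\vartheta)=\pi\circ e^{(t/\ve)\vec H}\bigl(\ve^{-1}\delta_\ve^*\lambda^\perp_S\bigr)$. From the expansions $h_0=2/\rho+H_0$, $h_1=-\cos\vartheta+\rho H_1$, $h_2=-\sin\vartheta+\rho H_2$ it reads off that the rescaled initial covector is $\lambda^\perp_{S_{a,b}}(\rho/\ve,\vartheta)+\ve G(\rho/\ve,\rho,\vartheta)$ with $G$ smooth. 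It then obtains $R_S$ by the fundamental theorem of calculus along the segment $s\mapsto\lambda^\perp_{S_{a,b}}+s\ve G$, using only smoothness of the flow $e^{\tau\vec H}$ on $T^*\hei$.

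You instead insert the explicit formulas of \cref{prop:normal_exponential_map} into the rescaled map $F(\tau,r,\rho,\vartheta)=\delta_{r/\rho}\exp^\perp_S(\tau\rho/r,\rho,\vartheta)$. You check term by term that it extends smoothly to $\rho=0$ with $F|_{\rho=0}=\exp^\perp_{S_{a,b}}$, and conclude by Hadamard's lemma in $\rho$. Your bookkeeping is right: the phase $h_0t=\frac{2\tau}{r}(1+\rho k)^{-1/2}$, the factors $\frac r\rho h_0^{-1}$ and $\frac{r^2}{\rho^2}h_0^{-2}$, and the term $\frac{r^2}{2}\bigl(u_{a,b}(A)+\rho\tilde w\bigr)$.

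What each approach buys:
\begin{itemize}
\item The paper's route is shorter and structural. It would work verbatim for any structure with a dilation-homogeneous Hamiltonian and smooth flow. However, it leaves implicit that $\ve G$ must also absorb the shift of base point from $\delta_\ve^{-1}(p)\in\delta_\ve^{-1}(S)$ to the corresponding point of $S_{a,b}$. Their $z$-coordinates differ by $w/(2\ve^2)$, which is $\ve$ times a smooth function of $(\rho/\ve,\rho,\vartheta)$, so the sum must be read in global coordinates on $T^*\hei$.
\item Your computation handles this shift explicitly through the $u$-term and identifies the limit concretely. It also recovers \cref{lem:homogeneity_normal_exp} as the special case $k=m_i=w=0$, where $F$ is independent of $\rho$. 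The cost is that it is tied to the explicit Heisenberg geodesics.
\end{itemize}
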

\begin{proof}
    Firstly, observe that the statement makes sense since \eqref{eq:x-y_polar} are well-defined coordinates on $S$, being $p$ a non-degenerate characteristic point. Secondly, by the homogeneity properties of the Hamiltonian, we have that
    \[
    \delta_\ve^{-1}(\exp^\perp_{S}(t,\rho,\vartheta))
    = \pi\circ e^{t\vec{H}}\big(\ve^{-2}\delta_\ve^*(\lambda^\perp(\rho,\vartheta))\big) = \pi\circ e^{\frac{t}{\ve}\vec{H}}\big(\ve^{-1}\delta_\ve^*(\lambda^\perp(\rho,\vartheta))\big). 
    \]
    Now, combining \cref{lem:lambda_perp} with the Taylor expansion of $u$ of \cref{lem:nice_f} and with \eqref{eq:components_covector}, we deduce that, in polar coordinates, it holds
    \begin{equation}
    \label{eq:behavior_components_covector}
    h_0(\rho,\vartheta) = \frac2\rho+H_0(\rho,\vartheta),\qquad h_1(\rho,\vartheta) = -\cos\vartheta+\rho H_1(\rho,\vartheta),\qquad h_2(\rho,\vartheta) = -\sin\vartheta+\rho H_2(\rho,\vartheta), 
    \end{equation}
    where $H_i\in C^\infty(\R^+\times\mathbb S^1)$, for $i=0,1,2$. Since the dual frame $\{\nu_0,\nu_1,\nu_2\}$ satisfies  
    \[
    \delta_\ve^*\nu_0=\ve^2\nu_0,\qquad \delta_\ve^*\nu_1=\ve\nu_1,\qquad \delta_\ve^*\nu_2=\ve\nu_2,
    \]
    in coordinates $p=(\rho,\vartheta)$, we obtain
    \begin{align}
        \ve^{-1}\delta_\ve^*\lambda^\perp_S(p) &=  \left.\left[\left(\frac{2\ve}{\rho}+\ve H_0(\rho,\vartheta)\right)\nu_0+(-\cos\vartheta+\rho H_1(\rho,\vartheta))\nu_1
    +(-\sin\vartheta+\rho H_2(\rho,\vartheta))\nu_2\right]\right|_{\delta_\ve^{-1}(p)}\\ 
    &=\lambda^\perp_{S_{a,b}}(\rho/\ve,\vartheta)+\ve G(\rho/\ve,\rho,\vartheta),
    \end{align}    
    for some smooth covector $G$. Since the flow $e^{t\vec{H}}$ at a fixed time $t$ is a smooth function on $T^*\hei$, we compute
    \begin{align}
        \pi\circ e^{\frac{t}{\ve} \vec{H}}\big(\ve^{-1}\delta_\ve^*(\lambda^\perp(p))\big) &= \pi\circ e^{\frac{t}{\ve} \vec{H}}\big(\lambda^\perp_{S_{a,b}}(\rho/\ve,\vartheta)+\ve G(\rho/\ve,\rho,\vartheta)\big)\\ 
        &= \pi\circ e^{\frac{t}{\ve} \vec{H}}\big(\lambda^\perp_{S_{a,b}}(\rho/\ve,\vartheta)\big)+\int_0^1 \frac{d}{ds}\left[\pi\circ e^{\frac{t}{\ve}\vec{H}}\big(\lambda^\perp_{S_{a,b}}(\rho/\ve,\vartheta)+s\ve G(\rho/\ve,\rho,\vartheta)\big)\right]ds\\
        &= \exp^\perp_{S_{a,b}}(t/\ve,\rho/\ve,\vartheta)+\ve R_S(t/\ve,\rho/\ve,\rho,\vartheta).
    \end{align}
    where $R_S$ is the smooth map defined implicitly by the last identity. This concludes the proof.
\end{proof}
\begin{remark}\label{rmk:E_approx}
    An analogous statement of \cref{lem:approximation_exp_new} holds also for the map $E$, namely there exists a smooth function $R_E:[0,\infty)^3\times\mathbb S^1\to\hei$ such that, in coordinates \eqref{eq:x-y_polar}, 
    \begin{equation}
        E(s,\rho,\vartheta)=\delta_\ve(E_{a,b}(s,\rho/\ve,\vartheta)+\ve R_E(s,\rho/\ve,\rho,\vartheta)).
    \end{equation}
    for every $(s,\rho,\vartheta)\in\mathbb R^+\times \R^+\times\mathbb S^1$. Observe that the regularity of $R_E$ is guaranteed by the regularity of $E$, proved in \cref{lem:smooth_E}.     
\end{remark}

\subsection{The Jacobian determinant of the normal exponential map near characteristic points}
\label{sec:normal_jacobian}
We now study the singularities of the Jacobian determinant of the normal exponential map near a non-degenerate characteristic point.

\begin{defi}
	A smooth map
	$\xi:[0,r_0)\times\mathbb S^1\to\mathbb R^2$
	is called a system of polar coordinates centered at the origin if
	$\xi(0,\theta)=(0,0)$, its restriction to
	$(0,r_0)\times\mathbb S^1$ is a diffeomorphism onto a punctured
	neighborhood of the origin, and
	\begin{equation}
		\det\left(\frac{\partial\xi(r,\theta)}{\partial(r,\theta)}
		\right)
		=r\,D(r,\theta),
	\end{equation}
	where $D$ is smooth and non-vanishing on $[0,r_0)\times\mathbb S^1$. If in addition $D$ is positive, then the system of polar coordinates is called positive.
\end{defi}
 \begin{remark}
    Let $\xi$ be a system of polar coordinates and $\chi:\mathbb R^+\times\mathbb S^1\to\mathbb R^+\times\mathbb S^1$ a diffeomorphism, then $\xi\circ \chi$ is also a system of polar coordinates. If in addition $\xi$ is positive and $\partial_\theta\chi_2(0,\theta)>0$, then $\xi\circ \chi$ is also positive.
\end{remark}
On an oriented surface, a (positive) system of polar coordinates centered at a point
is defined through an (orientation-preserving) smooth chart
sending that point to the origin. In the sequel, for a smooth function $F:\mathbb R^2\to \mathbb R$ we abbreviate $F\circ\xi(r,\theta)$ by $F(r,\theta)$.

\begin{prop}
    \label{prop:iter_diver_singularities}
    Let $S=\partial\Omega\subset \hei$ be a smooth embedded oriented surface with a non-degenerate characteristic point $p\in\Sigma(S)$. Fix a system of polar coordinates $(r,\theta)$ on $S$ centered at $p$ and denote 
    \begin{equation}
        J_S(t,r,\theta)=\left.\det\left(\frac{\partial\exp^\perp_S(t,r,\theta)}{\partial(t,r,\theta)}\right)\right|_{(t,r,\theta)},\qquad (t,r,\theta)\in\mathbb R^+\times(0,+\infty)\times\mathbb S^1.
    \end{equation}
    Then, there exists a smooth function $\mathcal J_S:\mathbb R^+\times\mathbb R^+\times\mathbb S^1\to\mathbb R$ such that 
    \begin{equation}\label{eq:mathcal_J}
        J_S(t,r,\theta)=r^2\mathcal J_S(t/r,r,\theta),\qquad (t,r,\theta)\in \mathbb R^+\times(0,+\infty)\times\mathbb S^1.
    \end{equation}
\end{prop}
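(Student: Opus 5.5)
We will prove the statement by passing through the regularized map $E$ of \cref{lem:smooth_E}, which is smooth up to the characteristic point, and then undoing the time reparametrization $t=s/h_0$.

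\textbf{Step 1: reduce to the smooth map.} By definition,
\begin{equation}
\exp^\perp_S(t,r,\theta)=E(h_0(r,\theta)\,t,r,\theta).
\end{equation}
Writing $s=h_0t$, the chain rule gives the matrix factorization
\begin{equation}
\frac{\partial \exp^\perp_S}{\partial(t,r,\theta)}=\frac{\partial E}{\partial(s,r,\theta)}\Big|_{(h_0t,r,\theta)}\cdot\begin{pmatrix} h_0 & t\,\partial_r h_0 & t\,\partial_\theta h_0\\ 0&1&0\\0&0&1\end{pmatrix}.
\end{equation}
The second factor is upper triangular with determinant $h_0$. Hence
\begin{equation}
J_S(t,r,\theta)=h_0(r,\theta)\,J_E(h_0 t,r,\theta),
\end{equation}
where $J_E$ is the Jacobian of $E$ in the coordinates $(s,r,\theta)$.

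\textbf{Step 2: regularity of the ingredients.}
\begin{itemize}
\item[(a)] The factor $J_E(s,r,\theta)=r\,D(r,\theta)\,\widetilde J_E(s,\xi(r,\theta))$, where $\widetilde J_E$ is the Jacobian of $E$ in the Cartesian coordinates $(s,x,y)$. It is smooth on $\mathbb R^+\times\mathbb R^2$ by \cref{lem:smooth_E}. So $J_E=r\,G(s,r,\theta)$ with $G$ smooth, including at $r=0$.
\item[(b)] By \cref{rmk:diver_h0}, $h_0^{-2}$ is smooth. Since $\xi(0,\theta)=0$, the explicit formula gives $h_0^{-2}=\tfrac14\bigl[(\partial_xu+y)^2+(\partial_yu-x)^2\bigr]\circ\xi$. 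This function vanishes to second order at $r=0$. Its leading part is the quadratic form $\tfrac14|A\xi|^2$ with $A=\begin{pmatrix}a&1\\-1&b\end{pmatrix}$, which is non-degenerate because $\det A=1+ab\neq0$.
\item[(c)] Hence $h_0^{-2}=r^2 q(r,\theta)$ with $q$ smooth and positive on $[0,r_0)\times\mathbb S^1$. This uses $\xi(r,\theta)=r\,\eta(r,\theta)$ with $\eta(0,\theta)=\partial_r\xi(0,\theta)\neq0$, which follows from $\det d\xi=rD$ with $D\neq0$. Therefore $h_0=\frac{1}{r\,k(r,\theta)}$ with $k=\sqrt q$ smooth and positive.
\end{itemize}

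\textbf{Step 3: conclude.} Substituting,
\begin{equation}
J_S(t,r,\theta)=\frac{1}{rk}\cdot r\,G\Big(\frac{t/r}{k(r,\theta)},r,\theta\Big).
\end{equation}
This leaves a function of $(t/r,r,\theta)$ times $r^0$, whereas the statement claims an $r^2$ factor. So Step 2 must also exploit the vanishing of $G$ at $r=0$: we need $G(s,r,\theta)=r^2\,\widetilde G(s,r,\theta)$.

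\begin{itemize}
\item[(i)] Justification of the extra vanishing. At the characteristic point, $E(s,0,0)$ has $(X,Y)=0$ and $Z=\frac{s-\sin s}{2}h_0^{-2}=0$, so $\partial_sE=0$ there. More precisely, $\partial_s E$ is $O(r)$ in its horizontal components and $O(r^2)$ in its $z$ component. This follows from \cref{lem:smooth_E}: the $(X,Y)$ part contains $\partial_x u+y,\ \partial_yu-x=O(r)$, and the $Z$ part contains $h_0^{-2}=O(r^2)$.
\item[(ii)] Cleanest route via homogeneity. Use \cref{rmk:E_approx}: $E=\delta_r\bigl(E_{a,b}(s,1,\vartheta)+rR_E(s,1,r,\vartheta)\bigr)$. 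The dilation $\delta_r$ is linear with determinant $r^4$. Differentiating with respect to $(s,r,\vartheta)$, the $\partial_r$ column acquires a factor $1/r$ from the scaling. Altogether this produces $J_E=r^3\,\widehat G(s,r,\vartheta)$ with $\widehat G$ smooth.
\end{itemize}

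Then $J_S=h_0J_E=r^2\,\widehat G(h_0t,r,\theta)/k$. Since $h_0t=(t/r)/k(r,\theta)$, this defines $\mathcal J_S(\tau,r,\theta):=\widehat G(\tau/k,r,\theta)/k(r,\theta)$, which is smooth on $\mathbb R^+\times\mathbb R^+\times\mathbb S^1$. This is \eqref{eq:mathcal_J}.

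\textbf{Main obstacle.} The delicate step is extracting exactly $r^3$ from $J_E$ smoothly in the given polar system $\xi$. To do this I would first prove it in the adapted coordinates \eqref{eq:x-y_polar}, using \cref{rmk:E_approx} and a careful column scaling of $d\delta_r$. I would then transfer it to an arbitrary polar system. The transfer is legitimate because the change between two polar systems centered at the same point is a smooth map $(r,\theta)\mapsto(\rho,\vartheta)$ with $\rho=r\,m(r,\theta)$, $m>0$, whose Jacobian is $r$ times a smooth non-vanishing function. This preserves the form $r^2\mathcal J(t/r,r,\theta)$ after rewriting $t/\rho=(t/r)/m$.
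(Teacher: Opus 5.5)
Your argument is correct in substance, and it takes a genuinely different route from the paper.

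\textbf{Comparison with the paper.} The paper never passes through $E$. It works in the adapted coordinates \eqref{eq:x-y_polar} and applies the dilation identity of \cref{lem:approximation_exp_new} directly to $\exp^\perp_S$, with $\ve$ frozen. It then takes Jacobians, picking up $\ve^4\cdot\ve^{-2}$ from $\delta_\ve$ and from rescaling both $t$ and $\rho$, and only afterwards sets $\ve=\rho$. So in the paper the ratio $t/\rho$ appears because time and radius are rescaled together.

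You instead factor $J_S=h_0\,J_E(h_0t,r,\theta)$ and use that $E$ is smooth through the characteristic point. You then get $t/r$ from $h_0t=(t/r)/k$ with $h_0^{-1}=rk$, $k>0$ smooth. Thus the singularity sits entirely in the time reparametrization, and the only spatial input is the vanishing $J_E=r^3\widehat G$.

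The paper's route gives at once the splitting $\mathcal J_S=J_{S_{a,b}}(u,1,\vartheta)+\rho R_J$ of \cref{rmk:in_hom_coords}, which is used later. Your (ii) recovers it as well, since it yields $\widehat G(s,\rho,\vartheta)=J_{E_{a,b}}(s,1,\vartheta)+O(\rho)$ up to the constant Jacobian of the target change.

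\textbf{Points to fix.}

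\emph{The transfer step.} If $\xi_1=\xi_2\circ\chi$ are two polar systems, then $rD_1=\rho\,(D_2\circ\chi)\det d\chi$. Hence $\det d\chi=D_1/(m\,D_2\circ\chi)$, which is smooth and \emph{non-vanishing} up to $r=0$. It is not $r$ times such a function. With the extra factor $r$ you claim, you would get $r^3$, and the form $r^2\mathcal J(t/r,r,\theta)$ would not be preserved. As written, your justification contradicts your conclusion.

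\emph{Step (ii).} You cannot substitute $\ve=r$ first and then treat $\delta_r$ as a constant linear map of determinant $r^4$. Freeze $\ve$, differentiate in $(s,\rho,\vartheta)$, and set $\ve=\rho$ afterwards. This gives $\ve^4\cdot\ve^{-1}$, hence $\rho^3$, exactly as the paper does for $\exp^\perp_S$.

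\emph{Step (i).} As stated, knowing only the $\partial_s$-column does not give $r^3$. It can, however, be completed, and then (ii) and the transfer both become unnecessary. From \cref{lem:smooth_E}, $E(s,0,0)=0$, and $\nabla_{(x,y)}Z(s,0,0)=0$ because every term of $Z$ vanishes to second order. Hadamard's lemma together with $\xi=r\eta$ then gives $E(s,\xi(r,\theta))=(r\widetilde X,r\widetilde Y,r^2\widetilde Z)$ with smooth $\widetilde X,\widetilde Y,\widetilde Z$, in any polar system. Factoring $r$ out of the $Z$-row and out of the $\partial_s$- and $\partial_\theta$-columns gives $J_E=r^3\widehat G$ with $\widehat G$ smooth. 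Combined with your Step 2(b)--(c), this is a fully elementary proof that needs neither the homogeneous model nor \cref{rmk:E_approx}.
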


\begin{proof}
    By the chain rule and multiplicativity of the determinant, the validity of the statements is independent of the system of polar coordinates. Therefore we may prove the proposition for a particular set of polar coordinates. Let $(x,y)$ be coordinates near $p$ such that $S$ has locally the form \eqref{eq:S_near_p}, and let  $(\rho,\vartheta)$ be the polar coordinates defined in \eqref{eq:x-y_polar}. We first prove the statement for a $S=S_{a,b}$ (cf.\,\cref{def:homogeneous_surf}), with $a,b\in\R$ such that $ab+1\neq 0$. By \cref{lem:homogeneity_normal_exp} we have
    \begin{equation}
        \exp^\perp_{S_{a,b}}(t, \rho,\vartheta)=\delta_\ve(\exp^\perp_{S_{a,b}}(t/\ve,\rho/\ve ,\vartheta)),\qquad\forall\, (t,\rho,\vartheta) \in \R^+\times \R^+\times\mathbb S^1,\,\ve>0.
    \end{equation}
    For a fixed $\ve>0$, taking the Jacobian determinant of both sides of the above identity yields 
    \begin{align}\label{eq:J_a,b}
        J_{S_{a,b}}(t,\rho,\vartheta)=\ve^2 J_{S_{a,b}}(t/\ve, \rho/\ve,\vartheta).
    \end{align}
    Evaluating at $\ve=\rho$ we find 
    \begin{equation}
        J_{S_{a,b}}(t,\rho,\vartheta)=\rho^2 J_{S_{a,b}}(t/\rho,1,\vartheta).
    \end{equation}
    In this case the proof can be concluded setting $\mathcal J_{S_{a,b}}(u,\rho,\vartheta):=J_{S_{a,b}}(u,1,\vartheta)$, which is a smooth function. For a general surface $S$, thanks to \cref{lem:approximation_exp_new}, we have 
    \begin{equation}
         \exp^\perp_{S}(t,\rho,\vartheta)=\delta_\ve(\exp^\perp_{S_{a,b}}(t/\ve,\rho/\ve,\vartheta)+\ve R_S(t/\ve,\rho/\ve,\rho,\vartheta)).
    \end{equation}
    Taking the Jacobian determinant of both sides yields 
    \begin{equation}
        J_S(t,\rho,\vartheta)=\ve^4(\ve^{-2}J_{S_{a,b}}(t/\ve,\rho/\ve,\vartheta)+\ve^{-1}\mathcal R(t/\ve,\rho/\ve,\rho,\vartheta)).
    \end{equation}
    for a suitable smooth function $\mathcal R:\R^+\times(0,+\infty)\times \R^+\times \mathbb S^1\to\R$. Substituting $\ve=\rho$ we are left with 
    \begin{equation}
        J_S(t,\rho,\vartheta)=\rho^2(J_{S_{a,b}}(t/\rho,1,\vartheta)+\rho\mathcal R(t/\rho,1,\rho,\vartheta)).
    \end{equation}
    We conclude the proof setting $\mathcal J_S(u,\rho,\vartheta):=J_{S_{a,b}}{(u,1,\vartheta)}+\rho \mathcal R(u,1,\rho,\vartheta)$.
\end{proof}
\begin{remark}\label{rmk:in_hom_coords}
    The proof of \cref{prop:iter_diver_singularities} in shows that, in the coordinates $(\rho,\vartheta)$ of \eqref{eq:x-y_polar} it holds 
    \begin{equation}
        \mathcal J_S(u,\rho,\vartheta)=J_{S_{a,b}}(u,1,\vartheta)+\rho R_J(u,\rho,\vartheta),
    \end{equation}
    where $R_J:\R^+\times \R^+\times \mathbb S^1\to\R$ is a smooth function.
\end{remark}

\subsection{Curvature forms: iterated divergences of the distance}\label{ssec:curve_form}
We relate the determinant of the normal exponential map to the iterated divergences of the sub-Riemannian distance from $S$. As a byproduct, we are able to describe the singularities of these divergences on the characteristic set.

Recall that, given a smooth vector field $X$ on $\hei$, its \emph{divergence} with respect to the Lebesgue volume form $\nu_0\wedge\nu_1\wedge\nu_2$, is defined as the unique smooth function satisfying 
\begin{equation}
    L_X(\nu_0\wedge\nu_1\wedge\nu_2)=\mathrm{div}(X)\nu_0\wedge\nu_1\wedge\nu_2,
\end{equation}
where $L_X$ denotes the Lie derivative in the direction of $X$. In particular, if $X\in \Gamma(T\hei)$ is a horizontal vector field, namely $X=u_1 X_1+u_2 X_2$, for some $u_1,u_2\in C^\infty(\hei)$, then 
\begin{equation}
    \mathrm{div}(X)=X_1(u_1)+X_2(u_2).
\end{equation}
For every $n\in \mathbb N$, the \emph{$n$-th iterated divergence} of $X$ is defined as the unique smooth function satisfying 
\begin{equation}
    L_{X}^{n}(\nu_0\wedge\nu_1\wedge\nu_2)=\mathrm{div}^{n}(X)\nu_0\wedge\nu_1\wedge\nu_2,
\end{equation}
 with the convention that $\mathrm{div}^0(X)=1$.

\begin{prop}
\label{prop:iter_diver}
    Let $S=\partial\Omega\subset \hei$ be a smooth, embedded, oriented surface and let $\mathrm{dist}_S:\hei\to\mathbb R$ be the signed distance from $S$. Given a pair of local coordinates $(x,y)$ on $S$, set 
    \begin{equation}
        J_S(t,x,y)=\left.\det\left(\frac{\partial \exp^\perp_S(t,x,y)}{\partial(t,x,y)}\right)\right|_{(t,x,y)}.
    \end{equation}
    Then, $\mathrm{dist}_S$ is smooth on  a neighborhood of $S^{\times}=S\setminus\Sigma(S)$ and, 
    \begin{equation}
        \left.\mathrm{div}^{n}(\nabla\mathrm{dist}_S)\right|_{(x,y)}=\frac{\partial_t^n J_S(0,x,y)}{J_S(0,x,y)},\qquad (x,y)\in S\setminus\Sigma(S),
    \end{equation}
    where $\nabla\mathrm{dist}_S$ is the horizontal gradient of $\mathrm{dist}_S$.
\end{prop}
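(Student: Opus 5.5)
The plan has three parts: first establish smoothness of $\mathrm{dist}_S$ near $S^\times$ together with the identification of $\nabla\mathrm{dist}_S$ with the velocity field of the normal geodesics, and then derive the formula from the flow identity $\exp^\perp_S(t+s,p)=\Phi_s(\exp^\perp_S(t,p))$, where $\Phi_s$ is the flow of $N:=\nabla\mathrm{dist}_S$.

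\emph{Smoothness.} Fix $p_0\in S^\times$. By \cref{cor:pos_cut_time} and \cref{cor:t_cut_continuous}, there are a neighborhood $V\subset S^\times$ of $p_0$ and $\delta>0$ with $t_\cut>\delta$ on $V$. By \cref{c:umaiuscolo}, $\exp^\perp_S$ restricted to $(0,\delta)\times V$ is then a diffeomorphism onto its image. Choosing $V$ and $\delta$ small, this image contains $\mathcal T_S(\delta)\cap W$ for some open set $W\ni p_0$ of $\hei$; this uses the continuity of the projection (minimizers from points near $p_0$ are based near $p_0$, by the Ascoli–Arzelà argument used above). On this set $\mathrm{dist}_S(\exp^\perp_S(t,p))=t$, so $\mathrm{dist}_S$ is smooth on the positive side. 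Running the same construction with the opposite normal covector handles the negative side. To glue the two sides across $S$ with smoothness, I would observe that the map $(t,p)\mapsto \exp_p(t\lambda^\perp(p))$ for $t\in(-\delta,\delta)$ is a single smooth map with invertible differential at $t=0$ (\cref{cor:pos_cut_time}), hence a local diffeomorphism near $\{0\}\times V$, and $\mathrm{dist}_S$ is its first inverse component.

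\emph{Gradient equals velocity.} Since $\gamma_p$ is unit speed, horizontal, and satisfies $\mathrm{dist}_S(\gamma_p(t))=t$, we have $d\,\mathrm{dist}_S(\dot\gamma_p)=1$. On the other hand, $\mathrm{dist}_S$ is 1-Lipschitz (\cref{rmk:1_lip}), which gives $|\nabla\mathrm{dist}_S|\le 1$. Cauchy–Schwarz then forces $\nabla\mathrm{dist}_S=\dot\gamma_p(t)$. Consequently the flow $\Phi_s$ of $N$ satisfies $\Phi_s\circ\exp^\perp_S(t,\cdot)=\exp^\perp_S(t+s,\cdot)$, and differentiating gives $(\exp^\perp_S)_*\partial_t=N$.

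\emph{Divergence formula.} Write $\omega=\nu_0\wedge\nu_1\wedge\nu_2$ and $\exp^{\perp*}_S\omega=J_S\,dt\wedge dx\wedge dy$. Since $\exp^\perp_S$ intertwines $\partial_t$ and $N$, naturality of the Lie derivative gives
\[
\exp^{\perp*}_S(L_N^n\omega)=L_{\partial_t}^n(J_S\,dt\wedge dx\wedge dy)=\partial_t^nJ_S\,dt\wedge dx\wedge dy .
\]
The left-hand side equals $(\mathrm{div}^n(N)\circ\exp^\perp_S)\,J_S\,dt\wedge dx\wedge dy$. Dividing by $J_S$, which is nonzero near $t=0$ by \cref{rmk:orientation}, and evaluating at $t=0$ yields the claim. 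The identity holds for small $t>0$, and extends to $t=0$ by continuity because the extended map of the smoothness step is smooth through $t=0$.

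The main obstacle is the first step: obtaining genuine two-sided smoothness of $\mathrm{dist}_S$ up to and across $S^\times$, rather than only on the open half-tube. This needs the uniform positive lower bound on $t_\cut$ near $p_0$ together with the local-diffeomorphism argument at $t=0$, and it is here that non-characteristicity is essential.
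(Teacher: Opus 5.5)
Your proposal is correct and follows essentially the same route as the paper: a uniform positive lower bound on $t_\cut$ near $p_0$ together with \cref{c:umaiuscolo} gives smoothness of $\mathrm{dist}_S$; the 1-Lipschitz bound together with the unit speed of the normal geodesics identifies $\nabla\mathrm{dist}_S$ with $(\exp^\perp_S)_*\partial_t$; and naturality of the Lie derivative under $\exp^\perp_S$ turns $\mathrm{div}^n(\nabla\mathrm{dist}_S)$ into $\partial_t^nJ_S/J_S$. Your two-sided local-diffeomorphism argument at $t=0$ is a welcome addition, since it makes precise the smoothness across $S$ that the paper only sketches with ``a similar argument can be done for $t$ negative''.
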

\begin{proof}
     Let $p\in S^\times$ and let $U\subset S^\times$ be a compact neighborhood of $p\in S^\times$. By \cref{cor:pos_cut_time} there exists $\ve_0>0$ such that 
  $  
        t_\cut(q)> \ve_0$, for all $q\in U
    $.
    \cref{c:umaiuscolo} then implies that the restricted map 
    \begin{equation}
        [0,\ve_0]\times U\to \hei,\qquad (t,q)\mapsto \exp_S^\perp(t,q),
    \end{equation}
    is a diffeomorphism onto its image. By definition of $t_\cut$, it holds 
    \begin{equation}
    \label{eq:known_identity}
        \mathrm{dist}_S(\exp_S^\perp(t,q)) = t, \qquad \forall\, (t,q)\in[0,\ve_0]\times U
    \end{equation}
    which then shows that $\mathrm{dist}_S$ is smooth on a neighborhood of $p$ (notice that this covers only half a neighborhood of $p$; a similar argument can be done for $t$ negative). Furthermore, differentiating \eqref{eq:known_identity} with respect to $t$ and using the definition of the horizontal gradient we find 
    \begin{equation}
    \label{eq:nabla_deltaS_exp}
        g(\nabla\mathrm{dist}_S,\exp_{S*}^\perp\partial_t)=\langle d(\mathrm{dist}_S),\exp_{S*}^\perp\partial_t\rangle=\langle \exp_S^{\perp *}d(\mathrm{dist}_S),\partial_t\rangle =\langle dt,\partial_t\rangle =1.
    \end{equation}
     Note that $\exp_{S*}^\perp\partial_t$ has norm $1$, because it is the velocity of the arc-length parametrized geodesic $t\mapsto \exp_S^\perp(t,q)$. Moreover, $|\nabla\mathrm{dist}_S|=1$, indeed \cref{rmk:1_lip} implies that $|\nabla\mathrm{dist}_S|\leq 1$, which combined with \eqref{eq:nabla_deltaS_exp} implies $|\nabla\mathrm{dist}_S|=1$. Since both $\exp_{S*}^\perp\partial_t$ and $\nabla\mathrm{dist}_S$ have norm $1$, \eqref{eq:nabla_deltaS_exp} implies that 
    \begin{equation}\label{eq:nabla_dist}
        \nabla\mathrm{dist}_S=(\exp^\perp_S)_*\partial_t.
    \end{equation}
    Since the iterated divergences of a vector field with respect to a given volume form are diffeomorphism invariant, computing the divergence of $\nabla\mathrm{dist}_S=\exp^\perp_{S*}\partial_t$ with respect to the volume form  $\nu_0\wedge\nu_1\wedge\nu_2$ is equivalent to computing the divergence of $\partial_t$ with respect to the volume form 
    \begin{equation}\label{eq:pull_volume}
        \exp_S^{\perp*}(\nu_0\wedge\nu_1\wedge\nu_2)=J_S(t,x,y)dt\wedge dx\wedge dy.
    \end{equation}
    Now, the $n$-th Lie derivative along $X=\partial_t$ of a volume form above is
    \begin{equation}
        L_X^n (J_S(t,x,y) dt\wedge dx\wedge dy) = \partial^n_t J_S(t,x,y)dt\wedge dx\wedge dy = \frac{\partial^n_t J_S(t,x,y)}{J_S(t,x,y)} J_S(t,x,y)dt\wedge dx\wedge dy.
    \end{equation}
    We deduce that 
    \begin{equation}
        \left.\mathrm{div}^{n}(\nabla\mathrm{dist}_S)\right|_{\exp_S^\perp(t,x,y)}=\frac{\partial^n_t J_S(t,x,y)}{J_S(t,x,y)}.
    \end{equation}
    Evaluating at $t=0$, since $\exp_S^\perp(0,x,y)=(x,y)$, we conclude the proof.
\end{proof}

\begin{defi}\label{defi:curvature_forms}
    Let $S=\partial\Omega\subset \hei$ be a smooth oriented embedded surface. The sub-Riemannian area form of $S$ is defined, outside of the characteristic set of $S$, as 
    \begin{equation}
        \sigma_H=\iota_{N}(\nu_0\wedge\nu_1\wedge\nu_2).
    \end{equation}
    where $N=\nabla \mathrm{dist}_S$ is the sub-Riemannian unit normal. Given $n\in\mathbb N$, the \emph{$n$-th curvature form} is defined, outside of the characteristic set of $S$, as 
    \begin{equation}
        \sigma_S^{(n)}:=\mathrm{div}^n(\nabla\mathrm{dist}_S)\sigma_H.
    \end{equation}
\end{defi}
\begin{lemma}\label{lem:area_curv}
    Under the hypotheses and notation of \cref{prop:iter_diver}, for $n\geq 1$ it holds 
    \begin{equation}
        \sigma_H=J_S(0,x,y)dx\wedge dy,\qquad \sigma_S^{(n)}=\partial_t^n J_S(0,x,y)dx\wedge dy.
    \end{equation}
\end{lemma}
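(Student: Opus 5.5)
The plan is to compute $\sigma_H$ in the coordinates $(x,y)$ directly from its definition, and then obtain the identity for $\sigma_S^{(n)}$ from \cref{prop:iter_diver}.

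First, the area form. By \eqref{eq:nabla_dist} in the proof of \cref{prop:iter_diver}, we have $N=\nabla\mathrm{dist}_S=(\exp^\perp_S)_*\partial_t$ on a neighborhood of $S^\times$. We also have the pullback identity \eqref{eq:pull_volume}:
\[
\exp_S^{\perp*}(\nu_0\wedge\nu_1\wedge\nu_2)=J_S(t,x,y)\,dt\wedge dx\wedge dy .
\]
Since $\exp^\perp_S$ is a local diffeomorphism near $\{0\}\times S^\times$, interior products commute with pullback:
\[
\exp_S^{\perp*}\bigl(\iota_N(\nu_0\wedge\nu_1\wedge\nu_2)\bigr)=\iota_{\partial_t}\bigl(J_S\,dt\wedge dx\wedge dy\bigr)=J_S(t,x,y)\,dx\wedge dy .
\]
Next restrict to $t=0$. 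The map $\exp^\perp_S(0,\cdot)$ is the inclusion $S^\times\hookrightarrow\hei$, so pulling back along $p\mapsto(0,p)$ restricts the form on $\hei$ to $S$. This gives $\sigma_H=J_S(0,x,y)\,dx\wedge dy$ as forms on $S^\times$.

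Second, the curvature forms. By \cref{prop:iter_diver}, $\mathrm{div}^n(\nabla\mathrm{dist}_S)|_{(x,y)}=\partial_t^nJ_S(0,x,y)/J_S(0,x,y)$ on $S^\times$. Multiplying by $\sigma_H$ and using \cref{defi:curvature_forms}, the factor $J_S(0,x,y)$ cancels, and we get $\sigma_S^{(n)}=\partial_t^nJ_S(0,x,y)\,dx\wedge dy$.

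The only delicate point is justifying that $J_S(0,x,y)\neq0$, so that dividing by it is legitimate. This holds because $d_{(0,p)}\exp^\perp_S$ is injective, as shown in the proof of \cref{cor:pos_cut_time}. The sign and orientation are handled by \cref{rmk:orientation}, since the coordinates $(t,x,y)$ carry the product orientation. No other obstacle is expected.
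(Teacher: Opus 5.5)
Your proposal is correct and follows the paper's proof essentially verbatim. You contract \eqref{eq:pull_volume} with $\partial_t=(\exp^\perp_S)^*N$ via \eqref{eq:nabla_dist}, evaluate at $t=0$ to get $\sigma_H$, and then multiply by $\mathrm{div}^n(\nabla\mathrm{dist}_S)$ from \cref{prop:iter_diver}. Your check that $J_S(0,x,y)\neq0$ makes explicit a point the paper leaves implicit; the appeal to orientation is harmless but not needed, since the identity of forms is purely algebraic.
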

\begin{proof}
       The form $\sigma_H$ is computed contracting \eqref{eq:pull_volume} with the vector field \eqref{eq:nabla_dist} and evaluating at $t=0$. The expression of $\sigma_S^{(n)}$ is obtained by multiplying $\sigma_H$ by $\mathrm{div}^n(\nabla\mathrm{dist}_S)$ (cf.\,\cref{prop:iter_diver}).
\end{proof}
We are now in position to describe the singularities of $\sigma_S^{(n)}$ at characteristic points. 
\begin{corollary}\label{cor:curv_forms1}
   Let $S=\partial\Omega\subset \hei$ be a smooth embedded oriented surface with a non-degenerate characteristic point $p\in\Sigma(S)$. Fix a system of polar coordinates $(r,\theta)$ centered at $p$, and set 
   \begin{equation}\label{eq:def_phi_n}
       \phi_n(r,\theta):=\partial_t^n\mathcal J_S(0,r,\theta),\qquad (r,\theta)\in\mathbb R^+\times\mathbb S^1,\,n\geq 0.
   \end{equation}
   where $\mathcal J_S$ is the function of \cref{prop:iter_diver_singularities}. Then $\phi_n\in C^\infty(\mathbb R^+\times\mathbb S^1)$ and 
    \begin{equation}\label{eq:explicit_curv_form}
        \sigma^{(n)}_S=r^{2-n}\phi_n(r,\theta)dr\wedge d\theta.
    \end{equation}
    In particular, if $(\rho,\vartheta)$ are the polar coordinates of \eqref{eq:x-y_polar} we have 
    \begin{equation}\label{eq:in_hom_coords}
        \phi_n(\rho,\vartheta)=\partial_t^{n}J_{S_{a,b}}(0,1,\vartheta)+O(\rho),
    \end{equation}
    and the remainder is identically zero in the homogeneous case $S=S_{a,b}$.
    \end{corollary}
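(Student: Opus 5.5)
The plan is to combine \cref{lem:area_curv} with the scaling identity of \cref{prop:iter_diver_singularities}, and to track how the polar coordinates enter the coordinate expression of the curvature forms.

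\textbf{Step 1: the coordinate expression of $\sigma_S^{(n)}$.} We apply \cref{lem:area_curv} in the coordinates $(r,\theta)$ on $S\setminus\Sigma(S)$. This is legitimate because \cref{prop:iter_diver} and \cref{lem:area_curv} hold for any local coordinates, and $(r,\theta)$ restricted to $r>0$ is a chart; orientation only contributes a global sign, which is absorbed into $J_S$ since $J_S$ is defined as the determinant in those coordinates. This gives
\begin{equation}
\sigma_S^{(n)}=\partial_t^nJ_S(0,r,\theta)\,dr\wedge d\theta,\qquad r>0 .
\end{equation}

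\textbf{Step 2: differentiate the scaling identity.} By \eqref{eq:mathcal_J}, $J_S(t,r,\theta)=r^2\mathcal J_S(t/r,r,\theta)$ with $\mathcal J_S$ smooth on $\mathbb R^+\times\mathbb R^+\times\mathbb S^1$. Differentiating $n$ times in $t$ with $r>0$ fixed, the chain rule gives
\begin{equation}
\partial_t^nJ_S(t,r,\theta)=r^{2-n}(\partial_u^n\mathcal J_S)(t/r,r,\theta).
\end{equation}
Evaluating at $t=0$ yields $r^{2-n}\phi_n(r,\theta)$, where $\phi_n(r,\theta)=\partial_u^n\mathcal J_S(0,r,\theta)$; this is the derivative in the first slot, as intended in \eqref{eq:def_phi_n}. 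Since $\mathcal J_S$ is smooth up to $r=0$ and $u=0$, $\phi_n$ is smooth on $\mathbb R^+\times\mathbb S^1$, including at $r=0$. This proves \eqref{eq:explicit_curv_form}.

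\textbf{Step 3: the expansion in the coordinates $(\rho,\vartheta)$.} Here we use \cref{rmk:in_hom_coords}: $\mathcal J_S(u,\rho,\vartheta)=J_{S_{a,b}}(u,1,\vartheta)+\rho R_J(u,\rho,\vartheta)$ with $R_J$ smooth. Differentiating $n$ times in $u$ and setting $u=0$ gives
\begin{equation}
\phi_n(\rho,\vartheta)=\partial_t^nJ_{S_{a,b}}(0,1,\vartheta)+\rho\,\partial_u^nR_J(0,\rho,\vartheta).
\end{equation}
The second term is $O(\rho)$ uniformly in $\vartheta$, because $R_J$ is smooth on the compact set $\{0\}\times[0,\rho_0]\times\mathbb S^1$.

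\textbf{Step 4: the homogeneous case.} For $S=S_{a,b}$ the proof of \cref{prop:iter_diver_singularities} sets $\mathcal J_{S_{a,b}}(u,\rho,\vartheta)=J_{S_{a,b}}(u,1,\vartheta)$ exactly. Hence $R_J\equiv0$, the remainder vanishes identically, and $\phi_n(\rho,\vartheta)$ depends only on $\vartheta$.

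\textbf{Main difficulty.} The only point needing care is that the coordinates $(\rho,\vartheta)$ of \eqref{eq:x-y_polar} are a system of polar coordinates in the sense of the definition, so that \cref{prop:iter_diver_singularities} applies to them. By \cref{rmk:hom_or} their Jacobian is $\rho/(1+ab)$, so $D=1/(1+ab)$ is smooth and nonvanishing; when $1+ab<0$ the chart is orientation reversing, and the resulting sign is consistently absorbed into $J_S$. Beyond this, the argument is the chain rule applied to the blow-up identity.
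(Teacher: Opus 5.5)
Your proposal is correct and matches the paper's own proof: apply \cref{lem:area_curv} in the chart $(r,\theta)$, differentiate the scaling identity \eqref{eq:mathcal_J} $n$ times in $t$ and set $t=0$, then read off \eqref{eq:in_hom_coords} from \cref{rmk:in_hom_coords}, where $R_J\equiv 0$ in the homogeneous case. Your remarks on orientation and on $(\rho,\vartheta)$ being an admissible polar system are accurate and only make explicit what the paper leaves implicit.
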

\begin{proof}
    To prove \eqref{eq:explicit_curv_form}, we apply \cref{lem:area_curv} in the polar coordinates $(r,\theta)$ centered at $p$ and then compute derivatives $\partial_t^n J_S(0,r,\theta)$ by applying $\partial_t^n$ to both sides of \eqref{eq:mathcal_J} and evaluating the result at $t=0$. Equation \eqref{eq:in_hom_coords} then follows from \cref{rmk:in_hom_coords}.
\end{proof}
\begin{corollary}\label{cor:curv_forms}
    Let $S=\partial\Omega\subset \hei$ be a smooth embedded oriented surface whose characteristic points are non degenerate. Then 
    \begin{itemize}
    \item[(i)] For $n=0,1,2$, the forms $\sigma^{(n)}_S$ is locally integrable on $S$,
    \item[(ii)] Given $p\in\Sigma(S)$ and letting $a,b\in\R$ defined by \eqref{eq:ab_u}, for every $n\geq 3$, if $a\neq b$, the form $\sigma^{(n)}_S$ is \emph{not} locally integrable on $S$.
    \end{itemize}
\end{corollary}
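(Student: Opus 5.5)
Both items follow from \cref{cor:curv_forms1}. Since non-degenerate characteristic points are isolated and $\sigma^{(n)}_S$ is smooth on $S^\times$ (where $\mathrm{dist}_S$ is smooth by \cref{prop:iter_diver}), local integrability only needs to be checked near each $p\in\Sigma(S)$. Near such a point we work in the polar coordinates $(\rho,\vartheta)$ of \eqref{eq:x-y_polar}. There \eqref{eq:explicit_curv_form} gives
\begin{equation}
\sigma^{(n)}_S=\rho^{2-n}\phi_n(\rho,\vartheta)\,d\rho\wedge d\vartheta,
\end{equation}
with $\phi_n$ smooth up to $\rho=0$. Integrability on a disc $\{\rho<R\}$ is therefore governed by $\int_0^R\rho^{2-n}|\phi_n|\,d\rho$, and the coordinate change to $(x,y)$ is harmless because its Jacobian is $\rho/(1+ab)$.

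For (i), $n\le 2$ gives $\rho^{2-n}\le \max\{1,R^2\}$. Hence $|\sigma^{(n)}_S|$ is bounded by a constant times $d\rho\,d\vartheta$ on a compact set, which is integrable.

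For (ii), the plan is to show that the angular mean
\begin{equation}
\bar\phi_n:=\int_{\mathbb S^1}\partial_t^nJ_{S_{a,b}}(0,1,\vartheta)\,d\vartheta
\end{equation}
is nonzero, or at least that $\partial_t^nJ_{S_{a,b}}(0,1,\cdot)\not\equiv 0$, whenever $a\neq b$. Using \eqref{eq:in_hom_coords}, $\phi_n(\rho,\vartheta)=\phi_n^{a,b}(\vartheta)+O(\rho)$. Pick $\vartheta_0$ with $\phi_n^{a,b}(\vartheta_0)\neq0$ and let $I$ be an interval around $\vartheta_0$ on which $|\phi_n^{a,b}|\ge c>0$. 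For $\rho$ small we then have $|\phi_n|\ge c/2$ on $(0,\rho_0)\times I$. This gives
\begin{equation}
\int|\sigma^{(n)}_S|\ge \frac{c|I|}{2}\int_0^{\rho_0}\rho^{2-n}\,d\rho=\infty \qquad (n\ge3),
\end{equation}
so $\sigma^{(n)}_S$ is not locally integrable.

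The main obstacle is checking that $\phi_n^{a,b}\not\equiv0$ for every $n\ge3$ when $a\ne b$. This is necessary, since for the horizontal plane $a=b=0$ one has $\sigma^{(3)}=0$ by \eqref{eq:trivially-zero}. I would compute $J_{S_{a,b}}(\tau,1,\vartheta)$ explicitly from \cref{prop:normal_exponential_map}, substituting $h_0=2/\rho$, $h_1=-\cos\vartheta$, $h_2=-\sin\vartheta$ and $(x,y)$ in terms of $(\rho,\vartheta)$. Setting $\rho=1$, $J_{S_{a,b}}(\tau,1,\vartheta)$ is then an explicit combination of $\cos(2\tau)$, $\sin(2\tau)$, $\tau$ and polynomials in $\tau$, with trigonometric coefficients in $\vartheta$ whose $\vartheta$-dependence enters through $a-b$ times terms like $\cos 2\vartheta$, $\sin2\vartheta$; this structure is inferred by analogy with \eqref{eq:intro-Jac-plane} and has not been checked. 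The key step is to isolate in $\partial_\tau^nJ_{S_{a,b}}(0,1,\vartheta)$ a term proportional to $(a-b)$ times a nonconstant trigonometric function of $\vartheta$, so that it cannot vanish identically. For the higher derivatives I would use the generating structure in $\tau$, namely the oscillatory factors $\cos 2\tau,\sin 2\tau$, whose derivatives never vanish for all orders simultaneously. If the explicit formula of \cref{prop:hom_curv_form} is available, this reduces to inspecting it.
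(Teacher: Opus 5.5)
Your argument is correct and is essentially the paper's: reduce to a neighbourhood of $p$, write $\sigma^{(n)}_S=\rho^{2-n}\phi_n\,d\rho\wedge d\vartheta$ with $\phi_n=\phi_n^{a,b}+O(\rho)$, and settle your ``main obstacle'' with \cref{prop:hom_curv_form}. That proposition gives $\phi_n^{a,b}=A_n\cos(2\vartheta)+B_n\sin(2\vartheta)+C_n$ with $(A_n,B_n)\neq(0,0)$ whenever $a\neq b$ and $n\geq1$, and your sector lower bound simply makes explicit the paper's final ``not locally integrable'' step. One caveat: your first option, non-vanishing of the angular mean $2\pi C_n$, can fail (e.g.\ $C_n=0$ for odd $n$ when $a=-b$, or for even $n$ when $ab=1$), so what is actually needed is the pointwise non-vanishing coming from $(A_n,B_n)$, which is your fallback.
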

    \begin{proof}
        Since the curvature forms are smooth outside of the characteristic set (cf.\,\cref{lem:area_curv}), it is sufficient to control integrability only near characteristic points. Item $(i)$ is then a direct consequence of \eqref{eq:explicit_curv_form}. Concerning item $(ii)$, it follows from \eqref{eq:explicit_curv_form} combined with \eqref{eq:in_hom_coords} and \cref{prop:hom_curv_form} that 
        \begin{equation}\label{eq:sigma_n_proof}
            \sigma^{(n)}_S=\rho^{2-n}(A_n\cos(2\vartheta)+B_n\sin(2\vartheta)+C_n+O(\rho))d\rho\wedge d\vartheta.
        \end{equation}
        Moreover, \cref{prop:hom_curv_form} ensures that, for $a\neq b$ and $n\geq 1$, $A_n$ and $B_n$ cannot vanish simultaneously. Thus the form \eqref{eq:sigma_n_proof} is not locally integrable at $\rho=0$ for $n\geq 3$ and $a\neq b$.
    \end{proof}

\section{The volume of the half-tube. Proof of Theorems~\ref{thm:intro-steiner-hom} and \ref{thm:intro-steiner}}
\label{sec:volume_half-tube}

In this section, we prove \cref{thm:intro-steiner}. Our starting point is the formula
\begin{equation}
    \mathrm{vol}(\mathcal T_U(\ve))
    =\int_U\int_0^{\min\{\ve,t_\cut(p)\}}
    \exp^{\perp*}_S(\nu_0\wedge\nu_1\wedge\nu_2),
\end{equation}
established in \cref{thm:formula_for_volume}. The integrand, namely the Jacobian determinant of the normal exponential map, has been analyzed in the previous sections. The remaining ingredient is the behavior of the cut time, described by \cref{thm:intro-homogeneous-cut-time,thm:intro-cut-time-stability}. Although the proofs of these theorems appear later, in \cref{part:cut-time}, they do not rely on any results from the present section. We apply these theorems here to derive the volume asymptotics.

\subsection{Steiner formula for non-characteristic surfaces}
With the results collected so far, we give a streamlined proof of the Steiner tube formula for non-characteristic surfaces, originally obtained in  \cite{BaloghSteiner}. 
\begin{lemma}\label{lem:non_char}
     Let $S=\partial\Omega\subset \hei$ be a smooth oriented embedded surface, and let $U\subset S$ be an open relatively compact subset, such that $\overline{U}\cap\Sigma(S)=\emptyset$, then $\inf_{p\in U}t_\cut(p)>0$ and for any $0<\ve<\inf_{p\in U}t_\cut(p)$ it holds
    \begin{equation}
        \mathrm{vol}(\mathcal T_U(\ve))=\sum_{n=0}^{\infty}\frac{\ve^{n+1}}{(n+1)!}\int_U\sigma_S^{(n)}.
    \end{equation}
\end{lemma}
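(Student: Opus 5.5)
Since $\overline U$ is compact and disjoint from $\Sigma(S)$, we have $\overline U\subset S^\times$. By \cref{cor:t_cut_continuous}, $t_\cut$ is continuous on $S^\times$ with values in $(0,\infty]$; here \cref{cor:pos_cut_time} gives positivity. A continuous function with values in $(0,\infty]$ attains a positive minimum on a compact set, so $\inf_{p\in U}t_\cut(p)\ge\min_{\overline U}t_\cut>0$.

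Fix $0<\ve<\ve_0:=\inf_U t_\cut$. Then $\min\{\ve,t_\cut(p)\}=\ve$ for every $p\in U$. By \cref{thm:formula_for_volume} and \cref{rmk:determinant}, in local oriented coordinates (patched with a partition of unity, or directly as forms),
\begin{equation}
\vol(\mathcal T_U(\ve))=\int_U\int_0^\ve J_S(t,p)\,dt\,dp .
\end{equation}
The natural step is to Taylor expand $J_S(\cdot,p)$ at $t=0$. By \cref{lem:area_curv}, the coefficients are $\partial_t^nJ_S(0,p)\,dp=\sigma_S^{(n)}$. Integrating term by term in $t$ then gives $\frac{\ve^{n+1}}{(n+1)!}\sigma_S^{(n)}$, and exchanging sum and integral over $U$ yields the claimed series.

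The only real point is the convergence of the series and the validity of the exchange. For this I plan to use real-analyticity. In $\hei$ the exponential map is analytic in $t$ by \eqref{eq:exp_0_explicit}. For fixed $p\in S^\times$, \cref{prop:normal_exponential_map} shows that $t\mapsto\exp^\perp_S(t,p)$ is an entire function of $t$ (trigonometric in $h_0t$), as are its derivatives in the $p$-directions, with $h_i$ and derivatives of $u$ as coefficients. Hence $J_S(t,p)$ is an entire function of $t$, built from $\sin(h_0t)$, $\cos(h_0t)$ and polynomials in $t$, with coefficients continuous in $p$. On compact $\overline U$ the quantities $h_0$, $h_1$, $h_2$ and their first derivatives are bounded, since $\overline U$ avoids the characteristic set. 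This yields a Cauchy-type bound
\begin{equation}
|\partial_t^nJ_S(0,p)|\le C\,(1+n)^k M^n
\end{equation}
uniformly on $\overline U$, for suitable constants $C,M,k$. The series $\sum_n \frac{\ve^{n+1}}{(n+1)!}|\partial_t^nJ_S(0,p)|$ therefore converges uniformly in $p\in\overline U$ for every $\ve$. Dominated (or uniform) convergence then justifies exchanging the sum with $\int_U$ and with $\int_0^\ve$, and the Taylor series of $J_S(\cdot,p)$ converges to $J_S$ on $[0,\ve]$.

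I expect the main obstacle to be this uniform bound on the Taylor coefficients. The plan is to write $J_S$ explicitly as a finite sum of terms $P(t)\,\mathrm{trig}(h_0t)$, where $P$ has degree at most $2$ and continuous coefficients. This form follows from differentiating \eqref{eq:simplified_xy}–\eqref{eq:simplified_z} in $x$ and $y$, since $\partial_x(h_0t)=t\,\partial_xh_0$. From that representation the $n$-th derivative is bounded by $C n^2 \|h_0\|_\infty^n$, which is enough.
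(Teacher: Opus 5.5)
Your overall route matches the paper's: positivity of $\inf_U t_\cut$ from \cref{cor:pos_cut_time} and \cref{cor:t_cut_continuous} on the compact set $\overline U\subset S^\times$, then replacing $\min\{\ve,t_\cut\}$ by $\ve$, Taylor expanding $J_S(\cdot,p)$, and exchanging sum and integral by uniform convergence. The paper simply asserts that this uniform convergence ``can be seen from'' \eqref{eq:exp_0_explicit}. You try to make that step explicit, and that is where your plan has a gap.

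Your representation of $J_S$ as a finite sum of terms $P(t)\,\mathrm{trig}(h_0t)$, with coefficients continuous and bounded on $\overline U$, comes from differentiating \eqref{eq:simplified_xy}--\eqref{eq:simplified_z}, i.e.\ from \cref{prop:normal_exponential_map}. Those formulas hold only where $S$ is locally a graph $z=\tfrac12u(x,y)$, and their coefficients contain $1/h_0$ and $1/h_0^2$.

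However, $h_0(p)=\langle\lambda^\perp(p),X_0\rangle$ vanishes exactly when $X_0=\partial_z\in T_pS$. Such points are non-characteristic, since $X_0\notin\mathcal D$. They are also unavoidable in general: on any compact $S$ the projection $(x,y)|_S$ cannot be a local diffeomorphism everywhere (think of the ``equator'' of a sphere). If $\overline U$ meets $\{h_0=0\}$, no $z$-graph chart exists there and your coefficients blow up. So the bound $Cn^2\|h_0\|_\infty^n$ is not justified by your argument.

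The conclusion is still true, but you need an argument that sees through the removable singularity at $h_0=0$.
\begin{itemize}
\item Rewrite \eqref{eq:exp_0_explicit} using the entire functions $\frac{\sin w}{w}$, $\frac{1-\cos w}{w}$, $\frac{w-\sin w}{w^2}$ evaluated at $w=h_0t$. For example, the first component is $t\bigl(h_1\frac{\sin(h_0t)}{h_0t}-h_2\frac{1-\cos(h_0t)}{h_0t}\bigr)$. This shows $\exp_0(t\lambda)$ is entire in $(t,\lambda)$.
\item Combine this with \eqref{eq:exp_left-invariance} and with the boundedness on $\overline U$ of $\lambda^\perp=df/\sqrt{2H(df)}$ and its first derivatives. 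Then $J_S(\cdot,p)$ extends to an entire function of $t\in\mathbb C$ with $|J_S(t,p)|\le C(1+|t|)^ke^{M|t|}$, uniformly in $p\in\overline U$.
\item Cauchy's estimates on the circle $|t|=R$ give $|\partial_t^nJ_S(0,p)|\le n!\,C(1+R)^ke^{MR}R^{-n}$. Taking any $R>\ve$, this yields the uniform convergence you need, both of the series of integrals and of the Taylor series of $J_S(\cdot,p)$ on $[0,\ve]$.
\end{itemize}

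A minor point: even where $h_0\neq0$, products of three trigonometric factors produce frequencies up to $3h_0$. So your $M$ should be of order $3\|h_0\|_\infty$ rather than $\|h_0\|_\infty$, which is harmless.
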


\begin{proof}
    By \cref{cor:pos_cut_time}, for any non characteristic $q\in S$, $t_\cut(q)>0$. Since $\overline U$ does not intersect the characteristic set, and $t_\cut$ is continuous by \cref{cor:t_cut_continuous}, there exists $\ve_0>0$ such that $t_\cut(q)\geq\ve_0$ for all $q \in U.$ Let $\ve<\ve_0$, and letting $(x,y)$ be oriented coordinates on $U$, by \cref{rmk:determinant}, in the notation of \cref{prop:iter_diver} we have 
    \begin{align}
        \mathrm{vol}(\mathcal T_U(\ve)) &=\int_U\int_0^{\min\{\ve,t_\cut(p)\}} J_S(t,x,y)\,dtdxdy=\int_0^\ve\left(\int_UJ_S(t,x,y)\,dxdy\right)\!dt.
    \end{align}
    Since $U$ is relatively compact and does not contain characteristic points, the exponential map is smooth, and real-analytic in $t$,  therefore
    \begin{equation}
        J_S(t,x,y)=\sum_{n=0}^{\infty}\frac{t^n}{n!}\partial_t^{n}J_S(0,x,y).
    \end{equation}
     We may substitute the above expression in the formula for the volume, and exchange the sum with the integral, since the series is uniformly convergent in compact subsets of $\mathbb R^+\times S^\times$ (this can be seen from the explicit expression of the exponential map given in \eqref{eq:exp_0_explicit}, see also \cite{BossioRizziRossi2024}), we find
    \begin{equation}
        \mathrm{vol}(\mathcal T_U(\ve))=\sum_{n=0}^\infty \int_0^\ve\frac{t^n}{n!}\left(\int_U\,\partial_t^{n}J_S(0,x,y)\,dxdy\right)dt=\sum_{n=0}^\infty \frac{\ve^{n+1}}{(n+1)!}\int_U\,\sigma_S^{(n)},
    \end{equation}
    where in the last equation we integrated with respect to $t$ and used \cref{lem:area_curv}.
\end{proof}

  \subsection{The half-tube around small sub-level sets of $t_\cut$.} When characteristic points are present, the cut time is not uniformly bounded below, see \cref{cor:t_cut_continuous}, and the behavior of the volume of the half-tube depends on the shape of the sub-level sets of the cut time.

\begin{lemma}\label{lem:rho_0}
    Let $S=\partial\Omega\subset \hei$ be a smooth oriented embedded surface containing only geometrically stable characteristic points. Let $U\subset S$ be a compact set having all its characteristic points in its interior. There exists $r_0>0$ such that 
   \begin{equation}
       B(r_0):=\{q\in U\cap S\,:\, t_\cut(q)\leq r_0\},
   \end{equation}
   contains all the characteristic points of $U$. Moreover $B(r_0)$ is a union of finitely many, disjoint, smooth embedded compact balls, each one containing exactly one characteristic point, and each one admitting a positive system of polar coordinates $(r,\theta)\in [0,r_0]\times\mathbb S^1$ such that
   \begin{equation}\label{eq:coords_cut}
       t_{\cut}(r,\theta)=r.
   \end{equation}
\end{lemma}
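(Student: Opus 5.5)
The plan is to combine the local description of the cut time near each characteristic point, given by \cref{thm:intro-cut-time-stability}, with the positivity and continuity of $t_\cut$ away from $\Sigma(S)$, given by \cref{cor:pos_cut_time,cor:t_cut_continuous}.

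\emph{Step 1: finitely many characteristic points.} Non-degenerate characteristic points are isolated and $U$ is compact. Hence $U$ contains finitely many characteristic points $p_1,\dots,p_k$, all lying in the interior of $U$.

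\emph{Step 2: local description near each $p_i$.} By \cref{thm:intro-cut-time-stability}, there is a neighborhood $V_i$ of $p_i$ where, in the polar coordinates \eqref{eq:x-y_polar}, we have $t_\cut(\rho,\vartheta)=c_{a,b}\rho+\rho^2R_\cut(\rho,\vartheta)$ with $c_{a,b}>0$. We shrink the $V_i$ so that they are pairwise disjoint and contained in $U$. Define the change of radial variable $r:=t_\cut(\rho,\vartheta)$ for $\rho\in[0,\rho_0]$. Since $\partial_\rho t_\cut=c_{a,b}+O(\rho)>0$ for $\rho$ small, the map $\chi(\rho,\vartheta)=(t_\cut(\rho,\vartheta),\vartheta)$ is a diffeomorphism of $[0,\rho_0]\times\mathbb S^1$ onto its image, fixing $\{0\}\times\mathbb S^1$. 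Its inverse $\rho=\rho(r,\vartheta)$ is smooth with $\rho(0,\vartheta)=0$ and $\partial_r\rho=1/c_{a,b}+O(r)$.

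\emph{Step 3: positivity of the new coordinates.} The coordinates $(\rho,\vartheta)$ form a system of polar coordinates with Jacobian $\rho/(1+ab)$, by \cref{rmk:hom_or}. Composing with $\chi^{-1}$ yields a system of polar coordinates in which $t_\cut=r$. If $1+ab<0$, we replace $\vartheta$ by $-\vartheta$ to make the system positive; by the remark after the definition of polar coordinates, positivity is preserved since $\partial_\vartheta$ of the angular component is $>0$. The sets $\{r\le r_1\}$ are then smooth embedded closed balls around $p_i$.

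\emph{Step 4: separating the two regions.} On the compact set $K:=U\setminus\bigcup_i\{r<r_1\}_{V_i}$, which contains no characteristic points, $t_\cut$ is continuous and positive, by \cref{cor:pos_cut_time,cor:t_cut_continuous}. Since $t_\cut$ could equal $+\infty$, we read positivity in $(0,\infty]$. Hence $m:=\inf_K t_\cut>0$. Choose $r_0<\min\{r_1,m\}$. Then points of $K$ satisfy $t_\cut>r_0$. Inside each $V_i$ with $r\le r_1$, the condition $t_\cut\le r_0$ is equivalent to $r\le r_0$. Therefore $B(r_0)=\bigsqcup_i\{r\le r_0\}$, which is a disjoint union of smooth compact balls, each containing exactly one $p_i$, with $t_\cut(r,\theta)=r$ on each.

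\emph{Main obstacle.} The delicate point is ensuring that the region $\{t_\cut\le r_0\}$ has no extra components. Step 4 handles this, using compactness of $K$ together with the lower semicontinuity of $t_\cut$ on $S^\times$.
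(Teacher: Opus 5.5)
Your proposal is correct and follows essentially the same route as the paper. Both arguments isolate the finitely many characteristic points, straighten the cut time near each one via $(\rho,\vartheta)\mapsto(t_\cut(\rho,\vartheta),\vartheta)$ using \cref{thm:intro-cut-time-stability} and the inverse function theorem, fix the orientation by $\theta\mapsto-\theta$, and use positivity and continuity of $t_\cut$ off $\Sigma(S)$ together with compactness of $U$ to confine $\{t_\cut\le r_0\}$ to these balls. The only difference is cosmetic: you make the confinement step explicit through $\inf_K t_\cut>0$ on the compact set $K$, whereas the paper invokes continuity of $t_\cut$ (extended by zero on $\Sigma(S)$) to get $B(r_0)\subset V$ for small $r_0$.
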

\begin{proof}
    Since geometrically stable characteristic points are non-degenerate, they are isolated  (see for instance \cite[Thm.\ 1.1]{BBP25}). Thus, being $U$ compact, $U\cap\Sigma(S)=\{p_1,\dots,p_k\}$. By \cref{cor:t_cut_continuous,cor:pos_cut_time}, $t_\cut(p)=0$ if and only if $p\in U\cap\Sigma(S)$. Therefore, $U\cap\Sigma(S)\subset B(r_0)$, for every $r_0>0$, and in addition, by the continuity of the cut time, for every neighborhood $V$ of $U\cap\Sigma(S)$ there exists $r_0>0$ sufficiently small such that $B(r_0)\subset V$. We take $V$ to be a union of disjoint balls $B_i$ containing the characteristic point $p_i$, $i=1,\dots,k$, each and every one of them supporting polar coordinates $(\rho,\vartheta)$ such that \cref{thm:intro-cut-time-stability} holds. In such coordinates the function $t_\cut(\rho,\vartheta)$ is smooth and satisfies 
    \begin{equation}
        t_\cut(\rho,\vartheta) = \left(\frac\pi2-\arctan\left(\max\{a,b\}\right)\right)\rho+o(\rho),\qquad\text{as }\rho\to 0^+.
    \end{equation}
     Thus, by the inverse function theorem, there exists $\rho_0>0$ such that the following map
    \begin{equation}\label{eq:mapp_t_cut}
        [0,\rho_0]\times\mathbb S^1\to \R^+\times\mathbb S^1,\qquad (\rho,\vartheta)\mapsto (t_\cut(\rho,\vartheta),\vartheta)=:(r,\theta)
    \end{equation}
    is a smooth diffeomorphism onto its image, which contains $[0,r_0]\times\mathbb S^1$ (up to reducing $r_0>0$). This gives polar coordinates $(r,\theta)$, such that $t_\cut(r,\theta)=r$, defined on the whole $[0,r_0]\times\mathbb S^1$. Moreover, up to changing $\theta\mapsto-\theta$, we may assume that $(r,\theta)$ are positively oriented polar coordinates.
    Finally, the existence of such polar coordinates $(r,\theta)$ implies that, for each $i=1,\dots,k$, $B_i\cap B(r_0)=\{(r,\theta)\in B_i\,: r\leq r_0\}$ is a smooth closed ball.
\end{proof}
\begin{remark}\label{rmk:hom_rho_0}
    The above lemma also holds for homogeneous surfaces, with the only requirement that their characteristic point is not degenerate, i.e., for $S_{a,b}$ with $1+ab\neq 0$. Indeed in such case \cref{thm:intro-homogeneous-cut-time} gives $t_\cut(\rho,\vartheta)=c_{a,b}\rho$, where $c_{a,b}>0$. Thus $(r,\theta):=(c_{a,b}\rho,\vartheta)$ are polar coordinates in which the cut time coincides with the radius, well defined on every ball $B_{r_0}=\{c_{a,b}\rho\leq r_0\}$, for $r_0>0$. To ensure positive orientation of the pair $(r,\theta)$ we make the additional change $\theta\mapsto \mathrm{sign}(1+ab)\theta$, cf.\,\cref{rmk:hom_or}.
\end{remark}
\noindent We now compute the asymptotics of the volume of the half-tube around a small neighborhood of a geometrically stable characteristic point. 
\begin{lemma}\label{lem:strcuture_of_integral}
    Let $S=\partial\Omega\subset \hei$ be a smooth oriented embedded surface containing only geometrically stable characteristic points and let $B\subset S$ be one of the balls of \cref{lem:rho_0} with the coordinates $(r,\theta)$ of \eqref{eq:coords_cut}. Then, the function $\Phi:\mathbb R^2\to \mathbb R$ defined by 
    \begin{equation}\label{eq:F(s,r)}
        \Phi(s,r):=r^3\int_0^{2\pi}\int_0^{s}\mathcal J_S(u,r,\theta)dud\theta,
    \end{equation}
    where $\mathcal J_S$ is defined in \eqref{eq:mathcal_J} is smooth. Furthermore, for $0\leq \ve\leq r_0$, the following formula holds 
    \begin{equation}\label{eq:strcuture_of_integral}
        \vol(\mathcal T_B(\ve))=\int_0^{\ve}\Phi(1,r)dr+\int_\ve^{r_0}\Phi\left(\frac{\ve}{r},r\right)dr.
    \end{equation}
\end{lemma}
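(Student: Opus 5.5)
The plan is to start from \cref{thm:formula_for_volume} in the positive polar coordinates $(r,\theta)$ of \cref{lem:rho_0}, write the Jacobian using \cref{prop:iter_diver_singularities}, and split the radial integral at $r=\ve$.

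\emph{Smoothness of $\Phi$.} By \cref{prop:iter_diver_singularities} there is a smooth $\mathcal J_S$ with $J_S(t,r,\theta)=r^2\mathcal J_S(t/r,r,\theta)$. Its statement is independent of the chosen system of polar coordinates, so it applies to $(r,\theta)$. Then $(s,r)\mapsto\int_0^{2\pi}\int_0^s\mathcal J_S(u,r,\theta)\,du\,d\theta$ is smooth by differentiation under the integral over the compact set $\mathbb S^1$ and the interval $[0,s]$. Multiplying by $r^3$ keeps it smooth. To have $\Phi$ defined on all of $\mathbb R^2$, I would first extend $\mathcal J_S$ smoothly to negative arguments (Seeley extension), which is harmless since only $s,r\ge0$ are used.

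\emph{The formula.} Apply \cref{thm:formula_for_volume} together with \cref{rmk:determinant}, using the oriented coordinates $(r,\theta)$ on $B\setminus\{p\}$. Since $\Sigma(S)\cap B=\{p\}$ is a null set and $(r,\theta)$ is positive, there is no sign issue, and we get
\[
\vol(\mathcal T_B(\ve))=\int_0^{2\pi}\int_0^{r_0}\int_0^{\min\{\ve,r\}}r^2\mathcal J_S(t/r,r,\theta)\,dt\,dr\,d\theta ,
\]
because $t_\cut(r,\theta)=r$ by \eqref{eq:coords_cut}.

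Here I should be careful about the convention that $J_S$ is the Jacobian with respect to $(t,r,\theta)$. Is the integrand $r^2\mathcal J_S$, or does an extra factor of $r$ appear from the measure $dr\,d\theta$? In \cref{prop:iter_diver_singularities}, $J_S$ is already the determinant in the coordinates $(t,r,\theta)$, so $dt\,dr\,d\theta$ is the correct measure and no extra factor appears.

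Now substitute $t=ur$, so $dt=r\,du$. The inner integral becomes $r^3\int_0^{\min\{\ve,r\}/r}\mathcal J_S(u,r,\theta)\,du$. Then use Fubini, which is justified because the integrand is continuous and bounded on the compact domain, to move the $\theta$-integral inside. This gives $\int_0^{r_0}\Phi(\min\{\ve/r,1\},r)\,dr$. Splitting at $r=\ve$ (where $\ve\le r_0$): for $r\le\ve$ the upper limit is $1$, and for $r>\ve$ it is $\ve/r$. This yields \eqref{eq:strcuture_of_integral}.

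\emph{Main point of care.} The only delicate step is the applicability of \cref{prop:iter_diver_singularities} and the change of variables in the coordinates $(r,\theta)$ built from $t_\cut$ in \cref{lem:rho_0}. These coordinates are a system of polar coordinates: they are obtained from $(\rho,\vartheta)$ by a diffeomorphism of $[0,\rho_0]\times\mathbb S^1$ that fixes $\theta$ and has $\partial_\rho t_\cut>0$. The smoothness of this reparametrization up to $r=0$ is guaranteed by \cref{thm:intro-cut-time-stability}, so the $r^2$ scaling structure is preserved with a new smooth $\mathcal J_S$.

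Integrability near $r=0$ is automatic. Indeed, $|J_S|\le Cr^2$ on $\{t\le r\}$, since $u=t/r\in[0,1]$ ranges over a compact set.
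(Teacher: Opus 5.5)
Your proposal is correct and follows the paper's proof. Like the paper, you start from \cref{thm:formula_for_volume} in the cut-time polar coordinates of \cref{lem:rho_0}, write $J_S=r^2\mathcal J_S(t/r,r,\theta)$ via \cref{prop:iter_diver_singularities}, substitute $t=ur$, exchange the $r$ and $\theta$ integrals, and split at $r=\ve$. Your additional remarks only make explicit what the paper leaves implicit: that the $r^2$ scaling survives the change $(\rho,\vartheta)\mapsto(t_\cut,\pm\vartheta)$, that the orientation is positive, and that the integrand is bounded, which justifies Fubini.
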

\begin{proof}
    The smoothness of $\Phi$ defined in \eqref{eq:F(s,r)} is a direct consequence of \cref{prop:iter_diver_singularities}. We now show \eqref{eq:strcuture_of_integral}. Recall that, according to \cref{lem:rho_0}, in coordinates $(r,\theta)$, we have $B=\{(r,\theta):0\le r\le r_0,\ 0\le\theta\le2\pi\}$ and $t_\cut(r,\theta)=r$. Thus, starting from the identity \eqref{eq:formula_for_volume}, we express the integrals in coordinates $(t,r,\theta)$, using the notation of \cref{prop:iter_diver_singularities}:
    \begin{equation}
    \begin{split}
        \vol(\mathcal T_B(\ve))&=\int_B\int_0^{\min\{\ve,t_\cut\}}\exp^{\perp*}(\nu_0\wedge\nu_1\wedge\nu_2)=\int_0^{2\pi}\int_0^{r_0}\int_0^{\min\{\ve,r\}} J_S(t,r,\theta)dtdrd\theta\\
         &=\int_0^{r_0}r^3\int_0^{2\pi}\int_0^{\min\{\frac{\ve}{r},1\}}    \mathcal J_S(u,r,\theta)dud\theta dr,
    \end{split}
    \end{equation}
    where, in the last equality, we have performed the change of coordinates $t=ur$, used \eqref{eq:mathcal_J} and exchanged the order of integration in $r$ and $\theta$. Therefore, by definition of $\Phi(s,r)$ we have 
    \begin{equation}
    \begin{aligned}
         \vol(\mathcal T_B(\ve))&=\int_0^{r_0}\Phi\left(\min\left\{\frac{\ve}{r},1\right\},r\right)dr=\int_0^{\ve}\Phi(1,r)dr+\int_\ve^{r_0}\Phi\left(\frac{\ve}{r},r\right)dr.
    \end{aligned}
    \end{equation}
\end{proof}
\begin{remark}\label{rmk:Phi_hom}
    In the case of a non-degenerate homogeneous surface $S_{a,b}$, the above lemma holds on the balls $B=B_{r_0}=\{c_{a,b}\rho <r_0\}$, for $r_0>0$. Applying the change of variables $(r,\theta)=(c_{a,b}\rho,\pm\vartheta)$ we find 
    \begin{equation}
        \mathcal J_{S_{a,b}}(u,r,\theta)=\mathcal J_{S_{a,b}}(u,\theta)=\pm \left(\frac{1}{c_{a,b}}\right)^3 J_{S_{a,b}}(c_{a,b}u,1,\pm\theta),
    \end{equation}
    where $\pm=\mathrm{sign}(1+ab)$ and $J_{S_{a,b}}(t,\rho,\vartheta)$ is the Jacobian of the normal exponential map computed with respect to the coordinates $(t,\rho,\vartheta)$, and $c_{a,b}=\pi/2-\arctan(\max\{a,b\})$. In particular
    \begin{equation}\label{eq:Phi_a,b}
        \Phi_{a,b}(s,r)=r^3\int_0^{2\pi}\int_0^s\mathcal J_{S_{a,b}}(u,\theta)du\,d\theta.
    \end{equation}
\end{remark}
The asymptotics of the first integral in \eqref{eq:strcuture_of_integral}, as $\ve\to 0^+$ is a standard Taylor expansion as the function $\Phi(1,\cdot)$ is smooth. In order to describe the asymptotic structure of the second integral in \eqref{eq:strcuture_of_integral}, we introduce the finite part.  

    \begin{defi}[Finite part of a function]
    \label{def:finite_part_fun}
	Let $f:(0,+\infty)\to\mathbb R$ be a smooth function satisfying 
	 \begin{equation}\label{eq:finite-part-definition}
		f(\delta)=\sum_{j=1}^M c_{j}\delta^{-j}
		+c_{\ell}\log\delta+c_0+o(1),
		\qquad \text{as }\delta\to0^+.
	\end{equation}
	Then, we call $c_0$ the \emph{finite part} of $f$ and write $\Fp_{\delta\to0^+}f(\delta):=c_0$.
\end{defi}

  \begin{remark}
  \label{rmk:anti_matter}
    The definition of finite part of a function is well-posed, indeed the coefficients of the expansion \eqref{eq:finite-part-definition} are uniquely determined by $f$, as a consequence of \cref{lem:log_expansion_intrinsic}. Moreover, observe that $\Fp_{\delta\to0^+}f(\delta)$ is linear in $f$ and agrees with the limit $\lim_{\delta\to 0^+}f(\delta)$ whenever the latter exists.
\end{remark} 

The proof of the next lemma is deferred to Appendix~\ref{app:noname}.

   \begin{lemma}\label{l:integrato}
   	Let $r_0>0$ and let
   	$G:[0,1]\times[0,r_0]\to\mathbb R$ be smooth. Set
   	\begin{equation}\label{eq:mathcal_G}
   		\mathcal G(\ve)
   		=\int_\ve^{r_0}G\left(\frac\ve{r},r\right)\,dr.
   	\end{equation}
   	There are uniquely determined real coefficients $\gamma_n,\beta_n$
   	such that, for every integer $N\geq0$,
   	\begin{equation}\label{eq:singular-integral-expansion}
   		\mathcal G(\ve)
   		=\sum_{n=0}^N\ve^n
   		\bigl(\gamma_n+\beta_n\log\ve\bigr)+o(\ve^N),
   		\qquad \ve\to0^+.
   	\end{equation}
   	Moreover, $\gamma_0=\int_0^{r_0}G(0,r)\,dr$,
   	$\beta_0=0$, and, for $n\geq1$,
   	\begin{equation}\label{eq:abstract_coefficients}
   		\begin{aligned}
   			\gamma_n
   			&=\Fp_{\delta\to0^+}\left(
   			\frac1{n!}\int_\delta^{r_0}
   			\frac{\partial_s^nG(0,r)}{r^n}\,dr
   			+\frac1{(n-1)!}\int_\delta^1
   			\frac{\partial_r^{n-1}G(s,0)}{s^{n+1}}\,ds
   			\right),\\
   			\beta_n
   			&=-\frac{\partial_s^n\partial_r^{n-1}G(0,0)}
   			{n!(n-1)!}.
   		\end{aligned}
   	\end{equation}
   \end{lemma}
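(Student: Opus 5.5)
The plan is to Taylor expand $G$ jointly in $(s,r)$ at the origin and estimate the remainder separately. Fix $N\geq 0$ and write
\begin{equation}
G(s,r)=\sum_{j+k\leq M}\frac{\partial_s^j\partial_r^kG(0,0)}{j!\,k!}s^jr^k+R_M(s,r),\qquad |R_M(s,r)|\leq C(s+r)^{M+1},
\end{equation}
with $M$ large, say $M=2N+2$. For each monomial $s^jr^k$ the contribution to $\mathcal G(\ve)$ is explicit:
\begin{equation}
\ve^j\int_\ve^{r_0}r^{k-j}\,dr.
\end{equation}
If $k-j\neq-1$, this equals $\ve^j\bigl(r_0^{k-j+1}-\ve^{k-j+1}\bigr)/(k-j+1)$, which gives the powers $\ve^j$ and $\ve^{k+1}$. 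If $k=j-1$, it equals $\ve^j(\log r_0-\log\ve)$. The logarithm at order $\ve^n$ therefore comes only from the monomial with $j=n$, $k=n-1$. Its coefficient is $-\partial_s^n\partial_r^{n-1}G(0,0)/(n!(n-1)!)$, which is the claimed $\beta_n$. In particular $\beta_0=0$.

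For the remainder, the substitution $s=\ve/r\in[\ve/r_0,1]$ will not be used directly. Instead I would split $R_M$ differently: expand in $s$ only up to order $N$ with $r$-dependent coefficients, and expand in $r$ only up to order $N$ with $s$-dependent coefficients. The point is that $\int_\ve^{r_0}$ with $s=\ve/r$ is a two-regime integral. Where $r\gg\ve$, $s$ is small, and the Taylor expansion in $s$ gives terms $\frac{\ve^n}{n!}\int_\ve^{r_0}\partial_s^nG(0,r)r^{-n}dr$. Where $r\sim\ve$, the change of variables $r=\ve/s$ gives $\ve\int_{\ve/r_0}^1 G(s,\ve/s)s^{-2}ds$, and expanding in $r=\ve/s$ produces $\frac{\ve^{k+1}}{k!}\int \partial_r^kG(s,0)s^{-k-2}ds$.

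Concretely, I would use the standard additive decomposition
\begin{equation}
G=T_s G+T_r G-T_sT_rG+\text{(mixed remainder)},
\end{equation}
where $T_s$ and $T_r$ are the Taylor polynomials of order $N$ in $s$ and in $r$. The mixed remainder is $O(s^{N+1}r^{N+1})=O(\ve^{N+1})$ uniformly, so its integral is $O(\ve^{N+1})$. Each of the three remaining pieces has one-variable integrals of the form $\int_\ve^{r_0} g(r)r^{-n}dr$ with $g$ smooth. These have expansions of the type in Definition~\ref{def:finite_part_fun} as $\ve\to0$, with powers, a log, a constant, and a Taylor tail. The same is true of $\int_{\ve/r_0}^1 h(s)s^{-k-2}ds$.

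Collecting orders gives \eqref{eq:singular-integral-expansion}. Uniqueness of the coefficients follows from Remark~\ref{rmk:anti_matter}, that is, from the linear independence of $\ve^n$ and $\ve^n\log\ve$.

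The main obstacle is bookkeeping the constant terms so that they assemble into the single finite-part formula \eqref{eq:abstract_coefficients}. The double-counted piece $T_sT_rG$ consists of pure monomials, whose integrals contribute exactly the divergent powers and logs that cancel between the two finite parts. I would check that, after taking $\Fp$ of each one-variable integral, with $\delta=\ve$ in the first and $\delta=\ve/r_0$ rescaled to $\int_\delta^1$ in the second, the $\log r_0$ contributions and the monomial constants cancel. That leaves precisely $\gamma_n$. The first coefficient $\gamma_0=\int_0^{r_0}G(0,r)dr$ comes out directly, since there is no singularity at order $0$.
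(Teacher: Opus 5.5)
Your proposal is correct, but it uses a different decomposition from the paper's.

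\textbf{The paper's route.} The paper first proves Lemma~\ref{l:euge}, $G=\sum_{k\le N}(sr)^k\bigl(f_k(r)+s\,g_k(s)\bigr)+(sr)^{N+1}R_{N+1}$. Since $sr=\ve$ along the integration curve, each piece integrates to something elementary:
\begin{itemize}
\item $\ve^k\int_\ve^{r_0}f_k\,dr$, which has no singularity;
\item $\ve^{k+1}\int_{\ve/r_0}^1 g_k(s)s^{-1}\,ds$, which produces a single logarithm.
\end{itemize}
The work then lies in checking, piece by piece, that the linear formula \eqref{eq:abstract_coefficients} reproduces these coefficients.

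\textbf{Your route.} The splitting $G=T_sG+T_rG-T_sT_rG+(sr)^{N+1}B$ is the matched-asymptotics form of the same idea: outer expansion, plus inner expansion, minus overlap. Its advantage is that $T_sG$ and $T_rG$ produce exactly the two finite-part integrals in $\gamma_n$, with $\delta=\ve$ and $\delta=\ve/r_0$ respectively, which explains where the formula comes from. The price is strongly singular one-variable integrals, whose spurious terms must cancel against $T_sT_rG$.

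\textbf{The deferred cancellation does hold.} Write $c_{j,k}=\partial_s^j\partial_r^kG(0,0)/(j!\,k!)$ and look at order $\ve^n$:
\begin{itemize}
\item The constants $\sum_{j\ne n}c_{j,n-1}/(n-j)$ appear with opposite signs from $T_sG$ and from $-T_sT_rG$.
\item The constants $c_{n,n-1}\log r_0$ and $c_{n,k}r_0^{k+1-n}/(k+1-n)$, $k\ne n-1$, come from $T_rG$ through $\delta=\ve/r_0$. They are removed by the monomials $s^nr^k$ of $-T_sT_rG$.
\item The $\log\ve$ coefficients of the three pieces are $-c_{n,n-1}$, $-c_{n,n-1}$ and $+c_{n,n-1}$, which sum to $\beta_n$.
\end{itemize}

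\textbf{Two presentational points.}

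First, the joint Taylor expansion in your first paragraph does not by itself show that the logarithm at order $\ve^n$ comes only from $s^nr^{n-1}$. Along $s=\ve/r$ the bound $C(s+r)^{M+1}$ is $O(1)$ near $r=\ve$ and near $r=r_0$, so $R_M$ is not negligible. You need to commit to one of two versions:
\begin{itemize}
\item Apply the additive splitting to $R_M$. With your $M=2N+2$, $T_sT_rR_M=0$, and the integrands $r^{-j}\partial_s^jR_M(0,r)$ and $s^{-k-2}\partial_r^kR_M(s,0)$, $j,k\le N$, are smooth. Then no logarithms arise from the remainder, and your first-paragraph claim is rescued.
\item Apply it to $G$ itself, as your ``concretely'' paragraph says. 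Then logarithms come from all three pieces, and only their sum is $-c_{n,n-1}$.
\end{itemize}

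Second, in the latter version $\gamma_0$ is not immediate. $T_rG$ contributes $\sum_k c_{0,k}r_0^{k+1}/(k+1)$ at order $\ve^0$, and this is cancelled by the monomials $r^k$ of $T_sT_rG$.
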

   We are in position to obtain the asymptotic expansion of the volume of the half-tube around a characteristic point.

   \begin{prop}\label{prop:expansion_vol_around_char_pts}
   	Let $S=\partial\Omega\subset\hei$ be a smooth oriented embedded surface whose
   	characteristic points are geometrically stable, and let $B$ be one of the balls of \cref{lem:rho_0}. There are uniquely determined real coefficients $\alpha_n(B)$, $\beta_n(B)$,
   	$n\geq4$, such that, for every integer $N\geq4$,
   	\begin{equation}\label{eq:VOLLL}
   		\begin{aligned}
   			\vol(\mathcal T_B(\ve))
   			=\sum_{n=1}^3\frac{\ve^n}{n!}\int_B\sigma_S^{(n-1)}+\sum_{n=4}^N\ve^n
   			\bigl(\alpha_n(B)+\beta_n(B)\log\ve\bigr)+o(\ve^N), \qquad \text{as }\ve\to 0^+.
   		\end{aligned}
   	\end{equation}
   	In the polar coordinates $(r,\theta)$ of \cref{lem:rho_0},
   	where $t_\cut(r,\theta)=r$, the logarithmic coefficients are
   	\begin{equation}\label{eq:beta_n}
   		\beta_n(B)
   		=-\frac1{n!(n-4)!}
   		\int_0^{2\pi}\partial_r^{n-4}\phi_{n-1}(0,\theta)\,d\theta,
   		\qquad n\geq4,
   	\end{equation}
    where $\phi_n\in C^\infty(\R^+\times\mathbb S^1)$ is the smooth function defined in \cref{cor:curv_forms1}.
   \end{prop}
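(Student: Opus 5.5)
Starting from \cref{lem:strcuture_of_integral}, we write
\begin{equation}
\vol(\mathcal T_B(\ve))=\int_0^{\ve}\Phi(1,r)\,dr+\mathcal G(\ve),\qquad \mathcal G(\ve):=\int_\ve^{r_0}\Phi\left(\frac{\ve}{r},r\right)dr .
\end{equation}
Both pieces will be expanded separately and then recombined.

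\emph{First integral.} Since $\Phi(1,\cdot)$ is smooth and $\Phi(1,r)=O(r^3)$, Taylor expansion in $r$ gives $\int_0^\ve\Phi(1,r)\,dr=\sum_{k\ge 3}\frac{\ve^{k+1}}{(k+1)!}\partial_r^k\Phi(1,0)$. This contributes only pure powers $\ve^n$ with $n\geq4$, and those are absorbed into $\alpha_n(B)$.

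\emph{Second integral.} We apply \cref{l:integrato} with $G=\Phi$ restricted to $[0,1]\times[0,r_0]$. This gives an expansion $\sum_n \ve^n(\gamma_n+\beta_n\log\ve)$ with explicit coefficients. The factor $r^3$ in $\Phi$ gives $\partial_r^{n-1}\Phi(s,0)=0$ for $n\le 3$. Moreover $\partial_s^n\partial_r^{n-1}\Phi(0,0)$ vanishes unless $n-1\ge3$. So no logarithms occur for $n\le 3$, and the second Fp term in $\gamma_n$ disappears for $n\le3$.

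For $n\ge4$, Leibniz gives
\begin{equation}
\partial_r^{n-1}\bigl(r^3 F\bigr)(0)=\frac{(n-1)!}{(n-4)!}\,\partial_r^{n-4}F(0),
\end{equation}
with $F(s,r)=\int_0^{2\pi}\int_0^s\mathcal J_S\,du\,d\theta$. Also $\partial_s^n F(0,r)=\int_0^{2\pi}\partial_u^{n-1}\mathcal J_S(0,r,\theta)\,d\theta=\int_0^{2\pi}\phi_{n-1}(r,\theta)\,d\theta$ by \eqref{eq:def_phi_n}. Substituting into the formula for $\beta_n$ in \cref{l:integrato} yields exactly \eqref{eq:beta_n}. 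Uniqueness of all coefficients follows from \cref{lem:log_expansion_intrinsic}, i.e.\ from the uniqueness of logarithmic expansions.

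\emph{Low-order coefficients.} It remains to check that for $n=1,2,3$ the coefficient equals $\frac1{n!}\int_B\sigma_S^{(n-1)}$. The first integral contributes nothing at these orders, and $\gamma_0=\int_0^{r_0}\Phi(0,r)\,dr=0$. For $1\le n\le 3$ we have
\begin{equation}
\gamma_n=\frac1{n!}\lim_{\delta\to0}\int_\delta^{r_0}\frac{\partial_s^n\Phi(0,r)}{r^n}\,dr=\frac1{n!}\int_0^{r_0}r^{3-n}\int_0^{2\pi}\phi_{n-1}\,d\theta\,dr .
\end{equation}
Here the integrand is integrable, so the finite part is a genuine limit. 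By \eqref{eq:explicit_curv_form}, $\sigma_S^{(n-1)}=r^{3-n}\phi_{n-1}\,dr\wedge d\theta$, and the polar coordinates are positive, so $\gamma_n=\frac1{n!}\int_B\sigma_S^{(n-1)}$.

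\emph{Main obstacle.} The real analytic difficulty is hidden in \cref{l:integrato}, which we may assume. The remaining delicate point is bookkeeping. We must verify that the vanishing of $\Phi$ to third order in $r$ kills every logarithmic and spurious contribution up to order $3$. We must also check that the orientation and sign conventions (positive polar coordinates, $\mathcal J_S$ versus $J_S$) make $\gamma_n$ match the curvature integrals exactly.
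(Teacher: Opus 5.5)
Your proposal is correct and follows the paper's proof essentially step by step. It uses the same splitting from \cref{lem:strcuture_of_integral}, a Taylor expansion of the first integral, and \cref{l:integrato} applied to $G=\Phi$. It also uses the same two consequences of the factor $r^3$: the vanishing of $\partial_r^{n-1}\Phi(s,0)$ for $n\le 3$, and the Leibniz factor $\binom{n-1}{3}3!=\frac{(n-1)!}{(n-4)!}$ that yields \eqref{eq:beta_n}.

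The only cosmetic point concerns the expansion of $\int_0^\ve\Phi(1,r)\,dr$. It should be written as a finite Taylor sum up to order $N$ with an $O(\ve^{N+1})$ remainder, since $\Phi(1,\cdot)$ is merely smooth and the infinite series need not converge.
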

   
   \begin{proof}
   	By \cref{lem:strcuture_of_integral}, we have the identity
   	\begin{equation}\label{eq:local-volume-split}
   		\vol(\mathcal T_B(\ve))
   		=\int_0^\ve\Phi(1,r)\,dr
   		+\int_\ve^{r_0}\Phi(\ve/r,r)\,dr,\qquad 0<\ve\leq r_0.
   	\end{equation}
   	Due to the factor $r^3$ in \eqref{eq:F(s,r)}, the first integral in \eqref{eq:local-volume-split} has a Taylor
   	expansion starting at order four. Thus, for every $N\geq 4$, it holds:
   	\begin{equation}
   		\int_0^\ve\Phi(1,r)\,dr=\sum_{n=4}^N\omega_n\ve^n+o(\ve^N),\qquad\text{as }\ve\to 0^+.
   	\end{equation}
   	 An application of \cref{l:integrato}, with $\Phi$ in place of $G$, to the
   	second integral implies the existence of real coefficients $\gamma_n$, $\beta_n$ for $n\in\N$, such that, for every $N\geq 0$, we have  
   	\begin{equation}
   		\int_\ve^{r_0}\Phi(\ve/r,r)\,dr=\sum_{n=0}^{N}\ve^n(\gamma_n+\beta_n\log\ve)+o(\ve^N),\qquad\text{as }\ve\to 0^+.
   	\end{equation}
   Additionally, \cref{l:integrato} gives the explicit expression of the coefficients: for $n=0$ we have 
   \begin{equation}\label{eq:zero-coeff}
   	\gamma_0=\int_0^{r_0}\Phi(0,r)\,dr=0,\qquad \beta_0=0.
   \end{equation}
   For $\gamma_n$, with $n=1,2,3$, due to the factor $r^3$ in \eqref{eq:F(s,r)}, $\partial_r^{n-1}\Phi(s,0)=0$ and, using \eqref{eq:abstract_coefficients}, we find
   \begin{equation}\gamma_n
   			=\Fp_{\delta\to0^+}\left(
   			\frac1{n!}\!\int_\delta^{r_0}
   			\frac{\partial_s^n\Phi(0,r)}{r^n}\,dr
   			+\frac1{(n-1)!}\int_\delta^1
   			\frac{\partial_r^{n-1}\Phi(s,0)}{s^{n+1}}\,ds
   			\right)=\Fp_{\delta\to0^+}\!\left(
   		\frac1{n!}\int_\delta^{r_0}
   		\frac{\partial_s^n\Phi(0,r)}{r^n}\,dr\right)\!.
   \end{equation}
   Recalling the definition of $\Phi$ in \eqref{eq:F(s,r)}, we may use \cref{cor:curv_forms1}, and in particular \eqref{eq:def_phi_n}, to obtain 
   \begin{equation}\label{eq:FI}
   	\begin{aligned}
   		\partial_s^n\Phi(0,r)&=r^3\int_{0}^{2\pi}\phi_{n-1}(r,\theta)d\theta.
   	\end{aligned}
   \end{equation}
   Therefore
   \begin{equation}
       \gamma_n=\frac{1}{n!}\Fp_{\delta\to0^+}\left(\int_{\delta}^{r_0}\int_{0}^{2\pi}r^{3-n}\phi_{n-1}(r,\theta)d\theta dr\right)=\frac{1}{n!}\int_{0}^{r_0}\int_{0}^{2\pi}r^{3-n}\phi_{n-1}(r,\theta)d\theta dr=\frac{1}{n!}\int_{B}\sigma_S^{(n-1)},
   \end{equation}
   where, in the last equality, we used that the $n$-th curvature form satisfies $\sigma_S^{(n)}=r^{2-n}\phi_n(r,\theta)\,dr\wedge d\theta$, cf.\,\cref{cor:curv_forms1}. 
  Similarly, the formula for $\beta_n$ in \eqref{eq:abstract_coefficients} gives zero for $n<4$ while, for $n\geq4$, gives
   	\begin{equation}
   		\begin{aligned}
   				\beta_n
   			&=-\frac{\partial_s^n\partial_r^{n-1}\Phi(0,0)}
   			{n!(n-1)!}=-\frac{1}
   			{n!(n-1)!}\partial_r^{n-1}\left.\left(r^3\int_{0}^{2\pi}\phi_{n-1}(r,\theta)d\theta\right)\right|_{r=0} \\
            &=-\frac1{n!(n-1)!}\binom{n-1}{3}3!\int_{0}^{2\pi}\partial_r^{n-4}\phi_{n-1}(0,\theta)d\theta\\
            &=-\frac1{n!(n-4)!}\int_{0}^{2\pi}\partial_r^{n-4}\phi_{n-1}(0,\theta)d\theta.
   		\end{aligned}
   	\end{equation}
        Therefore, we may conclude the proof setting 
    \begin{equation}
    \begin{aligned}
        \alpha_n(B)&:=\begin{cases}
            \displaystyle\frac{1}{n!}\int_B\sigma_S^{(n-1)},&n=1,2,3,\\[1em]
            \displaystyle \omega_n+\gamma_n, &n\geq 4,
        \end{cases}\\
        \beta_n(B)&:=\beta_n=-\frac{1}{n!(n-4)!}\int_0^{2\pi}\partial_r^{n-4}\phi_{n-1}(0,\theta)d\theta,\qquad n\in\mathbb N,\,n\geq 4.
    \end{aligned}
    \end{equation}
   \end{proof}
\begin{remark}\label{rmk:beta_n_hom}
    Collecting the observation of \cref{rmk:hom_rho_0} and \cref{rmk:Phi_hom} we see that the proof of the previous proposition applies to non-degenerate homogeneous surfaces $S_{a,b}$ as well. In particular, the expression of $\Phi_{a,b}$ in \eqref{eq:Phi_a,b} shows that the functions $\phi_n(r,\theta)$ of \eqref{eq:FI} are actually independent of $r$, i.e. $\phi_n(r,\theta)=\phi_n(\theta)$ in the homogeneous case. Therefore $\beta_n(B)\neq 0$ necessarily implies $n=4$. 
\end{remark}
\subsection{Steiner's formula for characteristic surfaces} 
Collecting the results of the previous sections we may prove \cref{thm:intro-steiner}.
\begin{proof}[Proof of \cref{thm:intro-steiner}]
    We apply \cref{lem:rho_0} to the set $U$ and find $r_0>0$ such that $B(r_0)$ is a finite union of disjoint embedded balls $B_1,\dots B_k$ centered at the (finitely many) characteristic points of $U$. Therefore $U$ is the disjoint union of $U\setminus B(r_0)$, $B_1,\dots,B_k$ and, as a consequence, using \cref{rmk:additivity_of_vol}, we have
\[
 \vol(\mathcal T_U(\ve)) = \vol(\mathcal T_{U\setminus B(r_0)}(\ve))+\sum_{i=1}^k\vol(\mathcal T_{B_i}(\ve)).
\]
Since the closure of the set $U\setminus B(r_0)$ does not contain characteristic points, the asymptotics of $\ve\mapsto\vol(\mathcal T_{U\setminus B(r_0)}(\ve))$, as $\ve\to0^+$, follows by \cref{lem:non_char} and it holds: 
\begin{equation}
\label{eq:VOLLL_1}
  \vol(\mathcal T_{U\setminus B(r_0)}(\ve))=\sum_{n=1}^{\infty}\frac{\ve^{n}}{n!}\int_{U\setminus B(r_0)}\sigma^{(n-1)}. 
\end{equation}
On the other hand, the asymptotics of $\ve\mapsto\vol(\mathcal T_{B_i}(\ve))$, as $\ve\to0^+$, for $i=1,\ldots,k$, is a consequence of \cref{prop:expansion_vol_around_char_pts}. More precisely, for every $i=1,\ldots,k$, there is a collection of real numbers $\{\alpha_n(B_i),\beta_n(B_i)\}_{n\in\N}$ such that, given $N\in\N$, it holds  
\begin{equation}\label{eq:VOLLL_2}
    \vol(\mathcal T_{B_i}(\ve))=\sum_{n=1}^3\frac{\ve^n}{n!}\int_{B_i}\sigma^{(n-1)}+\sum_{n=4}^{N}\ve^n(\alpha_n(B_i)+\beta_n(B_i)\log\ve)+o(\ve^N),\qquad\text{as }\ve\to0^+.
\end{equation}
Combining \eqref{eq:VOLLL_1} and \eqref{eq:VOLLL_2}, we find coefficients $\{\alpha_n(U),\beta_n(U)\}_{n\in\N,n\geq 4}$ such that \eqref{eq:intro-main-expansion} holds. Note that these coefficients are uniquely determined by \cref{lem:log_expansion_intrinsic}. 
   \end{proof}
   We prove the analogue statement for non-degenerate homogeneous surfaces.
   \begin{proof}[Proof of \cref{thm:intro-steiner-hom}]
       In light of the observations contained in \cref{rmk:hom_rho_0}, \cref{rmk:Phi_hom} and \cref{rmk:beta_n_hom} we may repeat verbatim the proof of \cref{thm:intro-steiner} deducing the validity of \eqref{eq:intro-main-expansion} in the homogeneous case as well. Moreover, by \cref{rmk:beta_n_hom}, $\beta_n(U)\neq 0$ implies $n=4$. We may compute $\beta_4(U)$ applying \eqref{eq:abstract_coefficients} with $G(s,r)=\Phi_{a,b}(s,r)$, with the latter computed in \cref{rmk:Phi_hom}, finding 
    \begin{equation}
        \beta_4=-\frac{\mathrm{sign}(1+ab)}{24}\int_0^{2\pi}\partial_s^3J_{a,b}(s,1,\mathrm{sign}(1+ab)\theta)|_{s=0}d\theta.
    \end{equation}
    The Jacobian $J_{a,b}(s,\rho,\vartheta)$ is computed explicitly in \cref{l:det_exp_perp}. In particular, integration with respect to $\theta$ has the effect of canceling the term $s(a-b)\cos(2\vartheta+2s)$ in \eqref{eq:hom_mathcal_J_explicit}, while multiplying the remaining piece by $2\pi$, leaving us with 
    \begin{equation}
    \begin{aligned}
         \beta_4&=\frac{-\pi}{48|1+ab|}\partial_s^3\Bigl[(1+ab)
			+(1-ab)\bigl(\cos(2s)+s\sin(2s)\bigr)
			+(a+b)\bigl(s\cos(2s)-\sin(2s)\bigr)
			\Bigr]\Big{|}_{s=0}.
    \end{aligned}
    \end{equation}
    The function $(1+ab)+(1-ab)\bigl(\cos(2s)+s\sin(2s)\bigr)$ is even so it has vanishing third derivative at zero. The remaining term $(a+b)\bigl(s\cos(2s)-\sin(2s)\bigr)$ has third derivative at zero $-4(a+b)$. This confirms the value of $\beta_4$ in \eqref{eq:intro-main-expansion_homogeneous}.
   \end{proof}
  
   \section{Distributional character of higher-order coefficients. Proof of Theorem~\ref{thm:intro-distributions}}\label{s:distri}
Recall that the \textit{metric projection} of $S$ is the (possibly multivalued) map
  \begin{equation}
  	{\sf p}:\hei\to S,\qquad {\sf p}(q) = \arg\min_{p\in S} d_{SR}(p,q).
  \end{equation}
  In particular, ${\sf p}(q)$ is the starting point of a geodesic realizing the distance from $q$ to $S$, i.e.,
  \begin{equation}
  	|\mathrm{dist}_S(q)| = d_{SR}(q,p),\qquad\forall\,p\in {\sf p}(q). 
  \end{equation}
  By \cref{thm:t_cut=min_t_max_t_foc} and \cref{cor:cut_S_negligible}, the set $\mathcal T_S(\ve)\setminus \cut(S)$ has full measure in $\mathcal T_S(\ve)$, and the restriction 
  \begin{equation}
  	{\sf p}_\ve:=\mathsf p|_{\mathcal T_S(\ve)\setminus \cut(S)}:\mathcal T_S(\ve)\setminus \cut(S)\to S
  \end{equation}
  is a smooth function, since, for every $p\in S^\times$ and every $t\in (0,t_\cut(p))$, it satisfies
  \begin{equation}
  	\label{eq:almost_left-inverse}
  	{\sf p}_\ve\circ\exp^\perp_S(t,p) = p.
  \end{equation}
    This also implies that the push-forward measure $({\sf p}_\ve)_\#\vol$ is well-defined and we have
  \begin{equation}
  	\vol(\mathcal T_S(\ve)) = \int_S d({\sf p}_\ve)_\#\vol = ({\sf p}_\ve)_\#\vol(S).
  \end{equation}
  Now, for a given $U\subset S$ and $\varphi\in C^\infty(U)$, we study the asymptotic expansion, at $\ve=0$, of the function: 
  \begin{equation}
  	\ve\mapsto \vol_\varphi(\mathcal T_U(\ve)) := \int_U\varphi\, d({\sf p}_\ve)_\#\vol = \int_{\mathcal T_U(\ve)} (\varphi\circ {\sf p}_\ve) d\vol.
  \end{equation}
  Note that, for $U=S$, $\vol_\varphi(\mathcal T_S(\ve))$ coincides with the pairing $\langle \mu_\ve,\varphi\rangle$ defined in \eqref{eq:intro-pairing}.  The next theorem can be proven following the arguments of \cref{sec:volume_half-tube}. We sketch its proof for the reader's convenience.

  \begin{theorem}
  	\label{thm:expansion_weighted_vol_S}
  	Let $S=\partial\Omega\subset \hei$ be a smooth, oriented, embedded surface. Assume that $\Sigma(S)$ consists only of geometrically stable characteristic points. Let $\varphi\in C_c^\infty(S)$. Then, there exist real numbers $\{\alpha_n(\varphi),\beta_n(\varphi)\}_{n\in\N}\subset \R$ such that, for every $N\geq 4$,  
  	\begin{equation}
  		\label{eq:expansion_claim}
  		\vol_\varphi(\mathcal T_S(\ve))=\alpha_1(\varphi)\ve+\alpha_2(\varphi)\ve^2+\alpha_3(\varphi)\ve^3+\sum_{n=4}^{N}\ve^n(\alpha_n(\varphi)+\beta_n(\varphi)\log\ve)+o(\ve^N),
  	\end{equation}
    as $\ve\to0^+$. Moreover, letting $\sigma_S^{(n)}$ be the curvature forms of \cref{defi:curvature_forms}, we have
  	\begin{equation}
  		\alpha_n(\varphi)=\frac{1}{n!}\int_S\varphi\,\sigma_S^{(n-1)},\qquad n=1,2,3.
  	\end{equation}
    Finally, if $S$ is homogeneous, the results hold requiring only non-degenerate characteristic points.
  \end{theorem}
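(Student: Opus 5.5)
The plan is to repeat the argument of \cref{sec:volume_half-tube} with the weight $\varphi\circ\mathsf p_\ve$ inserted. First, the weighted analogue of \cref{thm:formula_for_volume} holds. By \eqref{eq:almost_left-inverse}, on $\exp^\perp_S(T_\ve)$ we have $\varphi\circ\mathsf p_\ve\circ\exp^\perp_S(t,p)=\varphi(p)$. The image of the graph of $\min\{\ve,t_\cut\}$ is negligible, so the same change of variables gives
\begin{equation}
\vol_\varphi(\mathcal T_U(\ve))=\int_U\varphi(p)\int_0^{\min\{\ve,t_\cut(p)\}}\exp^{\perp*}_S(\nu_0\wedge\nu_1\wedge\nu_2).
\end{equation}
This functional is additive over disjoint Borel subsets of $S$, as in \cref{rmk:additivity_of_vol}.

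Next we localize. Let $U\supset\supp\varphi$ be compact with the finitely many characteristic points of $\supp\varphi$ in its interior. These points are isolated, and we may enlarge $U$ slightly so that no characteristic point lies on $\partial U$. \cref{lem:rho_0} gives $r_0>0$ and disjoint balls $B_1,\dots,B_k$ carrying positive polar coordinates in which $t_\cut(r,\theta)=r$. On $U\setminus B(r_0)$ the cut time is bounded below, so the proof of \cref{lem:non_char} with $J_S$ replaced by $\varphi J_S$ gives
\begin{equation}
\sum_{n\ge1}\frac{\ve^n}{n!}\int_{U\setminus B(r_0)}\varphi\,\sigma_S^{(n-1)}
\end{equation}
for small $\ve$. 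On each ball $B_i$, \cref{lem:strcuture_of_integral} holds with $\Phi$ replaced by
\begin{equation}
\Phi_\varphi(s,r):=r^3\int_0^{2\pi}\int_0^s\varphi(r,\theta)\,\mathcal J_S(u,r,\theta)\,du\,d\theta .
\end{equation}
This function is smooth because $\varphi\circ\xi$ is smooth on $[0,r_0)\times\mathbb S^1$ for a system of polar coordinates $\xi$, and $\mathcal J_S$ is smooth by \cref{prop:iter_diver_singularities}. The weighted volume then splits as $\int_0^\ve\Phi_\varphi(1,r)\,dr+\int_\ve^{r_0}\Phi_\varphi(\ve/r,r)\,dr$.

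We then expand the two pieces exactly as in \cref{prop:expansion_vol_around_char_pts}. Because of the factor $r^3$, the first integral has a Taylor expansion starting at order $\ve^4$. We apply \cref{l:integrato} to the second integral with $G=\Phi_\varphi$, which produces the logarithmic expansion. For $n=1,2,3$ the factor $r^3$ kills the terms $\partial_r^{n-1}\Phi_\varphi(s,0)$, and the $\log\ve$ coefficients vanish. The remaining finite part is a genuine convergent integral,
\begin{equation}
\frac1{n!}\int_0^{r_0}\!\!\int_0^{2\pi}\varphi\, r^{3-n}\phi_{n-1}\,d\theta\,dr=\frac1{n!}\int_{B_i}\varphi\,\sigma_S^{(n-1)},
\end{equation}
by \cref{cor:curv_forms1}. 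Summing over $U\setminus B(r_0)$ and the balls gives \eqref{eq:expansion_claim} with the stated $\alpha_1,\alpha_2,\alpha_3$. Uniqueness of the coefficients follows from \cref{lem:log_expansion_intrinsic}. The homogeneous case is the same, using \cref{rmk:hom_rho_0} and \cref{rmk:Phi_hom} in place of \cref{lem:rho_0}.

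The main point to check is the smoothness of $\Phi_\varphi$ up to $r=0$, which is what \cref{l:integrato} requires. This needs $\varphi$ to be smooth in the cut-adapted polar coordinates $(r,\theta)$. That holds because these coordinates come from a diffeomorphism of $[0,\rho_0]\times\mathbb S^1$ composed with a genuine polar chart, so pulling back a smooth function on $S$ gives a smooth function of $(r,\theta)$. The same remark covers the independence of the construction from the choice of $U$.
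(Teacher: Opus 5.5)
Your proposal is correct and is essentially the paper's proof. It uses the same three steps: the weighted change-of-variables formula via $\mathsf p_\ve\circ\exp^\perp_S(t,p)=p$; the splitting of a compact $U\supset\supp\varphi$ into a non-characteristic part, treated as in \cref{lem:non_char}, and the cut-adapted balls of \cref{lem:rho_0}; and, on each ball, the decomposition $\int_0^\ve\Phi_\varphi(1,r)\,dr+\int_\ve^{r_0}\Phi_\varphi(\ve/r,r)\,dr$ with \cref{l:integrato} applied to $G=\Phi_\varphi$.

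Your explicit checks fill in details the paper only sketches. These are the smoothness of $\varphi$ in the cut-time polar coordinates, the fact that the factor $r^3$ kills the $\log\ve$ terms and the terms $\partial_r^{n-1}\Phi_\varphi(s,0)$ for $n\le 3$, and the implicit fact that $\gamma_0=0$ because $\Phi_\varphi(0,r)\equiv 0$.
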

  
  \begin{proof}
  	Firstly, combining \cref{thm:formula_for_volume} and the identity \eqref{eq:almost_left-inverse}, we may write
  	\begin{equation}
  		\label{eq:expression_vol_varphi}
  		\mathrm{vol}_\varphi(\mathcal T_S(\ve))=\int_S\varphi(p)\int_0^{\min\{\ve,t_\cut(p)\}}\exp^{\perp*}_S(\nu_0\wedge\nu_1\wedge\nu_2),
  	\end{equation}
  	Then, we consider a compact set $U\subset S$, having its characteristic points in its interior, such that $\supp \varphi\subset U$ (note that this set always exists since $\Sigma(S)$ consists of isolated points). Then, we have
    \[
        \vol_\varphi(\mathcal T_S(\ve)) = \vol_{\varphi}(\mathcal T_U(\ve)).
    \]
    We now split $U$ as in the proof of \cref{thm:intro-steiner} and then, by \cref{rmk:additivity_of_vol}, we can split the volume accordingly: 
  	\begin{equation}
  		\label{eq:splitting_vol_varphi}
  		\vol_{\varphi}(\mathcal T_U(\ve))=\vol_\varphi(\mathcal T_{U\setminus B(r_0)}(\ve))+\sum_{i=1}^k\vol_\varphi(\mathcal T_{B_i}(\ve)),
  	\end{equation}
  	where we recall that $B(r_0)$, defined in \cref{lem:rho_0} (see \cref{rmk:hom_rho_0} for the non-degenerate homogeneous case), is the finite union of the balls $B_i$, each of which contains a single characteristic point. Note that $\overline{U\setminus B(r_0)}$ does not contain characteristic points, thus $\inf_{U\setminus B(r_0)}t_\cut>0$. This means that, exploiting \eqref{eq:expression_vol_varphi}, one can easily adapt the proof of \cref{lem:non_char} to the weighted volume to obtain that the term $\vol_\varphi(\mathcal T_{U\setminus B(r_0)}(\ve))$ in \eqref{eq:splitting_vol_varphi} has the expansion: 
  	\begin{equation}
  		\label{eq:expansion1}
  		\mathrm{vol}_\varphi(\mathcal T_{U\setminus B(r_0)}(\ve))=\sum_{n=0}^{\infty}\frac{\ve^{n+1}}{(n+1)!}\int_{U\setminus B(r_0)}\varphi\,\sigma_S^{(n)}.
  	\end{equation}
  	For ease of notation, set $B:=B_i$, for some $i=1,\ldots,k$, and let $(r,\theta)$ be the polar coordinates identified by \cref{lem:rho_0}. To discuss the term $\vol_\varphi(\mathcal T_B(\ve))$ in \eqref{eq:splitting_vol_varphi}, using the notation of \cref{prop:iter_diver_singularities}, we define
  	\begin{equation}\label{eq:FI(s,r)_varphi}
  		\Phi_\varphi:\mathbb R^+\times\mathbb R^+\to \mathbb R,\qquad \Phi_\varphi(s,r):=r^3\int_0^{2\pi}\varphi(r,\theta)\int_0^{s}\mathcal J(u,r,\theta)du\,d\theta.
  	\end{equation}
  	As in the case where $\varphi\equiv 1$, $\Phi_\varphi$ is smooth and, repeating the proof of \cref{lem:strcuture_of_integral} (or \cref{rmk:Phi_hom} for the non-degenerate homogeneous case), one can show that, for $0\leq \ve\leq r_0$, it holds 
  	\begin{equation}\label{eq:structure_of_integral_varphi}
  		\vol_\varphi(\mathcal T_B(\ve))=\int_0^{\ve}\Phi_\varphi(1,r)dr+\int_\ve^{r_0}\Phi_\varphi\left(\frac{\ve}{r},r\right)dr.
  	\end{equation}
  	Reasoning as in the proof of \cref{prop:expansion_vol_around_char_pts} (see \cref{rmk:beta_n_hom} for the analogous statement for non-degenerate homogeneous surfaces), the first integral in \eqref{eq:structure_of_integral_varphi} is a smooth function at $\ve=0$, hence it admits a standard Taylor expansion. The second integral in \eqref{eq:structure_of_integral_varphi} can be discussed using \cref{l:integrato}, with $G=\Phi_\varphi$ therein. Combining these two ingredients, for a fixed $N\in\N$, we obtain the expansion: 
  	\begin{equation}
  		\label{eq:expansion2}
  		\vol_\varphi(\mathcal T_B(\ve)) = \sum_{n=0}^N \ve^n\left(\gamma_n+\beta_n\log\ve\right)+o(\ve^N),\qquad\text{as }\ve\to 0^+,
  	\end{equation}
  	for a collection of real numbers $\{\gamma_n,\beta_n\}_{n\in\N}$. Finally, combining \eqref{eq:expansion1} with  \eqref{eq:expansion2} applied for each ball $B_i$, $i=1,\ldots,k$, we obtain \eqref{eq:expansion_claim}. The characterization of the first three coefficients of \eqref{eq:expansion_claim} can be obtained as in \cref{prop:expansion_vol_around_char_pts}. 
  \end{proof}
  
    \begin{corollary}\label{cor:noncharacteristic_coefficients}
  	Under the hypotheses and notations of \cref{thm:expansion_weighted_vol_S}, assume that $\supp(\varphi)\cap\Sigma(S)=\emptyset$.
  	Then, for every $n\geq1$ it holds
  	\begin{equation}
  		\alpha_n(\varphi)
  		=\frac1{n!}\int_S\varphi\,\sigma_S^{(n-1)},
  		\qquad
  		\beta_n(\varphi)=0.
  	\end{equation}
  \end{corollary}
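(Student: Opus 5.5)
The plan is to exploit the fact that, when $\supp\varphi$ avoids $\Sigma(S)$, the whole weighted tube integral localizes on a compact non-characteristic region. There the cut time is bounded below, so the expansion becomes a convergent power series. The coefficients can then be read off by uniqueness of the logarithmic expansion.

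First I would set $K:=\supp\varphi$. It is compact and disjoint from $\Sigma(S)$, so I can choose an open relatively compact $U\supset K$ with $\overline U\cap\Sigma(S)=\emptyset$. By \eqref{eq:expression_vol_varphi}, the integrand vanishes outside $\supp\varphi$, which gives
\begin{equation}
\vol_\varphi(\mathcal T_S(\ve))=\int_U\varphi(p)\int_0^{\min\{\ve,t_\cut(p)\}}\exp^{\perp*}_S(\nu_0\wedge\nu_1\wedge\nu_2).
\end{equation}
Next, by \cref{lem:non_char} (via \cref{cor:pos_cut_time} and \cref{cor:t_cut_continuous}), $\ve_0:=\inf_U t_\cut>0$. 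For $0<\ve<\ve_0$ the minimum therefore equals $\ve$. I would then repeat verbatim the argument of \cref{lem:non_char} with the weight $\varphi$ inserted. One expands $J_S(t,x,y)$ in its Taylor series in $t$, which converges uniformly on $[0,\ve_0]\times\overline U$ by analyticity in $t$ of the explicit exponential map, and integrates term by term. Using \cref{lem:area_curv}, this yields
\begin{equation}
\vol_\varphi(\mathcal T_S(\ve))=\sum_{n=1}^\infty\frac{\ve^n}{n!}\int_S\varphi\,\sigma_S^{(n-1)},\qquad 0<\ve<\ve_0.
\end{equation}
This is exactly \eqref{eq:expansion1} with $U\setminus B(r_0)$ replaced by $U$ and no ball contributions.

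Finally, truncating at order $N$ gives an expansion of the form \eqref{eq:expansion_claim} with the $\beta_n$ equal to zero and the $\alpha_n$ given by the curvature integrals. By \cref{lem:log_expansion_intrinsic}, the coefficients of such an expansion in the basis $\ve^n,\ve^n\log\ve$ are unique. They must therefore coincide with the $\alpha_n(\varphi),\beta_n(\varphi)$ of \cref{thm:expansion_weighted_vol_S}. This gives $\alpha_n(\varphi)=\frac1{n!}\int_S\varphi\,\sigma_S^{(n-1)}$ and $\beta_n(\varphi)=0$ for all $n\geq1$.

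The only delicate step is justifying the term-by-term integration, that is, uniform convergence of the $t$-Taylor series of $J_S$ on $[0,\ve_0]\times\overline U$, possibly after shrinking $\ve_0$. This is exactly the point already used in \cref{lem:non_char}, where $h_0$ is bounded on $\overline U$. Even without analyticity, one could use a finite Taylor expansion with a uniformly bounded remainder, which already gives the claim up to $o(\ve^N)$ for every $N$.
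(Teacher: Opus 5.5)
Your proof is correct and is essentially the paper's argument. The paper reuses the splitting from the proof of \cref{thm:expansion_weighted_vol_S} and shrinks $r_0$ so that $\overline{B(r_0)}\cap\supp\varphi=\emptyset$; the ball contributions then vanish and only the non-characteristic series \eqref{eq:expansion1} remains, which is the same as your direct choice of a neighborhood $U$ of $\supp\varphi$ avoiding $\Sigma(S)$, with your appeal to \cref{lem:log_expansion_intrinsic} supplying the coefficient identification that the paper leaves implicit.
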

  \begin{proof}
      Observe that $\supp(\varphi)$ is a compact set that does not contain characteristic points. Hence, we may choose $r_0>0$ so small that $\overline{B(r_0)}\cap\supp(\varphi)=\emptyset$. The result then follows from \eqref{eq:expansion1}.
  \end{proof}

  \subsection{The finite part of a curvature form}

  In light of the non-integrable behavior of the high-order curvature forms at characteristic points (cf. \cref{cor:curv_forms}), we now introduce a notion of regularization of their (potentially) divergent integrals. The regularization procedure is performed using the cut time. 

  \begin{defi}[Finite part of a curvature form]
  \label{def:finite_part_boring}  
      Let $S=\partial\Omega\subset\hei$ be a smooth embedded surface and let $\sigma_S^{(n)}$ be its $n$-th curvature form, for $n\in\N$. The \emph{finite part} of $\sigma_S^{(n)}$ is the functional $\mathrm{Fp}(\sigma_S^{(n)})$ on $C^\infty_c(S)$ defined by
    \begin{equation}
        \langle\mathrm{Fp}(\sigma_S^{(n)}),\varphi\rangle :=\mathrm{Fp}_{\delta\to 0^+}\left(\int_{\{t_\cut>\delta\}}\varphi\,\sigma^{(n)}_S\right),\qquad\forall\,\varphi\in C^\infty_c(S).
    \end{equation}
  \end{defi}  

The next lemma shows that the finite part of a curvature form is well-defined and defines a distribution on $S$, as long as $\Sigma(S)$ contains only geometrically stable points. 

\begin{lemma}\label{lem:finite_part_boring}
    Assume that $\Sigma(S)$ consists of geometrically stable characteristic points. Then, for every $n\in\N$ and every $\varphi\in C^\infty_c(S)$, the function
    \begin{equation}
    \label{eq:higgs_boson}
    \delta\mapsto \int_{\{t_\cut>\delta\}}\varphi\,\sigma^{(n)}_S
    \end{equation}
    has an expansion of the form \eqref{eq:finite-part-definition} as $\delta\to 0^+$. Moreover, for every $n\geq 2$, the finite part of a curvature form is a distribution on $C^\infty_c(S)$ of order at most $n-2$. Finally, if $S$ is homogeneous, the results hold requiring only non-degenerate characteristic points.
\end{lemma}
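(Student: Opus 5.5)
Since $\varphi$ has compact support and the characteristic points are isolated, I would first localize exactly as in the proof of \cref{thm:expansion_weighted_vol_S}. Take a compact $U\supset\supp\varphi$ having its characteristic points in its interior, and apply \cref{lem:rho_0} (or \cref{rmk:hom_rho_0} in the homogeneous case) to obtain $r_0>0$ and the balls $B_1,\dots,B_k$. On $U\setminus B(r_0)$ the cut time is bounded below by some $c>0$. Hence for $\delta<\min\{c,r_0\}$ the set $\{t_\cut>\delta\}\cap\supp\varphi$ contains all of $\supp\varphi\setminus B(r_0)$, and the integral of the smooth form $\varphi\sigma_S^{(n)}$ over that piece is a constant independent of $\delta$. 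So it suffices to treat one ball $B$ with the coordinates $(r,\theta)$ in which $t_\cut=r$, where $\{t_\cut>\delta\}\cap B=\{\delta<r\le r_0\}$.

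On $B$, \cref{cor:curv_forms1} gives $\sigma_S^{(n)}=r^{2-n}\phi_n(r,\theta)\,dr\wedge d\theta$ with $\phi_n$ smooth up to $r=0$. This uses that \cref{prop:iter_diver_singularities} holds in any system of polar coordinates, in particular the positive one of \cref{lem:rho_0}. Setting $g(r):=\int_0^{2\pi}\varphi(r,\theta)\phi_n(r,\theta)\,d\theta$, which is smooth on $[0,r_0]$, we get
\begin{equation}
\int_{\{\delta<r\le r_0\}}\varphi\,\sigma_S^{(n)}=\int_\delta^{r_0}r^{2-n}g(r)\,dr .
\end{equation}
For $n\le2$ this converges as $\delta\to0$. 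For $n\ge3$, Taylor expand $g(r)=\sum_{k=0}^{n-3}\frac{g^{(k)}(0)}{k!}r^k+r^{n-2}h(r)$ with $h$ smooth. The polynomial terms integrate explicitly to negative powers $\delta^{k+3-n}$, plus a $\log\delta$ term coming from $k=n-3$, plus constants. The remainder $\int_\delta^{r_0}h$ converges to $\int_0^{r_0}h$ with an $o(1)$ error. This produces the expansion \eqref{eq:finite-part-definition}, and its finite part is
\begin{equation}
\int_0^{r_0}\Big(r^{2-n}g(r)-\sum_{k=0}^{n-3}\tfrac{g^{(k)}(0)}{k!}r^{k+2-n}\Big)dr+\sum_{k=0}^{n-4}\tfrac{g^{(k)}(0)}{k!}\tfrac{r_0^{k+3-n}}{k+3-n}+\tfrac{g^{(n-3)}(0)}{(n-3)!}\log r_0 .
\end{equation}

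This finite part is linear in $\varphi$. For the order estimate, first bound the constant contribution from $\supp\varphi\setminus B(r_0)$ by $C\sup|\varphi|$. Then bound the $g^{(k)}(0)$ by $C\|\varphi\|_{C^k}$ with $k\le n-3$. Finally, write the integral-remainder term via Taylor's formula with integral remainder, $r^{-(n-2)}\big(g-\text{Taylor}_{n-3}\big)=\int_0^1\frac{(1-s)^{n-3}}{(n-3)!}g^{(n-2)}(sr)\,ds$, which is bounded by $C\|\varphi\|_{C^{n-2}}$ on a fixed compact $K\supset\supp\varphi$. Since $\partial_r^k$ of $\varphi(r,\theta)$ involves at most $k$ derivatives of $\varphi$ in a chart, this gives a distribution of order at most $n-2$. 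The case $n=2$ is then simply an $L^1$ pairing, which has order $0$.

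The main subtlety is bookkeeping rather than analysis. Different $\varphi$'s with growing supports need a uniform choice of $U$ and $r_0$ on each compact $K$, which is fine because the estimate is only required per compact set. The other point to justify is that the uniqueness of the expansion coefficients (\cref{rmk:anti_matter}) makes the finite part independent of these choices. The homogeneous case is identical, using \cref{rmk:hom_rho_0}.
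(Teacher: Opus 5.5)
Your proposal is correct and follows essentially the same route as the paper. Both arguments localize to the balls of \cref{lem:rho_0}, where $t_\cut=r$. Both then Taylor-expand the angular average $\int_0^{2\pi}\varphi\,\phi_n\,d\theta$ to order $n-3$, integrate the singular monomials explicitly to get the negative powers and the $\log\delta$ term, and control the Taylor remainder in the $C^{n-2}$ topology; the paper cites \cref{lem:distribution_T} for the bound on the Taylor coefficients, which matches your chain-rule remark. Your extra care in choosing $U$ and $r_0$ uniformly on each compact set is left implicit in the paper but does not change the argument.
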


\begin{proof}
    For $n\leq 2$, \cref{cor:curv_forms} gives local integrability of
$\sigma_S^{(n)}$, hence, given $\varphi\in C^\infty_c(S)$,  
    \[
    \exists\lim_{\delta\to 0^+} \int_{\{t_\cut>\delta\}} \varphi\,\sigma_S^{(n)} = \int_S \varphi\,\sigma_S^{(n)}. 
    \]
    Hence, taking into account \cref{rmk:anti_matter}, the finite part of $\sigma_S^{(n)}$ is well-defined and coincides with the evaluation of the integral. For $n\geq 3$, let $0<\delta<r_0$ be small enough for \cref{lem:rho_0} to hold (cf.\,\cref{rmk:hom_rho_0} for the homogeneous case). Then 
    \begin{equation}
    \label{eq:sum_of_integralz}
        \int_{\{t_\cut>\delta\}}\varphi\,\sigma^{(n)}_S= \int_{\{\delta<t_\cut<r_0\}}\varphi\,\sigma^{(n)}_S+\int_{\{t_\cut\geq r_0\}}\varphi\,\sigma^{(n)}_S.
    \end{equation}
    The second integral of \eqref{eq:sum_of_integralz} is independent of $\delta$ so it contributes with a constant to the asymptotic expansion of $\int_{\{t_\cut>\delta\}}\varphi\,\sigma^{(n)}_S$. We now deal with the first integral of \eqref{eq:sum_of_integralz}. By \cref{lem:rho_0}, by additivity of the integral, the latter splits into $|\Sigma(S)\cap\supp\varphi|$ summands of the form 
    \begin{equation}
    \label{eq:suffering}
        \delta\mapsto\int_{\{\delta<t_\cut<r_0\}\cap B}\varphi \,\sigma_S^{(n)},
    \end{equation}
    where $B$ is one of the disjoint balls of \cref{lem:rho_0}. Therefore, it is sufficient to show that the finite part of one of such terms is well-defined and a distribution on $C^\infty_c(S)$. To deal with \eqref{eq:suffering}, we use the coordinates $(r,\theta)$ of \cref{lem:rho_0}, in which $t_\cut(r,\theta)=r$. \cref{cor:curv_forms1} then gives 
    \begin{equation}
    \label{eq:the_one_summand?}
          \int_{\{\delta<t_\cut<r_0\}\cap B}\varphi\,\sigma^{(n)}_S=\int_\delta^{r_0}\int_{0}^{2\pi}\varphi(r,\theta)r^{2-n}\phi_n(r,\theta)d\theta dr=\int_\delta^{r_0}r^{2-n}a_{\varphi,n}(r)dr,
    \end{equation}
    where we have denoted 
    \begin{equation}\label{eq:def_an_varphi}
  	a_{\varphi,n}(r):=\int_{0}^{2\pi}\varphi(r,\theta)\phi_n(r,\theta)d\theta.
  \end{equation}
    Now, the function $a_{\varphi,n}$ is smooth at $r=0$, thus it admits a Taylor expansion, up to any order $N\in\N$, of the form: 
    \begin{equation}\label{eq:chatgpt_is_useless}
        a_{\varphi,n}(r)=\sum_{k=0}^{N}\frac{a_{\varphi,n}^{(k)}(0)}{k!}r^k + r^{N+1}R_{\varphi,N+1}(r),\qquad\text{as }r\to 0^+,
    \end{equation}
    where $R_{\varphi,N+1}$ is a smooth function on $\R^+$. Adding and subtracting to $a_{\varphi,n}$ in \eqref{eq:the_one_summand?} its Taylor polynomial up to order $n-3$, we obtain: 
    \begin{equation}
    \label{eq:the_two_summands}
        \int_\delta^{r_0}r^{2-n}a_{\varphi,n}(r)dr = \int_\delta^{r_0}R_{\varphi,n-2}(r)dr+\sum_{k=0}^{n-3}\int_{\delta}^{r_0}\frac{a_{\varphi,n}^{(k)}(0)}{k!}r^{2-n+k}dr.
    \end{equation}
    Integrating the second summand in \eqref{eq:the_two_summands}, term by term, yields 
    \begin{equation}
        \sum_{k=0}^{n-3}\int_{\delta}^{r_0}\frac{a_{\varphi,n}^{(k)}(0)}{k!}r^{2-n+k}dr=\sum_{k=0}^{n-4}\frac{a_{\varphi,n}^{(k)}(0)(r_0^{3-n+k}-\delta^{3-n+k})}{k!(3-n+k)}+\frac{a_{\varphi,n}^{(n-3)}(0)}{(n-3)!}\log\left(\frac{r_0}{\delta}\right).
    \end{equation}
    Concerning the first summand of \eqref{eq:the_two_summands}, by smoothness of $R_{\varphi,n-2}(r)$, we have 
    \begin{equation}
        \int_\delta^{r_0}R_{\varphi,n-2}(r)dr=\int_0^{r_0}R_{\varphi,n-2}(r)dr+o(1).
    \end{equation}
    Therefore, the function \eqref{eq:suffering} has an expansion of the kind \eqref{eq:finite-part-definition}, and its finite part, denoted by 
    \begin{equation}
         \langle T_B,\varphi\rangle:=\int_0^{r_0}R_{\varphi,n-2}(r)dr+\sum_{k=0}^{n-4}\frac{a_{\varphi,n}^{(k)}(0)r_0^{3-n+k}}{k!(3-n+k)}+\frac{a_{\varphi,n}^{(n-3)}(0)}{(n-3)!}\log r_0,
    \end{equation}
    defines a distribution on $C^\infty_c(S)$. Indeed, for every $k$, $a^{(k)}_{\varphi,n}$ (cf.\ \eqref{eq:def_an_varphi}) depends linearly and continuously on $\varphi$ by \cref{lem:distribution_T}, hence 
     \begin{equation}
         C^\infty_c(S)\ni \varphi\mapsto \sum_{k=0}^{n-4}\frac{a_{\varphi,n}^{(k)}(0)r_0^{3-n+k}}{k!(3-n+k)}+\frac{a_{\varphi,n}^{(n-3)}(0)}{(n-3)!}\log r_0,
     \end{equation}
     is a distribution of order at most $n-3$. On the other hand, by Lagrange remainder theorem one deduces that the mapping 
    \begin{equation}
    \label{eq:assignment_remainder}
        C^\infty_c(S)\ni \varphi\mapsto \int_0^{ r_0}R_{\varphi,n-2}(r)dr=\frac{1}{(n-3)!}\int_0^{ r_0}\int_0^1\int_0^{2\pi} (1-t)^{n-3} \partial^{n-2}_s\left(\varphi(s,\theta)\phi_{n}(s,\theta)\right)|_{s=tr}d\theta dtdr.
    \end{equation}
    is linear. While arguing as in the proof of \cref{lem:distribution_T}, we infer the continuity of \eqref{eq:assignment_remainder} in the $C^{n-2}$-topology. Finally, $T_B$ defines a distribution of order at most $n-2$, as claimed.
\end{proof}

  \subsection{Expression of the coefficients around a characteristic point}
  
  In this section, we give an explicit expression of the coefficients of \cref{thm:expansion_weighted_vol_S}, relating them to the finite part of the curvature forms. Recall that, given $p\in \Sigma(S)$ a geometrically stable characteristic point and $\varphi\in C_c^\infty(S)$, then, in coordinates $(r,\theta)$ of \cref{lem:rho_0}, the function $\Phi_\varphi(s,r)$ defined in \eqref{eq:FI(s,r)_varphi} 
  is smooth. More precisely, letting $\mathcal J_S(u,r,\theta)$ be the function of \cref{prop:iter_diver_singularities}, we can write
  \begin{equation}\label{eq:FI_varphi}
  	\begin{aligned}
  		\Phi_\varphi(s,r)&= r^3\int_0^{2\pi}\varphi(r,\theta)\int_0^s\mathcal J_S(u,r,\theta)\,dud\theta.
  	\end{aligned}
  \end{equation}
  Recall the notation \eqref{eq:def_an_varphi}:
  \begin{equation}
  	a_{\varphi,n-1}(r)=\int_{0}^{2\pi}\varphi(r,\theta)\phi_{n-1}(r,\theta)d\theta,\qquad \phi_n(r,\theta)=\partial_s^{n}\mathcal J_S(s,r,\theta)|_{s=0},
  \end{equation}
  where the function $\phi_n$ satisfies (cf.\,\eqref{eq:def_phi_n})
  \begin{equation}
      \sigma_S^{(n)}=r^{2-n}\phi_n(r,\theta)\,dr d\theta.
  \end{equation}
  For the next lemma, recall the notion of finite part of a function, given in \cref{def:finite_part_fun}, and of a curvature form, given in \cref{def:finite_part_boring}.
  \begin{lemma}\label{lem:string_theory}
   	Under the hypotheses of \cref{thm:expansion_weighted_vol_S}, assume that $\supp(\varphi)\subset B$, where $B$ is one of the balls of \cref{lem:rho_0}. Then, for every $n\in\N$, $n\geq 4$, in the coordinates of \cref{lem:rho_0}, it holds
   	\begin{equation}\label{eq:alpha_n_varphi}
   			\alpha_n(\varphi)=\frac1{n!}\bigl\langle\Fp(\sigma_S^{(n-1)}),\varphi\bigr\rangle
   			+T^B_n(\varphi),\qquad 
   				\beta_n(\varphi)=-\frac{a_{\varphi,n-1}^{(n-4)}(0)}{n!(n-4)!}
   	\end{equation}
   where $T^B_n$ is a distribution on $B$ of order $n-4$ and has the explicit expression
   		\begin{equation}\label{eq:local-profile-h}
   				T^B_n(\varphi)
   				=\frac{h_{\varphi,n}(1)}{n}
   				+\Fp_{\delta\to0^+}\left(
   				\int_\delta^1\frac{h_{\varphi,n}(s)}{s^{n+1}}\,ds\right),\qquad h_{\varphi,n}(s)
   				:=\frac{\partial_r^{n-1}\Phi_\varphi(s,0)}{(n-1)!}.
   		\end{equation}
    \end{lemma}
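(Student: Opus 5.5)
Since $\supp\varphi\subset B$, we have $\vol_\varphi(\mathcal T_S(\ve))=\vol_\varphi(\mathcal T_B(\ve))$. By \eqref{eq:structure_of_integral_varphi},
\[
\vol_\varphi(\mathcal T_B(\ve))=\int_0^\ve\Phi_\varphi(1,r)\,dr+\int_\ve^{r_0}\Phi_\varphi(\ve/r,r)\,dr .
\]
The first integral is handled by Taylor expansion. Since $\Phi_\varphi(1,r)=\sum_{k\ge3}\partial_r^k\Phi_\varphi(1,0)r^k/k!$, it contributes
\[
\omega_n=\frac{\partial_r^{n-1}\Phi_\varphi(1,0)}{n!}=\frac{(n-1)!}{n!}\,h_{\varphi,n}(1)=\frac{h_{\varphi,n}(1)}{n}
\]
to the coefficient of $\ve^n$. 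The second integral is handled by \cref{l:integrato} with $G=\Phi_\varphi$. This gives $\alpha_n(\varphi)=\omega_n+\gamma_n$ and $\beta_n(\varphi)=\beta_n$, where $\gamma_n,\beta_n$ are given by \eqref{eq:abstract_coefficients}.

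For $\beta_n$, write $\partial_s^n\Phi_\varphi(0,r)=r^3a_{\varphi,n-1}(r)$, exactly as in \eqref{eq:FI}, using $\partial_s^n\int_0^s\mathcal J_S\,du|_{s=0}=\phi_{n-1}$. Differentiating $n-1$ times in $r$ at $r=0$ gives $\binom{n-1}{3}3!\,a^{(n-4)}_{\varphi,n-1}(0)$. Inserting this into the formula for $\beta_n$ yields the stated value $-a^{(n-4)}_{\varphi,n-1}(0)/(n!(n-4)!)$, by the same arithmetic as in \cref{prop:expansion_vol_around_char_pts}.

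For $\gamma_n$, split the finite part in \eqref{eq:abstract_coefficients} into its two summands. By linearity of $\Fp$ (\cref{rmk:anti_matter}) this is allowed, since each summand separately has an expansion of the form \eqref{eq:finite-part-definition}: the first by \cref{lem:finite_part_boring}, the second by a direct Taylor expansion of $h_{\varphi,n}$ at $s=0$. The second summand is exactly $\Fp\int_\delta^1 h_{\varphi,n}(s)s^{-n-1}ds$. The first summand is
\[
\frac1{n!}\Fp_{\delta\to0^+}\int_\delta^{r_0}r^{3-n}a_{\varphi,n-1}(r)\,dr .
\]
Since $t_\cut=r$ in these coordinates, and since $\sigma_S^{(n-1)}=r^{3-n}\phi_{n-1}\,dr\,d\theta$ with $\varphi$ supported in $B$, this equals $\frac1{n!}\langle\Fp(\sigma_S^{(n-1)}),\varphi\rangle$ by \cref{def:finite_part_boring}. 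Collecting terms gives $\alpha_n=\frac1{n!}\langle\Fp\sigma_S^{(n-1)},\varphi\rangle+T_n^B(\varphi)$ with $T^B_n$ as in \eqref{eq:local-profile-h}.

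The main obstacle is the order claim for $T^B_n$. I would compute $h_{\varphi,n}(s)$ explicitly as
\[
h_{\varphi,n}(s)=\frac{1}{(n-1)!}\,\partial_r^{n-4}\Bigl[\binom{n-1}{3}3!\int_0^{2\pi}\varphi(r,\theta)\int_0^s\mathcal J_S(u,r,\theta)\,du\,d\theta\Bigr]_{r=0} .
\]
This involves only derivatives of $\varphi$ up to order $n-4$ at the characteristic point, integrated against smooth kernels. Hence $h_{\varphi,n}(1)$ is controlled by $\|\varphi\|_{C^{n-4}}$. For the finite part, expand $h_{\varphi,n}$ in $s$ to order $n$ with an integral remainder. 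The $\Fp$ discards the singular terms and produces constants times $\partial_s^kh_{\varphi,n}(0)$ plus an absolutely convergent remainder integral. All of these are still linear in $\partial_r^{j}\varphi(0,\theta)$ for $j\le n-4$, continuously by \cref{lem:distribution_T}. This gives order at most $n-4$, with support at the characteristic point.
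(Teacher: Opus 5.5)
Your proposal is correct and follows the paper's proof essentially step by step. You use the split \eqref{eq:structure_of_integral_varphi}, Taylor-expand the first integral to get $h_{\varphi,n}(1)/n$, apply \cref{l:integrato} with $G=\Phi_\varphi$ to the second, identify the $r$-finite part with $\frac1{n!}\langle\Fp(\sigma_S^{(n-1)}),\varphi\rangle$, and do the same binomial computation for $\beta_n$.

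The only difference is cosmetic and lies in the order bound. You Taylor-expand $h_{\varphi,n}$ in $s$ and read off the finite part explicitly. The paper instead writes $h_{\varphi,n}(s)=\sum_{k\le n-4}\int_0^{2\pi}\partial_r^k\varphi(0,\theta)J_k(s,\theta)\,d\theta$ and invokes \cref{lem:finite-part-swap}, which is the same computation.

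One small fix: $\Phi_\varphi(1,\cdot)$ is only smooth, not analytic, so the first integral should be expanded with Taylor's formula with remainder rather than written as a full power series.
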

  \begin{proof}
  	Fix $\varphi\in C_c^\infty(S)$, with $\supp(\varphi)\subset B$, and $n\geq4$. We recall the splitting \eqref{eq:structure_of_integral_varphi} and compute the contributions of the two integrals appearing therein separately. The function $\Phi_\varphi(s,r)$ is $r^3$ times a smooth
  	function. Therefore, for $N\geq 4$, a standard Taylor expansion of $\Phi_\varphi$ and integration give
  	\begin{equation}\label{eq:contr_1}
  		\int_0^\ve\Phi_\varphi(1,r)\,dr=\sum_{n=4}^N
  		\frac{\ve^n}{n!}
  		\partial_r^{n-1}\Phi_\varphi(1,0)
  		+O(\ve^{N+1})=\sum_{n=4}^N
  		\ve^n\frac{h_{\varphi,n}(1)}{n}
  		+O(\ve^{N+1}),\qquad\text{as }\ve\to0^+.
  	\end{equation}
  	For the second integral in \eqref{eq:structure_of_integral_varphi}, we apply \cref{l:integrato} with
  	$G=\Phi_\varphi$. Formula \eqref{eq:abstract_coefficients} then gives
  	\begin{equation}\label{eq.contr2}
  		\begin{aligned}
  			\gamma_n
  			=\Fp_{\delta\to0^+}\left(\frac1{n!}
  			\int_\delta^{ r_0}
  			\frac{\partial_s^n\Phi_\varphi(0,r)}{r^n}\,dr
  			+\int_\delta^1\frac{h_{\varphi,n}(s)}{s^{n+1}}\,ds\right),\qquad 
  			\beta_n
  			=-\frac{
  				\partial_r^{n-1}\partial_s^n\Phi_\varphi(0,0)}
  			{n!(n-1)!}.
  		\end{aligned}
  	\end{equation} 
  	Using \eqref{eq:def_an_varphi} we find, for every sufficiently small $\delta>0$,
  	\begin{equation}
  		\begin{aligned}
  			\int_\delta^{ r_0}
  			\frac{\partial_s^n\Phi_\varphi(0,r)}{r^n}\,dr=\int_\delta^{ r_0}\int_{0}^{2\pi}\varphi(r,\theta)r^{3-n}\phi_{n-1}(r,\theta)d\theta dr=\int_{\{t_\cut>\delta\}}
  			\varphi\,\sigma_S^{(n-1)},
  		\end{aligned}
  	\end{equation}
  	where the last equality uses $\supp\varphi\subset B$. Thus, by \cref{def:finite_part_boring}, we deduce that
  	\begin{equation}
  		\Fp_{\delta\to0^+}
  		\left(\int_\delta^{ r_0}
  		\frac{\partial_s^n\Phi_\varphi(0,r)}{r^n}\,dr\right)
  		=\langle\Fp(\sigma_S^{(n-1)}),\varphi\rangle.
  	\end{equation}
  	Adding the contribution of order $\ve^n$ of the two integrals in \eqref{eq:structure_of_integral_varphi}, which are, respectively, the coefficient of order $\ve^n$ in \eqref{eq:contr_1} and $\gamma_n$ in \eqref{eq.contr2}, we obtain
  	\begin{equation}
  		\begin{aligned}
  			\alpha_n(\varphi)=\frac1{n!}
  			\bigl\langle\Fp(\sigma_S^{(n-1)}),\varphi\bigr\rangle+\Fp_{\delta\to0^+}\left(
  			\int_\delta^1\frac{h_{\varphi,n}(s)}{s^{n+1}}\,ds\right)
  			+\frac{h_{\varphi,n}(1)}n,
  		\end{aligned}
  	\end{equation}
  	which is the asserted formula for $\alpha_n(\varphi)$. Finally, the only contribution to order $\ve^n\log(\ve)$ is given by $\beta_n$ in \eqref{eq.contr2}, which, using \eqref{eq:def_an_varphi}, rewrites as
  	\begin{equation}
  		\beta_n(\varphi)
  		=-\frac1{n!(n-1)!}
  		\left.\partial_r^{n-1}\bigl(r^3a_{\varphi,n-1}(r)\bigr)\right|_{r=0}=-\frac{a_{\varphi,n-1}^{(n-4)}(0)}{n!(n-4)!}.
    \end{equation}
    Hence, the proof is complete if we show that $T_n\in\D'(B)$ of order at most $n-4$. Note that, by definition of $h_{\varphi,n}$ and by \eqref{eq:FI_varphi}, there exist smooth functions $J_k(s,\theta)$ such that 
    \begin{equation}
        h_{\varphi,n}(s)=\frac{\partial_r^{n-1}\Phi_\varphi(s,0)}{(n-1)!}=\sum_{k=0}^{n-4}\int_{0}^{2\pi}\partial_r^k\varphi(0,\theta) J_k(s,\theta)d\theta,
    \end{equation}
    The upper index is $n-4$ because exactly three of the $n-1$ derivatives with respect to $r$ have to fall on the factor $r^3$ for the result to be nonzero. Therefore 
    \begin{equation}
    \begin{aligned}
        \Fp_{\delta\to0^+}\!\left(
   				\int_\delta^1\frac{h_{\varphi,n}(s)}{s^{n+1}}\,ds\right)&=\sum_{k=0}^{n-4}\int_{0}^{2\pi}\partial_r^k\varphi(0,\theta) \Fp_{\delta\to0^+}\!\left(\int_\delta^{1}\frac{J_k(s,\theta)}{s^{n+1}}ds\right)d\theta\\ 
                &=:\sum_{k=0}^{n-4}\int_{0}^{2\pi}\partial_r^k\varphi(0,\theta) \hat J_k(\theta)d\theta,
    \end{aligned}
    \end{equation}
    where the swapping of the finite part and the integral is justified by \cref{lem:finite-part-swap}. The latter sum defines a distribution of order $n-4$, by direct application of \cref{lem:distribution_T}. Finally, the fact that $\varphi\mapsto h_{\varphi,n}(1)$ is a distribution of order $n-4$ follows by computing $h_{\varphi,n}(1)$ using \eqref{eq:FI_varphi} and applying \cref{lem:distribution_T}.
  \end{proof}

\subsection{Curvature distributions. Proof of \cref{thm:intro-distributions}}

We are now in position to study in detail the structure of the coefficients of the expansion of $\vol_\varphi(\mathcal T_S(\ve))$. The first three coefficients of the expansion are given by the curvature measures $\sigma_S^{(n-1)}$, for $n=1,2,3$. For $n\geq 4$, the coefficients no longer define curvature measures, but rather \emph{curvature distributions}. For further details around distributions on manifolds, we refer to Appendix~\ref{app:distributions}. 
\begin{proof}[Proof of \cref{thm:intro-distributions}]
We firstly observe that $\vol_\varphi(\mathcal T_S(\ve))$ is linear with respect to $\varphi$. Since, by \cref{thm:expansion_weighted_vol_S}, $\ve\mapsto\vol_\varphi(\mathcal T_S(\ve))$ admits an expansion of the form \eqref{eq:log_laurent_expansion}, its coefficients are uniquely determined, by \cref{lem:log_expansion_intrinsic}. Consequently, we deduce that $\alpha_n(\cdot)$ and $\beta_n(\cdot)$ are linear functionals on $C_c^\infty(S)$. We now show that they are distributions of the claimed orders, starting from $\alpha_n(\cdot)$. By \cref{lem:rho_0} (cf.\,\cref{rmk:hom_rho_0} for the homogeneous case), there exists a collection of (possibly) countably many disjoint embedded balls $\{B_i\}_{i\in\N}$ centered at the characteristic points of $S$ (cover $S$ with countably many compact sets satisfying the hypothesis of the lemma, and apply the latter to each one). 
We define $U=S\setminus \Sigma(S)$. Then, $\{U,{\rm int}(B_i)\}_{i\in\N}$ is an open cover for $S$. By \cref{prop:Dprime_sheaf}, $\alpha_n$ is a distribution of order at most $n-3$ if we show that restrictions $\alpha_n|_U:=\alpha_n|_{C^\infty_c(U)}$, $\alpha_n|_{{\rm int}(B_i)}:=\alpha_n|_{C^\infty_c({\rm int}(B_i))}$ are local distributions of the same order. Now, by \cref{cor:noncharacteristic_coefficients}, observe that 
\[
    \alpha_n|_U(\varphi)= \frac1 {n!}\int_U\varphi \,\sigma_S^{(n-1)},\qquad\forall\, \varphi\in C^\infty_c(U),
\]
thus $\alpha_n|_U\in\D'(U)$ of order $0$, and in particular it is continuous in $C^{n-3}_c(U)$. We are left to check the claim for $\alpha_n|_{{\rm int}(B_i)}$, for every $i\in\N$. Fix $i\in\N$ and set $B:={\rm int}(B_i)$, then, by \cref{lem:string_theory}, we have
\[
\alpha_n|_{B}(\varphi) = \frac1{n!}\langle\Fp(\sigma_S^{(n-1)}),\varphi\rangle+T_n^i(\varphi),\qquad\forall\,\varphi\in C^\infty_c(B),
\]
where we recall that $T_n^i\in\D'(B)$ of order at most $n-4$, supported at the unique characteristic point of $B$. On the other hand, the finite part of a curvature form $\Fp(\sigma_S^{(n-1)})$ is a distribution on $S$ of order at most $n-3$, by \cref{lem:finite_part_boring}. Therefore, we conclude that $\alpha_n|_{B}\in\D'(B)$ of order $\leq n-3$ as claimed. The proof of item $(i)$ is then completed, setting 
\[
T_n(\varphi) := \sum_{i\in\N} T_n^i(\varphi)\qquad\forall\,\varphi\in C^\infty_c(S).
\]
Note that, for every fixed $\varphi\in C^\infty_c(S)$, the above series is a finite sum. Indeed, $\Sigma(S)$ consists of geometrically stable characteristic points, which must be isolated. In addition the order of $T_n$ is at most $n-4$ and $\supp(T_n)\subset \bigcup_{i\in\N} \supp(T_n^i)\subset\Sigma(S)$.
Similarly for $\beta_n(\cdot)$, combining \cref{cor:noncharacteristic_coefficients} and \cref{lem:string_theory}, we have 
\begin{equation}
    \beta_n|_U=0,\qquad \beta_n|_{B}(\varphi)=-\frac{1}{n!(n-4)!}\int_{0}^{2\pi}\partial_r^{n-4}\left(\varphi(r,\theta)\phi_{n-1}(r,\theta)\right)|_{r=0}d\theta.
\end{equation}
\cref{lem:distribution_T} and \cref{prop:Dprime_sheaf} then imply that $\beta_n(\cdot)$ is a distribution of order at most $n-4$ supported at the characteristic set.
\end{proof}

\subsection{More on the logarithmic coefficients} The coefficient $\beta_n(\cdot)$ is a distribution of order at most $n-4$, supported on the characteristic set. For $\varphi\in C_c^\infty(S)$ whose support contains a single characteristic point $p$, \cref{lem:string_theory} allows one to compute $\beta_n(\varphi)$ explicitly as
\begin{equation}\label{eq:beta_n_polar_explain}
    \beta_n(\varphi)
    =-\frac{1}{n!(n-4)!}
    \left.\partial_r^{n-4}
    \left(
        \int_0^{2\pi}
        \varphi(r,\theta)\phi_{n-1}(r,\theta)\,d\theta
    \right)\right|_{r=0},
\end{equation}
where $(r,\theta)$ are positive polar coordinates centered at $p$ such that $t_\cut(r,\theta)=r$. Since iterated derivatives with respect to $r=t_\cut$ appear, one may think that $\beta_n$ depends on high order derivatives of the cut time function. However, remarkably enough, the value of the right hand side of \eqref{eq:beta_n_polar_explain} is independent of the polar coordinates used to compute it. 
\begin{prop}
\label{prop:beta_n_polar}
Under the hypotheses of \cref{thm:intro-distributions}, let
$p\in\Sigma(S)$ and let $U\subset S$ be an open set such that $U\cap \Sigma(S)=\{p\}$. 
For every systems of positive polar coordinates $(r,\theta)$ centered at $p$ we have 
\begin{equation}\label{eq:beta_n_polar}
    \beta_n(\varphi)
    =-\frac{1}{n!(n-4)!}
    \left.\partial_r^{n-4}
    \left(
        \int_0^{2\pi}
        \varphi(r,\theta)\phi_{n-1}(r,\theta)\,d\theta
    \right)\right|_{r=0},\qquad \forall\,\varphi\in C_c^\infty(U),\quad n\geq 4,
\end{equation}
where $\phi_n$ is defined in \cref{cor:curv_forms1}. In particular the right-hand side is independent of the choice of the positive polar
coordinates centered at $p$. 
\end{prop}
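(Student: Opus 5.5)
The plan is to show that the right-hand side of \eqref{eq:beta_n_polar} is intrinsic. We already know from \cref{lem:string_theory} that it equals $\beta_n(\varphi)$ when computed in the special coordinates of \cref{lem:rho_0}, where $t_\cut=r$. It therefore suffices to prove that the quantity
\[
Q_{(r,\theta)}(\varphi):=\left.\partial_r^{n-4}\left(\int_0^{2\pi}\varphi(r,\theta)\phi_{n-1}(r,\theta)\,d\theta\right)\right|_{r=0}
\]
takes the same value for any two positive polar coordinate systems centered at $p$. First we give it an intrinsic meaning. By \cref{cor:curv_forms1}, $\varphi\,\sigma_S^{(n-1)}=r^{3-n}\varphi\phi_{n-1}\,dr\wedge d\theta$. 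Hence, writing $a(r)=\int_0^{2\pi}\varphi\phi_{n-1}\,d\theta$, the number $Q/(n-4)!$ is the coefficient of $r^{n-4}$ in the Taylor expansion of $a$. That coefficient is exactly the coefficient of $\log(1/\delta)$ in the expansion, as $\delta\to0^+$, of
\[
\int_{\{r>\delta\}}\varphi\,\sigma_S^{(n-1)}=\int_\delta^{r_0}r^{3-n}a(r)\,dr+\text{const},
\]
as computed in the proof of \cref{lem:finite_part_boring}. Positivity of the coordinates guarantees that the orientation sign is correct.

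The key step is to show that this logarithmic coefficient does not depend on which polar radius is used to cut out the neighborhood of $p$. Let $(r,\theta)$ and $(r',\theta')$ be two positive polar systems. The transition map $(r,\theta)\mapsto(r',\theta')$ is a diffeomorphism of $[0,r_0)\times\mathbb S^1$ of the form $r'=r\,g(r,\theta)$ with $g$ smooth and positive; this follows from both being smooth maps with the same center and nondegenerate radial Jacobian factor. The difference of the two truncated integrals is then
\[
\int_{\{r>\delta\}\setminus\{r'>\delta\}}\varphi\,\sigma-\int_{\{r'>\delta\}\setminus\{r>\delta\}}\varphi\,\sigma,
\]
an integral over a thin annular region between the curves $r=\delta$ and $r=\delta/g(\cdot)$ approximately. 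We compute it in $(r,\theta)$ coordinates as
\[
\int_0^{2\pi}\int_{\delta}^{\rho_\delta(\theta)}r^{3-n}\varphi\phi_{n-1}\,dr\,d\theta,
\]
where $\rho_\delta(\theta)=\delta\,\tilde g(\delta,\theta)$ solves $r g(r,\theta)=\delta$; here $\tilde g$ is smooth by the implicit function theorem. We then Taylor expand $\varphi\phi_{n-1}$ in $r$. The term $r^{k}$ contributes $\delta^{k+4-n}$ times a smooth function of $\delta$ when $k\ne n-4$. For $k=n-4$ it contributes $\int\log\tilde g(\delta,\theta)\,(\dots)\,d\theta$, which is smooth in $\delta$. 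Hence the difference is a finite sum of powers $\delta^{j}$, $j\ge 4-n$, with smooth coefficients, and contains no $\log\delta$ term. The log coefficients therefore agree, which gives $Q_{(r,\theta)}=Q_{(r',\theta')}$.

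We expect the main obstacle to be this annulus computation: it must be shown carefully that no logarithm is generated and that the expansion is uniform in $\theta$. The point is that a logarithm can only come from integrating $r^{-1}$ between two radii of ratio $\tilde g$, which yields $\log\tilde g$, a smooth function rather than $\log\delta$. We also need to justify that positivity keeps the orientations, and hence the signs of $\sigma_S^{(n-1)}$ in the two coordinate expressions, consistent. Combining the invariance with \cref{lem:string_theory} applied in the coordinates of \cref{lem:rho_0} yields \eqref{eq:beta_n_polar} for every positive polar system.
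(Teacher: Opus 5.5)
Your proposal is correct and follows essentially the same route as the paper. You identify the right-hand side with the coefficient of $\log\delta$ in the expansion of $\int_{\{r>\delta\}}\varphi\,\sigma_S^{(n-1)}$, match it with $\beta_n(\varphi)$ via \cref{lem:string_theory} in the cut-time coordinates of \cref{lem:rho_0}, and obtain coordinate independence by writing the other level curve as $r=\delta\,\tilde g(\delta,\theta)$, so that the annular difference produces only powers of $\delta$ and the smooth term $\log\tilde g$, never $\log\delta$.
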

\begin{proof}
Fix $n\geq4$ and set $m=n-4$. In the chosen coordinates, write
\begin{equation}
    \varphi\,\sigma_S^{(n-1)}
    =r^{-m-1}f(r,\theta)\,dr\wedge d\theta,
    \qquad
    f(r,\theta)=\varphi(r,\theta)\phi_{n-1}(r,\theta).
\end{equation}
A Taylor expansion at $r=0$ gives
\begin{equation}
    f(r,\theta)
    =\sum_{j=0}^{m}r^jf_j(\theta)+O(r^{m+1}),
    \qquad
    f_j(\theta)=\frac1{j!}\partial_r^jf(0,\theta).
\end{equation}
We define $L_r(\varphi)$ as the coefficient of $\log\delta$ in
the expansion as $\delta\to 0^+$ of the function
\begin{equation}
    I_r(\delta)
    :=\int_{\{r>\delta\}}\varphi\,\sigma_S^{(n-1)}.
\end{equation}
Integrating the Taylor expansion term by term gives
\begin{equation}
    I_r(\delta)
    =\sum_{j=0}^{m-1}
    \frac{\delta^{j-m}}{m-j}
    \int_0^{2\pi}f_j(\theta)\,d\theta
    -\log\delta\int_0^{2\pi}f_m(\theta)\,d\theta
    +O(1),
    \qquad \delta\to0^+,
\end{equation}
where the sum is empty when $m=0$. In particular, the coefficient
of $\log\delta$ is
\begin{equation}\label{eq:curvature_log_residue}
    L_r(\varphi)
    =-\int_0^{2\pi}f_m(\theta)\,d\theta=-\frac{1}{(n-4)!}
    \left.\partial_r^{n-4}
    \left(
        \int_0^{2\pi}
        \varphi(r,\theta)\phi_{n-1}(r,\theta)\,d\theta
    \right)\right|_{r=0}.
\end{equation}
By equation \eqref{eq:alpha_n_varphi} of \cref{lem:string_theory}, if $(r,\theta)$ are the polar coordinates of \cref{lem:rho_0}, then $L_r(\varphi)$ coincides with $n!\beta_n(\varphi)$. It remains to show that $L_r(\varphi)$ is independent of the choice of polar coordinates. Let $(\widetilde r,\widetilde\theta)$ be another positively oriented smooth polar coordinate system. The radial component of the
coordinate change has the form
\begin{equation}
    \widetilde r=r\,a(r,\theta),
    \qquad a(0,\theta)>0,
\end{equation}
with $a$ smooth up to $r=0$. By the implicit function theorem,
the curve $\widetilde r=\delta$ can be written in the old coordinates $(r,\theta)$ as
$r=\delta b(\delta,\theta)$, where $b$ is smooth and positive.
Thus, for sufficiently small $\delta$,
\begin{equation}
    I_r(\delta)-I_{\widetilde r}(\delta)
    =\int_0^{2\pi}\int_\delta^{\delta b(\delta,\theta)}
    r^{-m-1}f(r,\theta)\,dr\,d\theta.
\end{equation}
For each Taylor term, the radial integral is
\begin{equation}
    \int_\delta^{\delta b(\delta,\theta)}
    r^{j-m-1}\,dr
    =
    \begin{cases}
        \displaystyle
        \delta^{j-m}
        \frac{b(\delta,\theta)^{j-m}-1}{j-m},
        & j<m,\\[6pt]
        \log b(\delta,\theta),
        & j=m.
    \end{cases}
\end{equation}
Since $b$ is smooth and positive, these expressions have asymptotic expansions containing only integer powers of $\delta$. The Taylor remainder contributes $O(\delta)$. Hence the difference contains no logarithmic term, proving $L_r(\varphi)=L_{\widetilde r}(\varphi)$.
\end{proof}

\subsection{The local coefficients of $\ve^4\log\ve$ and $\ve^4$}
\label{sec:beta4}
The lowest-order correction beyond order $\ve^3$ in the expansion of \cref{thm:intro-distributions} is given by
\begin{equation}
    \beta_4(\varphi)\ve^4\log\ve+\left(\frac{1}{24}\langle\Fp(\sigma^{(3)}),\varphi\rangle+T_4(\varphi)\right)\ve^4,
\end{equation}
where the distributions $\beta_4$ and $T_4$ are of order zero and supported on the characteristic set. We explicitly compute these distributions showing that, at a given characteristic point $p$, they only depend on the parameters $a,b$ of the local homogeneous approximation.

\begin{prop}\label{lem:beta4_dirac}
Under the hypotheses of \cref{thm:intro-distributions}, let $p\in\Sigma(S)$ and let $U\subset S$ be an open set such that $U\cap \Sigma(S)=\{p\}$. Then 
	\begin{equation}
		\beta_4|_U=\frac{\pi(a+b)}{12|1+ab|}\,\delta_p,\qquad T_4|_U=\tau_{a,b}\delta_p,
	\end{equation}
    where $\tau_{a,b}\in\mathbb R$ depends only on the parameters $a,b$ of the homogeneous approximation. In particular
    \begin{equation}
        \tau_{0,0}=\frac{1-\pi\Si(\pi)}{12}.
    \end{equation}
\end{prop}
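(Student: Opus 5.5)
The plan is to work in the coordinates $(r,\theta)$ of \cref{lem:rho_0} on the ball $B$ around $p$, where $t_\cut=r$, and use the explicit formulas of \cref{lem:string_theory} with $n=4$:
\begin{equation}
\beta_4(\varphi)=-\frac{a_{\varphi,3}(0)}{24},\qquad T_4(\varphi)=\frac{h_{\varphi,4}(1)}{4}+\Fp_{\delta\to0^+}\int_\delta^1\frac{h_{\varphi,4}(s)}{s^5}\,ds,\qquad h_{\varphi,4}(s)=\frac{\partial_r^3\Phi_\varphi(s,0)}{3!}.
\end{equation}
Since exactly three $r$-derivatives must hit the factor $r^3$ in \eqref{eq:FI_varphi}, only $r=0$ values survive:
\begin{equation}
h_{\varphi,4}(s)=\int_0^{2\pi}\varphi(0,\theta)\int_0^s\mathcal J_S(u,0,\theta)\,du\,d\theta=\varphi(p)\int_0^{2\pi}\int_0^s\mathcal J_S(u,0,\theta)\,du\,d\theta,
\end{equation}
because $\varphi(0,\theta)=\varphi(p)$. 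Similarly $a_{\varphi,3}(0)=\varphi(p)\int_0^{2\pi}\phi_3(0,\theta)\,d\theta$. Hence both $\beta_4$ and $T_4$ are multiples of $\delta_p$, with constants built solely from $\mathcal J_S(\cdot,0,\cdot)$. This extends to $\varphi\in C^\infty_c(U)$ by \cref{prop:Dprime_sheaf}, since both distributions vanish away from $\Sigma(S)$ by \cref{thm:intro-distributions}.

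The second step is to show that $\mathcal J_S(u,0,\theta)$ depends only on $(a,b)$. In the coordinates $(\rho,\vartheta)$ of \eqref{eq:x-y_polar}, \cref{rmk:in_hom_coords} gives $\mathcal J_S(u,0,\vartheta)=J_{S_{a,b}}(u,1,\vartheta)$. The coordinates of \cref{lem:rho_0} are obtained from $(\rho,\vartheta)$ through $r=t_\cut(\rho,\vartheta)=c_{a,b}\rho+\rho^2R_\cut$ by \cref{thm:intro-cut-time-stability}, possibly with $\theta\mapsto-\theta$. At $r=0$ this change of variables reduces to $(r,\theta)=(c_{a,b}\rho,\pm\vartheta)$, and the Jacobian factor at $\rho=0$ is $c_{a,b}^{-1}$. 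I will track the scaling carefully: $J_S(t,r,\theta)=r^2\mathcal J_S(t/r,r,\theta)$, and $J_S$ transforms with the factor $\partial(\rho,\vartheta)/\partial(r,\theta)$. I expect to recover exactly the expression in \cref{rmk:Phi_hom}, $\mathcal J_S(u,0,\theta)=\pm c_{a,b}^{-3}J_{S_{a,b}}(c_{a,b}u,1,\pm\theta)$, which is the homogeneous one.

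Consequently $\beta_4|_U$ and $T_4|_U$ coincide with the same constants computed for $S_{a,b}$. The value of $\beta_{a,b}$ then follows from the computation already done in the proof of \cref{thm:intro-steiner-hom}, which gives $\pi(a+b)/(12|1+ab|)$. Alternatively, \cref{prop:beta_n_polar} lets one compute it directly in $(\rho,\vartheta)$ coordinates, avoiding the cut time entirely. This defines $\tau_{a,b}$ as the corresponding homogeneous expression, which depends only on $a,b$.

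The last and most laborious step is the computation of $\tau_{0,0}$. For the horizontal plane, $c_{0,0}=\pi/2$ and, by \eqref{eq:intro-Jac-plane}, the Jacobian integrated over $\theta$ is $\tfrac{\pi}{2}[1+\cos 2\tau+\tau\sin 2\tau]$ in the $(t/\rho)$-variable. After rescaling, $h(s)$ becomes an explicit elementary function of $\sigma=\pi s/2$. Its Taylor expansion at $0$ begins at order $s^5$, since $\sigma^{(1)}=\sigma^{(2)}=\sigma^{(3)}=0$ by \eqref{eq:trivially-zero}, and the $s^4$ coefficient is also absent. Hence the finite part is an ordinary convergent integral $\int_0^1 h(s)s^{-5}\,ds$. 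I will substitute and integrate by parts repeatedly to reduce it to $\int_0^\pi \sin x/x\,dx=\Si(\pi)$, and then add $h(1)/4$ to reach $(1-\pi\Si(\pi))/12$. As a cross-check, this must match the constant $C$ in \eqref{eq:intro-piano}, since $\Fp(\sigma^{(3)}_{S_0})=0$. I expect this explicit integration, with its bookkeeping of signs, orientation and constants, to be the main obstacle, while the structural part of the argument is straightforward.
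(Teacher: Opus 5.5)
Steps 1--3 of your plan follow the paper's proof. You reduce $\beta_4$ and $T_4$ via \cref{lem:string_theory} to $\mathcal J_S(\cdot,0,\cdot)$, then identify it with $\pm c_{a,b}^{-3}J_{S_{a,b}}(c_{a,b}u,1,\pm\theta)$ by passing from $(\rho,\vartheta)$ to cut-time coordinates, and $\beta_{a,b}$ then follows as you say.

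\textbf{The gap is in the computation of $\tau_{0,0}$.} Your claim that $h=h_4^{0,0}$ has a Taylor expansion starting at order $s^5$ is false, and so is the claim that the finite part is the ordinary integral $\int_0^1h(s)s^{-5}\,ds$. The vanishing of $\sigma^{(1)},\sigma^{(2)},\sigma^{(3)}$ only removes the $s^2,s^3,s^4$ coefficients. You omitted the $s^1$ coefficient, which comes from the horizontal area form $\sigma^{(0)}\neq0$. Your own integrand $1+\cos(\pi u)+\tfrac{\pi u}{2}\sin(\pi u)$ already equals $2$ at $u=0$. Explicitly,
\begin{equation}
h(s)=\frac{4}{\pi^2}\Bigl[s-\frac{s}{2}\cos(\pi s)+\frac{3}{2\pi}\sin(\pi s)\Bigr]=\frac{8}{\pi^2}s-\frac{\pi^2}{30}s^5+O(s^7).
\end{equation}
Hence $h(s)s^{-5}\sim\frac{8}{\pi^2}s^{-4}$, and $\int_\delta^1h(s)s^{-5}\,ds$ diverges like $\frac{8}{3\pi^2}\delta^{-3}$. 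Integrating by parts from $0$, as you propose, produces divergent boundary terms.

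\textbf{The fix is to keep the regularization.} The finite part is
\begin{equation}
\lim_{\delta\to0^+}\Bigl[\int_\delta^1h(s)s^{-5}\,ds-\frac{8}{3\pi^2}\delta^{-3}\Bigr]=\int_0^1\Bigl(h(s)-\frac{8}{\pi^2}s\Bigr)s^{-5}\,ds-\frac{8}{3\pi^2}.
\end{equation}
The last term is a nonzero contribution that is lost if you simply discard the singular part. Your observation that the $s^4$ coefficient vanishes is correct; it is exactly why no logarithm appears. Now substitute $x=\pi s$ and integrate by parts repeatedly down to $\Si$, evaluating the antiderivative at $x=\pi$ and keeping only the constant term of its expansion at $x\to0^+$. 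This gives
\begin{equation}
\Fp_{\delta\to0^+}\int_\delta^1\frac{h(s)}{s^5}\,ds=-\frac{3}{2\pi^2}+\frac1{12}-\frac{\pi}{12}\Si(\pi).
\end{equation}
Adding $h(1)/4=\frac{3}{2\pi^2}$ yields $\tau_{0,0}=\frac{1-\pi\Si(\pi)}{12}$, as in the paper. Your proposed cross-check with the constant $C$ in \eqref{eq:intro-piano} remains valid.
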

\begin{proof}
	We first prove the formula for $\beta_4$. Let $\varphi\in C_c^\infty(U)$. By \cref{prop:beta_n_polar}, given positive polar coordinates centered at $p$ 
	\begin{equation}\label{eq:integral_beta}
		\beta_4(\varphi)
		=-\frac{\varphi(p)}{24}\int_0^{2\pi}\phi_3(0,\theta)\,d\theta.
	\end{equation}
	We may compute this integral in the positive adapted polar coordinates $(\rho,\mathrm{sign}(1+ab)\vartheta)$
	of \eqref{eq:x-y_polar}. Note that the factor $\mathrm{sign}(1+ab)$ ensures that these coordinates are positively oriented, see \cref{rmk:hom_or}. By \cref{cor:curv_forms1}, and in particular by equation \eqref{eq:in_hom_coords}, and by \cref{ex:hom_curv_forms} we obtain
	\begin{equation}
		\begin{aligned}
			\int_0^{2\pi}\pm\phi_3(0,\pm\vartheta)\,d\vartheta&=\int_0^{2\pi}\pm\phi_3^{(a,b)}(1,\pm\vartheta)\,d\vartheta\\
            &=-\frac{1}{|1+ab|}
			\int_0^{2\pi}
			\bigl(a+b+3(a-b)\cos(2\vartheta)\bigr)\,d\vartheta=-\frac{2\pi(a+b)}{|1+ab|}.
		\end{aligned}
	\end{equation}
    
    We now move to the formula for $T_4$. It is sufficient to compute $T_4|_B$, where $B$ is the ball of \cref{lem:rho_0} (cf.\,\cref{rmk:hom_rho_0} for the homogeneous case) centered at $p$. Given $\varphi\in C_c^\infty(B)$, \cref{lem:string_theory} gives
\begin{equation}
	T_4|_B(\varphi)=T^B_4(\varphi)
	=\frac{h_{\varphi,4}(1)}{4}
	+\Fp_{\delta\to0^+}\left(
	\int_\delta^1\frac{h_{\varphi,4}(s)}{s^{5}}\,ds\right),\qquad h_{\varphi,4}(s)
	=\frac{\partial_r^{3}\Phi_\varphi(s,0)}{3!},
\end{equation}
where $\Phi_\varphi(s,r)$ is defined in \eqref{eq:FI_varphi}, i.e.
\begin{equation}
	\Phi_\varphi(s,r)=r^3\int_0^{2\pi}\varphi(r,\theta)\int_0^{s} \mathcal J_S(u,r,\theta)du\,d\theta,
\end{equation}
whereas $(r,\theta)$ are the polar coordinates of \cref{lem:rho_0}, and $J_{S}(t,r,\theta)=r^2\mathcal J_S(t/r,r,\theta)$ is the Jacobian determinant of $\exp^\perp_S(t,r,\theta)$ (cf. \cref{prop:iter_diver_singularities}). The function $h_{\varphi,4}$ can then be explicitly computed:
\begin{equation}\label{eq:h_varphi,4}
	h_{\varphi,4}(s)
	=\varphi(p)\int_0^{2\pi}\int_0^s
	\mathcal J_S(u,0,\theta)\,du\,d\theta.
\end{equation}
 Let now $\widetilde{J}_S(t,\rho,\vartheta)$ be the Jacobian determinant of the normal exponential map computed with respect to the polar coordinates $(\rho,\vartheta)$ of \eqref{eq:x-y_polar}. By \cref{rmk:in_hom_coords} and \cref{thm:intro-cut-time-stability}, we have
\begin{equation}
	\widetilde{J}_S(t,\rho,\vartheta)
	=\rho^2\left(
	J_{S_{a,b}}\left(\frac t\rho,1,\vartheta\right)
	+\rho R_J\left(\frac t\rho,\rho,\vartheta\right)
	\right),
	\qquad
	t_\cut(\rho,\vartheta)=c_{a,b}\rho+O(\rho^2),
\end{equation}
	with smooth remainders. The change to cut time coordinates
	$(r,\theta)=(t_\cut(\rho,\vartheta),\pm\vartheta)$,  with $$\pm=\mathrm{sign}(1+ab),$$ has a smooth inverse
	$(\rho,\vartheta)=(q(r,\theta),\pm\theta)$, with $q(r,\theta)=r/c_{a,b}+O(r^2)$.
	Its Jacobian determinant is $\pm\partial_rq$, since the angular
	coordinate is unchanged. Hence
	\[
	J_S(t,r,\theta)
	=\pm\widetilde{J}_S(t,q(r,\theta),\pm\theta)\,\partial_rq(r,\theta).
	\]
	Evaluating at $t=ur$ and dividing by $r^2$ gives
	\[
	\mathcal J_S(u,r,\theta)
	=\pm\left(\frac qr\right)^2\partial_rq(r,\theta)
	\left[
	J_{S_{a,b}}\left(\frac{ur}{q(r,\theta)},1,\pm\theta\right)
	+q(r,\theta)R_J\left(\frac{ur}{q(r,\theta)},q(r,\theta),\pm\theta\right)
	\right].
	\]
	Since $q(r,\theta)/r$ extends smoothly with value $c_{a,b}^{-1}$ at $r=0$, and $\partial_rq(0,\theta)=c_{a,b}^{-1}$
	we obtain
	\begin{equation}\label{eq:normalized_jacobian_cut_limit}
		\mathcal J_S(u,0,\theta)=\pm c_{a,b}^{-3}J_{S_{a,b}}(c_{a,b}u,1,\pm\theta).
	\end{equation}
	Substituting the latter result in \eqref{eq:h_varphi,4} yields
	\begin{equation}
		h_{\varphi,4}(s)=\varphi(p)h_4^{a,b}(s),\qquad h_4^{a,b}(s):=\pm c_{a,b}^{-3}\int_0^{2\pi}\int_0^s
	J_{S_{a,b}}(c_{a,b}u,1,\pm\theta)\,du\,d\theta
	\end{equation}
	Finally we have 
	\begin{equation}
		T^B_4(\varphi)
		=\varphi(p)\left[\frac{h_{4}^{a,b}(1)}{4}
		+\Fp_{\delta\to0^+}\left(
		\int_\delta^1\frac{h_{4}^{a,b}(s)}{s^{5}}\,ds\right)\right]=:\varphi(p)\tau_{a,b}.
	\end{equation}
	We see from its definition that the quantity $\tau_{a,b}$ is a constant depending on $a,b$ alone.
    
    For $a=b=0$ we have $c_{0,0}=\pi/2$, $\mathrm{sign}(1+ab)=+$, and 
    \begin{equation}
        J_{S_{0,0}}=\frac{r^2}4\left[1+\cos\left(\frac{2t}{r}\right)+\left(\frac{t}{r}\right)\sin\left(\frac{2t}{r}\right)\right],
    \end{equation}
    therefore we may compute $h_4^{0,0}(s)$ explicitly as
    \begin{equation}
        h_4^{0,0}(s)=\frac{4}{\pi^2}\int_0^s\left[1+\cos\left(\pi u\right)+\left(\frac{\pi u}{2}\right)\sin\left(\pi u\right)\right]du=\frac{4}{\pi^2}\left[s-\frac{s}{2}\cos(\pi s)+\frac{3}{2\pi}\sin(\pi s)\right].
    \end{equation}
In particular, using the shorthand $h:=h_4^{0,0}$,
\begin{equation}
    \frac{h(1)}{4}=\frac{3}{2\pi^2},
\qquad
h(s)=\frac{8}{\pi^2}s-\frac{\pi^2}{30}s^5+O(s^7).
\end{equation}
Thus the finite part is
\[
\operatorname{Fp}_{\delta\to 0^+}\int_\delta^1\frac{h(s)}{s^5}\,ds
=
\lim_{\delta\to0^+}
\left[
\int_\delta^1\frac{h(s)}{s^5}\,ds
-\frac{8}{3\pi^2\delta^3}
\right]=-\frac{3}{2\pi^2}+\frac1{12}
-\frac{\pi}{12}\Si(\pi),
\]
Therefore 
\begin{equation}
\tau_{0,0}=\frac{h(1)}{4}+\operatorname{Fp}_{\delta\to 0^+}\int_\delta^1\frac{h(s)}{s^5}\,ds
=\frac{1-\pi\Si(\pi)}
{12}.
\end{equation}
\end{proof}

\part{Cut time analysis}
\label{part:cut-time}

This part of the paper is entirely devoted to the study of the cut time function, both in the homogeneous case and near geometrically stable characteristic points (see \cref{def:geometrically_stable_char_pts}). We first compute the cut time for a homogeneous surface. We then apply the theory of Legendrian singularities to show that, the cut time near a geometrically stable characteristic point can be obtained as a perturbation of the one of the homogeneous model. Legendrianity needs to be taken into account because the singularities of the normal exponential map at the cut locus are not stable in the class of smooth mappings.
\begin{notation}
	Throughout this part, given a map $f:M\to N$ between smooth manifolds $M$ and $N$, and given $U\subset M$, we set
	\begin{equation}
		\begin{aligned}
			\mathrm{Doub}(f,U)&=\{x\in U\,:\,\exists\, x'\in U,\,x'\neq x,\,f(x)=f(x')\}.
		\end{aligned}
	\end{equation}
	Whenever $U=M$ we simply write $\mathrm{Doub}(f,M)=\mathrm{Doub}(f)$. 
\end{notation}

\section{The extended exponential map}
\label{sec:eem}
The first step is to characterize the cut time of a smooth oriented embedded surface $S=\partial\Omega$, in terms of the double points of the  \emph{extended exponential map}, defined as 
\begin{equation}
    \mathscr F:\mathbb R^+\times S^\times\to\hei\times\mathbb R,\qquad \mathscr F(t,q):=(\exp^\perp_S(t,q),\ell(t,q)),\qquad \ell(t,q):=t.
\end{equation}
Note that $\ell(t,q)$ is the length of the normal geodesic from $S$ starting at $q$ at time $t$. 
\begin{remark}\label{rmk:lege}
    For $(t,q)\in \mathbb R^+\times S^\times$ let $\lambda(t,q):=e^{t\vec{H}}(\lambda^\perp(q))$. Then, Pontryagin maximum principle implies that 
    \begin{equation}\label{eq:lege}
        d\ell(t,q)-\langle\lambda(t,q),d\exp_S^\perp(t,q)\rangle=0.
    \end{equation}
    Therefore $\ker d\ell\supset \ker d\exp^\perp_S$. Thus, critical points of $\mathscr F=(\exp^\perp_S,\ell)$ coincide with those of $\exp^\perp_S$. 
\end{remark}
The next lemma shows that the double point locus of $\mathscr F$ contains all relevant information concerning the cut time: a cut point is either a double point or an accumulation of double points. 

\begin{lemma}\label{lem:foc_after_max}
    Let $S=\partial\Omega\subset \hei$ be a smooth, oriented, embedded surface and let $q\in S^\times$. If $t_\cut(q)<+\infty$, then 
    \[
    (t_\cut(q),q)\in \overline{\mathrm{Doub}(\mathscr F)}.
    \]
\end{lemma}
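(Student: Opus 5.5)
We reuse the construction from the lower bound in the proof of \cref{thm:t_cut=min_t_max_t_foc}, now tracking the length function $\ell$. Fix $q\in S^\times$ with $T:=t_\cut(q)<\infty$, and a sequence $\ve_n\downarrow 0$. Set $q_n=\exp^\perp_S(T+\ve_n,q)$. By definition of cut time $\mathrm{dist}_S(q_n)<T+\ve_n$. Choose minimizing geodesics $\sigma_n$ from $S$ to $q_n$ of length $d_n:=\mathrm{dist}_S(q_n)$. By \cref{lem:no_abn} each is based at some $p_n\in S^\times$, and $\sigma_n(t)=\exp^\perp_S(t,p_n)$ for $t\in[0,d_n]$.

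The key observation is that in general $(d_n,p_n)\neq(T+\ve_n,q)$, since $d_n<T+\ve_n$. However, $\exp^\perp_S$ takes the same value $q_n$ at both points, while $\ell$ takes the different values $d_n$ and $T+\ve_n$. So $\mathscr F$ does not identify them. We therefore need a different pair of points at which both $\exp^\perp_S$ and $\ell$ agree. The natural remedy is to note that $q_n$ is not itself a double point of $\mathscr F$, and to look instead along the geodesic from $q$ at an earlier time.

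Consider the continuous function $g_n(t):=\mathrm{dist}_S(\exp^\perp_S(t,q))$ on $[T,T+\ve_n]$. It satisfies $g_n(T)=T$ and $g_n(T+\ve_n)<T+\ve_n$. Since $t_\cut(q)=T$, every right-neighbourhood of $T$ contains times $t$ with $g_n(t)<t$. So we pick $t_n\in(T,T+\ve_n)$ with $\mathrm{dist}_S(\exp^\perp_S(t_n,q))<t_n$, and a minimizer from $S$ to $x_n:=\exp^\perp_S(t_n,q)$ based at $p_n'$ with length $s_n<t_n$.

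This still gives mismatched lengths, so we use a continuity/intermediate-value argument on the second sheet. Consider $\Psi_n(t):=\exp^\perp_S(t,q)$ for $t\le t_n$, together with the family of distance-realizing geodesics. The cleaner route is to pass to the limit instead of seeking exact double points at each $n$, since the lemma only asks for the closure. Arguing exactly as in the proof of \cref{thm:t_cut=min_t_max_t_foc}, we extract $\sigma_n\to\sigma$ with $\ell(\sigma)=T$ and $\sigma(T)=\exp^\perp_S(T,q)$. There are then two cases.

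\emph{Case $\sigma\neq\gamma_q$.} Then $\sigma=\exp^\perp_S(\cdot,\bar p)$ with $\bar p\neq q$. This gives $\mathscr F(T,\bar p)=(\exp^\perp_S(T,q),T)=\mathscr F(T,q)$, so $(T,q)\in\mathrm{Doub}(\mathscr F)$ directly.

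\emph{Case $\sigma=\gamma_q$.} Here $p_n\to q$ and $(d_n,p_n)\to(T,q)$. We need genuine double points of $\mathscr F$ nearby. For this, consider the map $t\mapsto \mathrm{dist}_S(\exp^\perp_S(t,q))-t$ near $T$ together with the local structure. The plan is to show that for $n$ large there exist $\tau_n\to T$ and $p_n''\neq q$ with $\exp^\perp_S(\tau_n,p''_n)=\exp^\perp_S(\tau_n,q)$. Equivalently, $(\tau_n,q)$ has Maxwell time at most $\tau_n$, that is $t_\Max(q)\le\tau_n$. The key tool is the upper bound part of \cref{thm:t_cut=min_t_max_t_foc} applied at points $q'$ near $q$: by continuity of $t_\cut$ (\cref{cor:t_cut_continuous}), points $q'$ near $q$ have cut time near $T$. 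It then suffices to show that the set of $q'$ whose cut point is a Maxwell point (not purely focal) accumulates at $q$. If instead a whole neighbourhood of $q$ had $t_\cut=t_\foc<t_\Max$, then the cut locus near $\exp^\perp_S(T,q)$ would consist of first focal points reached by unique minimizers. A standard argument (as in Riemannian geometry, the first conjugate locus cannot be a cut point in an open set without Maxwell points, using the fold/second variation of \cite{GPT2003}) shows that minimizers then extend beyond, contradicting $t_\cut=t_\foc$. This step is the main obstacle, and I would attempt it via the lower bound argument applied to nearby $q'$, which forces either double points near $(T,q)$ or focal points with nearby double points. Once Maxwell points $(t_\cut(q'_k),q'_k)\to(T,q)$ are found, they lie in $\mathrm{Doub}(\mathscr F)$, because equal distance implies equal $\ell$. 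This concludes $(T,q)\in\overline{\mathrm{Doub}(\mathscr F)}$.
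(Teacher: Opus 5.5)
\textbf{There is a genuine gap.} Your case $\sigma\neq\gamma_q$ is correct and matches the paper's first case. The limit curve is a second minimizer of length $T$ ending at $x:=\exp^\perp_S(T,q)$, and it is based at some $\bar p\neq q$, so $(T,q)\in\mathrm{Doub}(\mathscr F)$.

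The case $\sigma=\gamma_q$, where $x$ is reached by a unique minimizer, is the whole content of the lemma, and there you give no argument. You reduce it to showing that base points whose cut point is reached by two minimizers accumulate at $q$. You then appeal to ``a standard argument'' based on the second variation of \cite{GPT2003}, and you flag this yourself as the main obstacle. Second variation cannot do this job. It yields $t_\cut\le t_\foc$, i.e.\ geodesics stop minimizing after the first focal point. That says nothing about the existence of nearby points reached by two minimizers. Your claim that ``minimizers then extend beyond, contradicting $t_\cut=t_\foc$'' is asserted, not proved. The discarded attempts at the start (the functions $g_n$, the times $t_n$) also play no role.

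The missing idea is topological. Suppose every $y$ in a neighbourhood $V$ of $x$ has a unique minimizer from $S$.
\begin{itemize}
\item The metric projection $\mathsf p$ is then single-valued and continuous on $V$, by compactness of minimizers. After shrinking $V$ it takes values in $S^\times$.
\item Hence $\Lambda(y):=(\mathrm{dist}_S(y),\mathsf p(y))$ is a continuous map $V\to(0,\infty)\times S^\times$. It is injective because $\exp^\perp_S\circ\Lambda=\mathrm{Id}_V$.
\item By Brouwer's invariance of domain, $\Lambda(V)$ is open. It contains $\Lambda(x)=(T,q)$, so it contains $(T+\eta,q)$ for small $\eta>0$.
\item The corresponding $y_\eta\in V$ satisfies $y_\eta=\exp^\perp_S(T+\eta,q)$ and $\mathrm{dist}_S(y_\eta)=T+\eta$. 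This contradicts $t_\cut(q)=T$.
\end{itemize}
So there are points $x_n\to x$, each reached by at least two minimizers. Set $t_n=\mathrm{dist}_S(x_n)$ and let $q_n$ be the base point of one of these minimizers. Then $(t_n,q_n)\in\mathrm{Doub}(\mathscr F)$. Moreover $(t_n,q_n)\to(T,q)$, by continuity of $\mathrm{dist}_S$, compactness of minimizers, and uniqueness of the minimizer at $x$.

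This is exactly how the paper closes the argument. Without this step, or an equivalent replacement, your proof is incomplete.
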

\begin{proof}
Put $t_0=t_{\cut}(q)$ and $x=\exp_S^\perp(t_0,q)$.
If $x$ is reached by more than one minimizing geodesic, then
$(t_0,q)\in\mathrm{Doub}(\mathscr F)$, and there is nothing to prove. We therefore assume that the minimizing geodesic reaching $x$ is unique. We first show that every neighborhood of $x$ contains a point reached by more than one minimizing geodesic. Suppose otherwise that each point $y$ of an open neighborhood $V$ of $x$ has a unique minimizer. Let $\beta := {\sf p}|_{V}:V\to S$ be the restriction to $V$ of the metric projection on $S$, which assigns to a point $y\in V$ the base point of the (unique) minimizing geodesic from $S$ reaching it. Since $\beta$ is single-valued by the assumptions, a standard argument shows that it is a continuous map. Up to shrinking $V$, we may also assume that $V\cap S=\varnothing$ and
$\beta(V)\subset S^\times$. Define
\begin{equation}
    \Lambda:V\to(0,\infty)\times S^\times,\qquad
\Lambda(y)=(\mathrm{dist}_S(y),\beta(y)).
\end{equation}
The function $\Lambda$ is continuous, by construction, and injective, since $\exp_S^\perp\circ\Lambda=\operatorname{Id}_V$. Then, Brouwer's invariance of domain implies that $\Lambda(V)$ is open and thus, since
$\Lambda(x)=(t_0,q)$, there exists $\eta>0$ such that
$(t_0+\eta,q)\in\Lambda(V)$. Therefore, there exists $y_\eta\in V$ such that $\Lambda(y_\eta)=(t_0+\eta,q)$. Applying $\exp^\perp_S$ to both sides of the latter equality, and recalling that $\exp_S^\perp\circ\Lambda=\operatorname{Id}_V$, we find 
\begin{equation}
    y_\eta=\exp_S^\perp(t_0+\eta,q),\qquad
\mathrm{dist}_S(y_\eta)=t_0+\eta.
\end{equation}
Thus, the normal geodesic from $q$ is minimizing beyond $t_0$, a contradiction. This shows that there is a sequence $\{x_n\}_{n\in\N}\subset\hei$ such that $x_n\to x$ and each $x_n$ is reached by at least two minimizing geodesics from $S$. Let $q_n\in{\sf p}(x_n)$ be the base point of one of them and set $t_n=\mathrm{dist}_S(x_n)$. Then, by construction, the pair
$(t_n,q_n)$ belongs to $\mathrm{Doub}(\mathscr F)$. Moreover, by the continuity of $\mathrm{dist}_S$, $t_n=\mathrm{dist}_S(x_n)\to \mathrm{dist}_S(x)=t_0$, while the
compactness of minimizers, together with uniqueness at $x$, gives
$q_n\to q$. Therefore $(t_n,q_n)\to(t_0,q)$, which proves the assertion.
\end{proof}
\begin{remark}\label{rmk:foc_after_max}
    The proof of \cref{lem:foc_after_max} provides pairs of minimizing normal geodesics with the same endpoint and length $t_n$, based at $q_n\ne q_n'$, such that
    \begin{equation}
        (t_n,q_n)\to(t_\cut(q),q).
    \end{equation}
    Since both geodesics are minimizing and meet at the same length it holds $t_n=t_\cut(q_n)=t_\cut(q_n')$.
\end{remark}

We define the canonical projection map as 
\[
\pi:\mathbb R^+\times S^\times\to S^\times,\qquad\pi(t,q)=q.
\]
The next lemma characterizes the cut time at $p\in S$ as the infimum of the length function along the intersection $\overline{\mathrm{Doub}(\mathscr F)}\cap\pi^{-1}(p)$.

\begin{lemma}\label{thm:t_cut_front}
	Let $S=\partial\Omega\subset \hei$ be a smooth, oriented, embedded surface. Then, for every $p\in S^\times$,
	\begin{equation}\label{eq:t_cut_char}
		t_\cut(p)=\inf \left\{\ell(t,q)\,:\,(t,q)\in \overline{\mathrm{Doub}(\mathscr F)},\,\pi(t,q)=p\right\}.
	\end{equation}
\end{lemma}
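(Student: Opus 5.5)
We prove the two inequalities separately. Denote by $m(p)$ the infimum on the right-hand side of \eqref{eq:t_cut_char}, with the convention $\inf\emptyset=+\infty$.

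\emph{The bound $m(p)\le t_\cut(p)$.} If $t_\cut(p)=+\infty$ there is nothing to prove. Otherwise, \cref{lem:foc_after_max} gives $(t_\cut(p),p)\in\overline{\mathrm{Doub}(\mathscr F)}$. This point lies in the fiber $\pi^{-1}(p)$ and has $\ell=t_\cut(p)$, so it is admissible in the infimum.

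\emph{The bound $m(p)\ge t_\cut(p)$.} It suffices to show that every $(t,p)\in\overline{\mathrm{Doub}(\mathscr F)}$ satisfies $t\ge t_\cut(p)$.

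First take $(t,q)\in\mathrm{Doub}(\mathscr F)$. Then there is $(t',q')\neq(t,q)$ with $\exp^\perp_S(t,q)=\exp^\perp_S(t',q')$ and $t=t'$, since the last component of $\mathscr F$ is the length. Hence $q\neq q'$, and the two normal geodesics reach the same point at the same time. By the definition \eqref{eq:t_Max}, $t_\Max(q)\le t$. By \cref{thm:t_cut=min_t_max_t_foc}, it follows that $t_\cut(q)\le t$.

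Now take $(t,p)$ in the closure. Choose $(t_n,q_n)\in\mathrm{Doub}(\mathscr F)$ with $(t_n,q_n)\to(t,p)$; the limit point is in $\mathbb R^+\times S^\times$, so $q_n\to p\in S^\times$. By the previous step, $t_\cut(q_n)\le t_n$. The cut time is continuous on $S^\times$ with values in $(0,\infty]$ by \cref{cor:t_cut_continuous}. Passing to the limit gives $t_\cut(p)\le t$.

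Taking the infimum over all admissible $(t,p)$ yields $m(p)\ge t_\cut(p)$, and the equality \eqref{eq:t_cut_char} follows.

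\emph{Expected difficulty.} The delicate points are minor. One must check that equal $\mathscr F$-values force equal times, which holds because $\ell$ is a component of $\mathscr F$; this is exactly why the extended map is used instead of $\exp^\perp_S$ alone. One must also handle the value $+\infty$ in the continuity argument, which is harmless because the inequality $t_\cut(q_n)\le t_n<\infty$ passes to the limit.
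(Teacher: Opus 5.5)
Your proof is correct and follows essentially the same route as the paper. The bound $m(p)\le t_\cut(p)$ comes from \cref{lem:foc_after_max}, and the reverse bound from approximating by genuine double points, for which $t_\cut\le t_\Max\le t$ by \cref{thm:t_cut=min_t_max_t_foc}, followed by passing to the limit with \cref{cor:t_cut_continuous}.

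Your version bounds every point of the closure in the fiber directly, which is slightly cleaner than the paper's choice of a sequence converging to the infimum point. You could add the one-line remark that a double point has $t>0$, since $\exp^\perp_S(0,\cdot)$ is the inclusion of $S^\times$ and $t_\Max$ is an infimum over positive times.
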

\begin{proof}
    Let $T(p)$ denote the right hand side of \eqref{eq:t_cut_char}. Since $\ell(t,q)=t$ and $\pi(t,q)=q$ we have
    \begin{equation}\label{eq:T(p)_char}
        T(p)=\inf\left\{t>0\,:\, (t,p)\in\overline{\mathrm{Doub}(\mathscr F)}\right\}.
    \end{equation}
    Let $\{(t_n,p_n)\}\subset\mathrm{Doub}(\mathscr F)$ converging to $(T(p),p)$. Then, for every $n\in\N$, $t_\Max(p_n)\leq t_n$. Consequently, by \cref{thm:t_cut=min_t_max_t_foc}, $t_\cut(p_n)\leq t_n$. Passing to the limit in the latter inequality we find $t_\cut(p)\leq T(p)$. On the other hand, if $t_\cut(p)<+\infty$ then by \eqref{eq:T(p)_char} and \cref{lem:foc_after_max} we have $t_\cut(p)\geq T(p)$, while if $t_\cut(p)=+\infty$ the latter inequality is obvious.
\end{proof}
\subsection{Reparametrization of geodesics}\label{ssec:s}
Given a smooth surface $S=\partial\Omega\subset\hei$ and $p\in\Sigma(S)$, making use of the embedding of \cref{rmk:embedding}, we may assume that $p=(0,0,0)$ and that
\begin{equation}\label{eq:surface_S1}
	S=\left\{(x,y,z)\in\hei\,:\, z=\frac{1}{2}u(x,y)\right\},\qquad u\in C^\infty(\mathbb R^2). 
\end{equation}
The surface $S$ is oriented towards $\{z>u(x,y)/2\}$. As observed in \cref{ssec:exp_regularity}, the analysis of the normal
exponential map is simplified by introducing the parameter $s=h_0(q)t$. Indeed, although the map $(t,q)\mapsto \exp^\perp_S(t,q)$ is singular
at characteristic points, the reparametrized map
\begin{equation}
    E:\mathbb R^+\times S^\times\to\hei,
    \qquad
    E(s,q)=\exp^\perp_S\left(\frac{s}{h_0(q)},q\right),
\end{equation}
is well defined and admits a smooth extension through the latter, as proved in \cref{lem:smooth_E}.
We shall therefore study the normal exponential map using the
coordinates $(s,q)$. We retain the symbol $\pi$ for the canonical projection
\begin{equation}
    \pi:\mathbb R^+\times S^\times\to S^\times,
    \qquad
    \pi(s,q)=q.
\end{equation} 
In these coordinates, the length function reads
\begin{equation}\label{eq:length_s}
    \ell:\mathbb R^+\times S^\times\to\mathbb R,
    \qquad
    \ell(s,q)=\frac{s}{h_0\circ\pi(s,q)}=\frac{s}{h_0(q)}.
\end{equation}
Here and in what follows, we use the same symbol $\ell$ for the length function expressed in the new coordinates. Similarly, to avoid introducing additional notation, we retain the symbol $\mathscr F$ for the extended exponential map and write
\begin{equation}
    \mathscr F:\mathbb R^+\times S^\times\to\hei\times\mathbb R,
    \qquad
    \mathscr F(s,q)=\bigl(E(s,q),\ell(s,q)\bigr).
\end{equation}
\begin{remark}\label{rmk:leges}
    For all $q\in S^\times$, the curve $s\mapsto E(s,q)$ is a reparametrization of the normal geodesic from $S$ based at $q$, indeed $E(s,q)=\gamma_q(s/h_0(q))$. In particular, reparametrizing normal lift of $\gamma_q$, the conclusion of \cref{rmk:lege} remains valid: 
    \begin{equation}\label{eq:contact_id1}
        d\ell(s,q)=\langle\lambda(s,q),dE(s,q)\rangle,\qquad \lambda(s,q)=e^{(s/h_0(q))\vec H}(\lambda^\perp(q)),\qquad \forall\,(s,q)\in \mathbb R^+\times S^\times.
    \end{equation}
\end{remark}
\subsection{A change of coordinates}\label{ss:change} 
We record here a useful change of coordinates which simplifies the expression of $\mathscr F$. Recall that, given $S\subset\hei$ of the form \eqref{eq:surface_S1}, the defining function $u\in C^\infty(\mathbb R^2)$ has the asymptotic expansion 
\begin{equation}
	u(x,y)=\frac{1}{2}(ax^2+by^2)+o(x^2+y^2),\qquad \text{as $(x,y)\to 0$},
\end{equation}
for $a,b\in\R$, see \cref{lem:nice_f}. Moreover, from now on we assume that $p\in \Sigma(S)$ is non-degenerate, namely $ab+1\neq 0$. For the sake of simplifying the formulae of the current part of the paper, we introduce the following notational convention. 
\begin{notation}\label{not:maps} Let $\Phi\colon M\to N$ be a map between smooth manifolds, and let \[ \varphi\colon U\subset\mathbb{R}^m\to M, \qquad \psi\colon V\subset\mathbb{R}^n\to N \] be local parametrizations inducing coordinates $(x_1,\ldots,x_m)$ on $M$ and $(y_1,\ldots,y_n)$ on $N$, respectively. Assume that $\Phi(\varphi(U))\subset\psi(V)$. By a slight abuse of notation, we write \[ \Phi(x_1,\ldots,x_m) := \bigl(\psi^{-1}\circ\Phi\circ\varphi\bigr)(x_1,\ldots,x_m) \] for the coordinate representation of $\Phi$. Thus, the source coordinates are indicated by the arguments of $\Phi$, whereas the target chart is suppressed from the notation and will be specified whenever needed. \end{notation}
First, we identify $S$ with $\R^2$, by means of the diffeomorphism
\begin{equation}\label{eq:global_x,y}
\mathbb R^2\to S,\qquad (x,y)\mapsto (x,y,\tfrac{1}{2}u(x,y)).
\end{equation}
Then, on the domain of $E$, which with the above identification is $\R^+\times \R^2$, we introduce coordinates defined by the following diffeomorphism 
\begin{equation}
	\varphi:\R^+\times \R^2\to \R^+\times \R^2,\qquad \varphi(s,u,v)=(s,x(s,u,v),y(s,u,v)),
\end{equation}
where we have denoted 
\begin{equation}\label{eq:uv_xy}
	\begin{pmatrix}
		x(s,u,v)\\
		y(s,u,v)
	\end{pmatrix}=\frac{1}{1+ab}
	\begin{pmatrix} 
		b & -1 \\
		1 & a
	\end{pmatrix}
	\begin{pmatrix} 
		\cos(\tfrac{s}{2}) & \sin(\tfrac{s}{2}) \\
		-\sin(\tfrac{s}{2}) & \cos(\tfrac{s}{2})
	\end{pmatrix}
	\begin{pmatrix}
		u\\ v
	\end{pmatrix}.
\end{equation}
On the codomain of $E$, which is $\hei$, we introduce coordinates induced by the diffeomorphism 
\begin{equation}\label{eq:psi}
	\psi:\hei\to \hei,\qquad \psi(x,y,z)=(ax+y,by-x,8(z-(ax^2+by^2)/4)).
\end{equation}
The non-degeneracy condition $ab+1\neq 0$ ensures that both maps are diffeomorphisms. We will be studying the normal exponential map in these new coordinates, i.e. 
\begin{equation}
	\psi\circ E\circ\varphi:\R^+\times\R^2\to\hei.
\end{equation}
In accordance with our convention of \cref{not:maps} on coordinates changes, we will denote 
	\begin{equation}\label{eq:E?}
		E(s,u,v)=\psi\circ E\circ\varphi(s,u,v),
	\end{equation}
and similarly, for the extended exponential map, 
\begin{equation}
	\mathscr F(s,u,v)=(\psi\circ E\circ\varphi(s,u,v),\ell\circ\varphi(s,u,v)).
\end{equation}

\begin{remark}\label{rmk:pi}
We specify the canonical projection in coordinates $(s,u,v)$. According to \cref{not:maps},  
\[
\pi(s,u,v) = \frac{1}{1+ab}
	\begin{pmatrix} 
		b & -1 \\
		1 & a
	\end{pmatrix}\mathfrak R(\tfrac{s}2)
	\begin{pmatrix}
		u\\ v
	\end{pmatrix},
\]
where $\mathfrak R(\cdot)$ is the rotation matrix defined in \eqref{eq:rotation}. Additionally, letting $(\rho,\vartheta)$ be the polar coordinates on $S$ defined in \eqref{eq:x-y_polar}, if we perform, in the domain, the change of coordinates $(s,u,v)\mapsto (s,\rho,\phi)$, where
\begin{equation}\label{eq:rho_phi}
    (u,v)=(\rho\cos\phi,\rho\sin\phi),
\end{equation}
and, on the target, we use the coordinates $(\rho,\vartheta)$, the canonical projection rewrites as 
\begin{equation}\label{eq:projection}
\pi(s,\rho,\phi)=(\rho,\phi-\tfrac{s}2).
\end{equation}
Analogously, by \eqref{eq:length_s}, the length function has the expression 
\begin{equation}
    \ell(s,\rho,\phi)=\frac{s}{h_0\circ\pi(s,\rho,\phi)}=\frac{s}{h_0(\rho,\phi-\tfrac{s}{2})}.
\end{equation}
Note that, by \cref{rmk:diver_h0}, $(s,\rho,\phi)\mapsto \ell(s,\rho,\phi)$ extends smoothly to $\rho=0$.
\end{remark}

\begin{remark}\label{r:cut_rho}
    According to \cref{thm:t_cut_front}, the cut time, in coordinates $(s,\rho,\phi)$ of \cref{rmk:pi}, can be characterized as 
\begin{equation}
\begin{aligned}
    t_\cut(\rho,\vartheta)&=\inf\{\ell(s,\rho',\phi)\,:\,(s,\rho',\phi)\in\overline{\mathrm{Doub}(\mathscr F)},\,\pi(s,\rho',\phi)=(\rho,\vartheta)\}.
\end{aligned}
\end{equation}
Since in these coordinates it holds $\pi(s,\rho,\phi)=(\rho,\phi-s/2)$ we have 
\begin{equation}
    t_\cut(\rho,\vartheta)=\inf\left\{\ell\left(s,\rho,\vartheta+\frac{s}{2}\right)\,:\,\left(s,\rho,\vartheta+\frac{s}{2}\right)\in\overline{\mathrm{Doub}(\mathscr F)}\right\}.
\end{equation}
\end{remark}
\section{The cut time of a homogeneous surface. Proof of Theorem~\ref{thm:intro-homogeneous-cut-time}}
\label{sec:cut-time-hom}

Given $a,b\in \mathbb R$, we now study the cut time function of the homogeneous surface $S_{a,b}$, cf.\ \cref{def:homogeneous_surf}. We assume that $a,b$ satisfy the non-degeneracy condition $ab+1\neq 0$, which we recall is equivalent to requiring that the origin is a non-degenerate characteristic point. Note that the map $(x,y,z)\mapsto (-y,x,z) $ is an isometry of $\hei$ mapping $S_{a,b}$ to $S_{b,a}$. For this reason, it is not restrictive to (and thus we will) assume that, in the remainder of the section, it holds
$$a\geq b.$$ 
We denote the normal exponential map \eqref{eq:E?} for the surface $S_{a,b}$ with $E_{a,b}$.
Similarly, the length function is denoted by $\ell_{a,b}$ and the extended exponential map by $\mathscr F_{a,b}=(E_{a,b},\ell_{a,b})$. 
\begin{lemma}\label{l:belle_coords}
	In the coordinates $(s,u,v)$ of \cref{ss:change}, the following holds
	\begin{equation}\label{eq:E_coords}
	 E_{a,b}(s,u,v)=\left(
	p_a(s)u,
	p_b(s)v,
		q_a(s)u^2+q_b(s)v^2
		\right),\qquad \ell_{a,b}(s,u,v)=\frac{s}{2}\sqrt{u^2+v^2},
	\end{equation}
	where, for $c\in \{a,b\}$ and $s\in \mathbb R^+$ we have defined 
	\begin{equation}
    \label{eq:def_p-q_c}
		p_c(s)=\cos\left(\tfrac{s}{2}\right)
		-c\sin\left(\tfrac{s}{2}\right),\qquad q_c(s)=s+\sin s-c(1-\cos s).
	\end{equation}
\end{lemma}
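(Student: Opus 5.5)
This is a direct computation: start from the closed formula for $E$ in the proof of \cref{lem:smooth_E}, specialize it to $u(x,y)=\frac12(ax^2+by^2)$, and then substitute the coordinates of \cref{ss:change}. The key observation is that for $S_{a,b}$ the vector $w:=(\partial_xu+y,\partial_yu-x)=(ax+y,by-x)$ is exactly the first two components of $\psi$. Moreover, it is the image of $(x,y)$ under the matrix $M=\begin{pmatrix}a&1\\-1&b\end{pmatrix}$, whose inverse is $\frac{1}{1+ab}\begin{pmatrix}b&-1\\1&a\end{pmatrix}$. Definition \eqref{eq:uv_xy} therefore says precisely that
\[
w=\mathfrak R(\tfrac s2)\begin{pmatrix}u\\v\end{pmatrix}.
\]

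\textbf{Horizontal components.} From the proof of \cref{lem:smooth_E},
\[
(X,Y)^T=(x,y)^T-\tfrac12\bigl[\mathfrak R(-\tfrac\pi2)+\mathfrak R(\tfrac\pi2-s)\bigr]w .
\]
Applying $M$ gives the first two components of $\psi\circ E$, namely $M(X,Y)^T=w-\frac12 M[\cdots]w$. Write $M=aP+bQ+\mathfrak R(\pi/2)$ with $P=\mathrm{diag}(1,0)$ and $Q=\mathrm{diag}(0,1)$. Using rotation identities together with $w=\mathfrak R(s/2)(u,v)^T$, I expect everything to collapse to $\mathfrak R(-s/2)$-type expressions whose first row depends only on $u$ with coefficient $p_a(s)$, and whose second row depends only on $v$ with coefficient $p_b(s)$. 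A useful sanity check is $s=0$: there $\mathfrak R(-\pi/2)+\mathfrak R(\pi/2)=0$, so the output is $w=(u,v)$, matching $p_c(0)=1$. The cleanest route is to use the half-angle identities $1+\cos s=2\cos^2(s/2)$ and $\sin s=2\sin(s/2)\cos(s/2)$ and compare the resulting coefficients with $\cos(s/2)-c\sin(s/2)$.

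\textbf{Third component.} I need $8\bigl(Z-\frac14(aX^2+bY^2)\bigr)$. The formula from the proof of \cref{lem:smooth_E} gives
\[
Z=\frac{u(x,y)}{2}+\frac{s-\sin s}{2h_0^2}-\frac14(x,y)\bigl[\mathrm{Id}+\mathfrak R(\pi-s)\bigr]w,
\qquad h_0^{-2}=\frac{|w|^2}{4}=\frac{u^2+v^2}{4}.
\]
Subtracting $\frac14(aX^2+bY^2)$, with $X,Y$ written through $M^{-1}$ applied to the first step's output $(p_a u,p_b v)$, leaves a quadratic form in $(u,v)$. I then have to check that the cross term $uv$ cancels and that the diagonal coefficients equal $q_a(s)$ and $q_b(s)$. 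This bookkeeping, that is, expanding the products of rotations and confirming that the $uv$ coefficient vanishes identically in $s$, is the main obstacle. As checks along the way: at $s=0$ one must get $Z=\frac14(ax^2+by^2)$, so the third component is $0$, consistent with $q_c(0)=0$; and the $s$-linear term coming from $\frac{s}{2h_0^2}$ contributes $s(u^2+v^2)$, consistent with the leading $s$ in $q_c$. To keep the computation manageable, I would express everything in terms of $w$ and use that $M$ and rotations interact simply.

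\textbf{Length function.} This follows from \eqref{eq:length_s} and \cref{ex:homog2}: $\ell=s/h_0=\frac s2|w|$, and since rotations preserve norms, $|w|=\sqrt{u^2+v^2}$.
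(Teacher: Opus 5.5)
Your plan is correct and is essentially the paper's own computation. The paper likewise uses $M(x,y)^T=\mathfrak R(\tfrac s2)(u,v)^T$ and the identity $\tfrac12\bigl[\mathfrak R(-\tfrac\pi2)+\mathfrak R(\tfrac\pi2-s)\bigr]\mathfrak R(\tfrac s2)=\sin(\tfrac s2)\,\mathrm{Id}$, which gives $M(X,Y)^T=\bigl(\mathfrak R(\tfrac s2)-\sin(\tfrac s2)M\bigr)(u,v)^T=\mathrm{diag}(p_a,p_b)(u,v)^T$, and it reads off the length from $h_0=2/\sqrt{u^2+v^2}$.

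The cross term you single out is handled by writing $ax^2+by^2=(x,y)M(x,y)^T=(x,y)\,\mathfrak R(\tfrac s2)(u,v)^T$. This cancels the graph term against the $\mathrm{Id}$-part of the last term, leaving $Z=\tfrac{s-\sin s}{8}(u^2+v^2)+\tfrac14(u,v)\,\mathfrak R(-\tfrac s2)M^{-T}\mathfrak R(-\tfrac s2)(u,v)^T$, a quadratic form whose symmetric part is diagonal. Together with $aX^2+bY^2=(p_au,p_bv)\,M^{-T}(p_au,p_bv)^T$, the half-angle identities then yield $q_a$ and $q_b$.
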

\begin{proof}
We first perform the change of coordinates in the domain. Let us introduce the notation
	\begin{equation}\label{eq:notation}
		A=
		\begin{pmatrix}
			a&1\\
			-1&b
		\end{pmatrix}, \qquad \xi=
	\begin{pmatrix}
	x\\y
\end{pmatrix},
\qquad
w=
\begin{pmatrix}
u\\v
\end{pmatrix}.
\end{equation}
	Then, by definition
	of $\varphi$, and in particular by \eqref{eq:uv_xy} we have 
	\begin{equation}\label{eq:xi_w}
		\xi=A^{-1}\mathfrak R(\tfrac{s}{2})w,
	\end{equation}
	where $\mathfrak R(\cdot)$ is the orthogonal matrix of \eqref{eq:rotation}. Consequently $A\xi=\mathfrak R(\tfrac{s}{2})w$, that is
	\begin{equation}
		\begin{pmatrix}
			ax+y\\
			by-x
		\end{pmatrix}
		=
		\mathfrak R(\tfrac{s}{2})
		\begin{pmatrix}
			u\\v
		\end{pmatrix}.
	\end{equation}
	Since $\mathfrak R(\cdot)$ is orthogonal, denoting $\rho=\sqrt{u^2+v^2}$ we find
	\begin{equation}
		\rho=\sqrt{(ax+y)^2+(by-x)^2}.
	\end{equation}
	Therefore, substituting our findings in the expressions of $h_0,h_1,h_2$ of \cref{ex:homog2} we find
	\begin{equation}\label{eq:h_i_notation}
		h_0=\frac{2}{\rho},
		\qquad
		\begin{pmatrix}
			h_1\\h_2
		\end{pmatrix}
		=
		-\frac{1}{\rho}\mathfrak R(\tfrac{s}2)w.
	\end{equation}
	We write the normal exponential map componentwise as $E_{a,b}=(X,Y,Z)$. By definition of the map $E_{a,b}$, combining \cref{prop:normal_exponential_map} with \eqref{eq:notation} and \eqref{eq:h_i_notation} we find 
	\begin{equation}
		\begin{aligned}
			\begin{pmatrix}
				X\\Y
			\end{pmatrix}
			&=
			\begin{pmatrix}
				x\\ y
			\end{pmatrix}
			+\frac{1}{h_0}
			\left[
			\mathfrak R\left(-\frac{\pi}{2}\right)
			+
			\mathfrak R\left(\frac{\pi}{2}-s\right)
			\right]
			\begin{pmatrix}
				h_1\\h_2
			\end{pmatrix}
			=
			\xi-\frac{1}{2}
			\left[
			\mathfrak R\left(-\frac{\pi}{2}\right)
			+
			\mathfrak R\left(\frac{\pi}{2}-s\right)
			\right]
			\mathfrak R(\tfrac{s}{2})w\\
			&=\xi-\frac{1}{2}
			\left[
			\mathfrak R\left(-\frac{\pi}{2}+\frac{s}{2}\right)
			+
			\mathfrak R\left(\frac{\pi}{2}-\frac{s}{2}\right)
			\right]w =\xi-\sin\tfrac{s}{2}\,w=\left[
			A^{-1}\mathfrak R\left(\frac{s}{2}\right)
			-\sin\frac{s}{2}\,\mathrm{Id}
			\right]w,
		\end{aligned}
	\end{equation}
	where we have used the property $\mathfrak R(\alpha)\mathfrak R(\beta)=\mathfrak R(\alpha+\beta)$ in the third equality and the identity 
	\begin{equation}
		\frac{1}{2}
		\left[
		\mathfrak R\left(-\frac{\pi}{2}+\theta\right)
		+
		\mathfrak R\left(\frac{\pi}{2}-\theta\right)
		\right]
		=
		\sin\theta\,\mathrm{Id},
	\end{equation}
	in the fourth one. Now, for $c\in\{a,b\}$, let $p_c(s)$ be defined as in \eqref{eq:def_p-q_c}. Expanding the matrix product $
	\left[
	A^{-1}\mathfrak R\left(\frac{s}{2}\right)
	-\sin\frac{s}{2}\,\mathrm{Id}
	\right]w$ gives
	\begin{equation}\label{eq:X,Y_domain}
	\begin{pmatrix}
		X\\ Y
	\end{pmatrix}\circ\varphi(s,u,v)
		=
		\frac{1}{1+ab}
		\begin{pmatrix}
			bp_a(s)&-p_b(s)\\
			p_a(s)&ap_b(s)
		\end{pmatrix}
		\begin{pmatrix}
			u\\v
		\end{pmatrix}=A^{-1}\begin{pmatrix}
		p_a(s) u\\ p_b(s)v
	\end{pmatrix}.
	\end{equation}
	We now compute the third component of $E_{a,b}$. Recalling that $S_{a,b}$ is the graph of (half) $u_{a,b}(x,y)=\frac{1}{2}(ax^2+by^2)$, equation \eqref{eq:simplified_z} gives
	\begin{equation}
		Z
		=
		\frac{1}{4}(ax^2+by^2)
		+\frac{s-\sin s}{2h_0^2}
		+
		\frac{1}{2h_0}
		\xi^T
		\left[
		\mathrm{Id}+\mathfrak R(\pi-s)
		\right]
		\begin{pmatrix}
			h_1\\h_2
		\end{pmatrix}.
	\end{equation}
	Note that the symmetric part of $A$ is the diagonal matrix $\mathrm{diag}(a,b)$, hence
	\begin{equation}
		ax^2+by^2=\xi^TA\xi=\xi^T\mathfrak R(\tfrac{s}{2})w,
	\end{equation}
	where we have used the identity $A\xi=\mathfrak R(\tfrac{s}{2})w$ obtained from \eqref{eq:xi_w}. Moreover, by \eqref{eq:h_i_notation} it holds 
	\begin{equation}
		\frac{s-\sin s}{2h_0^2}
		=
		\frac{s-\sin s}{8}(u^2+v^2),\qquad \frac{1}{2h_0}
		\begin{pmatrix}
			h_1\\h_2
		\end{pmatrix}
		=
		-\frac{1}{4}\mathfrak R(\tfrac{s}{2})w.
	\end{equation}
	It follows that the component $Z$ can be rewritten using the notation \eqref{eq:notation} as 
	\begin{equation}
		\begin{aligned}
			Z&=
			\frac{1}{4}\xi^T\mathfrak R(\tfrac{s}{2})w
			+\frac{s-\sin s}{8}(u^2+v^2)
			-\frac{1}{4}\xi^T
			\left[
			\mathrm{Id}+\mathfrak R(\pi-s)
			\right]
			\mathfrak R(\tfrac{s}{2})w\\
			&=\frac{1}{4}\xi^T\mathfrak R(\tfrac{s}{2})w
			+\frac{s-\sin s}{8}(u^2+v^2)
			-\frac{1}{4}\xi^T\mathfrak R(\tfrac{s}{2})w-\frac{1}{4}\xi^T\mathfrak R(\pi-\tfrac{s}{2})w\\
			&=\frac{s-\sin s}{8}(u^2+v^2)-\frac{1}{4}\xi^T\mathfrak R(\pi-\tfrac{s}{2})w=\frac{s-\sin s}{8}w^Tw+\frac{1}{4}\xi^T\mathfrak R\left(-\tfrac{s}{2}\right)w.
		\end{aligned}
	\end{equation}
	Substituting $\xi=A^{-1}\mathfrak R(\tfrac{s}{2})w$ we are left with 
		\begin{equation}
		Z
		=
		\frac{s-\sin s}{8}w^Tw
		+\frac{1}{4}w^T\left[\mathfrak R\left(-\tfrac{s}{2}\right)A^{-T}\mathfrak R\left(-\tfrac{s}{2}\right)\right]w.
	\end{equation}
    In particular, $Z$ is a quadratic form in $w$, hence we may replace $N(s):=\mathfrak R\left(-\tfrac{s}{2}\right)A^{-T}\mathfrak R\left(-\frac{s}{2}\right)$ by its symmetric part which is
	\begin{equation}
		\frac{1}{2}(N(s)+N(s)^T)=
		\frac{1}{2(1+ab)}
		\begin{pmatrix}
			(b-a)+(a+b)\cos s+2\sin s & 0\\
			0 & (a-b)+(a+b)\cos s+2\sin s
		\end{pmatrix}.
	\end{equation}
Replacing $N(s)$ with its symmetric part and expanding, we find
	\begin{equation}\label{eq:Z_s,u,v}
		Z\circ\varphi(s,u,v)=M_{11}(s)u^2+M_{22}(s)v^2,
	\end{equation}
	where we have denoted
	\begin{equation}\label{eq:Mii}
    \begin{aligned}
        M_{11}(s)
		&=
		\frac{
			(1+ab)s+(1-ab)\sin s+(b-a)+(a+b)\cos s
		}{
			8(1+ab)
		},\\
		M_{22}(s)
		&=
		\frac{
			(1+ab)s+(1-ab)\sin s+(a-b)+(a+b)\cos s
		}{
			8(1+ab)
		}.
    \end{aligned}
	\end{equation}
Summarizing we have
\begin{equation}
	E_{a,b}\circ \varphi(s,u,v)=(
		X, Y, Z
	)\circ \varphi(s,u,v)=\left(
	 A^{-1}\begin{pmatrix}
	 	p_a(s) u\\ p_b(s)v
	 \end{pmatrix},
 M_{11}(s)u^2+M_{22}(s)v^2
\right).
\end{equation}
	We now perform the change of coordinates in the codomain, finding
	\begin{equation}
		\psi\circ E_{a,b}\circ\varphi(s,u,v)
		=
		\left(\begin{pmatrix}
			aX+Y\\
			bY-X
		\end{pmatrix}
		\circ\varphi(s,u,v),\,
		8\left(Z-\frac{aX^2+bY^2}{4}\right)\circ\varphi(s,u,v)
		\right).
	\end{equation}
	Using equation \eqref{eq:X,Y_domain}, referring to notation \eqref{eq:notation}, we obtain 
	\begin{equation}
		\begin{pmatrix}
			aX+Y\\
			bY-X
		\end{pmatrix}
		\circ\varphi(s,u,v)=A	\begin{pmatrix}
			X\\ Y
		\end{pmatrix}\circ\varphi(s,u,v)=AA^{-1}\begin{pmatrix}
		p_a(s)u\\
		p_b(s)v
	\end{pmatrix}=\begin{pmatrix}
	p_a(s)u\\
	p_b(s)v
\end{pmatrix},
	\end{equation}
    which corresponds to the first two components of $E_{a,b}$ as claimed in \eqref{eq:E_coords}. Finally, for the third component of $E_{a,b}$, a standard computation gives
	\begin{equation}
		\begin{aligned}
			8\left(Z-\frac{aX^2+bY^2}{4}\right)\circ\varphi(s,u,v)
			&=
			\left(
			8M_{11}(s)
			-\frac{2b\,p_a(s)^2}{1+ab}
			\right)u^2+
			\left(
			8M_{22}(s)
			-\frac{2a\,p_b(s)^2}{1+ab}
			\right)v^2.
		\end{aligned}
	\end{equation}
	Expanding the squares of $p_c(s)$ and using the trigonometric identities
	\begin{equation}
		2\sin^2\tfrac{s}{2}=1-\cos s,
		\qquad
		2\cos^2\tfrac{s}{2}=1+\cos s,
		\qquad
		2\sin\tfrac{s}{2}\cos\tfrac{s}{2}=\sin s,
	\end{equation}
    we find
	\begin{equation}
		\begin{aligned}
			8M_{11}(s)
			-\frac{2b\,p_a(s)^2}{1+ab}
			=
			q_a(s),\qquad  8M_{22}(s)
			-\frac{2a\,p_b(s)^2}{1+ab}
			=
			q_b(s),
		\end{aligned}
	\end{equation}
	where $q_c(s)$ for $c\in\{a,b\}$ is as in \eqref{eq:def_p-q_c}. This concludes the proof of \eqref{eq:E_coords} for the map $E_{a,b}$. In a similar fashion, combining \eqref{eq:h_i_notation} with \eqref{eq:length_s}, we find the expression of the length function as well.
\end{proof}

\begin{remark}\label{r:identities}
    We record here a few useful facts about the functions $p_c,q_c$ defined in \eqref{eq:def_p-q_c}. Firstly, a standard computation shows the validity of the following identities
    \begin{equation}
    \label{eq:identities_remark}
        \begin{aligned}
            q_c(s)=s+2\sin(\tfrac{s}2)p_c(s),\qquad 
\sin(\tfrac{t}2)p_c(s)-\sin(\tfrac{s}2)p_c(t)
=\sin(\tfrac{t-s}2).
        \end{aligned}
    \end{equation}
    Thus, setting $P(s)=\mathrm{diag}(p_a(s),p_b(s))$ and $w=(u,v)$, we may write the third component of $E_{a,b}$ as
    \begin{equation}
        Z_{a,b}(s,w)=s|w|^2+2\sin(\tfrac{s}2)\langle P(s)w,w\rangle.
    \end{equation}
 Secondly, for any $c\in\mathbb R$, the first positive root of $p_c$ is
	\begin{equation}
		s_c:=\pi-2\arctan(c),
	\end{equation}
	and it is isolated. In particular, $p_c$ is positive in the interval $[0,s_c)$ and, if $a\geq b$, then $s_b\geq s_a$.
\end{remark}
With the final goal of characterizing the cut time of a homogeneous surface, we study the double points of its extended exponential map.

\begin{prop} \label{p:seipuntotre} 
Let $\mathscr F_{a,b}$ be the extended exponential map of $S_{a,b}$. If $\mathscr F_{a,b}(s,w)=\mathscr F_{a,b}(t,\tilde w)$ for some $(s,w),(t,\tilde w)\in (\R^+\times\R^2 \setminus \{0\})$, then $s=t$. Moreover
\begin{equation}
\label{eq:doub_F=smth}
    \overline{\mathrm{Doub}\bigl(\mathscr F_{a,b},[0,s_a]\times \left(\mathbb R^2\setminus\{0\}\right)\bigr)}= \{s_{a}\}\times(\mathbb R^2\setminus\{0\}),
\end{equation}
where the closure is taken in $\mathbb R^+\times\left(\mathbb R^2\setminus\{0\}\right)$.
\end{prop}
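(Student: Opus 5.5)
The plan is to work entirely in the coordinates $(s,u,v)$ of \cref{ss:change}, using \cref{l:belle_coords}:
\[
\mathscr F_{a,b}(s,w)=\bigl(p_a(s)u,\;p_b(s)v,\;Z_{a,b}(s,w),\;\tfrac{s}{2}|w|\bigr),
\qquad Z_{a,b}(s,w)=s|w|^2+2\sin(\tfrac s2)\langle P(s)w,w\rangle,
\]
as in \cref{r:identities}. The proof then becomes an algebraic analysis of the system $\mathscr F_{a,b}(s,w)=\mathscr F_{a,b}(t,\tilde w)$, which has four scalar equations.

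\emph{First claim ($s=t$).} Suppose, without loss of generality, that $s<t$.

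1. The length equation gives $|\tilde w|=s|w|/t$.
2. Set $\alpha:=p_a(s)u=p_a(t)\tilde u$ and $\beta:=p_b(s)v=p_b(t)\tilde v$.
3. Rewrite each third component as $s|w|^2+2\sin(\tfrac s2)(\alpha u+\beta v)$ and $t|\tilde w|^2+2\sin(\tfrac t2)(\alpha\tilde u+\beta\tilde v)$.
4. Wherever the relevant $p_c$ do not vanish, substitute $u=\alpha/p_a(s)$, $\tilde u=\alpha/p_a(t)$, and similarly for $v,\tilde v$. Then use the second identity in \eqref{eq:identities_remark}, $\sin(\tfrac t2)p_c(s)-\sin(\tfrac s2)p_c(t)=\sin(\tfrac{t-s}2)$.

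Equating the third components should then reduce to
\[
\frac{s(t-s)}{t}\,|w|^2=2\sin\bigl(\tfrac{t-s}{2}\bigr)\Bigl(\frac{\alpha^2}{p_a(s)p_a(t)}+\frac{\beta^2}{p_b(s)p_b(t)}\Bigr)=2\sin\bigl(\tfrac{t-s}{2}\bigr)\bigl(u\tilde u+v\tilde v\bigr).
\]
I would then bound the right-hand side. Cauchy–Schwarz gives $|u\tilde u+v\tilde v|\le |w||\tilde w|=s|w|^2/t$, and $2|\sin(\tfrac{t-s}2)|\le t-s$. Equality forces $\sin(\tfrac{t-s}{2})=\pm\tfrac{t-s}{2}$, which is impossible for $t>s$, and hence $s=t$.

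The degenerate cases in which some $p_c(s)$ or $p_c(t)$ vanishes, so that the substitution fails, must be handled separately. I would treat them directly from the equation in step 3, or by continuity of the identity above in $s,t$.

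\emph{Second claim.} With $s=t$, the system reads $p_a(s)u=p_a(s)\tilde u$, $p_b(s)v=p_b(s)\tilde v$, $|w|=|\tilde w|$, $q_a(s)u^2+q_b(s)v^2=q_a(s)\tilde u^2+q_b(s)\tilde v^2$.

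For $s\in[0,s_a)$, \cref{r:identities} gives $p_a(s)>0$ and $p_b(s)>0$, since $s_b\ge s_a$. Hence $w=\tilde w$, and there are no double points there.

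At $s=s_a$, $p_a$ vanishes. Suppose first $a>b$, so $p_b(s_a)\neq0$. Then the equations give $v=\tilde v$ and $u^2=\tilde u^2$, and the third equation holds automatically. So $(s_a,u,v)$ and $(s_a,-u,v)$ form a double pair whenever $u\neq0$. If $a=b$, both $p$'s vanish and every pair of points in $\{s_a\}\times(\mathbb R^2\setminus\{0\})$ with $|w|=|\tilde w|$ is a double pair.

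It follows that $\mathrm{Doub}$ restricted to $[0,s_a]$ equals $\{s_a\}\times\{u\neq0\}$, or all of $\{s_a\}\times(\mathbb R^2\setminus\{0\})$ when $a=b$. Its closure in $\mathbb R^+\times(\mathbb R^2\setminus\{0\})$ is $\{s_a\}\times(\mathbb R^2\setminus\{0\})$: a point $(s_a,0,v)$ with $v\neq0$ is the limit of $(s_a,u_n,v)$ with $u_n\to0$.

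The main obstacle is the first claim. The sign and Cauchy–Schwarz argument may require care in regimes where $p_c(s)p_c(t)<0$, or where the substitution breaks down because some $p_c$ vanishes. If the inequality does not close directly in those regimes, I would instead exploit the structure of $Z_{a,b}$ as a quadratic form in $w$ with $s\mapsto Z$ increasing at fixed $\ell$.
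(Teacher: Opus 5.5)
Your proposal is essentially the paper's proof. It uses the same identity $Z_{a,b}(t,\tilde w)-Z_{a,b}(s,w)=t|\tilde w|^2-s|w|^2+2\sin(\tfrac{t-s}{2})\langle w,\tilde w\rangle$, combined with $s|w|=t|\tilde w|$, Cauchy--Schwarz and $|\sin x|<|x|$, and then the invertibility of $P(s)$ on $[0,s_a)$ and the pairs $(s_a,\pm u,v)$. The obstacle you flag is not real. The paper never divides by $p_c$: it writes $\langle P(t)\tilde w,\tilde w\rangle=\langle P(s)w,\tilde w\rangle$ and $\langle P(s)w,w\rangle=\langle P(t)\tilde w,w\rangle$, which in your notation reads $\alpha\tilde u=p_a(s)u\tilde u$ and $\alpha u=p_a(t)u\tilde u$. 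So the identity holds unconditionally, and the sign of $p_c(s)p_c(t)$ plays no role. The only extra case is $s=0<t$, which the length equation excludes directly because $\tilde w\neq0$.
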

\begin{proof} Letting $w=(u,v)$ and $\widetilde w=(\widetilde u,\widetilde v)$, we study the equation $\mathscr F_{a,b}(s,w)=\mathscr F_{a,b}(t,\widetilde w)$. As a consequence of \cref{l:belle_coords} and taking into account \cref{r:identities}, the latter equation becomes
\begin{equation}
\label{eq:F=F}
    P(s)w=P(t)\widetilde w,\qquad Z_{a,b}(s,w)=Z_{a,b}(t,\widetilde w),\qquad s|w|=t|\widetilde w|.
\end{equation}
First, we claim that any solution of \eqref{eq:F=F} satisfies $s=t$. On the hand, if one among $t$ and $s$ is zero, equality of lengths and the non-vanishing of $w,\widetilde w$ imply $s=t=0$. Then, since $P(0)=\mathrm{id}$, the first equality of \eqref{eq:F=F} gives $w=\widetilde w$. On the other hand, if $t,s>0$, applying
\cref{r:identities} to the second equality of \eqref{eq:F=F}, we find 
\begin{equation}
\begin{aligned}
0=Z_{a,b}(t,\widetilde w)-Z_{a,b}(s,w)
&=t|\widetilde w|^2-s|w|^2+2\sin(\tfrac{t}2)\langle P(s)w,\widetilde w\rangle
-2\sin(\tfrac{s}2)\langle P(t)\widetilde w,w\rangle\\
&=t|\widetilde w|^2-s|w|^2+2\left\langle
\bigl(\sin(\tfrac{t}2)P(s)-\sin(\tfrac{s}2)P(t)\bigr)w,
\widetilde w
\right\rangle\\
&=t|\widetilde w|^2-s|w|^2
+2\sin(\tfrac{t-s}2)\langle w,\widetilde w\rangle.
\end{aligned}
\end{equation}
The last equality follows by applying the second identity of \eqref{eq:identities_remark} to each
diagonal entry of $P(\cdot)$. Dividing by $|w||\widetilde w|$ and using $s|w|=t|\widetilde w|$, we obtain
\begin{equation}\label{eq:unified_sinc}
t-s
=2\sin(\tfrac{t-s}2)
\frac{\langle w,\widetilde w\rangle}{|w||\widetilde w|}.
\end{equation}
By Cauchy-Schwarz inequality, for $s\ne t$, the right-hand side has absolute value strictly less than $|t-s|$. Hence, it must be $s=t$, concluding the proof the claim. We now prove \eqref{eq:doub_F=smth}. Let $(s,w)$, $(t,\tilde w)\in [0,s_a]\times\R^2\setminus\{0\}$ be distinct points such that $\mathscr F(s,w)=\mathscr F(t,\tilde w)$. Then, by the first part of the proof, we know that $s=t$. Now, if $s<s_a$, $P(s)$ is invertible, so the equality $P(s)w=P(t)\widetilde w$ of \eqref{eq:F=F} forces $w=\widetilde w$, which is impossible. Therefore, we must have $s=s_a$ and $w\neq \tilde w$. This implies that $\mathrm{Doub}(\mathscr F_{a,b},[0,s_a]\times (\mathbb R^2\setminus\{0\}))\subset \{s_a\}\times(\mathbb R^2\setminus\{0\})$. Conversely, observe that a direct computation using \cref{l:belle_coords} shows that, for every $(u,v)\in\R^2\setminus\{0\}$, it holds 
\[
 \mathscr F_{a,b}(s_a,u,v) = \mathscr F_{a,b}(s_a,-u,v), 
\]
which means that $\{s_a\}\times(\mathbb R^2\setminus\{u=0\})\subset \mathrm{Doub}(\mathscr F_{a,b},[0,s_a]\times (\mathbb R^2\setminus\{0\}))$.
\end{proof}
We can now prove the main result of the section.
\begin{proof}[Proof of \cref{thm:intro-homogeneous-cut-time}]
    We work in coordinates $(s,\rho,\phi)$ given in \cref{rmk:pi}. In these coordinates, the length function reads as $\ell(s,\rho,\phi)=s\rho/2$ (recall that $h_0=2/\rho$ for $S_{a,b}$ by \eqref{eq:components_covector}). Applying \cref{r:cut_rho}, the cut time is 
	\begin{equation}
    \label{eq:cut-time_aux}
		\begin{aligned}
			t_\cut(\rho,\phi)=\frac{\rho}{2}\inf\left\{s\,:(s,\rho,\phi+\tfrac{s}{2})\in\overline{\mathrm{Doub}(\mathscr F_{a,b})}\right\}.
	\end{aligned}
	\end{equation}
By \cref{p:seipuntotre}, the double locus of $\mathscr F_{a,b}:\R^+\times(\R^2\setminus\{0\})\to \hei\times\R^+$ satisfies
\[
 \{s_a\}\times(\mathbb R^2\setminus\{0\})
 \subset\overline{\mathrm{Doub}(\mathscr F_{a,b})}
 \subset[s_a,\infty)\times(\mathbb R^2\setminus\{0\}),
\]
where we recall that the closure is taken in $\mathbb R^+\times\mathbb R^2\setminus\{0\}$. Thus, the infimum in \eqref{eq:cut-time_aux} equals $s_a$, and so
\begin{equation}
	t_\cut(\rho,\phi)=\frac{\rho}{2}s_a=\rho\left(\frac{\pi}{2}-\arctan(a)\right)=\rho\left(\frac{\pi}{2}-\arctan(\max\{a,b\})\right),
\end{equation}
where the latter equality uses the hypothesis $a\geq b$. We conclude the proof recalling the definition of $\rho$ in \eqref{eq:x-y_polar}.
\end{proof}
\section{Approximations of the double point locus}
\label{sec:cut_time_general}

Let $S=\partial\Omega\subset\hei$ be a smooth, embedded, oriented surface and let $p\in\Sigma(S)$ be a geometrically stable characteristic point, cf.\ \cref{def:geometrically_stable_char_pts}. In this section, we show how one can describe the double locus of the extended exponential map of $S$ close to $p$, in terms of the homogeneous approximation. As before we assume that $p=(0,0,0)$ and we describe $S$ locally as in \eqref{eq:surface_S1}, namely as the graph of a function $u\in C^\infty(\R^2)$ given by \cref{lem:nice_f}. The geometric stability of $p$ is encoded in the conditions 
\begin{equation}\label{eq:geom_stab}
    ab+1\neq 0,\qquad a\neq b.
\end{equation}
As before, without loss of generality, we may additionally assume $a>b$.

 \subsection{Localization of the double points of the extended exponential map}

Let $\mathscr F=(E,\ell)$ be the extended exponential map of $S\subset\hei$, expressed in coordinates $(s,\rho,\phi)$ of \cref{ss:change}. Observe that in these polar coordinates $\mathscr F(s,\rho,\phi)$ is a smooth map, including at $\rho=0$, indeed
\begin{equation}
    \mathscr F(s,\rho,\phi)=(E(s,\rho,\phi),\ell(s,\rho,\phi)),
\end{equation}
and $E(s,\rho,\phi)$ extends smoothly to $\rho=0$ by \cref{lem:smooth_E}, while $\ell(s,\rho,\phi)$ extends smoothly by \cref{rmk:pi}.
We have already shown that the map $E$ can be approximated by the corresponding map $E_{a,b}$, cf.\ \cref{rmk:E_approx}. The same result holds for the length function. Hence, after applying the change of coordinates of \cref{ss:change}, it holds: 
\begin{equation}\label{eq:fundamental_approx}
	E(s,\rho,\phi)=\delta_{\ve}\left(E_{a,b}(s,\rho/\ve,\phi)+\ve R_E(s,\rho/\ve,\rho,\phi)\right),\qquad\forall\,(s,\rho,\phi)\in\mathbb R^+\times \mathbb R^+\times\mathbb S^1,
\end{equation}
where $R_E:(\R^+)^3\times\mathbb S^1\to\mathbb R^3$ is a smooth function and $\delta_\ve(x,y,z)=(\ve x,\ve y,\ve^2 z)$ as in \eqref{eq:dilations}. Moreover, the formula for the length obtained in \cref{rmk:pi}, together with \cref{rmk:diver_h0} gives 
\begin{equation}
\label{eq:approx_length}
    \ell(s,\rho,\phi)=\frac{s}{h_0(\rho,\phi-\tfrac s2)}=\frac{s\rho}{2}+\rho^2R_\ell(s,\rho,\phi).
\end{equation}
In particular, the latter two equations show that $\mathscr F$ is a smooth perturbation of $\mathscr F_{a,b}=(E_{a,b},\ell_{a,b})$. For the next proposition, recall the quantity $s_a=\pi-2\arctan(a)$, defined in \cref{r:identities}. Moreover, in coordinates $(\rho,\phi)$, for $\rho_0>0$, we set $$B_{\rho_0}^\times:=\{q=(\rho,\phi)\in S: 0<\rho\leq\rho_0\}$$ 
to be a punctured ball around the characteristic point $p=(0,0,0)$.

\begin{prop}\label{lem:singular_pos}
	For every $\delta>0$ sufficiently small, there exists $\rho_0>0$ such that
	\begin{equation}\label{eq:double_dovesei}
		\mathrm{Doub}(\mathscr F,[0,s_a+\delta]\times B_{\rho_0}^\times)=\mathrm{Doub}(\mathscr F,[s_a-\delta,s_a+\delta]\times B_{\rho_0}^\times).
	\end{equation}
\end{prop}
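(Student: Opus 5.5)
The claim amounts to this: once $\rho_0$ is small, the map $\mathscr F$ has no double points $(s,\rho,\phi)$ with $s\in[0,s_a-\delta)$ and $\rho\in(0,\rho_0]$ whose partner also lies in $[0,s_a+\delta]\times B^\times_{\rho_0}$. I argue by contradiction through blow-up. Suppose that for some small $\delta>0$ no such $\rho_0$ exists. Then there are sequences of distinct points $(s_n,\rho_n,\phi_n)\neq(t_n,\tilde\rho_n,\tilde\phi_n)$ in $[0,s_a+\delta]\times B^\times_{1/n}$ with $\mathscr F$ equal at both points and $s_n<s_a-\delta$.

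The first step is to rescale by $\ve_n:=\max\{\rho_n,\tilde\rho_n\}$. Applying $\delta_{\ve_n}^{-1}$ to the equality of the $E$-components via \eqref{eq:fundamental_approx}, and dividing the length equality by $\ve_n$ via \eqref{eq:approx_length}, gives
\begin{equation}
\mathscr F_{a,b}(s_n,\rho_n/\ve_n,\phi_n)+\ve_n\mathcal R_n=\mathscr F_{a,b}(t_n,\tilde\rho_n/\ve_n,\tilde\phi_n)+\ve_n\tilde{\mathcal R}_n ,
\end{equation}
with remainders bounded uniformly, since $R_E$ and $R_\ell$ are smooth on compact sets and the rescaled radii lie in $[0,1]$. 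Passing to a subsequence, $(s_n,\rho_n/\ve_n,\phi_n)\to(\bar s,\bar\rho,\bar\phi)$ and $(t_n,\tilde\rho_n/\ve_n,\tilde\phi_n)\to(\bar t,\bar{\tilde\rho},\bar{\tilde\phi})$, where $\max\{\bar\rho,\bar{\tilde\rho}\}=1$ and $\bar s\le s_a-\delta$. In the limit $\mathscr F_{a,b}$ agrees at the two points. If either radius is $0$, the length formula $\ell_{a,b}=s\rho/2$ together with the planar components $p_a(s)u,\,p_b(s)v$ forces a contradiction: equal lengths with one radius zero require the other product $t\rho$ to vanish, and the planar equation then fails unless everything degenerates, which I rule out using $p_c>0$ on small $s$. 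So both radii are positive. \cref{p:seipuntotre} then gives $\bar s=\bar t$, and since $P(\bar s)$ is invertible for $\bar s<s_a$, the two limit points coincide.

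The second step, which I expect to be the main obstacle, is to rule out the case where both sequences collapse onto a single limit point. At that point I show that $\mathscr F_{a,b}$ is an injective immersion near $(\bar s,\bar\rho,\bar\phi)$. By \cref{rmk:lege} its critical points coincide with those of $E_{a,b}$, and $E_{a,b}$ is an immersion wherever $P(s)$ is invertible; this holds because the Jacobian of $(p_a u,\,p_b v,\,q_a u^2+q_b v^2)$ in $(s,u,v)$ is nonzero for $s<s_a$ and $w\neq0$, which follows from the focal time of the homogeneous model being at least $s_a$. I still need to verify this nonvanishing, expecting it to come from $\partial_s$ of the third component being nonzero after eliminating $u$ and $v$. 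Given that $\mathscr F_{a,b}$ is an immersion at this point, the rescaled maps $\mathscr F_{a,b}+\ve_n\mathcal R$ converge to $\mathscr F_{a,b}$ in $C^1$ on a fixed neighborhood. A $C^1$-small perturbation of an immersion is injective on a uniform neighborhood, which follows from the quantitative inverse function theorem applied to the map paired with a left inverse on the image. This contradicts the fact that both sequences of distinct points enter that neighborhood.

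One further case needs separate treatment. If $\bar s=\bar t=0$, the limit sits at the boundary $s=0$, and I would use the injectivity of $d\mathscr F$ at $s=0$, as in \cref{cor:pos_cut_time}, uniformly in the rescaled variables. Finally, I would make the choice of $\rho_0$ uniform in $\phi$ using compactness of $\mathbb S^1$, which is already built into the sequence argument.
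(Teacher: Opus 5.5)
Your proposal is correct and follows essentially the same route as the paper. Both arguments proceed by contradiction and rescale by the size of the two base points (you use $\max\{\rho_n,\tilde\rho_n\}$, the paper uses $\sqrt{\rho_n^2+\tilde\rho_n^2}$). They then exclude a vanishing limit radius, apply \cref{p:seipuntotre} to get $\bar s=\bar t$ and coinciding limits, and conclude by local injectivity of $C^1$-small perturbations of $E_{a,b}$ near a point where $dE_{a,b}$ is invertible.

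One correction: $E_{a,b}$ is not an immersion wherever $P(s)$ is invertible. By \cref{l:det_E_ab} its Jacobian $-2\bigl(p_b m_a u^2+p_a m_b v^2\bigr)$ changes sign for $s$ slightly above $s_a$, where $P(s)$ is still invertible. You only need $s<s_a$, however, and there invertibility follows from $t_\cut\le t_\foc$ together with \cref{thm:intro-homogeneous-cut-time}, exactly as the paper does through \cref{c:umaiuscolo}. It also follows directly from $p_a,p_b>0$ and $m_c(0)=-1$, $m_c'=-\tfrac{s}{4}p_c<0$ on $(0,s_a)$.
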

\begin{proof}
Fix $\delta>0$. We must show that there exists $\rho_0>0$ such that, for every $(s,w)\in \mathrm{Doub}(\mathscr F,[0,s_a+\delta]\times B_{\rho_0}^\times)$, it holds $s\geq s_a-\delta$. Assume, by contradiction, that this is not the case. Hence, we find a collection of distinct pairs 
\begin{equation}
    (s_n,w_n):=(s_n,\rho_n,\phi_n),\quad  (t_n,\widetilde w_n):=(t_n,\widetilde\rho_n,\widetilde\phi_n), \qquad n\in\N,
\end{equation}
such that 
\begin{equation} 
\label{eq:equality_of_images}
0\le s_n\le s_a-\delta,\qquad 0\le t_n\le s_a+\delta, \qquad
\mathscr F(s_n,w_n)=\mathscr F(t_n,\widetilde w_n),
\end{equation}
and it holds $\rho_n,\widetilde\rho_n\rightarrow0$ as $n\to\infty$, with $\rho_n,\widetilde \rho_n\ne0$.  
For every $n\in\N$, setting $r_n:=\sqrt{\rho_n^2+ \widetilde\rho_n^2}$, we normalize both $w_n,\widetilde w_n$ as
\begin{equation}
\zeta_n:=\left(\frac{\rho_n}{r_n},\phi_n\right),\qquad
\widetilde\zeta_n:=\left(\frac{\widetilde\rho_n}{r_n},\widetilde\phi_n\right),
\end{equation}
so that $|\zeta_n|^2+|\widetilde\zeta_n|^2=1$, where the moduli stand for the radial coordinates. Given $\ve> 0$, define 
\begin{equation}
    \mathscr F_\ve(s,\rho,\phi)=(E_\ve(s,\rho,\phi),\ell_\ve(s,\rho,\phi)):=(\delta_\ve^{-1} E(s,\ve \rho,\phi),\ve^{-1} \ell(s,\ve \rho,\phi)),
\end{equation}
for every $(s,\rho,\phi)\in\R^+\times\R^+\times\mathbb S^1$. Observe that, for all $\ve>0$, the homogeneity property of \cref{lem:homogeneity_normal_exp} rewrites for the map $E_{a,b}$ in coordinates $(s,\rho,\phi)$ as 
	\begin{equation}
		\delta_\ve E_{a,b}(s,\rho,\phi)=E_{a,b}(s,\ve\rho, \phi),\qquad \forall\,(s,\rho,\phi)\in\R^+\times\R^+\times\mathbb S^1.
	\end{equation}
Thus, by \eqref{eq:fundamental_approx}, as $\ve\to 0$, $\mathscr F_\ve$ $C^1$-converges to $\mathscr F_{a,b}$, uniformly on compact sets of $\R^+\times \R^+\times\mathbb S^1$. The equality in \eqref{eq:equality_of_images} may be written as
\begin{equation}\label{eq:F_r_n}
\mathscr F_{r_n}(s_n,\zeta_n)
=\mathscr F_{r_n}(t_n,\widetilde\zeta_n),\qquad\forall\,n\in\N.
\end{equation}
By compactness, up to extracting a subsequence, there exist $(s,\zeta), (t,\widetilde\zeta)\in \R^+\times\R^+\times\mathbb S^1$ such that $(s_n,\zeta_n)\to (s,\zeta)$ and $(t_n,\widetilde\zeta_n)\to (t,\widetilde\zeta)$. Uniform convergence of the rescaled maps then implies that
\begin{equation}\label{eq:unified_limit_pair}
\mathscr F_{a,b}(s,\zeta)=\mathscr F_{a,b}(t,\widetilde\zeta),\qquad\text{and}\qquad
|\zeta|^2+|\widetilde\zeta|^2=1.
\end{equation}
We claim that neither $|\zeta|$ nor $|\widetilde\zeta|$ vanishes. Indeed, if by contradiction one of the two vanishes, say $|\zeta|=0$, then, by \eqref{eq:unified_limit_pair}, $\widetilde\zeta\neq 0$ and $\mathscr F_{a,b}(t,\widetilde\zeta)=\mathscr F_{a,b}(s,0)=0$. Now, if $t>0$, the length coordinate is $\ell_{a,b}(t,\widetilde\zeta) = t|\widetilde\zeta|/2$, meaning that $|\widetilde\zeta|=0$. If $t=0$, the first two components of $\mathscr F_{a,b}(t,\widetilde\zeta)$ equal $\widetilde\zeta$, meaning that the latter vanishes. In both cases, we get a contradiction. Now \cref{p:seipuntotre} gives $s=t$.
Moreover, since $s\le s_a-\delta<s_a$, \cref{p:seipuntotre} also implies that $\zeta=\widetilde\zeta$. Thus, both sequences converge to the same limit 
\begin{equation}
    z=(s,\zeta):=\lim_{n\to\infty}(s_n,\zeta_n)=\lim_{n\to\infty}(t_n,\widetilde\zeta_n),\qquad\text{with } |\zeta|=\frac{1}{\sqrt 2} \text{ and } 0\leq s\leq s_{a}-\delta.
\end{equation}
On the other hand, by \cref{c:umaiuscolo}, $E_{a,b}|_{ [0,s_a)\times\mathbb R^2\setminus\{0\}}$ is a diffeomorphism onto its image, hence, $d_{z}E_{a,b}$ is invertible. Thus, we can find a small compact ball $V$ centered at $z$ and a constant $C>0$ such that
\begin{equation}
|E_{a,b}(z_1)-E_{a,b}(z_2)|\ge C|z_1-z_2|,
\qquad \forall\, z_1,z_2\in V.
\end{equation}
Since, as $\ve\to 0$, $E_\ve\to E_{a,b}$ in $C^1(V)$, for every $z_1,z_2\in V$ we find
\begin{equation}
\begin{aligned}
|(E_{\ve}-E_{a,b})(z_1)-(E_{\ve}-E_{a,b})(z_2)|\le
\sup_V\|d E_{\ve}-dE_{a,b}\|\,|z_1-z_2|
=o(1)|z_1-z_2|.
\end{aligned}
\end{equation}
Consequently, for every $z_1,z_2\in V$, we can estimate
\begin{equation}
\begin{aligned}
|E_{\ve}(z_1)- E_{\ve}(z_2)|
&\geq \Big||E_{a,b}(z_1)-E_{a,b}(z_2)|-|(E_{\ve}-E_{a,b})(z_1)-(E_{\ve}-E_{a,b})(z_2)|\Big|\\
&\geq \Big||E_{a,b}(z_1)-E_{a,b}(z_2)|-o(1)|z_1-z_2|\Big| \\
&\ge (C-o(1))|z_1-z_2|\geq \frac{C}{2}|z_1-z_2|,
\end{aligned}
\end{equation}
for sufficiently small ${\ve}$. Thus $E_{\ve}$ is injective
on the same neighborhood $V$ for all such ${\ve}$. This contradicts the existence of the two distinct sequences $(s_n,\zeta_n)\neq (t_n,\widetilde\zeta_n)$ approaching $z$, and satisfying \eqref{eq:F_r_n}, for sufficiently large $n\in\N$.
\end{proof}

\begin{remark}
    Since $\mathscr F$ is a smooth perturbation of $\mathscr F_{a,b}$, using \cref{lem:singular_pos}, one may be tempted to conclude that the closure of the double point locus of $\mathscr F$ is a smooth perturbation of the corresponding set of the homogeneous model $\mathscr F_{a,b}$. However, the latter contains critical points of $\mathscr F_{a,b}$ which are not stable under arbitrary smooth perturbations. Indeed, due to the validity of \eqref{eq:lege}, $\mathscr F$ is a \emph{Legendrian front} (cf.\,\cref{app:Legendrian}), and thus its critical points are unstable in the class of all smooth maps (see \cite{Ishikawa2005}). Thus it is natural to study the stability of $\mathscr F$ in the class of Legendrian fronts. 
\end{remark}
\subsection{Reduction to a family of Legendrian fronts}\label{ssec:red_front}
We now describe the Legendrian structure of $\mathscr F$ (see Appendix~\ref{app:Legendrian}). Using the coordinates $(s,\rho,\phi)$ of \cref{ss:change}
and writing $\mathscr F=(X,Y,Z,\ell)$, by \cref{rmk:leges}, for $\rho>0$ we have
\begin{equation}\label{eq:ci2}
    d\ell=P_X\,dX+P_Y\,dY+P_Z\,dZ,
\end{equation}
where $(P_X,P_Y,P_Z)$ are the components of the normal lift in the target coordinates $(X,Y,Z)$.
Although $\mathscr F$ extends smoothly to $\rho=0$, its Legendrian lift in these coordinates does not. More precisely one can show that $P_Z(s,\rho,\phi)\sim\rho^{-1}$ as $\rho\to 0^+$. This suggests restricting $\mathscr F$ to level sets of $Z$. On each such level, the pullback of $dZ$ vanishes, so the singular
component $P_Z$ disappears from the contact identity \eqref{eq:ci2}. Moreover, this reduction preserves all double point information: if two distinct source points have the same image under $\mathscr F$, then they necessarily have the same value of $Z$.

In the next lemma we formalize this construction by describing the level sets $Z=\ve^2$.
 Given $\delta,\rho_0>0$ determined by \cref{lem:singular_pos}, we study the following restriction of the extended exponential map to the set $[s_a-\delta,s_a+\delta]\times B_{\rho_0}^\times$. To lighten the notation we set 
\begin{equation}
    I:=[s_a-\delta,s_a+\delta].
\end{equation}
We identify $B_{\rho_0}^\times$ with $(0,\rho_0]\times \mathbb S^1$, using the polar coordinates $(\rho,\phi)\in (0,\rho_0]\times \mathbb S^1$.
\begin{lemma}\label{lem:Z} 
For every $\delta>0$ small enough there exists $\rho_0>0$ such that the following map
	\begin{equation}\label{eq:Psi_Z}
		\begin{aligned}
			\Psi_Z:I\times [0,\rho_0]\times \mathbb S^1\to I\times \R^+\times \mathbb S^1,\qquad
			 \Psi_Z(s,\rho,\phi)=\left(s,\sqrt{Z(s,\rho,\phi)},\phi\right),
		\end{aligned}
	\end{equation}
	is a diffeomorphism onto its image. In particular its image contains $I\times [0,\ve_0]\times \mathbb S^1$ for $\ve_0>0$ small enough, and on the latter domain its inverse satisfies 
	\begin{equation}\label{eq:Psi_Z-1}
		\begin{aligned}
			\Psi_Z^{-1}(s,\ve,\phi)=\left(s,\frac{\ve}{\sqrt{q_a(s)\cos^2\phi+q_b(s)\sin^2\phi}}+o(\ve),\phi\right), \qquad\text{as }\ve\to 0^+.
		\end{aligned}
	\end{equation}
\end{lemma}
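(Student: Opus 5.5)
The plan is to show that $Z$, as a function of $(s,\rho,\phi)$, equals $\rho^2$ times a smooth function that is strictly positive on $I\times[0,\rho_0]\times\mathbb S^1$. Its square root is then $\rho$ times a smooth positive function, and the inverse function theorem does the rest.

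\emph{Step 1: rescaling.} By \eqref{eq:fundamental_approx} with $\ve=\rho$, and by the homogeneity $\delta_\rho E_{a,b}(s,1,\phi)=E_{a,b}(s,\rho,\phi)$, the third component satisfies
\begin{equation}
Z(s,\rho,\phi)=\rho^2\bigl(Z_{a,b}(s,1,\phi)+\rho\,R_Z(s,\rho,\phi)\bigr),
\end{equation}
with $R_Z$ smooth up to $\rho=0$. I would check this by noting that $R_E(s,1,\rho,\phi)$ is smooth in $(s,\rho,\phi)$, because $R_E$ is smooth on $[0,\infty)^3\times\mathbb S^1$ (\cref{rmk:E_approx}). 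By \cref{l:belle_coords},
\begin{equation}
Z_{a,b}(s,1,\phi)=q_a(s)\cos^2\phi+q_b(s)\sin^2\phi=:g(s,\phi).
\end{equation}

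\emph{Step 2: positivity of $g$ on $I\times\mathbb S^1$.} This is the key point. By \cref{r:identities}, $q_c(s)=s+2\sin(\tfrac s2)p_c(s)$. Evaluating at $s=s_a$ gives $q_a(s_a)=s_a>0$. Since $a>b$, we have $s_b>s_a$, so $p_b(s_a)>0$, and also $\sin(s_a/2)>0$ because $s_a\in(0,2\pi)$. Hence $q_b(s_a)>s_a>0$. By continuity, $q_a,q_b\geq c>0$ on $I$ once $\delta$ is small, so $g\ge c$ on $I\times\mathbb S^1$.

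\emph{Step 3: the map $\Psi_Z$.} Shrinking $\rho_0$ so that $\rho_0\sup|R_Z|\le c/2$, we get $\sqrt Z=\rho\,k(s,\rho,\phi)$ with
\begin{equation}
k:=\sqrt{g+\rho R_Z}
\end{equation}
smooth and at least $\sqrt{c/2}$. Then $\Psi_Z$ is smooth up to $\rho=0$, and its Jacobian has only one nontrivial entry,
\begin{equation}
\partial_\rho(\rho k)=k+\rho\,\partial_\rho k,
\end{equation}
which is positive for $\rho_0$ small. So $\Psi_Z$ is a local diffeomorphism. On each fibre of fixed $(s,\phi)$ the map $\rho\mapsto\rho k$ is strictly increasing, hence $\Psi_Z$ is injective, and therefore it is a diffeomorphism onto its image. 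Since $\Psi_Z(s,0,\phi)=(s,0,\phi)$, compactness of $I\times\mathbb S^1$ gives that the image contains $I\times[0,\ve_0]\times\mathbb S^1$, with $\ve_0=\min\rho_0k(s,\rho_0,\phi)$.

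\emph{Step 4: the inverse.} Solving $\ve=\rho k$ gives $\rho=\ve/k(s,0,\phi)+O(\ve^2)$, and $k(s,0,\phi)=\sqrt{g(s,\phi)}$, which is \eqref{eq:Psi_Z-1}.

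The main obstacle is Step 1, namely making sure the remainder is genuinely $O(\rho^3)$ and smooth up to $\rho=0$. I would handle it directly from the proof of \cref{lem:approximation_exp_new}, tracking that the perturbation of the covector is $\rho G$ with $G$ smooth, and that the $\psi$-coordinate change is polynomial with the correct weights. If this bookkeeping fails, a fallback is to Taylor expand the explicit formula for $Z$ from \cref{lem:smooth_E} in the polar coordinates directly.
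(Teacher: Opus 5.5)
Your proposal is correct and follows the paper's proof: you use the factorization $Z=\rho^2\bigl(q_a\cos^2\phi+q_b\sin^2\phi+\rho R_z\bigr)$ obtained from \eqref{eq:fundamental_approx} with $\ve=\rho$, positivity of $q_a,q_b$ on $I$ near $s_a$, positivity of $\partial_\rho\sqrt Z$ giving an injective immersion, and inversion of the leading-order relation. The differences are cosmetic: you get $q_b(s_a)>0$ from the identity $q_c(s)=s+2\sin(\tfrac s2)p_c(s)$ rather than from the explicit value $s_a+\tfrac{2(a-b)}{1+a^2}$, and you make smoothness of $\sqrt Z$ at $\rho=0$ and the size of $\ve_0$ explicit by writing $\sqrt Z=\rho k$.
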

\begin{proof}
	 We first show that $Z(s,\rho,\phi)$ is non-negative in $I\times [0,\rho_0]\times \mathbb S^1$. Combining \cref{l:belle_coords} with \eqref{eq:fundamental_approx}, applied with $\ve=\rho$, we have 
	\begin{equation}\label{eq:Z_is_pos}
		Z(s,\rho,\phi)=\rho^2(q_a(s)\cos^2\phi+q_b(s)\sin^2\phi+\rho R_z(s,\rho,\phi)),
	\end{equation}
	where $R_z(s,\rho,\phi)$ is a smooth function, including at $\rho=0$. Using the definitions of $q_a,q_b$ given in \cref{l:belle_coords}, a direct evaluation gives 
	\begin{equation}\label{eq:q>0}
		q_a(s_a)=s_a>0,\qquad q_b(s_a)=s_a+\frac{2(a-b)}{1+a^2}>0,
	\end{equation}
	where for the second inequality we have used the assumption $a\geq b$. Thus, for $\delta>0$ small enough, $q_a,q_b$ remain strictly positive on the interval $I$. Consequently, for $\rho_0>0$ small enough, $Z$ is non-negative on $I\times [0,\rho_0]\times \mathbb S^1$. Moreover, combining \eqref{eq:Z_is_pos} and \eqref{eq:q>0}, together with the computation 
    \[
    \partial_\rho\sqrt{Z}\big|_{\rho=0} = \sqrt{q_a(s)\cos^2\phi+q_b(s)\sin^2\phi},
    \]
    we deduce that, for $\rho_0>0$ sufficiently small, $\partial_\rho \sqrt{Z}>0$ on $I\times [0,\rho_0]\times \mathbb S^1$. Therefore, \eqref{eq:Psi_Z} is an injective immersion on a compact manifold and hence a diffeomorphism onto its image. Applying $\Psi_Z^{-1}$ from the right to both sides of \eqref{eq:Z_is_pos} yields 
	\begin{equation}
		\ve^2=\rho^2\circ \Psi_Z^{-1}(s,\ve,\phi)(q_a(s)\cos^2\phi+q_b(s)\sin^2\phi)+O(\rho^3\circ \Psi_Z^{-1}(s,\ve,\phi)), \qquad \text{as }\ve\to0^+.
	\end{equation}
	From the latter equation we deduce that $\rho\circ \Psi_Z^{-1}(s,\ve,\phi)=O(\ve)$, uniformly with respect to $s,\phi$. Therefore, 
	\begin{equation}
			\rho^2\circ \Psi_Z^{-1}(s,\ve,\phi)(q_a(s)\cos^2\phi+q_b(s)\sin^2\phi)=\ve^2+O(\ve^3), \qquad \text{as }\ve\to0^+.
	\end{equation}
Inverting the above equality for $\rho^2\circ \Psi_Z^{-1}(s,\ve,\phi)$ yields \eqref{eq:Psi_Z-1}.
\end{proof}
 Using the previous lemma, we restrict $\mathscr F$ to a fixed level set $\{Z=\ve^2\}$. When restricted, the remaining components $(X,Y,\ell)$ of $\mathscr F$ are of order $\ve$. This justifies the following definition. For every $\ve\in (0,\ve_0]$, we define
	\begin{equation}\label{eq:Fve}
		\begin{aligned}
		F^\ve:[s_a-\delta,s_a+\delta]\times \mathbb S^1\to \mathbb R^2\times \R,\qquad
			F^\ve(s,\phi)= \frac{1}{\ve}(X,Y,\ell)\circ\Psi_Z^{-1}(s,\ve,\phi).
		\end{aligned}
	\end{equation}
The double point locus of $\mathscr F$ may be traced back from the one of the family of maps $\{F^\ve\}_{\ve\in(0,\ve_0]}$.
\begin{lemma}\label{l:union}
	For every $\ve_0>0$ small enough, setting $V:=\Psi_Z^{-1}(I\times(0,\ve_0]\times \mathbb S^1)$, we have 
	\begin{equation}
		\mathrm{Doub}(\mathscr F,V)=\{\Psi_Z^{-1}(s,\ve,\phi)\,:\,(s,\phi)\in\mathrm{Doub}(F^\ve,I\times \mathbb S^1),\,\ve\in(0,\ve_0]\}.
	\end{equation}
\end{lemma}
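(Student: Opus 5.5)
The proof is a double inclusion. The only input is that $\Psi_Z$ is a diffeomorphism from $I\times[0,\rho_0]\times\mathbb S^1$ onto its image, which is \cref{lem:Z}; its last component gives the level $\ve=\sqrt Z$. Take $\ve_0$ small enough that $I\times(0,\ve_0]\times\mathbb S^1$ lies in the image of $\Psi_Z$. Then
\[
V=\Psi_Z^{-1}\bigl(I\times(0,\ve_0]\times\mathbb S^1\bigr)\subset I\times(0,\rho_0]\times\mathbb S^1 .
\]
In the coordinates $(s,\ve,\phi)=\Psi_Z(s,\rho,\phi)$ the point has height $Z=\ve^2$. Moreover, $\mathscr F\circ\Psi_Z^{-1}(s,\ve,\phi)$ has components $\bigl(\ve F^\ve(s,\phi)_{1,2},\,\ve^2,\,\ve F^\ve(s,\phi)_3\bigr)$, where $F^\ve_{1,2}$ denote the $(X,Y)$-part and $F^\ve_3$ the $\ell$-part.

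First inclusion ($\subset$). Let $z\in\mathrm{Doub}(\mathscr F,V)$, with $z'\in V$, $z'\ne z$ and $\mathscr F(z)=\mathscr F(z')$. Write $z=\Psi_Z^{-1}(s,\ve,\phi)$ and $z'=\Psi_Z^{-1}(s',\ve',\phi')$. The $Z$-components coincide, so $\ve^2=\ve'^2$, and since $\ve,\ve'>0$ this gives $\ve=\ve'$. Dividing the remaining components $(X,Y,\ell)$ by $\ve$ yields $F^\ve(s,\phi)=F^\ve(s',\phi')$. Because $\Psi_Z^{-1}$ is injective, $z\ne z'$ forces $(s,\phi)\ne(s',\phi')$. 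Hence $(s,\phi)\in\mathrm{Doub}(F^\ve,I\times\mathbb S^1)$.

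Second inclusion ($\supset$). Let $(s,\phi)\ne(s',\phi')$ with $F^\ve(s,\phi)=F^\ve(s',\phi')$ for some $\ve\in(0,\ve_0]$. Multiplying by $\ve$, the points $\Psi_Z^{-1}(s,\ve,\phi)$ and $\Psi_Z^{-1}(s',\ve,\phi')$ have the same $(X,Y,\ell)$. They also share $Z=\ve^2$, by the definition of $\Psi_Z$. So they have the same image under $\mathscr F$. Both lie in $V$, and they are distinct because $\Psi_Z^{-1}$ is injective.

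Potential obstacle. The only point needing care is the bookkeeping that $\Psi_Z^{-1}$ is well defined and injective on the whole set $I\times(0,\ve_0]\times\mathbb S^1$. This is exactly what \cref{lem:Z} provides, so no real difficulty arises. Here "$\mathrm{Doub}(\mathscr F,V)$" is taken with both source points in $V$, consistent with the Notation.
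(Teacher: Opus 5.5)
Your proof is correct and follows the same argument as the paper. On $V$, the $Z$-component of $\mathscr F\circ\Psi_Z^{-1}$ is the constant $\ve^2$ and the remaining components are $\ve F^\ve$, while injectivity of $\Psi_Z^{-1}$ (\cref{lem:Z}) turns distinctness of source points into distinctness of $(s,\phi)$. You also spell out a step the paper leaves implicit: equal $\mathscr F$-images force $\ve=\ve'$ through the $Z$-component, whereas the paper only compares points on the same level.
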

\begin{proof}
Fix $\ve>0$. The third component of
$\mathscr F(\Psi_Z^{-1}(s,\ve,\phi))$ is $\ve^2$. Deleting from $\mathscr F\circ \Psi_Z^{-1}$ this constant component and dividing the remaining components by $\ve$ gives $F^\ve$. Consequently,
\begin{equation}
\begin{aligned}
\mathscr F\bigl(\Psi_Z^{-1}(s,\ve,\phi)\bigr)
 =\mathscr F\bigl(\Psi_Z^{-1}(s',\ve,\phi')\bigr)\iff 
F^\ve(s,\phi)=F^\ve(s',\phi').
\end{aligned}
\end{equation}
Since $\Psi_Z^{-1}$ is injective by \cref{lem:Z}, the two points $\Psi_Z^{-1}(s,\ve,\phi)$ and $\Psi_Z^{-1}(s',\ve,\phi')$ are distinct precisely when $(s,\phi)\ne(s',\phi')$. This proves the claimed description of $\mathrm{Doub}(\mathscr F,V)$.
\end{proof}
\subsection{The limiting front and its Legendrian lift}

We show that the maps $F^\ve$ are actually Legendrian fronts, including at $\ve=0$. In the following, we have denoted 
\begin{equation}
    \mathcal N: I\times\mathbb S^1\to \R^+,\qquad \mathcal N(s,\phi)=\sqrt{q_a(s)\cos^2\phi+q_b(s)\sin^2\phi}.
\end{equation}
\begin{lemma}\label{lem:front_family}
There exists a sufficiently small $\ve_0>0$ such that the maps $F^\ve$ form a smooth family of
two-dimensional Legendrian fronts for $0\le\ve\le\ve_0$, with
\begin{equation}\label{eq:front_limit}
F^0(s,\phi)=\frac1{\mathcal N(s,\phi)}
\left(p_a(s)\cos\phi,p_b(s)\sin\phi,\frac{s}{2}\right), \qquad \forall\,(s,\phi)\in I\times\mathbb S^1.
\end{equation}
Their Legendrian lifts may be chosen smoothly in $(s,\phi,\ve)$,
and the limiting lift is
\begin{equation}\label{eq:front_lift_limit}
\widetilde F^0(s,\phi)=
\bigl(F^0(s,\phi),-\sin(\tfrac{s}2)\cos\phi,-\sin(\tfrac{s}2)\sin\phi\bigr).
\end{equation}
\end{lemma}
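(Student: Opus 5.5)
We first establish the joint smoothness of the family $F^\ve$ in $(s,\phi,\ve)$ up to $\ve=0$, and then use the contact identity \eqref{eq:ci2} restricted to the level sets $\{Z=\ve^2\}$ to produce the Legendrian lifts.

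\emph{Step 1: smooth extension of $\rho\circ\Psi_Z^{-1}$.} By \eqref{eq:Z_is_pos} we have $\sqrt{Z}=\rho\,\mathcal M(s,\rho,\phi)$, where
$\mathcal M=\sqrt{q_a\cos^2\phi+q_b\sin^2\phi+\rho R_z}$ is smooth and positive near $\rho=0$ by \eqref{eq:q>0}. The map $(s,\rho,\phi)\mapsto(s,\rho\mathcal M,\phi)$ is therefore smooth up to $\rho=0$ and has invertible differential there. By the inverse function theorem (as in \cref{lem:Z}), its inverse has the form
$\rho=\ve\,g(s,\ve,\phi)$ with $g$ smooth up to $\ve=0$ and $g(s,0,\phi)=1/\mathcal N(s,\phi)$.

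\emph{Step 2: smoothness of $F^\ve$ and the limit \eqref{eq:front_limit}.} Using \eqref{eq:fundamental_approx} with $\ve=\rho$ together with \eqref{eq:approx_length}, we can write $(X,Y,\ell)(s,\rho,\phi)=\rho\,(p_a(s)\cos\phi,p_b(s)\sin\phi,s/2)+\rho^2\mathcal R(s,\rho,\phi)$ with $\mathcal R$ smooth. Substituting $\rho=\ve g$ and dividing by $\ve$ gives
$F^\ve=g\,(p_a\cos\phi,p_b\sin\phi,s/2)+\ve g^2\mathcal R$. This expression is smooth in $(s,\phi,\ve)$, and setting $\ve=0$ yields \eqref{eq:front_limit}.

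\emph{Step 3: the lifts.} On $\{Z=\ve^2\}$ the form $dZ$ pulls back to zero, so \eqref{eq:ci2} reduces to $d\ell=P_X\,dX+P_Y\,dY$. The target rescaling by $1/\ve$ preserves this identity, since both sides scale the same way. Hence $F^\ve$ is the front of $\widetilde F^\ve:=(F^\ve,P_X,P_Y)\circ\Psi_Z^{-1}(\cdot,\ve,\cdot)$ with respect to the contact form $d\ell-P_X\,dX-P_Y\,dY$. It remains to show that $P_X,P_Y$ are smooth up to $\rho=0$, to compute their limit, and to check that $\widetilde F^\ve$ is an immersion.

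\emph{Computing $P_X,P_Y$.} The target chart is $\psi$ from \eqref{eq:psi}, so $X=ax+y$, $Y=by-x$, $Z=8(z-\tfrac14(ax^2+by^2))$. Write the normal lift along the geodesic in the coframe $(\nu_0,\nu_1,\nu_2)$. By \eqref{eq:lege} the covector $\lambda(s,q)$ is the one appearing in \eqref{eq:ci2}, and $h_0$ is constant along the flow while $(h_1,h_2)$ rotate by angle $s$. Express $\lambda=h_1dx+h_2dy+h_0\nu_0$ in $dX,dY,dZ$, using $dz=\tfrac18dZ+\tfrac12(ax\,dx+by\,dy)$ and $\nu_0=dz+\tfrac12(y\,dx-x\,dy)$. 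The $dZ$ coefficient is $h_0/8$, which is singular, but it drops out. The $dx,dy$ coefficients are $h_1+\tfrac{h_0}2(ax+y)$ and $h_2+\tfrac{h_0}{2}(by-x)$, which involve $h_0$ times quantities of order $\rho$ evaluated along the geodesic, and so remain bounded.

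We then invert $A$ to pass to $dX,dY$ and insert the explicit homogeneous formulas of \cref{l:belle_coords}, together with $h_0=2/\rho$ (up to $O(1)$ corrections by \cref{rmk:diver_h0}) and $(h_1,h_2)=-\rho^{-1}\mathfrak R(\cdot)w$. This should yield
$(P_X,P_Y)=-\sin(\tfrac s2)(\cos\phi,\sin\phi)+O(\rho)$, with a smooth remainder obtained through the blow-up argument of \cref{lem:approximation_exp_new}, which gives \eqref{eq:front_lift_limit}.

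\emph{Main obstacle.} The hard step is this last computation: checking that the $h_0$-divergent pieces cancel exactly, so that $P_X,P_Y$ extend smoothly to $\rho=0$, and that the rotation angle produces exactly $-\sin(s/2)$. If the direct computation becomes messy, a fallback is available. Verify \eqref{eq:front_lift_limit} directly from the identity $d\ell^0=P_X\,dX^0+P_Y\,dY^0$ for $F^0$; a front lift is determined by this identity wherever the front map has rank $\ge1$. Then extend to small $\ve$ by continuity.

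Finally, $\widetilde F^0$ is an immersion because its last two components $(-\sin(\tfrac s2)\cos\phi,-\sin(\tfrac s2)\sin\phi)$, together with $s$ recovered from $F^0$, give a local embedding, since $\sin(s_a/2)\neq0$ on $I$. Immersivity is an open condition, so it persists for all small $\ve$.
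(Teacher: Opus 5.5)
Your Steps 1--3 follow the paper's route: expand $(X,Y,\ell)$ to first order in $\rho$, substitute $\rho=\varepsilon g$, and build the lift from $d\ell=P_X\,dX+P_Y\,dY$ on $\{Z=\varepsilon^2\}$. Your Step 1 factorization $\rho=\varepsilon g(s,\varepsilon,\phi)$ is a cleaner way to state what the paper writes as $\varepsilon/\mathcal N+o(\varepsilon)$.

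The momentum computation you left as ``should yield'' goes through, and no delicate cancellation is involved. Since $X,Y=O(\rho)$ and $h_0=2/\rho+O(1)$, the $dx,dy$ coefficients are
$-\mathfrak R(-\tfrac s2)(\cos\phi,\sin\phi)^T+(p_a(s)\cos\phi,p_b(s)\sin\phi)^T+O(\rho)=\sin(\tfrac s2)\,(\sin\phi-a\cos\phi,\,-\cos\phi-b\sin\phi)^T+O(\rho)$.
Applying $\frac{1}{1+ab}\begin{pmatrix}b&1\\-1&a\end{pmatrix}$ gives exactly $-\sin(\tfrac s2)(\cos\phi,\sin\phi)+O(\rho)$, with a smooth remainder.

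Your fallback would not replace this computation. The contact identity determines $(P_X,P_Y)$ only where $d(x,y)$ has rank $2$, not rank $\ge 1$. It also says nothing about smoothness of the lifts in $\varepsilon$ up to $\varepsilon=0$.

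The genuine gap is the immersion step. The momentum map $(s,\phi)\mapsto-\sin(\tfrac s2)(\cos\phi,\sin\phi)$ has Jacobian $\tfrac12\sin(\tfrac s2)\cos(\tfrac s2)$. The condition $\sin(\tfrac s2)\neq0$ that you invoke only gives $\partial_\phi(P^0_X,P^0_Y)\neq0$. At $s=\pi$ one has $\partial_s(P^0_X,P^0_Y)=0$. This point does lie in $I$ when $a$ is near $0$: for instance $a=0>b$ is geometrically stable and gives $s_a=\pi$. There, immersivity of $\widetilde F^0$ is equivalent to $\partial_sF^0(\pi,\phi)\neq0$, which is exactly your unproved phrase ``$s$ recovered from $F^0$''; that phrase carries the whole argument at this point.

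The missing check is short. Since $q_c'(s)=1+\cos s-c\sin s=2\cos(\tfrac s2)p_c(s)$ vanishes at $s=\pi$, we get $\partial_s\mathcal N(\pi,\phi)=0$. Hence $\partial_s\bigl(\tfrac{s}{2\mathcal N}\bigr)\big|_{s=\pi}=\tfrac{1}{2\mathcal N(\pi,\phi)}>0$, and this is how the paper closes the argument. Alternatively, one can verify $\partial_s\bigl(\tfrac{s}{2\mathcal N}\bigr)>0$ on all of $\{s_a\}\times\mathbb S^1$. That would make your ``angle from the momenta, $s$ from the length component'' argument rigorous everywhere, but it also needs a computation.

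With this in place, openness of immersions and compactness of $I\times\mathbb S^1$ give the conclusion for small $\varepsilon$, as you say.
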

\begin{proof}
    Equation \eqref{eq:fundamental_approx} with $\ve=\rho$ and \eqref{eq:approx_length} imply that 
    \begin{equation}\label{eq:appross}
        (X(s,\rho,\phi),Y(s,\rho,\phi),\ell(s,\rho,\phi))=\rho(p_a(s)\cos\phi,p_b(s)\sin\phi,s/2)+\rho^2R(s,\rho,\phi),
    \end{equation}
    where $R:I\times\R^+\times\mathbb S^1\to \R^3$ is smooth. \cref{lem:Z}, and in particular \eqref{eq:Psi_Z-1}, shows that, in coordinates $(s,\ve,\phi)$, it holds
    \begin{equation}\label{eq:rho_ve}
        \rho(s,\ve,\phi)=\frac{\ve}{\sqrt{q_a(s)\cos^2\phi+q_b(s)\sin^2\phi}}+o(\ve)= \frac{\ve}{\mathcal N(s,\phi)}+o(\ve),\qquad\text{as }\ve\to0^+.
    \end{equation}
    Substituting the latter equation into \eqref{eq:appross} we deduce that $F^\ve$ extends smoothly to $\ve=0$,  indeed:
    \[
    F^\ve(s,\phi) = \frac{1}{\mathcal N(s,\phi)}(p_a(s)\cos\phi,p_b(s)\sin\phi,s/2)+O(\ve),\qquad\text{as }\ve\to0^+.
    \]
    Moreover, from the latter equation, we see that $F^0$ is given by \eqref{eq:front_limit}. We now show that these maps are fronts by computing their Legendrian lifts. By \cref{rmk:leges}, the extended exponential map $\mathscr F=(X,Y,Z,\ell)$ satisfies
    \begin{equation}\label{eq:leg12}
        d\ell-P_XdX-P_YdY-P_ZdZ=0,
    \end{equation}
    where $(P_X,P_Y,P_Z)$ are the momentum components of the normal lift $$\lambda(s,\rho,\phi)=e^{(s/h_0(q))\vec H}(\lambda^\perp(q)),\qquad q=\pi(s,\rho,\phi),$$i.e., the evolution of $\lambda^\perp$ along the Hamiltonian flow, in the coordinates of \cref{ss:change}. We now explicitly compute these momentum components. Let $(h_1,h_2,h_0)$ be the components of $\lambda^\perp$ with respect to the coframe \eqref{eq:the_coframe} and recall that the evolution of the momentum of $\lambda^\perp$, with respect to the coframe \eqref{eq:the_coframe}, has first two components equal to $\mathfrak R(-s)(h_1,h_2)^T$, while its third component is constantly equal to $h_0$. In the coordinates of \cref{ss:change}, and in particular using \eqref{eq:psi}, we have $(X,Y,Z)=(ax+y,by-x,8(z-(ax^2+by^2)/4))$, and thus
    \begin{equation}
        \nu_0=\frac18\,dZ+\frac12(X\nu_1+Y\nu_2),
        \qquad
        \begin{pmatrix}\nu_1\\\nu_2\end{pmatrix}
        = \begin{pmatrix}a&1\\-1&b\end{pmatrix}^{-1}         \begin{pmatrix}dX\\dY\end{pmatrix}.
    \end{equation}
    A standard computation then shows that 
    \begin{equation}\label{eq:adapted_momenta}
        \begin{pmatrix}P_X\\P_Y\end{pmatrix}
        =
        \frac{1}{1+ab}
        \begin{pmatrix}b&1\\-1&a\end{pmatrix}
        \left[
            \mathfrak R(-s)
            \begin{pmatrix}h_1\\h_2\end{pmatrix}
            +
            \frac{h_0}{2}
            \begin{pmatrix}X\\Y\end{pmatrix}
        \right],
        \qquad
        P_Z=\frac{h_0}{8}.
    \end{equation}
    Thanks to \cref{rmk:diver_h0}, we may express the components $(h_1,h_2,h_0)$ of the initial covector $\lambda^\perp$ in terms of the source coordinates $(s,\rho,\phi)$, finding
    \begin{equation}
        h_0=\frac{2}{\rho}+H_0(s,\rho,\phi),
        \qquad
        \begin{pmatrix}h_1\\h_2\end{pmatrix}
        =
        -\mathfrak R(s/2)
        \begin{pmatrix}\cos\phi\\\sin\phi\end{pmatrix}
        +\rho \begin{pmatrix}
            H_1(s,\rho,\phi)\\ 
            H_2(s,\rho,\phi)
        \end{pmatrix},
    \end{equation}
     where the remainders $H_0,H_1,H_2:I\times \R^+\times \mathbb S^1\to \R$ are smooth. Substituting the latter expansions and \eqref{eq:appross} into \eqref{eq:adapted_momenta}, we find that the components of the normal lift $\lambda$ satisfy
    \begin{equation}\label{eq:boundary_momenta}
        \begin{pmatrix}P_X\\P_Y\end{pmatrix}
        =-\sin(s/2)
        \begin{pmatrix}\cos\phi\\\sin\phi\end{pmatrix}+\rho P(s,\rho,\phi),
    \end{equation}
    where $P:I\times\R^+\times \mathbb S^1\to \R^2$ is smooth. We now pass to the source coordinates $(s,\ve,\phi)$ given by \cref{lem:Z}. For every $(s,\ve,\phi)\in I\times[0,\ve_0]\times\mathbb S^1$, where $\ve_0>0$ is identified by \cref{lem:Z}, we define
    \begin{equation}
        \begin{aligned}
            \iota_\ve(s,\phi):=\Psi_Z^{-1}(s,\varepsilon,\phi)
            =\bigl(s,\rho(s,\varepsilon,\phi),\phi\bigr),
        \end{aligned}
    \end{equation}
    where $\rho(s,\ve,\phi)$ is given in \eqref{eq:rho_ve}. Recall that, by construction, it holds $Z\circ\iota_\varepsilon=\varepsilon^2$ and the family of maps $F^\ve$ satisfies 
    \begin{equation}
        F^\ve(s,\phi)=
        \left(
            \frac{X}{\varepsilon},
            \frac{Y}{\varepsilon},
            \frac{\ell}{\varepsilon}
        \right)\circ\iota_\varepsilon(s,\phi),\qquad\forall\, (s,\phi)\in I\times \mathbb S^1,\ \ve\in (0,\ve_0].
    \end{equation}
    Then, define the family  of lifts $\widetilde F^\ve$ by
    \begin{equation}
        \widetilde F^\varepsilon(s,\phi)
        =
        \left(
            F^\ve,
            P_{X}\circ\iota_\varepsilon,P_{Y}\circ\iota_\varepsilon
        \right)(s,\phi),\qquad\forall\, (s,\phi)\in I\times \mathbb S^1,\ \ve\in (0,\ve_0].
    \end{equation}
    Substituting \eqref{eq:rho_ve} into \eqref{eq:boundary_momenta} shows that the family of lifts also extends
    smoothly to $\varepsilon=0$, with
    \begin{equation}
    \label{eq:widetilde_F0}
        \widetilde F^0(s,\phi)
        :=
        \left(
            F^0(s,\phi),
            -\sin(s/2)\cos\phi,
            -\sin(s/2)\sin\phi
        \right),\qquad\forall\, (s,\phi)\in I\times \mathbb S^1.
    \end{equation}
    We now show that $\widetilde F^\ve$ is a Legendrian lift of $F^\ve$, for every $\ve\in[0,\ve_0]$. We fix $\varepsilon>0$ and we compute: 
    \begin{equation}
        \begin{aligned}
            (\widetilde F^\varepsilon)^*
            \bigl(d\ell-P_{X}dX-P_{Y}dY\bigr)
            &=
            \frac1\varepsilon\,
            \iota_\varepsilon^*
            \bigl(d\ell(s,\rho,\phi)-P_{X}(s,\rho,\phi)dX(s,\rho,\phi)-P_{Y}(s,\rho,\phi)\,dY(s,\rho,\phi)\bigr)\\
            &=
            \frac1\varepsilon\,
            \iota_\varepsilon^*(P_{Z}(s,\rho,\phi)\,dZ(s,\rho,\phi))
            =0,
        \end{aligned}
    \end{equation}
    where we have used \eqref{eq:leg12} and the fact that $\iota_\varepsilon^*dZ(s,\rho,\phi)=d(Z\circ \iota_\ve)(s,\phi)=d(\varepsilon^2)=0$ since $\ve$ is fixed. By smoothness of $\widetilde F^\ve$ at $\ve=0$, the same contact identity holds at $\varepsilon=0$. Finally, to conclude the proof, we need to show that $\widetilde F^\ve$ is an immersion for all sufficiently small $\ve$. Since $\{\widetilde F^\ve\}_{\ve\in[0,\ve_0]}$ is a smooth family of maps defined on a compact domain, it is sufficient to show that $\widetilde F^0$ is an immersion. Write $P_{X}^0,P_{Y}^0$ for the momentum components of $\widetilde F^0$, i.e.\ the last two components in \eqref{eq:widetilde_F0}. Then, we have
    \begin{equation}
    \label{eq:det_momenta}
        \det\left(
        \frac{\partial(P_{X}^0,P_{Y}^0)}
             {\partial(s,\phi)}\right)
        =
        \frac12\sin(s/2)\cos(s/2),\qquad\forall\, (s,\phi)\in I\times \mathbb S^1.
    \end{equation}
    After possibly shrinking $\delta$, the interval $[s_a-\delta,s_a+\delta]$ lies in $(0,2\pi)$, so the determinant in \eqref{eq:det_momenta} is nonzero except possibly at $s=\pi$.
    At $s=\pi$, an explicit computation shows that, for every $\phi\in \mathbb S^1$, 
    \[
        \partial_s\begin{pmatrix}
            P_X^0\\ P_Y^0
        \end{pmatrix}\bigg|_{s=\pi} = 0,\qquad \partial_\phi\begin{pmatrix}
            P_X^0\\ P_Y^0
        \end{pmatrix}\bigg|_{s=\pi} = \begin{pmatrix}
            \sin\phi\\ -\cos\phi
        \end{pmatrix}.
    \]
    On the other hand, an elementary computation shows that $q_c'(\pi)=0$, which implies
    $\partial_s\mathcal N(\pi,\phi)=0$, and hence
    \begin{equation}
        \left.
        \partial_s\left(\frac{s}{2\mathcal N(s,\phi)}\right)
        \right|_{s=\pi}
        =
        \frac1{2\mathcal N(\pi,\phi)}>0.
    \end{equation}
    This implies that $\widetilde F^0$ is an immersion and concludes the proof.
\end{proof}
The crucial fact which allows us to compute the cut time for a general surface, is that the limit front $F^0$ is stable in the class of Legendrian fronts. 

\begin{theorem}\label{thm:F0_stab}
	Let $N=I\times\mathbb S^1$, let $C=\{s_a\}\times\mathbb S^1$ and let $p_\pm=(s_a,\pm\tfrac{\pi}{2})$. Then, it holds
    \begin{equation}\label{eq:doub_F0_statement}
        \mathrm{Doub}(F^0,N)=C\setminus\{p_+,p_-\},
    \end{equation}
    where the points $p_{\pm}$ are swallowtails. Moreover, every point in $F^0(C\setminus\{p_+,p_-\})$ has exactly two distinct preimages in $N$, which are regular points whose image tangent spaces are transverse, i.e., for every $q\in F^0(C\setminus \{p_+,p_-\})$, there exist $z_1,z_2\in C$ such that
    \begin{equation}
         (F^0)^{-1}(q)=\{z_1,z_2\},\qquad\text{and}\qquad d_{z_1}F^0\bigl(T_{z_1}N\bigr)+ d_{z_2}F^0\bigl(T_{z_2}N\bigr)=T_q\mathbb R^3. 
    \end{equation}
\end{theorem}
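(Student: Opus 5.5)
The plan is to work entirely with the explicit formula \eqref{eq:front_limit} for $F^0$. It is convenient to write $F^0=(\mathcal X,\mathcal Y,\mathcal L)$, with $\mathcal X=p_a(s)\cos\phi/\mathcal N$, $\mathcal Y=p_b(s)\sin\phi/\mathcal N$ and $\mathcal L=s/(2\mathcal N)$.

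\textbf{Step 1: double points.} First I identify the double points. Since $F^0$ is the rescaled limit of $\mathscr F_{a,b}$ restricted to $\{Z=1\}$, I would use \cref{p:seipuntotre}. Suppose $F^0(s,\phi)=F^0(t,\psi)$. Then the points $(s,\phi/\!\!\cdot)$ with radii $1/\mathcal N$ give equal images under $\mathscr F_{a,b}$, because the $Z$-components are both equal to $1$. Hence $s=t$. If $s\neq s_a$ and $s<s_b$, then $P(s)$ is invertible, which forces equal points; the case $s>s_a$ with $p_a(s)<0$ is handled the same way, since $P(s)$ is still invertible for $\delta$ small because $s_b>s_a$ (using $a>b$). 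So $s=s_a$.

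At $s=s_a$ we have $p_a=0$. The image is then $(0,p_b(s_a)\sin\phi/\mathcal N,s_a/(2\mathcal N))$ with $\mathcal N^2=q_a\cos^2\phi+q_b\sin^2\phi$. The pair $(\mathcal Y,\mathcal L)$ determines $\sin\phi$ and $\mathcal N$, hence $\cos^2\phi$. Therefore $\phi$ and $\pi-\phi$ are the only preimages, and they coincide exactly when $\phi=\pm\pi/2$. This proves \eqref{eq:doub_F0_statement} and gives exactly two preimages $z_{1,2}=(s_a,\phi),(s_a,\pi-\phi)$.

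\textbf{Step 2: regularity and transversality.} Next I compute $dF^0$ at $(s_a,\phi)$. Since $p_a(s_a)=0$, the derivative is $\partial_\phi F^0=(0,\ast,\ast)$. Its $s$-derivative has $\mathcal X$-component $p_a'(s_a)\cos\phi/\mathcal N\neq0$ for $\cos\phi\neq0$, because $s_a$ is a simple root. This shows that regular points are exactly those with $\cos\phi\ne0$. The two tangent planes differ by the reflection $\phi\mapsto\pi-\phi$, which acts as $\mathcal X\mapsto-\mathcal X$ on the image. Transversality amounts to the normals (equivalently, the Legendrian momenta $(-\sin\tfrac{s_a}2\cos\phi,-\sin\tfrac{s_a}2\sin\phi)$ from \eqref{eq:front_lift_limit}, together with the $d\mathcal L$ coefficient) being non-parallel. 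These momenta differ in the $P_X$ sign, and $\sin(s_a/2)\cos\phi\neq0$, so the tangent planes $d\mathcal L=P_Xd\mathcal X+P_Yd\mathcal Y$ are distinct. Distinct planes in $\R^3$ are transverse.

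\textbf{Step 3: swallowtails.} The main obstacle is showing that $p_\pm$ are swallowtails. At $\phi=\pm\pi/2$ both $\partial_s F^0$ and $\partial_\phi F^0$ need checking. I would expand the generating family locally in $(\sigma,\eta)=(s-s_a,\phi\mp\pi/2)$, using the symmetry $\eta\mapsto-\eta$, which corresponds to $\mathcal X\mapsto-\mathcal X$. From this I would read off that the critical set is a smooth curve through $p_\pm$ tangent to the kernel, and that the kernel field has contact of order exactly $2$ with the critical curve. By the Arnold criterion for Legendrian fronts (\cref{app:Legendrian}), this is a swallowtail ($A_3$) point. Concretely, I would verify that the lift \eqref{eq:front_lift_limit} is an immersion, and that the function $\det dF^0$ (the caustic equation) has nondegenerate differential at $p_\pm$ with the appropriate third-order derivative along the kernel nonzero. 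The nonvanishing should come from $q_b(s_a)-q_a(s_a)=2(a-b)/(1+a^2)\ne0$, i.e., from geometric stability. Expanding $\mathcal N$ to second order in $\eta$ and $p_a$ to first order in $\sigma$ should yield the normal form $(\sigma\eta+c\eta^3,\ \ldots)$, and identifying $c\neq0$ with $a\neq b$ is where I expect the computation to be delicate.
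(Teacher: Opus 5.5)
Your Step~1 and the transversality part of Step~2 follow the paper's argument. Step~1 reduces to \cref{p:seipuntotre} via \eqref{eq:F0_scaling_relation}, and then $p_a(s_a)=0$, $p_b(s_a)\neq0$ force $\psi=\pi-\phi$. The transversality part reads the normals off the Legendrian lift. The regularity claim in Step~2, however, does not follow from what you wrote. A nonzero $\mathcal X$-component of $\partial_sF^0$ together with $\partial_\phi F^0=(0,\ast,\ast)$ gives rank two only if $\partial_\phi F^0\neq0$, and you never check this. This is exactly where $a\neq b$ is needed. For $a=b$ your reasoning goes through unchanged, yet then $p_a=p_b$, $\mathcal N$ does not depend on $\phi$, and $F^0$ collapses all of $C$ to a point. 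The repair is the computation $\partial_\phi(\sin\phi/\mathcal N)=q_a\cos\phi/\mathcal N^3$, which gives $\partial_\phi\mathcal Y(s_a,\phi)=p_b(s_a)\,s_a\cos\phi/\mathcal N^3\neq0$ whenever $\cos\phi\neq0$.

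The genuine gap is Step~3. The swallowtail property of $p_\pm$ is the substantive part of the theorem, and you only describe what you expect to find. Your guessed normal form is also not visible in these coordinates. Since $\mathcal X=p_a(s)\cos\phi/\mathcal N$ is divisible by $p_a(s)$, it has no pure $(\phi\mp\tfrac\pi2)^3$ term; a cubic can only appear after straightening the singular curve, which you have not located.

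What has to be done is to verify the hypotheses of \cref{lem:crit_recon} explicitly. The paper proceeds in four steps.
\begin{enumerate}
\item The contact identity reduces criticality to the vanishing of $W=\det\partial(\mathcal X,\mathcal Y)/\partial(s,\phi)$. The scaling relation \eqref{eq:F0_scaling_relation} together with \cref{l:det_E_ab} then gives the closed form $W=\Lambda/\mathcal N^4$, where $\Lambda=p_bm_a\cos^2\phi+p_am_b\sin^2\phi$ and $m_c=sp_c'-p_c$.
\item At $p_\pm$ one has $\partial_s\Lambda=p_a'(s_a)m_b(s_a)\neq0$, which is non-degeneracy and uses $m_b'=-\tfrac s4p_b<0$. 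One also has $\partial_\phi\Lambda(p_\pm)=0$ and $\partial_\phi^2\Lambda(p_\pm)=-(a-b)s_a$. Hence the singular curve is $\gamma_\pm(t)=(s_a+\kappa t^2+O(t^3),\pm\tfrac\pi2+t)$ with $\kappa=\frac{(a-b)s_a}{2p_a'(s_a)m_b(s_a)}>0$.
\item The null direction along $\gamma_\pm$ coincides with $\ker d(\mathcal Y/\mathcal L)$, namely $(-\kappa t+O(t^2))\partial_s+\partial_\phi$.
\item Therefore $\Delta_\pm(t)=\det(\dot\gamma_\pm,\text{null field})=3\kappa t+O(t^2)$, which vanishes at $t=0$ with nonzero derivative.
\end{enumerate}
The same closed form also gives $W(s_a,\phi)=-\tfrac{(a-b)s_a}{2}\cos^2\phi/\mathcal N^4$, which is the regularity you needed in Step~2. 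Geometric stability enters through $\kappa$, i.e.\ through $p_b(s_a)=(a-b)/\sqrt{1+a^2}$; your quantity $q_b(s_a)-q_a(s_a)$ equals $2\sin(s_a/2)p_b(s_a)$, so your intuition is right. But your proposal establishes none of the following: non-degeneracy of $p_\pm$, the shape of the singular curve, the null field, or the order of vanishing of $\Delta_\pm$.
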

\begin{proof}
    We begin by determining the type of critical points of $F^0$, proving in particular that the points $p_\pm=(s_a,\pm\tfrac{\pi}2)$ are swallowtails. Throughout the proof, we fix the following notation 
    \begin{equation}\label{eq:contact_condition_F0}
       F^0=(x,y,L),\qquad  \mathscr F_{a,b}=(E_{a,b},\ell_{a,b}).
    \end{equation}
    First, we note that, by \cref{lem:front_family}, $F^0$ is a Legendrian front. In particular, the momentum components of its lift, computed in \eqref{eq:front_lift_limit}, and the corresponding contact identity are
	\begin{equation}
    \label{eq:real_contact_condition_F0}
		(P_X^0,P_Y^0)=-\sin(s/2)(\cos\phi,\sin\phi),
		\qquad
		dL=P_X^0\,dx+P_Y^0\,dy.
	\end{equation}
    It follows that $\operatorname{rank}(d_{(s,\phi)}F^0)=\operatorname{rank}(d_{(s,\phi)}(x,y))$, and thus the critical points of $F^0$ are given by 
    \begin{equation}
    \label{eq:crit_F0}
        \mathrm{Crit}(F^0)=\left\{(s,\phi)\in I\times\mathbb S^1\,:\, \det\left(\frac{\partial(x,y)}{\partial(s,\phi)}\right)=0\right\}.
    \end{equation}
    To compute such determinant, note that, by \cref{l:belle_coords} and \eqref{eq:front_limit}, in coordinates $(s,\rho,\phi)$, we have the equality 
	\begin{equation}\label{eq:F0_scaling_relation}
		\mathscr F_{a,b}\left(s,\frac{\rho}{\mathcal N(s,\phi)},\phi\right)
		=\bigl(\rho x(s,\phi),\rho y(s,\phi),\rho^2,\rho L(s,\phi)\bigr),
		\qquad \forall\,\rho>0,\ (s,\phi)\in I\times\mathbb S^1.
	\end{equation}
	In particular, defining $\mathcal C(s,\rho,\phi):=(s,\rho/\mathcal N(s,\phi),\phi)$, from the above equation we obtain
    \begin{equation}\label{eq:pre_fact}
        \left.\det\left(\frac{\partial (E_{a,b}\circ \mathcal C)\left(s,\rho,\phi\right)}{\partial(s,\rho,\phi)}\right)\right|_{\rho=1}=\det\begin{pmatrix}
			\partial_sx(s,\phi) & x(s,\phi) & \partial_\phi x(s,\phi)\\
			\partial_sy(s,\phi) & y(s,\phi) & \partial_\phi y(s,\phi)\\
			0   & 2 & 0
		\end{pmatrix}=-2\det\left(\frac{\partial(x,y)}{\partial(s,\phi)}\right).
    \end{equation}
    On the other hand, the Jacobian determinant of the coordinate change $\mathcal C(s,\rho,\phi)$ is $1/\mathcal N(s,\phi)$. Therefore, using the chain rule to factorize the left hand side of \eqref{eq:pre_fact} we deduce 
    \begin{equation}\label{eq:det_xy_1}
        \det\left(\frac{\partial(x,y)}{\partial(s,\phi)}\right)=-\frac{1}{2\mathcal N(s,\phi)}\left.\det\left(\frac{\partial E_{a,b}\left(s,\rho,\phi\right)}{\partial(s,\rho,\phi)}\right)\right|_{\rho=\tfrac{1}{\mathcal N(s,\phi)}}.
    \end{equation}
    The Jacobian determinant of $E_{a,b}$ in coordinates $(s,\rho,\phi)$ is given in \cref{l:det_E_ab}:
	\begin{equation}\label{eq:F0_polar_jacobian}
		\det\left(\frac{\partial E_{a,b}(s,\rho,\phi)}{\partial(s,\rho,\phi)}\right)
		=-2\rho^3\Lambda(s,\phi),\qquad \Lambda(s,\phi)=p_b(s)m_a(s)\cos^2\phi+p_a(s)m_b(s)\sin^2\phi,
	\end{equation}
    where $m_c(s)=sp_c'(s)-p_c(s)$, for $c\in \{a,b\}$, cf.\ \cref{l:belle_coords} for the definition of the functions $p_c,q_c$. Combining the previous identities, we finally obtain
    \begin{equation}
    \label{eq:formula_det_dF0}
        \det\left(\frac{\partial(x,y)}{\partial(s,\phi)}\right)=  \frac{\Lambda(s,\phi)}{\mathcal N(s,\phi)^4},
    \end{equation}
    which, according to \eqref{eq:crit_F0}, implies that
	\begin{equation}
		\mathrm{Crit}(F^0)
		=\left\{(s,\phi)\in I\times\mathbb S^1:
		\Lambda(s,\phi)=0\right\}.
	\end{equation}
    We now study the equation $\Lambda(s,\phi)=0$. A direct evaluation gives
    \begin{equation}
    \label{eq:p_c(s_a)_and_some}
    p_a(s_a) = 0,\qquad p_b(s_a) = \frac{a-b}{\sqrt{1+a^2}},\qquad p_a'(s_a)= -\frac{\sqrt{1+a^2}}{2},\qquad p_b'(s_a)= -\frac{1+ab}{2\sqrt{1+a^2}}.
    \end{equation}
    Therefore, we obtain:    
	\begin{equation}\label{eq:Lambda_core}
		\Lambda(s_a,\phi)
		=-\frac{(a-b)s_a}{2}\cos^2\phi=0\qquad\iff\qquad\phi=\pm\frac{\pi}{2},
	\end{equation}
    since $a>b$ by the geometric stability assumption. This shows that $\Lambda(p_\pm)=0$. We want to apply the implicit function theorem at such points. A standard computation gives
    \begin{equation}\label{eq:Lambda_derivatives}
		\begin{aligned}
			\partial_s\Lambda(p_\pm)
			=p_a'(s_a)m_b(s_a)
            ,\qquad 
			\partial_\phi\Lambda(p_\pm)&=0,
			\qquad
			\partial_{\phi}^2\Lambda(p_\pm)=-(a-b)s_a.
		\end{aligned}
	\end{equation}
 We claim that $p_a'(s_a)m_b(s_a)>0$. Indeed, since $a>b$, $p_b(s)>0$ for $s\in [0,s_a]$ (cf.\,\cref{r:identities}), while
	\begin{equation}
		m_b(0)=-1,\qquad
		m_b'(s)=sp_b''(s)=-\frac{s}{4}p_b(s).
	\end{equation}
	Thus, $m_b'(s)<0$ and consequently $m_b(s_a)<0$. The assertion follows as $p_a'(s_a)<0$ by \eqref{eq:p_c(s_a)_and_some}. The implicit function theorem therefore shows that the critical set of $F^0$ near $p_\pm$ is parametrized by smooth curves $\gamma_\pm:[-t_0,t_0]\to I\times\mathbb S^1$, with $t_0>0$, defined by 
	\begin{equation}
		\gamma_\pm(t)
		:=\bigl(\sigma_\pm(t),\pm\pi/2+t\bigr),
		\qquad \sigma_\pm(0)=s_a.
	\end{equation}
    Moreover, by compactness of $\mathbb S^1$ and \eqref{eq:Lambda_core}, up to shrinking $\delta$, we may assume that $\Lambda(p)\neq 0$, for every $p\in N\setminus \mathrm{Im}(\gamma_\pm)$.
	Differentiating the defining identity $\Lambda(\sigma_\pm(t),\pm\pi/2+t)=0$ and using \eqref{eq:Lambda_derivatives}, we obtain
	\begin{equation}
		\sigma_\pm'(0)=0,\qquad
		\sigma_\pm''(0)
		=-\frac{\partial^2_{\phi}\Lambda(p_\pm)}
		{\partial_s\Lambda(p_\pm)}
		=\frac{(a-b)s_a}{p_a'(s_a)m_b(s_a)}.
	\end{equation}
	Consequently, the parametrization of the critical set of $F^0$ on $I\times \mathbb S^1$ is 
	\begin{equation}\label{eq:critical_curves_F0}
		\gamma_\pm(t)
		=\left(s_a+\kappa t^2+O(t^3),\pm\frac{\pi}{2}+t\right),
		\qquad
		\kappa:=\frac{(a-b)s_a}{2p_a'(s_a)m_b(s_a)}>0,
	\end{equation}
    for every $t\in [-t_0,t_0]$. Differentiating \eqref{eq:formula_det_dF0} with respect to $s$, and recalling that $\Lambda(p_\pm)=0$, we obtain 
    \begin{equation}
    \left.
    \partial_s\det\left(\frac{\partial(x,y)}{\partial(s,\phi)}\right)
    \right|_{p_\pm}
    =
    \frac{\partial_s\Lambda(p_\pm)}
         {\mathcal N(p_\pm)^4}
    \neq0.
\end{equation}
Since $\partial_s\Lambda(p_\pm)\neq 0$, $p_\pm$ are non-degenerate singular points in light of \cref{rmk:non_deg_sing}. In particular the kernel of $dF^0$ is necessarily one-dimensional along $\gamma_\pm(t)$ (see \cref{lem:crit_recon}), and thus spanned by a non-vanishing vector field $\eta_\pm(t)$. We now compute these vector fields. Recall that $L(s,\phi)=s/(2\mathcal N(s,\phi))>0$. Then, the quotient $y/L$ and its differential are
	\begin{equation}
		\frac{y}{L}=\frac{2p_b(s)\sin\phi}{s},
		\qquad
		d\left(\frac{y}{L}\right)
		=\frac{2}{s^2}
		\bigl(m_b(s)\sin\phi\,ds
		+sp_b(s)\cos\phi\,d\phi\bigr).
	\end{equation}
	Since $m_b(s_a)\sin(\pm\tfrac{\pi}2)=\pm m_b(s_a)\neq0$, cf.\ \eqref{eq:p_c(s_a)_and_some}, the latter differential is nonzero near
	$p_\pm$, meaning that $\ker d(y/L)$ is one-dimensional along the critical curves. Moreover, it can also be written as a linear combination of $dy$ and
	$dL$, so that
	$\ker dF^0\subset\ker d(y/L)$.
	Since, along the critical curves, both kernels are one-dimensional,
	they must coincide. It follows that 
	\begin{equation}\label{eq:null_direction_F0}
		\eta_\pm(t)
		=\left.\left(-\frac{sp_b(s)}{m_b(s)}\cot\phi\,\partial_s
		+\partial_\phi\right)\right|_{\gamma_\pm(t)}.
	\end{equation}
    Now according to \cref{lem:crit_recon}, in order to understand the type of critical points composing the critical set, it is sufficient to compute the determinant
\begin{equation}
    \Delta_\pm(t):=\det\left(\dot\gamma_\pm(t),\eta_\pm(t)\right).
\end{equation}
Using equation \eqref{eq:critical_curves_F0} and observing that 
	\begin{equation}
		\frac{s_ap_b(s_a)}{m_b(s_a)}=-\kappa,
		\qquad
		\cot(\pm\pi/2+t)=-t+O(t^3),
	\end{equation}
	we may compute a Taylor expansion of $\eta_\pm(t)$ near $t=0$ finding 
	\begin{equation}
		\eta_{\pm}(t)
		=\bigl(-\kappa t+O(t^2)\bigr)\partial_s+\partial_\phi,\qquad \dot\gamma_\pm(t)
		=\bigl(2\kappa t+O(t^2)\bigr)\partial_s+\partial_\phi.
	\end{equation}
Computing the Taylor first-order expansion of $\Delta_\pm(t)$ yields 
\begin{equation}
    \Delta_\pm(t)=3\kappa t+O(t^2).
\end{equation}
Since $\kappa\neq 0$, \cref{lem:crit_recon} implies that the points $\gamma_{\pm}(t)$ are cuspidal edges for $t\neq 0$ and swallowtails for $t=0$. In particular the points $p_\pm$ are swallowtails.

We now prove the assertion on double points. Suppose there are distinct pairs $(s,\phi),(t,\psi)\in I\times\mathbb S^1$ such that $F^0(s,\phi)=F^0(t,\psi)$. Equation \eqref{eq:F0_scaling_relation} implies that 
\[ 
    \mathscr F_{a,b}\left(s,\frac{1}{\mathcal N(s,\phi)},\phi\right)=\mathscr F_{a,b}\left(t,\frac{1}{\mathcal N(t,\psi)},\psi\right)
\]
and then \cref{p:seipuntotre} implies $s=t$. Equality of the length components, i.e., $L(s,\phi)=L(t,\psi)$, then gives
	$\mathcal N(s,\phi)=\mathcal N(s,\psi)$, and equality of the remaining two
	components $(x(s,\phi),y(s,\phi))=(x(t,\psi),y(t,\psi))$ yields
	\begin{equation}
		p_a(s)(\cos\phi-\cos\psi)=0,
		\qquad
		p_b(s)(\sin\phi-\sin\psi)=0.
	\end{equation}
	Thus, since $\phi\neq\psi$ and, for $s\in I$, $p_b(s)>0$ while $p_a(s)=0$ if and only if $s=s_a$, we must have $s=s_a$ and
	$\psi=\pi-\phi$ modulo $2\pi$. Conversely, such pairs have
	the same image under $F^0$, and they are distinct exactly when
	$\phi\neq\pm\pi/2$. This proves equation \eqref{eq:doub_F0_statement} and that every point in $F^0(\mathrm{Doub}(F^0,N))$ has exactly two preimages, in particular it shows that, for $\phi\neq\psi$, one has
    \begin{equation}
        F^0(s_a,\phi)=F^0(s_a,\psi)\qquad \iff\qquad\psi=\pi-\phi.
    \end{equation}
    Note that these distinct preimages are regular points by the first part of the proof (cf.\ \eqref{eq:Lambda_core}), because they are distinct if and only if $\phi\neq \pm\tfrac{\pi}{2}$. It remains to show 
    \begin{equation}
        d_{(s_a,\phi)}F^0\bigl(T_{(s_a,\phi)}N\bigr)+ d_{(s_a,\pi-\phi)}F^0\bigl(T_{(s_a,\pi-\phi)}N\bigr)=T_q\mathbb R^3,
    \end{equation}
    where $q=F^0(s_a,\phi)=F^0(s_a,\pi-\phi)$ and $\phi\neq\pm\tfrac\pi 2$. We are going to show that the (Euclidean) normals of the two planes are linearly independent. By \eqref{eq:real_contact_condition_F0}, for every $(s,\phi)\in N$, the covector $(\sin(s/2)\cos\phi,\sin(s/2)\sin\phi,1\bigr)\in T^*\R^3$ annihilates the tangent plane $d_{(s,\phi)}F^0(T_{(s,\phi)}N)\subset T\R^3$. Thus the normal direction in $\mathbb R^3$ to the latter plane is spanned by  
    \begin{equation}\label{eq:F0_normal_from_lift}
	\nu_{(s,\phi)}
	=\bigl(\sin(s/2)\cos\phi,\sin(s/2)\sin\phi,1\bigr).
\end{equation}
Evaluating the normal at $(s_a,\phi)$ and $(s_a,\pi-\phi)$ we find 
	\begin{equation}
		\nu_{(s_a,\phi)}
		=\left(\frac{\cos\phi}{\sqrt{1+a^2}},
		\frac{\sin\phi}{\sqrt{1+a^2}},1\right),\qquad
		\nu_{(s_a,\pi-\phi)}
		=\left(-\frac{\cos\phi}{\sqrt{1+a^2}},
		\frac{\sin\phi}{\sqrt{1+a^2}},1\right),
	\end{equation}
    which are linearly independent for every $\phi\neq \pm\tfrac{\pi}{2}$, as claimed.
\end{proof}

We conclude the section by showing how the latter theorem, combined with classical stability theory, allows to compute the double point locus of $F^\ve$, for $\ve$ sufficiently small, as a smooth perturbation of the double locus for the model $F^0$. We then use \cref{l:union} to obtain the self intersections of $\mathscr F$. 
\begin{corollary}\label{cor:F_ve_doub}
There exist $\ve_0>0$ and a smooth
function $f :[0,\ve_0]\times\mathbb S^1\to(s_a-\delta,s_a+\delta)$,
with $f(0,\phi)=s_a$, such that, setting
$V=\Psi_Z^{-1}(I\times(0,\ve_0]\times\mathbb S^1)$, we have
\begin{equation}\label{eq:doubF}
\overline{\mathrm{Doub}(\mathscr F,V)}
=\left\{\Psi_Z^{-1}(f(\ve,\phi),\ve,\phi):
0<\ve\le\ve_0,\ \phi\in\mathbb S^1\right\}.
\end{equation}
\end{corollary}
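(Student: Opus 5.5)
The plan is to transfer the description of $\mathrm{Doub}(F^0,N)$ given in \cref{thm:F0_stab} to the fronts $F^\ve$ for small $\ve$, and then to lift the result to $\mathscr F$ via \cref{l:union}. By \cref{lem:front_family}, $\{F^\ve\}_{\ve\in[0,\ve_0]}$ is a smooth family of Legendrian fronts. Its lifts $\widetilde F^\ve$ depend smoothly on $\ve$ and are defined on the compact domain $N=I\times\mathbb S^1$. So $F^\ve$ is a small $C^\infty$ perturbation of $F^0$ within the class of Legendrian fronts.

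The first step is to treat the double points away from the swallowtails. Fix $\phi_0\neq\pm\pi/2$. Near the pair $z_1=(s_a,\phi_0)$, $z_2=(s_a,\pi-\phi_0)$, both $F^0$ and $F^\ve$ are embeddings of small discs, by \cref{thm:F0_stab} and smoothness in $\ve$. The two image sheets are transverse at $\ve=0$. Their intersection is therefore, for all small $\ve$, a smooth curve depending smoothly on $\ve$. I would realize this concretely by writing the local sheets as graphs over a common plane and applying the implicit function theorem to $G(\ve,s,\phi,s',\phi')=F^\ve(s,\phi)-F^\ve(s',\phi')$. At $\ve=0$ the zero set is $\{s=s'=s_a,\ \phi'=\pi-\phi\}$, and transversality of the tangent planes gives a two-dimensional surjective differential. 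Since $\partial_s$ is transverse to $C$, the preimage curve in the first copy of $N$ can be written as $s=f(\ve,\phi)$ with $f$ smooth and $f(0,\phi)=s_a$. Compactness of $\mathbb S^1$ minus small arcs around $\pm\pi/2$ makes $\ve_0$ uniform there.

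Next I would rule out spurious double points. I would use the same compactness argument as in \cref{lem:singular_pos}: suppose there were double points of $F^{\ve_n}$ not on these curves. Limits of such pairs are double points of $F^0$, hence lie on $C$, or coincide. Coinciding limits at regular points are excluded by local injectivity, and at cuspidal edges or swallowtails they are excluded by the local stability below. So for $\ve$ small, $\mathrm{Doub}(F^\ve,N)$ is contained in a small neighbourhood of $C$.

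The main obstacle is the neighbourhood of the swallowtails $p_\pm$. There, transversality degenerates and the implicit function argument breaks down. For this I would invoke the stability of the swallowtail within Legendrian fronts (Arnold's theory, \cref{app:Legendrian}). For small $\ve$ there is a point $p_\pm^\ve$ near $p_\pm$ and local diffeomorphisms of source and target, smooth in $\ve$, bringing $F^\ve$ to the standard swallowtail front. In the normal form, the closure of the self-intersection locus is an explicit smooth curve through the swallowtail point, transverse to the kernel direction. This curve matches, in the overlap region, the curves from the transverse case, and it extends the graph $s=f(\ve,\phi)$ smoothly across $\phi=\pm\pi/2$. Taking closures adds exactly the swallowtail points. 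So $\overline{\mathrm{Doub}(F^\ve,N)}=\{(f(\ve,\phi),\phi)\}$. The delicate part is checking that the normal-form curve is a graph over $\phi$ and varies smoothly in $\ve$. I would try to deduce this from the fact that at $\ve=0$ this curve is $C$, which is transverse to $\eta_\pm(0)=\partial_\phi$ only in the $s$-direction sense, i.e.\ $C$ is itself a graph over $\phi$.

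Finally, \cref{l:union} converts $\mathrm{Doub}(F^\ve)$ into $\mathrm{Doub}(\mathscr F,V)$ via $\Psi_Z^{-1}$. Taking closure in $V$ gives \eqref{eq:doubF}, using that $\Psi_Z^{-1}$ is a diffeomorphism by \cref{lem:Z} and that $f$ is continuous in $\ve$.
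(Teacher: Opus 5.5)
Your argument is correct, but it globalizes the local information differently from the paper.

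\textbf{The paper's route.} The paper feeds \cref{thm:F0_stab} into \cref{lem:stability}. There, the local conjugacies at transverse double pairs and at the swallowtails are patched into a single conjugacy $F^\ve\circ h_\ve=k_\ve\circ F^0$ on a neighbourhood of $\overline{\mathscr D}=C$. The patching is done by a partition of unity on time-dependent vector fields, which are then integrated. \cref{cor:stability} then shrinks $I$ so that the whole strip lies in the trivialized region. This gives $\mathrm{Doub}(F^\ve,I\times\mathbb S^1)=h_\ve(\mathscr D)$ and $f(\ve,\phi)=r(\ve,g_\ve^{-1}(\phi))$ at once, with spurious double points excluded for free.

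\textbf{Your route.} You never build global diffeomorphisms. Instead you glue the double \emph{curve} itself: the implicit function theorem on $G$ at transverse pairs, the Legendrian normal form with parameters only at $p_\pm$, and agreement on overlaps by uniqueness of the double locus. You then exclude other double pairs by a compactness argument in the spirit of \cref{lem:singular_pos}. This is more elementary away from the swallowtails. However, the compactness step needs uniform-in-$\ve$ local injectivity of $F^\ve$ near cuspidal-edge points of $F^0$, i.e.\ Legendrian stability of the cuspidal edge. Your "local stability below" only treats the swallowtail. The fix is easy: either invoke that stability, or shrink $\delta$ so that the critical arcs $\gamma_\pm\cap N$ lie inside the swallowtail charts. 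The latter is exactly the shrinking the paper performs.

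\textbf{A false statement in the swallowtail step.} It is not load-bearing, but you claim the closure of the double locus is transverse to the kernel at a swallowtail. It is in fact tangent to it. In the normal form, $\overline{\mathrm{Doub}(F_{st})}=\{v=-2u^2\}$ is tangent to $\partial_u=\ker d_{(0,0)}F_{st}$. Likewise, here $C$ is tangent to $\eta_\pm(0)=\partial_\phi$, and so is the critical curve $\gamma_\pm$. What you actually need is transversality to $\partial_s$. That holds because the local double curve of $F^\ve$ is $\Phi_\ve(\{v=-2u^2\})$, with $\Phi_\ve$ smooth in $\ve$ and $\Phi_0(\{v=-2u^2\})=C$ near $p_\pm$. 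Hence it is $C^1$-close to $C$, and therefore a graph over $\phi$.
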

\begin{proof}
By \cref{thm:F0_stab}, the assumptions of \cref{lem:stability} are verified for the Legendrian front $F^0$. Thus, applying \cref{lem:stability} and \cref{cor:stability} to the smooth family of Legendrian fronts $F^\ve$, we find $\ve_0>0$, a smooth map $f:[0,\ve_0]\times \mathbb S^1\to I$ and two smooth curves $\phi_\pm:[0,\ve_0]\to \mathbb S^1$ such that 
\begin{equation}\label{eq:conclusion_front_graph}
\mathrm{Doub}(F^\ve,I\times\mathbb S^1)
=\{(f(\ve,\phi),\phi):\phi\in\mathbb S^1\setminus{\phi_{\pm}(\ve)}\},\qquad
f(0,\phi)=s_a,\qquad \phi_\pm(0)=\pm\frac{\pi}{2}.
\end{equation}
Combining \cref{l:union} with \eqref{eq:conclusion_front_graph} and taking the closure in $V$ yield the result.
\end{proof}

\section{Computation of the cut time. Proof of Theorem~\ref{thm:intro-cut-time-stability}}
\label{sec:proof_of_cut_time_general}
Now that we have an explicit description of $\overline{\mathrm{Doub}(\mathscr F,V)}$ in terms of the double locus of the perturbation $F^\ve$ of the reduced front $F^0$, we exploit \cref{thm:t_cut_front} to compute the cut time: we intersect $\overline{\mathrm{Doub}(\mathscr F,V)}$ with the fibers of the projection map $\pi$, and compute the infimum of the length function along this intersection.
\begin{lemma}\label{lem:tau}
There exists $R>0$ such that $I\times B_R^\times\subset V$. Defining
\begin{equation}\label{eq:tau_cut_lambda_proof}
T_V(q):=\inf\{\ell(z):z\in\overline{\mathrm{Doub}(\mathscr F,V)},\
                       \pi(z)=q\},\qquad \forall q\in B_{R}^\times,
\end{equation}
then, the infimum is attained at a unique parameter $s=\sigma(\rho,\theta)$, where
$$\sigma:[0,R]\times\mathbb S^1\to(s_a-\delta,s_a+\delta)$$
is a smooth function satisfying $\sigma(0,\vartheta)=s_a$. Moreover,
\begin{equation}\label{eq:tau_expansion}
T_V(\rho,\vartheta)=\frac{\sigma(\rho,\vartheta)}{h_0(\rho,\vartheta)}
=\frac{s_a}{2}\rho+O(\rho^2),\qquad\text{as }\rho\to0^+.
\end{equation}
\end{lemma}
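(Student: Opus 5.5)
We start from the description of the closure of the double locus given in \cref{cor:F_ve_doub}, writing it in the source coordinates $(s,\rho,\phi)$ rather than $(s,\ve,\phi)$. Set $G(s,\ve,\phi):=\rho\circ\Psi_Z^{-1}(s,\ve,\phi)$. By \eqref{eq:Psi_Z-1}, $G$ is smooth, $G(s,0,\phi)=0$ and $\partial_\ve G(s,0,\phi)=1/\mathcal N(s,\phi)>0$. Hence $\overline{\mathrm{Doub}(\mathscr F,V)}$ is the image of the smooth surface
\[
(\ve,\phi)\mapsto\bigl(f(\ve,\phi),\,G(f(\ve,\phi),\ve,\phi),\,\phi\bigr).
\]
The map $(\ve,\phi)\mapsto(G(f(\ve,\phi),\ve,\phi),\phi)$ has derivative in $\ve$ at $\ve=0$ equal to $1/\mathcal N(s_a,\phi)>0$. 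By the inverse function theorem and compactness of $\mathbb S^1$, it is therefore a diffeomorphism from $[0,\ve_0']\times\mathbb S^1$ onto a set containing $[0,R]\times\mathbb S^1$ for some small $R>0$. Inverting gives a smooth $\tilde\ve(\rho,\phi)$, so the closed double locus becomes a smooth graph $s=g(\rho,\phi):=f(\tilde\ve(\rho,\phi),\phi)$ over $(\rho,\phi)\in(0,R]\times\mathbb S^1$, with $g(0,\phi)=s_a$.

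The same inverse function argument, applied to $\Psi_Z$ on $I\times[0,R]\times\mathbb S^1$, shows that $I\times B^\times_R\subset V$ once $R$ is small. Concretely, $\sqrt Z\le C\rho$ and $\sqrt Z>0$ for $\rho>0$ by \eqref{eq:Z_is_pos} and \eqref{eq:q>0}, so $\sqrt Z\in(0,\ve_0]$ whenever $0<\rho\le R$ with $R\le\ve_0/C$.

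Next we intersect this graph with the fibers of $\pi$. By \eqref{eq:projection}, $\pi(s,\rho,\phi)=(\rho,\phi-s/2)$. Hence the points $z$ of the closed double locus with $\pi(z)=(\rho,\vartheta)$ are the solutions $s$ of
\[
s=g\bigl(\rho,\vartheta+\tfrac s2\bigr),\qquad s\in I.
\]
At $\rho=0$ the only solution is $s=s_a$, and the derivative in $s$ of $s-g(0,\vartheta+s/2)$ equals $1$. The implicit function theorem, together with compactness and continuity (note that $g(\rho,\cdot)$ is $C^1$-close to $s_a$, so $s\mapsto g(\rho,\vartheta+s/2)$ is a contraction on $I$), then yields for small $R$ a unique solution $\sigma(\rho,\vartheta)$, smooth on $[0,R]\times\mathbb S^1$, with $\sigma(0,\vartheta)=s_a$.

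Because each fiber contains exactly one point, the infimum in \eqref{eq:tau_cut_lambda_proof} is attained at that point. Its value is
\[
T_V(\rho,\vartheta)=\ell\bigl(\sigma,\rho,\vartheta+\tfrac\sigma2\bigr)=\frac{\sigma(\rho,\vartheta)}{h_0(\rho,\vartheta)},
\]
where we used $\ell(s,\rho,\phi)=s/h_0(\pi(s,\rho,\phi))$ from \cref{rmk:pi}. Finally, \cref{rmk:diver_h0} gives $1/h_0=\rho/2+O(\rho^2)$ smoothly in $(\rho,\vartheta)$, and $\sigma=s_a+O(\rho)$. Together these give \eqref{eq:tau_expansion}.

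The main obstacle is the first step, namely passing from the $\ve$-parametrization of \cref{cor:F_ve_doub} to a graph over $\rho$ that is smooth up to $\rho=0$. This requires controlling $\partial_\ve G$ uniformly along $s=f(\ve,\phi)$ and making sure the closure taken in $V$ adds no spurious points. Here we rely on the smoothness of $\Psi_Z^{-1}$ including at $\ve=0$, given by \cref{lem:Z}.
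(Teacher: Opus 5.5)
Your proposal is correct and follows essentially the paper's argument: \cref{cor:F_ve_doub}, an implicit-function argument at $\rho=0$ where the double locus collapses to $s=s_a$, the fiber parametrization $\phi=\vartheta+s/2$, and the formula $\ell=s/h_0$. The only difference is that the paper skips your intermediate graph $s=g(\rho,\phi)$. It writes $\overline{\mathrm{Doub}(\mathscr F,V)}$ directly as the zero set of $s-f\bigl(\sqrt{Z(s,\rho,\phi)},\phi\bigr)$ and solves the single equation $\mathcal L(s,\rho,\vartheta)=0$ using $\partial_s\mathcal L>0$ and a sign change on $I$. In that formulation your worry about spurious points, which you handle with the bound $\sqrt Z\le C\rho$, does not arise.
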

\begin{proof}
By \cref{lem:Z}, the function
$\ve(s,\rho,\phi):=\sqrt{Z(s,\rho,\phi)}$ is smooth at
$\rho=0$, where it vanishes. By compactness of $I\times\mathbb S^1$, we may choose $R>0$ so small that $I\times B_R^\times\subset V$. Since $\Psi_Z(s,\rho,\phi)=(s,\ve(s,\rho,\phi),\phi)$,
\cref{cor:F_ve_doub} implies that a point $(s,\rho,\phi)\in V$ belongs to $\overline{\mathrm{Doub}(\mathscr F,V)}$ if and only if $s=f(\ve(s,\rho,\phi),\phi)$. Thus,
\begin{equation}\label{eq:conclusion_implicit_graph}
	\overline{\mathrm{Doub}(\mathscr F,V)}
	=
	\left\{(s,\rho,\phi)\in V:
	s-f\bigl(\ve(s,\rho,\phi),\phi\bigr)=0
	\right\}.
\end{equation}
We define the function
\begin{equation}
    \mathcal L:V\to\R,\qquad \mathcal L(s,\rho,\vartheta)
    :=s-f\bigl(\ve(s,\rho,\vartheta+s/2),\vartheta+s/2\bigr).
\end{equation}
Then, since $\pi(s,\rho,\phi)=(\rho,\phi-\tfrac s2)$ by \cref{rmk:pi}, the intersection $\overline{\mathrm{Doub}(\mathscr F,V)}\cap\pi^{-1}(\rho,\vartheta)$ is characterized by the equation $\mathcal L(s,\rho,\vartheta)=0$. Since $\ve(s,0,\phi)=0$ and $f(0,\phi)=s_a$, we have
\begin{equation}
	\mathcal L(s,0,\vartheta)=s-s_a,
	\qquad
	\partial_s\mathcal L(s,0,\vartheta)=1.
\end{equation}
It follows that, there exists $R>0$ sufficiently small such that $\partial_s\mathcal L>0$ on
$I\times[0,R]\times\mathbb S^1$, and $\mathcal L(\cdot,\rho,\vartheta)$ has opposite signs at the endpoints of $I$, for every $(\rho,\vartheta)\in[0,R]\times\mathbb S^1$.
Thus, for every $(\rho,\vartheta)\in [0,R]\times\mathbb S^1$, $\mathcal L(s,\rho,\vartheta)=0$ has a unique solution $s=\sigma(\rho,\vartheta)$.
The non-vanishing $s$-derivative allows us to apply the implicit function theorem, deducing smoothness of $\sigma$,
including at $\rho=0$, where $\sigma(0,\vartheta)=s_a$. Therefore the intersection $\overline{\mathrm{Doub}(\mathscr F,V)}\cap \pi^{-1}(\rho,\vartheta)$ consists of a single point, i.e.,
\begin{equation}
    \overline{\mathrm{Doub}(\mathscr F,V)}\cap\pi^{-1}(\rho,\vartheta)=\left\{\left(
    \sigma(\rho,\vartheta),\rho,
    \vartheta+\frac{\sigma(\rho,\vartheta)}{2}
    \right)\right\}.
\end{equation}
Hence, evaluating the length function $\ell$ at such point, using \cref{rmk:pi}, gives
\begin{equation}
    T_V(\rho,\vartheta)=\frac{\sigma(\rho,\vartheta)}{h_0(\rho,\vartheta)}.
\end{equation}
The Taylor expansion \eqref{eq:tau_expansion} follows from expansions: 
\[ \sigma(\rho,\vartheta)=s_a+O(\rho)\qquad\text{and}\qquad \frac 1{h_0(\rho,\vartheta)}=\frac{\rho}{2}+O(\rho^2),
\]
as $\rho\to 0^+$. This concludes the proof.
\end{proof}
We can now prove the main result showing that $T_V$ coincides with the cut time, sufficiently close to a characteristic point.

\begin{proof}[Proof of \cref{thm:intro-cut-time-stability}]
By \cref{thm:t_cut_front}, and since $\mathrm{Doub}(\mathscr F,V)\subset\mathrm{Doub}(\mathscr F)$, we have
\begin{equation}\label{eq:t_cut_lambda_proof}
\begin{aligned}
t_\cut(q)=\inf\{\ell(z):z\in\overline{\mathrm{Doub}(\mathscr F)},\ \pi(z)=q\}\leq T_V(q),\qquad q\in B_R^\times.
\end{aligned}
\end{equation}
We prove that equality holds in a smaller punctured ball $B_r^\times$. First, there exists $0<r<R$ such that
\begin{equation}\label{eq:conclusion_distance_gap}
2\sup_{q\in B_r}T_V(q)
<d_{SR}(B_r,S\setminus B_R).
\end{equation}
Indeed, the left-hand side tends to zero as $r\to0^+$ by \eqref{eq:tau_expansion}, while the right-hand side stays bounded below by a positive constant. Second, fix $q\in B_r^\times$ and set $t_0:=t_\cut(q)$. By \cref{rmk:foc_after_max} there exist minimizing normal geodesics from $S$, based at distinct points $q_n, q_n'\in S$, with the same endpoint and length $t_n$, such that
\begin{equation}
(t_n,q_n)\to(t_0,q),\qquad t_n=t_\cut(q_n)=t_\cut(q_n').
\end{equation}
Setting $s_n=t_n h_0(q_n)$ and $s_n'=t_nh_0(q_n')$, we claim that, eventually
\begin{equation}\label{eq:in_V}
	\{(s_n,q_n)\},\{(s_n',q_n')\}\subset V.
\end{equation}
Indeed, the triangle inequality, together with the inequality \eqref{eq:t_cut_lambda_proof}, gives
\begin{equation}
\label{eq:interchangeable}
d_{SR}(q,q_n')\le d_{SR}(q,q_n)+2t_n
\xrightarrow{n\to\infty} 2t_0\le2T_V(q).
\end{equation}
Then, by \eqref{eq:conclusion_distance_gap}, for $n$ large, $d_{SR}(q,q_n')<d_{SR}(B_r,S\setminus B_R)$ and therefore $q_n'\in B_R^\times$. Also, $q_n\in B_R^\times$ because $q_n\to q$ by construction. Moreover, combining \eqref{eq:t_cut_lambda_proof} with \eqref{eq:tau_expansion},we obtain
\begin{equation}
\begin{aligned}
s_n=h_0(q_n)t_n=h_0(q_n)t_\cut(q_n)
 \leq h_0(q_n)T_V(q_n)=\sigma(q_n)<s_a+\delta,
\end{aligned}
\end{equation}
and the same exact inequality holds for $s_n'$. Thus $(s_n,q_n),(s_n',q_n')\in[0,s_a+\delta]\times B_R^\times$.
\cref{lem:singular_pos} finally implies that $s_n,s_n'\in I$, therefore 
\begin{equation}
		\{(s_n,q_n)\},\{(s_n',q_n')\}\subset I\times B_R^\times.
\end{equation}
By \cref{lem:tau}, $R$ can be chosen sufficiently small so that $I\times B_R^\times\subset V$, hence \eqref{eq:in_V} holds for large $n$. Since, $(s_n,q_n)$ and $(s_n',q_n')$ are double points, for large $n$ it holds
\[
(s_n,q_n),(s_n',q_n')\in\mathrm{Doub}(\mathscr F,V).
\]
The definition of $T_V$ then gives
\begin{equation}\label{eq:other_ineq}
T_V(q)\leq \lim_{n} \ell(s_n,q_n)=\lim_{n}\ell(t_nh_0(q_n),q_n)=\lim_{n}t_n=t_\cut(q).
\end{equation}
Since $q$ was arbitrarily chosen in $B_r^\times\subset B_R^\times$, the two inequalities \eqref{eq:t_cut_lambda_proof} and \eqref{eq:other_ineq} combined give 
\begin{equation}
	t_\cut(q)=T_V(q),\qquad \forall q\in B_r^\times.
\end{equation}
Expressing the result in polar coordinates $(\rho,\vartheta)$ we find that that the cut time restricted to the punctured ball $B_r^\times \simeq (0,r]\times \mathbb S^1$ satisfies 
\begin{equation}
	t_\cut(\rho,\vartheta)=T_V(\rho,\vartheta),\qquad \forall \,(\rho,\vartheta)\in (0,r]\times \mathbb S^1.
\end{equation}
In particular by \cref{lem:tau} the cut time is smooth on such domain and smoothly extends to $\rho=0$. Moreover \cref{lem:tau} also gives 
\begin{equation}
	t_\cut(\rho,\vartheta)=\frac{\rho s_a}{2}+\rho^2 R_\cut(\rho,\vartheta),
\end{equation}
where $s_a=\pi-2\arctan(a)$ and $R_\cut \in C^\infty(\R^+ \times \mathbb S^1)$. This concludes the proof, observing that, under the assumption $a>b$, $c_{a,b}=s_a/2$. 
\end{proof}
\part{Appendix}
\label{part:appendix}

\appendix

\section{Horizontal plane and homogeneous surfaces} \label{a:f}  
\subsection{Direct computations for the horizontal plane} We derive \eqref{eq:intro-piano} by means of a direct computation. First we combine \eqref{eq:intro-tube-integral} with our knowledge of the cut time and Jacobian \eqref{eq:intro-Jac-plane} to obtain 
\begin{equation}
	\begin{aligned}
		\mathrm{vol}(\mathcal T_{B_R}(\ve))
		&=\int_0^{2\pi}\int_0^R
		\int_0^{\min\{\ve,t_\cut(r,\theta)\}}
		J_{S_0}(t,r,\theta)\,dt\,dr\,d\theta\\
		&=\frac{\pi}{2}\int_0^R
		\int_0^{\min\{\ve,\pi r/2\}}
		\left[
		r^2\left(1+\cos\left(\frac{2t}{r}\right)\right)
		+tr\sin\left(\frac{2t}{r}\right)
		\right]dt\,dr.
	\end{aligned}
\end{equation}
For $0<\ve<\pi R/2$, we may exchange the order of integration in $t$ and $r$ finding
\begin{equation}
	\mathrm{vol}(\mathcal T_{B_R}(\ve))=\frac{\pi}{2}\int_0^\ve I(t)\,dt,
	\qquad
	I(t):=\int_{2t/\pi}^{R}
	\left[
	r^2\left(1+\cos\left(\frac{2t}{r}\right)\right)
	+tr\sin\left(\frac{2t}{r}\right)
	\right]dr.
\end{equation}
For $t>0$, two integrations by parts in $r$ yield
\begin{equation}
	\begin{aligned}
		I(t)=
		\frac{R^3}{3}
		\left(1+\cos\left(\frac{2t}{R}\right)\right)
		+\frac{tR^2}{6}\sin\left(\frac{2t}{R}\right)
		+\frac{t^2}{3}\int_{2t/\pi}^{R}
		\cos\left(\frac{2t}{r}\right)\,dr.
	\end{aligned}
\end{equation}
To evaluate the remaining integral, we define
$\operatorname{Si}(x):=\int_0^x(\sin u)/u\,du$ and we observe that
\begin{equation}
	\frac{d}{dr}\left[
	r\cos\left(\frac{2t}{r}\right)
	+2t\operatorname{Si}\left(\frac{2t}{r}\right)
	\right]
	=\cos\left(\frac{2t}{r}\right).
\end{equation}
Consequently,
\begin{equation}
	\int_{2t/\pi}^{R}\cos\left(\frac{2t}{r}\right)\,dr
	=
	R\cos\left(\frac{2t}{R}\right)
	+2t\operatorname{Si}\left(\frac{2t}{R}\right)
	+2t\left(\frac1\pi-\operatorname{Si}(\pi)\right).
\end{equation}
Substituting into the expression for $I(t)$ and expanding at
$t=0$, using $\operatorname{Si}(x)=x+O(x^3)$, gives
\begin{equation}
	\begin{aligned}
		I(t)=
		\frac{2R^3}{3}
		+\frac{2t^3}{3}
		\left(\frac1\pi-\operatorname{Si}(\pi)\right)
		+O\left(\frac{t^4}{R}\right).
	\end{aligned}
\end{equation}
Finally, integrating with respect to $t$, we obtain
\begin{equation}
	\mathrm{vol}(\mathcal T_{B_R}(\ve))
	=
	\frac{\pi R^3}{3}\ve
	+\frac{1-\pi\operatorname{Si}(\pi)}{12}\ve^4
	+O\left(\frac{\ve^5}{R}\right),
	\qquad \ve\to0^+.
\end{equation}
\subsection{The curvature forms of homogeneous surfaces}\label{ssec:expl_curve_forms}
Here we explicitly compute the curvature forms introduced in \cref{defi:curvature_forms} for homogeneous surfaces $S_{a,b}$, $1+ab\neq 0$, in the coordinates $(\rho,\vartheta)$ of \eqref{eq:x-y_polar}. First observe that, as a consequence of \cref{cor:curv_forms1} it holds 
\begin{equation}\label{eq:hom_mathcal_J}
    \sigma_{S_{a,b}}^{(n)}=\rho^{2-n}\partial_s^{n}J_{a,b}(s,1,\vartheta)|_{s=0}d\rho\wedge d\vartheta.
\end{equation}
\begin{lemma}\label{l:det_E_ab}
    In the coordinates of \cref{rmk:pi} it holds 
    \begin{equation}\label{eq:det_E_ab_Lambda}
        \det\left(\frac{\partial E_{a,b}(s,\rho,\phi)}{\partial(s,\rho,\phi)}\right)=-2\rho^3\Lambda(s,\phi),\qquad \Lambda(s,\phi):=p_b(s)m_a(s)\cos^2\phi+p_a(s)m_b(s)\sin^2\phi,
	\end{equation}
    where $m_c(s):=sp_c'(s)-p_c(s)$, for $c\in \{a,b\}$, and $p_c(s)$ is defined in \cref{l:belle_coords}.
\end{lemma}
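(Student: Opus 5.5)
The plan is a direct computation of the Jacobian from the explicit formula in \cref{l:belle_coords}. By \cref{rmk:pi}, the coordinates $(s,\rho,\phi)$ are obtained from $(s,u,v)$ by $(u,v)=(\rho\cos\phi,\rho\sin\phi)$. Hence
\begin{equation}
E_{a,b}(s,\rho,\phi)=\bigl(\rho\,p_a(s)\cos\phi,\ \rho\,p_b(s)\sin\phi,\ \rho^2(q_a(s)\cos^2\phi+q_b(s)\sin^2\phi)\bigr).
\end{equation}
Each component is homogeneous in $\rho$, of degree $1$, $1$ and $2$ respectively. The derivatives in $s$ and $\phi$ keep these degrees, while $\partial_\rho$ lowers each by one. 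Consequently the determinant is $\rho^{1+1+2-1}=\rho^3$ times the determinant of the matrix obtained by setting $\rho=1$ and taking $\partial_\rho$ to be the vector $(p_a\cos\phi,\ p_b\sin\phi,\ 2Q)$, where $Q:=q_a\cos^2\phi+q_b\sin^2\phi$.

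Next I will write out the $3\times3$ determinant with columns $\partial_s,\partial_\rho,\partial_\phi$ at $\rho=1$:
\begin{equation}
\det\begin{pmatrix}
p_a'\cos\phi & p_a\cos\phi & -p_a\sin\phi\\
p_b'\sin\phi & p_b\sin\phi & p_b\cos\phi\\
\partial_sQ & 2Q & \partial_\phi Q
\end{pmatrix},
\end{equation}
with $\partial_\phi Q=(q_b-q_a)\sin 2\phi$. To simplify, I will use the identity $q_c=s+2\sin(\tfrac s2)p_c$ from \cref{r:identities}. It gives $Q=s+2\sin(\tfrac s2)\,(p_a\cos^2\phi+p_b\sin^2\phi)$. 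The third row then splits into two parts. The first is the constant-$s$ part $(1,\,2s,\,0)$. The second is $2\sin(\tfrac s2)$ times a combination of the first two rows: precisely, $(p_a\cos\phi)\cdot\mathrm{row}_1+(p_b\sin\phi)\cdot\mathrm{row}_2$ reproduces the $\phi$- and $\rho$-derivatives of $p_a\cos^2\phi+p_b\sin^2\phi$, up to the factor $2$ in the $\rho$ column. I will handle this by row operations, subtracting multiples of rows one and two, plus the extra $\cos(\tfrac s2)$ term coming from $\partial_s\sin(\tfrac s2)$. This step needs care, and I will double-check it.

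After the reduction, the third row should become essentially $\bigl(1+\cos(\tfrac s2)(p_a\cos^2\phi+p_b\sin^2\phi),\ 2s+\dots,\ 0\bigr)$. The determinant then expands along the $\phi$-column into minors involving $p_a'p_b-p_ap_b'$ and $p_c$. Using $p_c'=-\tfrac12(\sin\tfrac s2+c\cos\tfrac s2)$ and the identity $\cos(\tfrac s2)p_c-2\sin(\tfrac s2)p_c'=1$, the terms should collapse to $-2\bigl(p_b(sp_a'-p_a)\cos^2\phi+p_a(sp_b'-p_b)\sin^2\phi\bigr)=-2\Lambda$. As an alternative check, I can compute the determinant directly in $(s,u,v)$, where the $u,v$-Jacobian is block-friendly. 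There $\det=p_ap_b\,\partial_sZ-p_a'u\cdot p_b\,\partial_uZ-\dots$, which I would multiply by $\rho$ for the polar change.

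The main obstacle is the trigonometric bookkeeping in this final simplification. The key facts to verify are $\partial_s q_c=1+\cos s-c\sin s=2\cos(\tfrac s2)p_c$ and $p_c'p_{c'}-p_cp_{c'}'$-type identities. With these, the expression should reduce to $-2\rho^3\Lambda$. I will sanity-check the result at $s=0$, where $E_{a,b}$ restricted to $s=0$ is the surface: there $m_c(0)=-1$ and $p_c(0)=1$, so $\Lambda=-1$ and the determinant should equal $2\rho^3$. I will confirm this directly from the matrix, which at $s=0$ has third row $(2,0,0)$, first two rows $(\cdot,\cos\phi,-\sin\phi)$ and $(\cdot,\sin\phi,\cos\phi)$, giving determinant $2$.
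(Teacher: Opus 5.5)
Your final formula is right, and the ``alternative check'' you sketch in the coordinates $(s,u,v)$ is exactly the paper's proof. There the Jacobian is $p_ap_b\,\partial_sZ-p_a'p_b\,u\,\partial_uZ-p_ap_b'\,v\,\partial_vZ=p_b(p_aq_a'-2p_a'q_a)u^2+p_a(p_bq_b'-2p_b'q_b)v^2$. The identities $q_c=s+2\sin(\tfrac s2)p_c$, $q_c'=2\cos(\tfrac s2)p_c$ and $\cos(\tfrac s2)p_c-2\sin(\tfrac s2)p_c'=1$ give $p_cq_c'-2p_c'q_c=-2m_c$. The polar change $(u,v)=(\rho\cos\phi,\rho\sin\phi)$ then contributes the last factor $\rho$.

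Your primary route extracts $\rho^3$ by homogeneity and then reduces the matrix at $\rho=1$. It is a genuinely different and viable computation, and the homogeneity step is correct. However, the row reduction as you describe it does not work. The combination $(p_a\cos\phi)\,\mathrm{row}_1+(p_b\sin\phi)\,\mathrm{row}_2$ has $\rho$-entry $p_a^2\cos^2\phi+p_b^2\sin^2\phi$, not $P:=p_a\cos^2\phi+p_b\sin^2\phi$.

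The correct coefficients are $\cos\phi$ and $\sin\phi$. Apply $\mathrm{row}_3\mapsto\mathrm{row}_3-4\sin(\tfrac s2)\bigl(\cos\phi\,\mathrm{row}_1+\sin\phi\,\mathrm{row}_2\bigr)$. This kills the $\phi$-entry, since $q_b-q_a=2\sin(\tfrac s2)(p_b-p_a)$. It reduces the $\rho$-entry $2s+4\sin(\tfrac s2)P$ to $2s$. It turns the $s$-entry into $2\cos(\tfrac s2)P-4\sin(\tfrac s2)(p_a'\cos^2\phi+p_b'\sin^2\phi)=2$, by the identity above. So the reduced third row is exactly $(2,2s,0)$, not $(1+\cos(\tfrac s2)P,\dots)$. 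Expanding along it gives $2p_ap_b-2s(p_a'p_b\cos^2\phi+p_ap_b'\sin^2\phi)=-2\Lambda$ at once. No Wronskian-type quantity $p_a'p_b-p_ap_b'$ enters.

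As for the comparison: the $(s,u,v)$ computation is shorter, because each of the first two rows of that Jacobian has a zero entry. Your polar version makes the $\rho^3$ scaling transparent. It also exposes the structure $dZ-4\sin(\tfrac s2)(u\,dX+v\,dY)=2|w|^2\,ds+2s(u\,du+v\,dv)$, with $E_{a,b}=(X,Y,Z)$ and $w=(u,v)$. The price is the extra bookkeeping you were worried about.
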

\begin{proof}
	By the first identity of \eqref{eq:identities_remark} and standard trigonometric identities, we have
	\[
	q_c(s)=s+2\sin(s/2)p_c(s),\qquad
	\cos(s/2)p_c(s)-2\sin(s/2)p_c'(s)=1.
	\]
	Consequently, a direct computation yields 
	\[
	p_c(s) q_c'(s)-2p_c'(s)q_c(s)=2p_c(s)-2s p_c'(s)=-2m_c(s).
	\]
	Therefore, differentiating \eqref{eq:E_coords} and computing the Jacobian determinant gives
	\begin{equation}
		\begin{aligned}
			\det\left(\frac{\partial E_{a,b}(s,u,v)}{\partial(s,u,v)}\right)
			&=p_b(s)\bigl(p_a(s)q_a'(s)-2p_a'(s)q_a(s)\bigr)u^2
			+p_a(s)\bigl(p_b(s)q_b'(s)-2p_b'(s)q_b(s)\bigr)v^2\\
            &=-2\bigl(p_b(s)m_a(s) u^2+p_a(s)m_b(s) v^2\bigr).
		\end{aligned}
	\end{equation}
	Finally, evaluating the latter equation at $(u,v)=(\rho\cos\phi,\rho\sin\phi)$, and using the fact that the change of variables $(u,v)=(\rho\cos\phi,\rho\sin\phi)$ has
	Jacobian determinant $\rho$, we prove \eqref{eq:det_E_ab_Lambda}.
\end{proof}
\begin{lemma}\label{l:det_exp_perp}
	The normalized Jacobian of $S_{a,b}$ is
	\begin{equation}\label{eq:hom_mathcal_J_explicit}
		\begin{aligned}
			J_{a,b}(s,1,\vartheta)
			=\frac{1}{4(1+ab)}\Bigl[\,&(1+ab)
			+(1-ab)\bigl(\cos(2s)+s\sin(2s)\bigr)\\
			&+(a+b)\bigl(s\cos(2s)-\sin(2s)\bigr)
			+s(a-b)\cos(2\vartheta+2s)\Bigr].
		\end{aligned}
	\end{equation}
\end{lemma}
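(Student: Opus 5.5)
The plan is to compute the Jacobian directly from the explicit form of $E_{a,b}$ in the coordinates of \cref{ss:change}, and then change variables back to $(t,\rho,\vartheta)$. \cref{l:det_E_ab} already gives
\[
\det\frac{\partial E_{a,b}(s,\rho,\phi)}{\partial(s,\rho,\phi)}=-2\rho^3\Lambda(s,\phi)
\]
in the target coordinates $\psi$ of \eqref{eq:psi}. So the work is to undo three coordinate changes and relate the resulting determinant to $J_{a,b}(t,\rho,\vartheta)$ at $\rho=1$.

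First, the target change. The map $\psi(x,y,z)=(ax+y,\,by-x,\,8(z-\dots))$ has Jacobian determinant $8(1+ab)$. Therefore the determinant computed with respect to $(x,y,z)$ equals the one in $\psi$-coordinates divided by $8(1+ab)$.

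Second, the source change. By \eqref{eq:projection}, the map $(s,\rho,\phi)\mapsto(s,\rho,\vartheta)$ with $\vartheta=\phi-s/2$ has unit Jacobian. The reparametrization $t=s/h_0=s\rho/2$, keeping $\rho$ fixed, contributes a factor $\partial t/\partial s=\rho/2$. Hence
\[
\det\frac{\partial E}{\partial(s,\rho,\vartheta)}=\frac{\rho}{2}\,J_{a,b}(s\rho/2,\rho,\vartheta).
\]
We evaluate at $\rho=1$. Here one has to check the normalization: the displayed formula has argument $s$ and frequency $2s$, which is consistent with $t=s/2$ at $\rho=1$. Indeed, \eqref{eq:intro-Jac-plane} contains $\cos(2t/r)$, so the variable called $s$ in the lemma should be read as $t$ at $\rho=1$. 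Accordingly, I will write the final answer in terms of $t$, so that $s_{\mathrm{here}}=2t$. Since $\mathfrak R$ enters with angle $s/2=t$, the angle shift turns $\cos 2\phi$ into $\cos(2\vartheta+2t)$, which produces the term $\cos(2\vartheta+2s)$ after renaming.

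Third, substitute $\phi=\vartheta+t$ and $s=2t$ into
\[
\Lambda=p_b m_a\cos^2\phi+p_a m_b\sin^2\phi,
\]
using $\cos^2\phi=\tfrac12(1+\cos2\phi)$ and $\sin^2\phi=\tfrac12(1-\cos 2\phi)$. This splits $\Lambda$ into an angle-independent part $\tfrac12(p_bm_a+p_am_b)$ and the coefficient $\tfrac12(p_bm_a-p_am_b)$ of $\cos(2\vartheta+2t)$. With $p_c(s)=\cos(s/2)-c\sin(s/2)$ and $m_c=sp_c'-p_c$, this is a trigonometric simplification. The constant part should reduce to a multiple of
\[
(1+ab)+(1-ab)(\cos 2t+2t\sin 2t\cdot\tfrac12)+(a+b)(t\cos 2t-\sin 2t),
\]
and the antisymmetric part to a multiple of $(a-b)\,t$. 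This matches the target once the overall constant $\frac{1}{4(1+ab)}$ and the sign from $-2$ are combined with $1/(8(1+ab))$ and the $\rho$-factors.

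The main obstacle is bookkeeping rather than ideas. The constants must be tracked correctly: the $1/(8(1+ab))$ from $\psi$, the $\rho/2$ from the time change, the $\rho^3$, and the sign of $-2$. The sign is fixed by the normalization at $t=0$, where $J_{a,b}(0,1,\vartheta)$ should equal the area density, namely $\tfrac12$ times $\frac{1}{1+ab}$ from \cref{rmk:hom_or}. I would use this value, together with the horizontal-plane case $a=b=0$ from \eqref{eq:intro-Jac-plane}, as consistency checks to pin down the overall constant, instead of trusting every intermediate factor.
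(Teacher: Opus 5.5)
Your plan is correct and is essentially the paper's proof. The paper writes $\exp^\perp_{S_{a,b}}(t,\rho,\vartheta)=\psi^{-1}\circ E_{a,b}(2t/\rho,\rho,\vartheta+t/\rho)$ and uses $\det D\psi=8(1+ab)$ together with the Jacobian $2/\rho$ of the source change, which is your unit-Jacobian angle shift combined with the factor $\rho/2$. This gives $J_{a,b}(s,1,\vartheta)=-\Lambda(2s,\vartheta+s)/(2(1+ab))$, and the paper then expands $\Lambda$ exactly as you propose.

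Carrying out your splitting gives
\[
\tfrac12(p_bm_a+p_am_b)=-\tfrac12\bigl[(1+ab)+(1-ab)(\cos 2t+t\sin 2t)+(a+b)(t\cos 2t-\sin 2t)\bigr]
\]
and $\tfrac12(p_bm_a-p_am_b)=-\tfrac12(a-b)t$, so the formula follows directly. Note that your consistency checks at $t=0$ and at $a=b=0$ only fix the overall constant. The coefficient of the $(a-b)$ term has to come from this explicit computation.
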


\begin{proof}
	By \cref{ss:change,rmk:pi}, we have the identity
	\[
	\exp^\perp_{S_{a,b}}(t,\rho,\vartheta)
	=\psi^{-1}\circ E_{a,b}
	\left(\frac{2t}{\rho},\rho,\vartheta+\frac{t}{\rho}\right).
	\]
	The map $\psi$ has Jacobian determinant $8(1+ab)$,
	whereas the map $(t,\rho,\vartheta)\mapsto(\tfrac{2t}{\rho},\rho,\vartheta+\tfrac{t}{\rho})$
	has Jacobian determinant $2/\rho$.
	Thus \cref{l:det_E_ab} and the chain rule give
	\[
	J_{S_{a,b}}(t,\rho,\vartheta)
	=-\frac{\rho^2}{2(1+ab)}
	\Lambda\left(\frac{2t}{\rho},\vartheta+\frac{t}{\rho}\right).
	\]
	Therefore, equation \eqref{eq:hom_mathcal_J} implies that 
	\begin{equation}\label{eq:mathcal_J_Lambda}
		J_{a,b}(s,1,\vartheta)
		=-\frac{\Lambda(2s,\vartheta+s)}{2(1+ab)}.
	\end{equation}
	Substituting $m_c(s)=s p_c'(s)-p_c(s)$, and the expression of
	$p_c(s)$ from \eqref{eq:def_p-q_c}, into \eqref{eq:det_E_ab_Lambda} yields
	\eqref{eq:hom_mathcal_J_explicit}.
\end{proof}
\begin{prop}\label{prop:hom_curv_form}
    Let $(\rho,\vartheta)$ be the coordinates on $S_{a,b}$ introduced in \eqref{eq:x-y_polar}. Then, for every $n\in\mathbb N$, the $n$-th curvature form of the homogeneous surface $S_{a,b}$ has the expression 
\begin{equation}
    \sigma^{(n)}_{S_{a,b}}
    =\rho^{2-n}\Big(
    A_n\cos(2\vartheta)
    +B_n\sin(2\vartheta)
    +C_n\Big)\,d\rho\wedge d\vartheta,
\end{equation}
where, for $n\geq1$, we have defined 
\begin{equation}\label{eq:A_n_B_n}
    (A_n,B_n,C_n)
    =\frac{(-1)^{\lfloor n/2\rfloor}2^{n-3}}{1+ab}
    \begin{cases}
        \bigl(0,n(a-b),(2-n)(1-ab)\bigr),
        & n\ \text{even},\\[2mm]
        \bigl(n(a-b),0,(n-2)(a+b)\bigr),
        & n\ \text{odd}.
    \end{cases}
\end{equation}
For $n=0$, the coefficients are $(A_0,B_0,C_0)=\left(0,0,\tfrac{1}{2(1+ab)}\right)$.
\end{prop}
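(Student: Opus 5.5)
The plan is to start from the explicit normalized Jacobian of \cref{l:det_exp_perp} and the identity \eqref{eq:hom_mathcal_J}. Together they give $\sigma^{(n)}_{S_{a,b}}=\rho^{2-n}\phi_n(\vartheta)\,d\rho\wedge d\vartheta$, where
\begin{equation}
\phi_n(\vartheta)=\partial_s^n J_{a,b}(s,1,\vartheta)\big|_{s=0}.
\end{equation}
The whole proof therefore reduces to computing the $n$-th Taylor coefficient at $s=0$ of the bracket in \eqref{eq:hom_mathcal_J_explicit}, divided by $4(1+ab)$. For $n=0$ the bracket at $s=0$ equals $(1+ab)+(1-ab)=2$. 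This gives $C_0=\tfrac{1}{2(1+ab)}$ and $A_0=B_0=0$.

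For $n\ge1$ I will treat the four pieces of the bracket separately, using the Leibniz rule in the form $\partial_s^n[s\,g(s)]|_{0}=n\,g^{(n-1)}(0)$ together with the formulas $\partial_s^k\cos(2s)|_0=2^k\cos(k\pi/2)$ and $\partial_s^k\sin(2s)|_0=2^k\sin(k\pi/2)$.

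\textbf{The $(1-ab)$ piece.} Its $n$-th derivative is
\begin{equation}
2^n\cos(n\pi/2)+n2^{n-1}\sin((n-1)\pi/2).
\end{equation}
This vanishes for $n$ odd. For $n$ even it equals $(-1)^{n/2}2^{n-1}(2-n)$, using $\sin((n-1)\pi/2)=-\cos(n\pi/2)$.

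\textbf{The $(a+b)$ piece.} Its $n$-th derivative is
\begin{equation}
n2^{n-1}\cos((n-1)\pi/2)-2^n\sin(n\pi/2).
\end{equation}
This vanishes for $n$ even. For $n$ odd it equals $(-1)^{(n-1)/2}2^{n-1}(n-2)$.

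\textbf{The $(a-b)$ piece.} The term is $s\cos(2\vartheta+2s)$, whose $n$-th derivative at $0$ is
\begin{equation}
n2^{n-1}\cos\bigl(2\vartheta+(n-1)\pi/2\bigr).
\end{equation}
For $n$ odd this is $(-1)^{(n-1)/2}n2^{n-1}\cos(2\vartheta)$. For $n$ even it is $(-1)^{n/2}n2^{n-1}\sin(2\vartheta)$.

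Dividing each piece by $4(1+ab)$ gives the common factor $2^{n-3}/(1+ab)$. The signs match $(-1)^{\lfloor n/2\rfloor}$ in both parities, since $\lfloor n/2\rfloor=(n-1)/2$ when $n$ is odd. This recovers \eqref{eq:A_n_B_n}.

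The main obstacle is not this differentiation, which is routine, but making sure \cref{l:det_exp_perp} has been established with the correct orientation and normalization. That lemma relies on \cref{l:det_E_ab} and on the chain rule through $\psi$ and the reparametrization $(t,\rho,\vartheta)\mapsto(2t/\rho,\rho,\vartheta+t/\rho)$. If I doubted it, I would re-derive $J_{a,b}(s,1,\vartheta)=-\Lambda(2s,\vartheta+s)/(2(1+ab))$ and expand $\Lambda$ with double-angle formulas. I would also check the normalization independently: at $s=0$ it should give $\sigma_H=\tfrac{\rho^2}{2(1+ab)}\,d\rho\wedge d\vartheta$, which is consistent with $\sigma_{S_0}^{(0)}=\tfrac{r^2}{2}dr\wedge d\theta$ in \eqref{eq:trivially-zero}.
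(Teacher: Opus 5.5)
Your proposal is correct and follows essentially the paper's proof. Both arguments read off $\sigma^{(n)}_{S_{a,b}}$ from the $n$-th $s$-derivative at $s=0$ of the explicit Jacobian in \cref{l:det_exp_perp}, via \eqref{eq:hom_mathcal_J}; the paper first expands $\cos(2\vartheta+2s)$ and uses the parity of the coefficient functions, whereas you use $\partial_s^n[s\,g(s)]|_{0}=n\,g^{(n-1)}(0)$ with phase-shifted derivatives of $\cos(2s)$ and $\sin(2s)$, which is only a different bookkeeping of the same computation.
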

\begin{proof} Substituting the identity $\cos(2\vartheta+2s)=\cos(2\vartheta)\cos(2s)-\sin(2\vartheta)\sin(2s)$ into \eqref{eq:hom_mathcal_J_explicit}, we see that the Jacobian $J_{a,b}(s,1,\vartheta)$ splits as a sum of three functions 
\begin{equation}\label{eq:A,B,C_dec}
    J_{a,b}(s,1,\vartheta)
			=\frac{1}{4(1+ab)}\bigl(A(s)\cos(2\vartheta)+B(s)\sin(2\vartheta)+C(s)\bigr),
\end{equation}
where the functions $A,B,C$ are given by 
\begin{equation}
\begin{aligned}
     A(s)=(a-b)s\cos(2s),\qquad B(s)=-(a-b)s\sin(2s),\qquad C(s)=C^{even}(s)+C^{odd}(s),
\end{aligned}
\end{equation}
where $C^{even},C^{odd}$ are nothing but the even and odd parts of $C(s)$, i.e. 
\begin{equation}
    C^{even}(s)=(1+ab)+(1-ab)\bigl(\cos(2s)+s\sin(2s)\bigr),\qquad C^{odd}(s)=(a+b)\bigl(s\cos(2s)-\sin(2s)\bigr).
\end{equation}
Thus, by \eqref{eq:hom_mathcal_J} and direct differentiation of \eqref{eq:A,B,C_dec}, the statements of the propositions hold with 
\begin{equation}
    A_n:=\frac{A^{(n)}(0)}{4(1+ab)},\qquad B_n:=\frac{B^{(n)}(0)}{4(1+ab)},\qquad C_n:=\frac{C^{(n)}(0)}{4(1+ab)}.
\end{equation}
The function $A$ is odd, while $B$ is even. Then, for $n$ even (resp. odd) only the $n$-th derivative of $B$ and $C^{even}$ (resp. $A$ and $C^{odd}$) are non vanishing. Taking full Taylor expansions of the elementary functions $A,B,C^{even},C^{odd}$ and extracting the $n$-th derivatives yields the claimed expressions of $A_n,B_n,C_n$.
\end{proof}
\begin{example}\label{ex:hom_curv_forms}
    The first five curvature forms of $S_{a,b}$ are
    \begin{equation}
        \begin{aligned}
            \sigma^{(0)}_{S_{a,b}}
            &=\frac{\rho^2}{2(1+ab)}
            \,d\rho\wedge d\vartheta,\\
            \sigma^{(1)}_{S_{a,b}}
            &=\frac{\rho}{4(1+ab)}
            \bigl((a-b)\cos(2\vartheta)-(a+b)\bigr)
            \,d\rho\wedge d\vartheta,\\
            \sigma^{(2)}_{S_{a,b}}
            &=-\frac{a-b}{1+ab}\sin(2\vartheta)
            \,d\rho\wedge d\vartheta,\\
            \sigma^{(3)}_{S_{a,b}}
            &=-\frac{3(a-b)\cos(2\vartheta)+(a+b)}
            {(1+ab)\rho}
            \,d\rho\wedge d\vartheta,\\
            \sigma^{(4)}_{S_{a,b}}
            &=\frac{4\bigl(2(a-b)\sin(2\vartheta)+(ab-1)\bigr)}
            {(1+ab)\rho^2}
            \,d\rho\wedge d\vartheta.
        \end{aligned}
    \end{equation}
\end{example}
\section{On logarithmic expansions. Proof of Lemma~\ref{l:integrato}} 
\label{app:noname}

The goal of this section is to prove \cref{l:integrato}. We first observe that whenever a function has an expansion involving logarithmic coefficients, these are uniquely determined. 

 \begin{lemma}\label{lem:log_expansion_intrinsic}
    	Let $F:(0,1)\to\mathbb R$ be a function. Assume that there exist $k_0\in\mathbb N$ and coefficients $\{\alpha_n,\beta_n\}_{n\geq -k_0}\subset\R$ such that 
    	\begin{equation}\label{eq:log_laurent_expansion}
    		F(\ve)=\sum_{n=-k_0}^{N}
    		\ve^n(\alpha_n+\beta_n\log\ve)+o(\ve^N),
    		\qquad \ve\to0^+,
    	\end{equation}
    	for every integer $N\geq-k_0$. Then the coefficients
    	$\alpha_n,\beta_n$ are uniquely determined by $F$.
    \end{lemma}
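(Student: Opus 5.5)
The plan is to show that the functions $\ve\mapsto\ve^n$ and $\ve\mapsto\ve^n\log\ve$, $n\geq -k_0$, form an asymptotic scale as $\ve\to0^+$. Each of them is $o$ of the previous one in the ordering
\begin{equation}
\ve^{-k_0}\log\ve,\ \ve^{-k_0},\ \ve^{-k_0+1}\log\ve,\ \ve^{-k_0+1},\ \dots
\end{equation}
Uniqueness of coefficients then follows by successive extraction. By linearity it suffices to assume two expansions of the form \eqref{eq:log_laurent_expansion}, subtract them, and show that if
\begin{equation}
\sum_{n=-k_0}^{N}\ve^n(a_n+b_n\log\ve)=o(\ve^N)\quad\text{for every } N\geq -k_0,
\end{equation}
then all $a_n,b_n$ vanish.

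We argue by induction on $n$, starting at $n=-k_0$. Suppose $a_m=b_m=0$ for all $m<n$. Apply the hypothesis with $N=n$, so that $\ve^n(a_n+b_n\log\ve)=o(\ve^n)$. Dividing by $\ve^n$ gives $a_n+b_n\log\ve\to0$ as $\ve\to0^+$. Since $|\log\ve|\to\infty$, dividing further by $\log\ve$ yields $b_n=0$, and then $a_n=0$.

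The equality $N=n$ is enough here; there is no need for the expansion to higher orders. There is no real obstacle in this argument. The only point to check is that the hypothesis is used with $N=n$ at each stage, and that the remainder $o(\ve^N)$ is uniform for a single fixed function $F$, which it is by assumption. One can alternatively phrase the argument as an explicit recursive formula:
\begin{equation}
\beta_n=\lim_{\ve\to0^+}\frac{F(\ve)-\sum_{m<n}\ve^m(\alpha_m+\beta_m\log\ve)}{\ve^n\log\ve},
\end{equation}
\begin{equation}
\alpha_n=\lim_{\ve\to0^+}\frac{F(\ve)-\sum_{m<n}\ve^m(\alpha_m+\beta_m\log\ve)-\beta_n\ve^n\log\ve}{\ve^n}.
\end{equation}
This exhibits each coefficient as determined by $F$ and the previously determined ones.
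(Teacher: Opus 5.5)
Your proof is correct and is essentially the paper's argument. The paper takes the smallest index $m$ at which the two coefficient families differ, applies the expansion with $N=m$, divides by $\ve^m$, and uses $\log\ve\to-\infty$ to force $\beta_m=\widetilde\beta_m$ and then $\alpha_m=\widetilde\alpha_m$. This is your induction on $n$, phrased as a minimal counterexample.
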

    \begin{proof}
    	Suppose two families of coefficients $\{\alpha_n,\beta_n\}$ and $\{\widetilde\alpha_n,\widetilde\beta_n\}$  satisfy \eqref{eq:log_laurent_expansion}, and let $m$ be the smallest index such that $(\alpha_m,\beta_m)\neq (\widetilde\alpha_m,\widetilde\beta_m) $. Then, the function $F$ satisfies \eqref{eq:log_laurent_expansion} 
    	with $N=m$, for both families of coefficients, hence taking the difference of the two expansions and dividing by $\ve^m$ gives
    	\begin{equation}
    	    0=\frac{1}{\ve^m}\left(\sum_{n=-k_0}^m\ve^n\bigl((\alpha_n-\widetilde \alpha_n)+(\beta_n-\widetilde \beta_n)\log\ve\bigr)+o(\ve^m)\right)=(\alpha_m-\widetilde\alpha_m)
    	+(\beta_m-\widetilde\beta_m)\log\ve+\frac{o(\ve^m)}{\ve^m},
    	\end{equation}
        where the last equality follows by our choice of $m$. It follows that 
        $$
    	(\alpha_m-\widetilde\alpha_m)
    	+(\beta_m-\widetilde\beta_m)\log\ve=o(1).$$
    	Since $\log\ve\to-\infty$, it must be
    	$\beta_m=\widetilde\beta_m$, and then
    	$\alpha_m=\widetilde\alpha_m$, contradicting the choice of $m$.
    \end{proof}

The previous lemma shows that the statement \cref{l:integrato} is well-posed. Secondly, we prove an auxiliary result that is needed in the proof of \cref{l:integrato}. 

\begin{lemma}\label{l:euge}
        Let $F:\mathbb R^2\to\mathbb R$ be a smooth function. Then, there exist $\{f_k,g_k\}_{k=0}^\infty\subset C^{\infty}(\mathbb R)$ such that, for every $N\in\N$, there exists 
        $R_{N+1}\in C^\infty(\mathbb R^2)$
       such that \begin{equation}\label{eq:euge}
            F(x,y)=\sum_{k=0}^N (xy)^k(f_k(y)+xg_k(x))+ (xy)^{N+1}R_{N+1}(x,y).
        \end{equation}
    \end{lemma}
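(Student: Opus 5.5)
The plan is to prove \eqref{eq:euge} by iterating a single splitting step, based on Hadamard's lemma. Write
\[
F(x,y)=F(0,y)+\bigl(F(x,0)-F(0,0)\bigr)+\bigl(F(x,y)-F(0,y)-F(x,0)+F(0,0)\bigr).
\]
The first two pieces already have the required shape for $k=0$. Set $f_0(y):=F(0,y)$. The function $F(x,0)-F(0,0)$ is smooth in $x$ and vanishes at $x=0$, so Hadamard's lemma gives $F(x,0)-F(0,0)=x\,g_0(x)$ with
\[
g_0(x)=\int_0^1\partial_xF(tx,0)\,dt,
\]
which is smooth.

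The third piece vanishes on both axes $\{x=0\}$ and $\{y=0\}$. Applying Hadamard's lemma in $x$ and then in $y$, or writing it directly as
\[
xy\int_0^1\!\!\int_0^1\partial_x\partial_yF(tx,\tau y)\,dt\,d\tau,
\]
expresses it as $xy\,R_1(x,y)$ with $R_1\in C^\infty(\mathbb R^2)$. This proves the case $N=0$, and it also gives an explicit formula for $R_1$ in terms of $F$.

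For the inductive step, apply the same decomposition to $R_1$ in place of $F$. This gives $R_1(x,y)=f_1(y)+x\,g_1(x)+xy\,R_2(x,y)$, and multiplying by $xy$ yields the $k=1$ terms together with the remainder $(xy)^2R_2$. Continuing, define recursively $R_0:=F$ and, for every $k\geq0$,
\begin{equation*}
f_k(y):=R_k(0,y),\qquad g_k(x):=\int_0^1\partial_xR_k(tx,0)\,dt,\qquad R_{k+1}(x,y):=\int_0^1\!\!\int_0^1\partial_x\partial_yR_k(tx,\tau y)\,dt\,d\tau .
\end{equation*}
By construction $R_k(x,y)=f_k(y)+x\,g_k(x)+xy\,R_{k+1}(x,y)$ for every $k$. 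Induction on $N$ then gives \eqref{eq:euge}. The point is that the sequences $\{f_k,g_k\}$ are fixed once and for all by the recursion, independently of $N$, and only the remainder $R_{N+1}$ depends on the truncation order, as the statement requires.

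There is no real obstacle. The one thing to check is that each $R_k$ is genuinely smooth on all of $\mathbb R^2$, so that the recursion can be iterated indefinitely. This holds because differentiation under the integral sign applies to the parameter integrals above, whose integrands are smooth functions of $(t,\tau,x,y)$ on $[0,1]^2\times\mathbb R^2$.
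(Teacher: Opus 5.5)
Your proposal is correct and follows the paper's proof essentially verbatim. The paper also obtains the case $N=0$ from the double-difference identity $F(x,y)-F(0,y)-F(x,0)+F(0,0)=xy\int_0^1\int_0^1\partial_y\partial_xF(xt,ys)\,ds\,dt$, and then inducts by applying the same splitting to $R_{N+1}$. Your version makes explicit two points the paper leaves implicit: the Hadamard integral for $g_k$ gives smoothness directly, whereas the paper defines $g_0$ as a difference quotient extended by continuity at $x=0$; and you note that the recursion fixes $\{f_k,g_k\}$ independently of $N$.
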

    
\begin{proof}
        We proceed by induction, first proving the statement for $N=0$. Since $F$ is smooth, both $F$ and its partial derivatives are absolutely continuous and we can write
        \begin{equation}
        \begin{aligned}
            F(x,y)-F(0,y)-F(x,0)+F(0,0)
            &=\int_0^x\partial_xF(t,y)dt-\int_0^x\partial_xF(t,0)dt \\
            &=\int_0^x\int_0^y\partial_y\partial_x F(t,s)dsdt=xy\underbrace{\int_0^1\int_0^1\partial_y\partial_x F(xt,ys)dsdt}_{R_1(x,y)}.
        \end{aligned}
        \end{equation}
        Setting $f_0(y)=F(0,y)$ and $g_0(x)=\frac{1}{x}(F(x,0)-F(0,0))$, which are smooth (extending $g_{0}$ by continuity setting $g_{0}(0)=\partial_{x}F(0,0)$), we can write 
        \begin{equation}
        \label{eq:expansion_N=0}
            F(x,y)=f_0(y)+xg_0(x)+(xy)R_1(x,y).
        \end{equation}
         We now prove \eqref{eq:euge} for $N+1$, assuming it holds for $N$. Since $R_{N+1}$ is smooth, using \eqref{eq:expansion_N=0}, we deduce the existence of two smooth functions $f_{N+1},g_{N+1}\in C^\infty(\R)$ such that
        \[
        R_{N+1}(x,y)= f_{N+1}(y)+xg_{N+1}(x) + xy R_{N+2}(x,y),
        \]
        for some smooth remainder $R_{N+2}$. Substituting the latter expression into \eqref{eq:euge}, we conclude.
    \end{proof}
 \begin{proof}[Proof of \cref{l:integrato}]
By \cref{l:euge}, for every $N\geq0$ we can write
\begin{equation}
G(s,r)
=\sum_{k=0}^N(sr)^k\bigl(f_k(r)+s g_k(s)\bigr)
+(sr)^{N+1}R_{N+1}(s,r),
\end{equation}
for suitable smooth functions $f_k,g_k,R_{N+1}$. We now set 
\begin{equation}
    G_{1,k}(s,r):=(sr)^kf_k(r),\qquad  G_{2,k}(s,r):=s^{k+1}r^k g_k(s),\qquad G_{3,N}(s,r):=(sr)^{N+1}R_{N+1}(s,r).
\end{equation}
Let $\mathcal G_{i,k}(\ve)$ be the integral \eqref{eq:mathcal_G} computed with $G_{i,k}$ in place of $G$. It is sufficient to prove the lemma for each and every one of these functions $\mathcal G_{i,k}(\ve)$. Indeed, if the lemma holds for $\mathcal G_{1,k}(\ve)$, $\mathcal G_{2,k}(\ve)$, $\mathcal G_{i,N}(\ve)$, for $k=0,\dots,N$, with coefficients $\gamma_n(G_{i,k}),\beta_n(G_{i,k})$ computed as in \eqref{eq:abstract_coefficients} with $G_{i,k}$ in place of $G$, then by linearity of the mappings $G\mapsto \gamma_n(G)$ and $G\mapsto \beta_n(G)$ we have  
\begin{equation}
\begin{aligned}
    \mathcal G(\ve)&=\sum_{k=0}^{N}\mathcal (G_{1,k}(\ve)+\mathcal G_{2,k}(\ve))+\mathcal G_{3,N}(\ve)=\sum_{n=0}^N\ve^n
   		\sum_{k=0}^N\sum_{i=1}^3\bigl(\gamma_n(G_{i,k})+\beta_n(G_{i,k})\log\ve\bigr)+o(\ve^N),\\
        &=\sum_{n=0}^N\ve^n
   		\bigl(\gamma_n(G)+\beta_n(G)\log\ve\bigr)+o(\ve^N).
\end{aligned}
\end{equation}
We first focus on $G_{1,k}(s,r)=(sr)^kf_k(r)$:
\begin{equation}
\mathcal G_{1,k}(\ve):=\int_\ve^{r_0}G_{1,k}(\ve/r,r)\,dr
=\ve^k\int_\ve^{r_0}f_k(r)\,dr
=\ve^k\int_0^{r_0}f_k(r)\,dr
-\ve^k\int_0^\ve f_k(r)\,dr.
\end{equation}
Then, a Taylor expansion of $f_k$ at zero gives an expansion with no logarithms, whose non-zero coefficients are
\begin{equation}
\gamma_k(G_{1,k})=\int_0^{r_0}f_k(r)\,dr,
\qquad
\gamma_{k+j+1}(G_{1,k})=-\frac{f_k^{(j)}(0)}{(j+1)!},
\qquad j\geq0.
\end{equation}
These agree with \eqref{eq:abstract_coefficients}.
Indeed, for $n\geq1$, the term 
\begin{equation}\label{eq:first_term}
	\Fp_{\delta\to0^+}\left(
	\frac1{n!}\int_\delta^{r_0}
	\frac{\partial_s^nG_{1,k}(0,r)}{r^n}\,dr\right)
\end{equation}
is non-zero
only when $n=k$, in which case it is
$\int_0^{r_0}f_n(r)\,dr$.
The second term, which is 
\begin{equation}\label{eq:secon_term}
	\Fp_{\delta\to0^+}\left(
	\frac1{(n-1)!}\int_\delta^1
	\frac{\partial_r^{n-1}G_{1,k}(s,0)}{s^{n+1}}\,ds
	\right)
\end{equation}
is non-zero only when
$n=k+j+1$, for some $j\geq0$, and then equals
\begin{equation}
\frac{f_k^{(j)}(0)}{j!}
\Fp_{\delta\to0^+}\int_\delta^1s^{-j-2}\,ds
=-\frac{f_k^{(j)}(0)}{(j+1)!}.
\end{equation}
The mixed derivative defining $\beta_n$ vanishes for
every $n$. Next, we focus on $G_{2,k}(s,r)=(sr)^k s g_k(s)$.
The change of variables $s=\ve/r$ gives
\begin{equation}
\mathcal G_{2,k}(\ve)
=\ve^{k+1}\int_{\ve/r_0}^1\frac{g_k(s)}s\,ds.
\end{equation}
Adding and subtracting $g_k(0)$ from the numerator of $\frac{g_k(s)}s$ we obtain the identity 
\begin{equation}
\begin{aligned}
\mathcal G_{2,k}(\ve)
=\ve^{k+1}
\left(\int_0^1\frac{g_k(s)-g_k(0)}s\,ds+g_k(0)\log r_0-g_k(0)\log\ve\right)-\ve^{k+1}\int_0^{\ve/r_0}\frac{g_k(s)-g_k(0)}s\,ds
\end{aligned}
\end{equation}
The integrand in the last term is smooth. A Taylor expansion therefore gives, for $k\geq0$ and $j\geq1$
\begin{equation}
\begin{aligned}
\gamma_{k+1}(G_{2,k})&=\int_0^1\frac{g_k(s)-g_k(0)}s\,ds+g_k(0)\log r_0,
\qquad \gamma_{k+j+1}(G_{2,k})
=-\frac{g_k^{(j)}(0)}{j!\,j\,r_0^j},\\[0.5em]
\beta_{k+1}(G_{2,k})&=-g_k(0),\qquad 
\end{aligned}
\end{equation}
with all remaining coefficients equal to zero. These are precisely the coefficients obtained by applying 
\eqref{eq:abstract_coefficients} to $G_{2,k}(s,r)=(sr)^k s g_k(s)$. Indeed, the term \eqref{eq:secon_term} is non-zero only for $n=k+1$,
when it equals $\int_0^1\frac{g_k(s)-g_k(0)}s\,ds$, while for $n=k+j+1$, $j\geq0$,
the term \eqref{eq:first_term} is
\begin{equation}
\frac{g_k^{(j)}(0)}{j!}
\Fp_{\delta\to0^+}
\int_\delta^{r_0}r^{-j-1}\,dr
=
\begin{cases}
g_k(0)\log r_0,&j=0,\\[4pt]
\displaystyle
-\frac{g_k^{(j)}(0)}{j!\,j\,r_0^j},&j\geq1.
\end{cases}
\end{equation}
The mixed derivative defining $\beta_n(G_{2,k})$ in \eqref{eq:abstract_coefficients} is non-zero
only for $n=k+1$, when its value is $-g_k(0)$. Finally for the function $G_{3,N}(s,r)=(sr)^{N+1}R_{N+1}(s,r)$ we have 
\begin{equation}
\mathcal G_{3,N}(\ve)=\ve^{N+1}\int_\ve^{r_0}R_{N+1}(\ve/r,r)\,dr
=O(\ve^{N+1}).
\end{equation}
Thus $\gamma_n(G_{3,N})=\beta_n(G_{3,N})=0$ for $n=0,\dots, N$. These coefficients therefore coincide with those obtained by applying \eqref{eq:abstract_coefficients} to $G_{3,N}$, since all derivatives appearing therein vanish when
vanish when applied to $(sr)^{N+1}R_{N+1}$.
\end{proof}
\section{Distributions on manifolds} 
\label{app:distributions}

We review some basic facts about distributions on manifolds, following the monograph \cite{Grosser}. Let $M$ be a smooth oriented manifold of $\dim M=n$. Fix $k\in\N\cup\{\infty\}$ and an atlas $\{(V_\alpha,\psi_\alpha)\}_{\alpha\in A}$ for $M$. Then, given $\alpha\in A$, we introduce a locally convex topology on $C^k(V_\alpha)$ via the following notion of convergence: a sequence $\{u_l\}_{l\in\N}\subset C^k(V_\alpha)$ converges to zero if and only if, for every $K\subset\psi_\alpha(V_\alpha)$ compact and for every $\nu\in\N^n$, $|\nu|\leq k$, 
\begin{equation}
\partial^\nu( u_l\circ\psi_\alpha^{-1})\xrightarrow{l\to\infty}0
\qquad\text{uniformly on }K.
\end{equation}
Such a topology exists and is generated by the following family of semi-norms on $C^k(V_\alpha)$: denote by $\{K_m^\alpha\}_{m\in\N}$ an exhausting, increasing sequence of compact subsets of $\psi_\alpha(V_\alpha)$, and define
\begin{equation}
\label{eq:chart_semi-norm}
p_{s,m,\alpha}(u):=\sup_{x\in K_m^\alpha,\,|\nu|\leq s}
\left|\partial^\nu\left(u\circ\psi_\alpha^{-1}(x)\right)\right|,\qquad\forall\, u\in C^k(V_\alpha),
\end{equation}
where $s\in\N$, $0\leq s\leq k$, and $m\in\N$. In turn, this allows to define a topology on the space $C^k(M)$: observe that a sequence $\{u_l\}_{l\in\N}\subset C^k(M)$ converges to zero if and only if $u_l\to0$ in $C^k(V_\alpha)$, for every $\alpha\in A$. Equipped with this topology, $C^k(M)$ is a separable Fr\'echet space and, for every $K\subset M$ compact, the set $C^k_{c,K}(M):=\{u\in C^k(M):\supp(u)\subset K\}\subset C^k(M)$ is a Fr\'echet subspace. Hence, letting $\{K_i\}_{i\in\N}$ be an exhausting, strictly increasing sequence of compact subsets of $M$, we have  
\[
C^k_c(M)=\bigcup_{i\in \N}C^k_{c,K_i}(M),
\]
and we endow $C^k_c(M)$ with the \emph{inductive limit topology} $\tau$ with respect to the countable collection of Fr\'echet subspaces $\{C^k_{c,K_i}(M)\}_{i\in\N}$. 
Finally, we can give the following definition of a distribution on $M$.

\begin{defi}
    A distribution (respectively of order $k$) on $M$ is a continuous linear functional on the locally convex space $C_c^\infty(M)$ (respectively $C_c^k(M)$). We denote the space of distributions by $\D'(M)$ (respectively $\D'_k(M)$). Given $T\in\D'(M)$, we denote its action on a function $u$ by $\langle T,u\rangle$ or $T(u)$.
\end{defi}

The following criterion for a linear functional on $C^k_c(M)$ to be a distribution is immediate from the definition of the topology $\tau$.
\begin{lemma}
\label{lem:continuity}
Let $k\in\N$ and let $T:C_c^\infty(M)\to\mathbb R$ be a linear functional.
Then the following conditions are equivalent:
\begin{enumerate}[label=(\roman*),leftmargin=3em]
\item $T$ is a distribution of order at most $k$;
\item For every compact $K\subset M$, the restriction $T|_{C^\infty_{c,K}(M)}$ is continuous with respect to the topology induced by $C^k_{c,K}(M)$;
\item For every compact $K\subset M$, there exist $s,\ell,m\in\mathbb N$, with $s\leq k$, a constant $c_K>0$, and indices $\alpha_1,\ldots,\alpha_\ell$ such that
\[
|T(u)|\leq c_K\max_{1\leq i\leq\ell}
p_{s,m,\alpha_i}(u),
\qquad
\forall u\in C^\infty_{c,K}(M);
\]
\item $T$ admits a unique continuous linear extension $\widetilde T:C_c^k(M)\to\mathbb R$.
\end{enumerate}
\end{lemma}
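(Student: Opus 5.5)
The plan is to prove the four conditions equivalent by running the cycle (i)$\Rightarrow$(ii)$\Rightarrow$(iii)$\Rightarrow$(iv)$\Rightarrow$(i). Throughout, ``continuity'' is read through the inductive limit topology $\tau$ on $C^k_c(M)$. The one structural fact to use is the universal property of a countable strict inductive limit of Fr\'echet spaces. A linear map out of $C^k_c(M)=\bigcup_i C^k_{c,K_i}(M)$ is continuous if and only if its restriction to each step $C^k_{c,K_i}(M)$ is continuous. Moreover, every compact $K$ lies in some $K_i$, and $C^k_{c,K}(M)$ carries the subspace topology of $C^k_{c,K_i}(M)$.

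\textbf{(i)$\Rightarrow$(ii).} Here (i) is read as: $T$ is continuous for the topology that $C^k_c(M)$ induces on $C^\infty_c(M)$. Then $T$ restricted to $C^\infty_{c,K}(M)$ is continuous for the topology of $C^k_{c,K}(M)$, because the inclusion $C^k_{c,K}(M)\hookrightarrow C^k_c(M)$ is continuous.

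\textbf{(ii)$\Rightarrow$(iii).} The topology of $C^k_{c,K}(M)$ is generated by the seminorms $p_{s,m,\alpha}$ with $s\le k$. A linear functional that is continuous on a locally convex space is bounded by a constant times the maximum of finitely many generating seminorms. Since $p_{s,m,\alpha}$ increases in $s$ and in $m$, the finitely many seminorms can be reduced to a single pair $s,m$, with $s=k$ allowed, and finitely many charts $\alpha_i$. Finitely many charts suffice: by compactness $K$ is covered by finitely many chart domains, which is the case actually relevant here.

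\textbf{(iii)$\Rightarrow$(iv).} This is the main step. Its key input is that $C^\infty_{c,K}(M)$ is dense in $C^k_{c,K'}(M)$ for every compact neighbourhood $K'$ of $K$, and the plan is to prove this by mollification in charts. Fix a partition of unity $\{\chi_j\}$ subordinate to finitely many charts covering $K$. Given $u\in C^k_{c,K}(M)$, write $u=\sum_j\chi_ju$ and mollify each $(\chi_ju)\circ\psi_{\alpha_j}^{-1}$. This produces $u_\ell\in C^\infty_{c,K'}(M)$ with $u_\ell\to u$ in every $p_{s,m,\alpha}$, $s\le k$; it uses that derivatives up to order $k$ converge uniformly under mollification.
\begin{itemize}
\item Applying the estimate (iii) on $K'$ shows that $T(u_\ell)$ is Cauchy. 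Define $\widetilde T(u)$ as the limit.
\item The same estimate shows the limit is independent of the approximating sequence, so $\widetilde T$ is well defined.
\item The bound passes to the limit, so $\widetilde T$ is continuous on each $C^k_{c,K}(M)$, hence on $C^k_c(M)$ by the inductive limit property.
\item Uniqueness follows from density of $C^\infty_c(M)$ in $C^k_c(M)$.
\end{itemize}
The obstacle I expect is the bookkeeping of supports: the enlarged compact $K'$ must be used both in the mollification and when invoking (iii).

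\textbf{(iv)$\Rightarrow$(i).} If $\widetilde T$ exists, then $T=\widetilde T|_{C^\infty_c(M)}$ is continuous for the induced $C^k$ topology. This is exactly the meaning of ``order at most $k$'' in the definition. It also implies continuity for the finer $C^\infty_c$ topology, so $T$ is in particular a distribution.
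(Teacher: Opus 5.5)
Your proof is correct, and the cycle (i)$\Rightarrow$(ii)$\Rightarrow$(iii)$\Rightarrow$(iv)$\Rightarrow$(i) closes consistently under your reading of (i). The paper gives no argument beyond the remark that the lemma is ``immediate from the definition of the topology $\tau$'', meaning the universal property of the strict inductive limit. That is exactly what you use for (i)$\Rightarrow$(ii) and for the continuity of $\widetilde T$ on $C^k_c(M)$.

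Where you go beyond the paper is (iii)$\Rightarrow$(iv). Existence of the extension, and above all its uniqueness, require that $C^\infty_c(M)$ be dense in $C^k_c(M)$, and this is \emph{not} contained in the definition of $\tau$. Your mollification-in-charts argument supplies this missing ingredient, so your version is more complete than the paper's one-liner. You leave implicit how $C^k$ convergence transfers between overlapping charts; it follows from the chain rule.

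One slip to fix. As literally stated, ``$C^\infty_{c,K}(M)$ is dense in $C^k_{c,K'}(M)$'' is false in general. Any $w$ supported in $K$ vanishes on $K'\setminus K$, so it cannot approximate a function of $C^k_{c,K'}(M)$ that is nonzero somewhere there. What you actually construct and use is correct: every $u\in C^k_{c,K}(M)$ is a $C^k$-limit of functions in $C^\infty_{c,K'}(M)$.

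For well-definedness of $\widetilde T$, compare two approximating sequences inside a common compact set, such as the union of their support sets, before invoking (iii).

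In (ii)$\Rightarrow$(iii), the finiteness of the charts $\alpha_i$ already follows from continuity with respect to a seminorm-generated topology. The appeal to compactness of $K$ there is harmless but unnecessary.
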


For every open set $\Omega\subset M$, denote by $\D'(\Omega)$ the space of distributions on $\Omega$, namely the continuous linear functionals on $C_c^\infty(\Omega)$. Given $\widetilde\Omega\subset\Omega$, denote by $T|_{\widetilde\Omega}:=T|_{C^\infty_c(\widetilde\Omega)}$ the restriction of $T$ to $\widetilde\Omega$. 

\begin{prop}\label{prop:Dprime_sheaf}
	Let $\{\Omega_i\}_{i\in I}$ be an open cover of a smooth
	manifold $M$, and let $T:C_c^\infty(M)\to\mathbb R$
	be a linear functional. Fix an integer $k\geq0$.
	If $T|_{\Omega_i}\in \D'(\Omega_i)$ of order at most $k$, for every $i\in I$, then $T$ is a distribution on $M$ of order at most $k$.
\end{prop}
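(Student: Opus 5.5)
The plan is to glue the local order-$k$ estimates by a partition of unity subordinate to the cover, and to reduce continuity of $T$ to the estimate (iii) of \cref{lem:continuity} on each fixed compact set.

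Fix a compact $K\subset M$. Since $\{\Omega_i\}$ is an open cover and $K$ is compact, we extract finitely many indices $i_1,\dots,i_m$ with $K\subset\Omega_{i_1}\cup\dots\cup\Omega_{i_m}$. We then choose smooth functions $\chi_1,\dots,\chi_m$ with $\supp\chi_j\subset\Omega_{i_j}$ compact and $\sum_j\chi_j\equiv1$ on a neighborhood of $K$. This is the standard partition of unity on a neighborhood of a compact set, obtained by taking bump functions on a finite subcover and normalizing them. For $u\in C^\infty_{c,K}(M)$ we write $u=\sum_j\chi_ju$, and linearity of $T$ gives
\[
T(u)=\sum_{j=1}^m T|_{\Omega_{i_j}}(\chi_ju).
\]
Each $\chi_ju$ belongs to $C^\infty_{c,K_j}(\Omega_{i_j})$, where $K_j:=\supp\chi_j\cap K$ is a compact subset of $\Omega_{i_j}$ independent of $u$.

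By hypothesis $T|_{\Omega_{i_j}}$ is a distribution of order at most $k$. Applying \cref{lem:continuity}(iii) on $\Omega_{i_j}$ with the compact set $K_j$ yields
\[
|T(\chi_ju)|\le c_j\max_\ell p_{s_j,m_j,\alpha_\ell}(\chi_ju),\qquad s_j\le k.
\]
The remaining work is to bound the seminorms of $\chi_ju$ by seminorms of $u$ of the same order. By the Leibniz rule in each chart, $\partial^\nu(\chi_ju\circ\psi^{-1})$ with $|\nu|\le s_j$ is a combination of derivatives of $u\circ\psi^{-1}$ of order at most $s_j$, with coefficients given by derivatives of $\chi_j\circ\psi^{-1}$. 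These coefficients are bounded on the relevant compact sets, so $p_{s_j,m_j,\alpha}(\chi_ju)\le C\,p_{s_j,m_j,\alpha}(u)$. We regard the charts of $\Omega_{i_j}$ as charts of $M$, or, if atlases are fixed, cover the compact sets by finitely many charts of the atlas of $M$ and use that chart changes are smooth diffeomorphisms, so the seminorms of order $s$ are mutually equivalent on compacts. Summing over $j$ and setting $s=\max_j s_j\le k$ gives the estimate of \cref{lem:continuity}(iii) for $T$ on $K$.

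Since $K$ was arbitrary, \cref{lem:continuity} implies $T\in\D'(M)$ of order at most $k$. The only delicate point is the bookkeeping with atlases, namely the comparison of seminorms across charts on compact sets; this is routine via the chain rule.
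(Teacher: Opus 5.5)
Your proposal is correct and follows essentially the same route as the paper: a finite subcover of $K$, a partition of unity $\{\chi_j\}$ subordinate to it, the decomposition $T(u)=\sum_j T|_{\Omega_{i_j}}(\chi_j u)$, and the Leibniz rule to control multiplication by $\chi_j$ in the order-$k$ topology. The only difference is cosmetic. You phrase continuity through the seminorm estimate of item (iii) of \cref{lem:continuity}, while the paper composes the continuous extension from item (iv) with the continuous multiplication map $u\mapsto\chi_j u$ and then concludes via item (ii).
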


\begin{proof}
    For a given $K\subset M$ compact, there is a finite sub-cover $\{\Omega_{i_j}\}_{j=1}^N$ for $K$ and a partition of unity $\{\chi_j\}_{j=1}^N$ subordinated to it. Then, setting $T_i:=T|_{\Omega_i}\in\D'(\Omega_i)$, we write
    \begin{equation}
    \label{eq:partition_of_unity}
        T(u) = T\left(\sum_{j=1}^N\chi_ju\right) = \sum_{j=1}^N T(\chi_ju) = \sum_{j=1}^N T_{i_j}(\chi_ju),\qquad\forall\,u\in C^\infty_{c,K}(M).
    \end{equation} 
    We claim that, for every $j=1,\ldots,N$, $C^\infty_{c,K}(M)\ni u\mapsto T_{i_j}(\chi_j u)$ admits a unique continuous extension to the Fr\'echet space $C^k_{c,K}(M)$. By assumption, $T_{i_j}\in\D'(\Omega_{i_j})$ is a distribution of order at most $k$. Hence, by \cref{lem:continuity}, it admits a continuous extension 
    \[
    \widetilde T_{i_j}:C_c^k(\Omega_{i_j})\to\mathbb R.
    \]
    In particular, setting $K_j:=K\cap\supp\chi_j$, the restriction 
    \[
    \widetilde T_{i_j}|_{C^k_{c,K_j}(M)}
    \]
    is continuous. We now observe that the multiplication map by $\chi_j$, defined as
    \[
    M_j:C^k_{c,K}(M)\to C^k_{c,K_j}(M),\qquad u\to \chi_ju,
    \]
    is continuous in the suitable Fr\'echet topologies, as a consequence of Leibniz' rule. Hence, we deduce that $\widetilde T_{i_j}\circ M_j$ is the (unique) continuous extension of $C^\infty_{c,K}(M)\ni u\mapsto T_{i_j}(\chi_j u)$ to $C^k_{c,K}(M)$, which is then a distribution of order at most $k$ by \cref{lem:continuity}, as claimed. Since the sum of distributions is a distribution as well, we infer that $T|_{C^\infty_{c,K}(M)}$ is continuous with respect to the topology induced by $C^k_{c,K}(M)$, as a consequence of \eqref{eq:partition_of_unity}. By the arbitrariness of $K$ and using \cref{lem:continuity}, we conclude that $T$ is a distribution of order at most $k$. 
\end{proof}

\subsection{The support of a distribution} The next result is a consequence of \cite[Thm.\ 3.1.7]{Grosser}. 

\begin{lemma}
    Let $M$ be a smooth manifold, let $\Omega\subset M$ be an open set and let $\{\Omega_i\}_{i\in I}$ be an open cover for $\Omega$. Assume that there is a family $\{T_i\}_{i\in I}$ such that, for all $i\in I$, $T_i\in \D'(\Omega_i)$ and  
    \begin{equation}
        T_i|_{\Omega_i\cap\Omega_j}= T_j|_{\Omega_i\cap\Omega_j},\qquad\forall\, i,j\in I.        
    \end{equation}
    Then, there exists a unique distribution $T\in \D'(\Omega)$ such that $T|_{\Omega_i}=T_i$, for all $i\in I$.    
\end{lemma}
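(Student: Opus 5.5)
The strategy is to reduce the gluing statement to the sheaf property \cref{prop:Dprime_sheaf}, after first constructing the functional $T$ by means of a partition of unity. I will build a candidate linear functional $T$ on $C_c^\infty(\Omega)$ and show that it is independent of all choices. Uniqueness is handled separately and first. If $T,T'\in\D'(\Omega)$ both restrict to $T_i$ on every $\Omega_i$, then for any $u\in C_c^\infty(\Omega)$ we choose finitely many $\Omega_{i_j}$ covering $\supp u$ and a subordinate partition of unity $\{\chi_j\}$ on that compact set. This gives
\[
T(u)=\sum_j T(\chi_j u)=\sum_j T_{i_j}(\chi_j u)=T'(u).
\]

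For existence, I fix a locally finite partition of unity $\{\chi_\alpha\}$ on $\Omega$ in which each $\supp\chi_\alpha$ is contained in some $\Omega_{i(\alpha)}$. I then define
\[
T(u):=\sum_\alpha T_{i(\alpha)}(\chi_\alpha u),\qquad u\in C_c^\infty(\Omega).
\]
By local finiteness this sum has only finitely many nonzero terms, and $T$ is clearly linear. The key step is well-definedness together with the identity $T|_{\Omega_k}=T_k$. Let $v\in C_c^\infty(\Omega_k)$. Each $\chi_\alpha v$ is then supported in $\Omega_{i(\alpha)}\cap\Omega_k$, so the compatibility hypothesis gives $T_{i(\alpha)}(\chi_\alpha v)=T_k(\chi_\alpha v)$. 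Summing over $\alpha$ and using the linearity of $T_k$ yields $T(v)=T_k\bigl(\sum_\alpha\chi_\alpha v\bigr)=T_k(v)$.

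The same computation also shows that $T$ does not depend on the choice of partition. Suppose $\{\chi'_\beta\}$ is a second partition, subordinate via indices $i'(\beta)$. Inserting $\sum_\beta\chi'_\beta=1$ into each term and applying compatibility on $\Omega_{i(\alpha)}\cap\Omega_{i'(\beta)}$ to the products $\chi_\alpha\chi'_\beta u$ makes the two double sums agree term by term.

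Continuity is the last step, and it is where the main care is needed. Since $T$ is linear and each restriction $T|_{\Omega_k}=T_k$ is a distribution, \cref{prop:Dprime_sheaf} applied on the manifold $\Omega$ with the open cover $\{\Omega_k\}$ gives $T\in\D'(\Omega)$. The proof of that proposition uses only continuity on each $C^\infty_{c,K}$ via finite subcovers, so it works with $k=\infty$ just as well as with finite order. The only genuine subtlety I expect is bookkeeping in the independence argument: the products $\chi_\alpha\chi'_\beta u$ lie in $C_c^\infty(\Omega_{i(\alpha)}\cap\Omega_{i'(\beta)})$, and only finitely many of them are nonzero for a fixed $u$, which is what justifies reordering the finite double sum.
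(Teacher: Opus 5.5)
Your argument is correct. The paper does not prove this lemma itself; it records it as a consequence of \cite[Thm.~3.1.7]{Grosser}, the sheaf property of $\mathcal D'$ in the Grosser--Kunzinger--Oberguggenberger--Steinbauer framework. There, distributions are duals of compactly supported densities, which on an oriented manifold is equivalent to the test-function definition used here.

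You instead give the standard self-contained gluing proof:
\begin{itemize}
\item uniqueness via a finite partition of unity on $\supp u$;
\item existence via the explicit formula $T(u)=\sum_\alpha T_{i(\alpha)}(\chi_\alpha u)$;
\item the restriction identity $T|_{\Omega_k}=T_k$ via compatibility on $\Omega_{i(\alpha)}\cap\Omega_k$.
\end{itemize}
The citation buys brevity. Your version buys independence from the density formalism and shows that nothing beyond the definitions of Appendix~\ref{app:distributions} is needed.

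Two streamlining remarks:
\begin{itemize}
\item The independence-of-partition paragraph is redundant. Existence only needs one $T$, and your uniqueness step already shows that any two constructions with the restriction property coincide.
\item Continuity can be read off directly from your formula, without \cref{prop:Dprime_sheaf}. For a compact $K\subset\Omega$, only finitely many $\chi_\alpha$ meet $K$. So on $C^\infty_{c,K}(\Omega)$ the functional $T$ is a finite sum of the continuous maps $u\mapsto T_{i(\alpha)}(\chi_\alpha u)$. This avoids stretching \cref{prop:Dprime_sheaf}, which is stated only for finite order $k$, to $k=\infty$. Your observation that its proof carries over to $k=\infty$ is nonetheless correct.
\end{itemize}
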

By the previous lemma, it follows that, given $T \in \D'(\Omega)$, there is a largest open $\Omega_T \subset \Omega$ on which $T$ vanishes (i.e.\ $T|_{\Omega_T}=0$).

\begin{defi}[Support of a distribution]
    Let $M$ be a smooth manifold, let $\Omega\subset M$ be an open set and let $T \in \D'(\Omega)$. Then, the \emph{support} of $T$ is the closed set given by
    \begin{equation}
        \supp(T) := \Omega\setminus\Omega_T 
        =\Omega\setminus \{x \in\Omega: T|_{V_x}=0, \text{ for some neighborhood } V_x \subset \Omega \text{ of } x\}.
    \end{equation}
    We say that $T$ is compactly supported if $\supp(T)$ is compact. 
\end{defi}

\subsection{A distribution on characteristic surfaces} 
We analyze a prototypical functional appearing when dealing with volumes of half-tubes around characteristic surfaces in the Heisenberg group. 

\begin{prop}
\label{lem:distribution_T}
    Let $S=\partial\Omega \subset\hei$ be a smooth, oriented, embedded surface. Assume that $\Sigma(S)$ consists only of geometrically stable characteristic points and let $B\subset S$ be one of the balls of \cref{lem:rho_0} with polar coordinates $(r,\theta)$. Let $T:C^\infty(S)\to\R$ be the linear functional given, in coordinates $(r,\theta)$, by
    \begin{equation}
    \label{eq:assumption_T}
        T(\varphi) = \int_0^{2\pi} \partial_r^k\varphi(0,\theta)\phi(0,\theta)d\theta, \qquad\forall\,\varphi\in C^\infty(S),
    \end{equation}
    for some $k\in\N$ and $\phi\in C^\infty(\R^+\times \mathbb S^1)$. Then, $T\in \D'(S)$, it has at most order $k$ and $\supp(T)\subset \{p\}$.  
\end{prop}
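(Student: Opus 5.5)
The plan is to verify the three claims (linearity, continuity in the $C^k$ topology, support in $\{p\}$) directly from \cref{lem:continuity}, using a chart in which $(r,\theta)$ arise as polar coordinates. Linearity is immediate from \eqref{eq:assumption_T}. The coordinates $(r,\theta)$ of \cref{lem:rho_0} come from a system of polar coordinates $\xi:[0,r_0]\times\mathbb S^1\to\R^2$ composed with a smooth chart $\psi:V\to\R^2$ of $S$ sending $p$ to the origin. So for $\varphi\in C^\infty(S)$ we write $\varphi(r,\theta)=(\varphi\circ\psi^{-1})(\xi(r,\theta))$.

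\textbf{Step 1: reduction to Cartesian derivatives.} By Faà di Bruno, or simply by iterating the chain rule, $\partial_r^k[(\varphi\circ\psi^{-1})\circ\xi](0,\theta)$ is a finite sum
\[
\sum_{1\le|\nu|\le k}c_\nu(\theta)\,\partial^\nu(\varphi\circ\psi^{-1})(0),
\]
plus the term $\varphi(p)$ when $k=0$. The coefficients $c_\nu(\theta)$ are polynomials in the derivatives $\partial_r^j\xi(0,\theta)$, $j\le k$, and are therefore smooth and bounded on $\mathbb S^1$. Inserting this into \eqref{eq:assumption_T} gives
\[
T(\varphi)=\sum_{|\nu|\le k}C_\nu\,\partial^\nu(\varphi\circ\psi^{-1})(0),\qquad C_\nu:=\int_0^{2\pi}c_\nu(\theta)\phi(0,\theta)\,d\theta .
\]
This is a finite linear combination of derivatives of order at most $k$ of a Dirac mass at $p$ in the chart $\psi$.

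\textbf{Step 2: continuity.} For every compact $K\subset S$, Step 1 yields
\[
|T(\varphi)|\le \Big(\sum_\nu|C_\nu|\Big)\,p_{k,m,\alpha}(\varphi),
\]
where $\alpha$ indexes the chart $\psi$ and $m$ is chosen so that $K^\alpha_m\ni 0$. This bound holds for all $\varphi\in C^\infty_{c,K}(S)$. Criterion (iii) of \cref{lem:continuity} then shows that $T\in\D'(S)$ has order at most $k$. If $p\notin K$, the bound is trivial because $T$ vanishes on $C^\infty_{c,K}(S)$.

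\textbf{Step 3: support.} If $\varphi\in C^\infty_c(S\setminus\{p\})$, then $\varphi$ vanishes in a neighbourhood of $p$, so all its derivatives at $p$ are zero and $T(\varphi)=0$ by Step 1. Hence $S\setminus\{p\}\subset\Omega_T$, which gives $\supp T\subset\{p\}$.

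The only step requiring care is Step 1. Smoothness of $\xi$ up to $r=0$, which is part of the definition of a system of polar coordinates and is guaranteed for the coordinates of \cref{lem:rho_0}, is exactly what makes $\partial_r^k\varphi(0,\theta)$ a bounded combination of Cartesian derivatives at $p$. No negative powers of $r$ appear, because the derivatives are taken in $r$ and not in $\theta$.
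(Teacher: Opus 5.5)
Your proposal is correct and follows the paper's proof. The paper likewise uses Fa\`a di Bruno's formula to write $\partial_r^k(\varphi\circ\xi)(0,\theta)$ as a combination of Cartesian derivatives of order at most $k$ at $p$, with coefficients polynomial in $\partial_r^j\xi(0,\theta)$, bounds $|T(\varphi)|$ by a chart seminorm to invoke \cref{lem:continuity}, and reads off $\supp(T)\subset\{p\}$ from the same formula; your step of integrating the coefficients against $\phi(0,\theta)$, which exhibits $T$ as a finite combination of derivatives of $\delta_p$, is only a slightly cleaner presentation of that argument.
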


\begin{proof}
We prove that $T\in\D'(S)$. This is straightforward if $k=0$, hence we assume that $k\geq 1$. Firstly, observe that if $\supp(\varphi)\cap B=\emptyset$, then $T(\varphi)=0$, hence we check the continuity of $T$ on every $K\subset B$ compact. Let $\psi:V\subset S\to \R^2$ be a coordinate chart such that $B\subset V$ and on $\psi(B)$, we use the (smooth) polar coordinates $\xi:\R^+_r\times\mathbb S^1_\theta\to \R^2$ on $\psi(B)$, centered at $(0,0)$. By \cref{lem:rho_0}, it holds $t_\cut(\xi(r,\theta))=r$. Secondly, as a consequence of Fa\`a di Bruno's formula, there are $c_{k,\alpha}\in\N$ such that   
     \begin{equation}
     \label{eq:faa_di_bruno}
         \partial_r^k (\varphi\circ \xi)(0,\theta) = \sum_{j=1}^k \sum_{\substack{\alpha\in\N_0^j\\|\alpha|=k}} c_{k,\alpha}D^j\varphi(0)\left[\partial_r^{\alpha_1}\xi(0,\theta),\ldots,\partial_r^{\alpha_j}\xi(0,\theta)\right],
    \end{equation}
    where $D^j\varphi(0):(\R^2)^j\to\R$ is the $j$-th Fr\'echet derivative of $\varphi$ at $0\in \psi(B)$, viewed as a symmetric multilinear map and $\N_0^j$ is the set of $j$-tuples with entries $\geq 1$. Thus, recalling that $\xi,\phi$ are smooth functions on $\R^+\times\mathbb S^1$, using  \eqref{eq:faa_di_bruno}, we estimate, for every $\varphi\in C^k_{c,K}(S)$,  
    \begin{equation}
         |T(\varphi)|\leq 2\pi\sum_{j=1}^k \sum_{\substack{\alpha\in\N_0^j\\|\alpha|=k}} c_{k,\alpha} |D^j\varphi(p)| \bigg(\sup_{\substack{\ell=1,\ldots,k\\\theta\in \mathbb S^1}} |\partial_r^\ell\xi(0,\theta)|+1\bigg)^j\|\phi(0,\cdot)\|_{C(\mathbb S^1)}\leq C_B \, p_{k,\psi}(\varphi), 
     \end{equation}
     where $p_{k,\psi}$ is the semi-norm \eqref{eq:chart_semi-norm} (associated to the chart $\psi$ and to the compact set $\psi(B)$). Since $K$ was arbitrary, by \cref{lem:continuity} we infer that $T$ is a distribution of order at most $k$, as claimed. Finally, \eqref{eq:faa_di_bruno} implies that $\supp(T)\subset\{p\}$, since, for every $\varphi\in C^\infty(S)$, with $p\notin\supp(\varphi)$, we have $T(\varphi)=0$. 
\end{proof}
\subsection{Swapping finite parts and integrals} We prove the technical step needed in \cref{lem:string_theory} to prove that $T_n$ is a distribution, namely that integrals can be exchanged with finite parts.
\begin{lemma}\label{lem:finite-part-swap}
    Let $r_0>0$, let $n\geq0$ be an integer, and let
    $f\in C^\infty([0,r_0]\times\mathbb S^1)$.
    Then the following finite parts exist and satisfy
    \begin{equation}
        \Fp_{\delta\to0^+}\left(
        \int_\delta^{r_0}\int_{\mathbb S^1}
        \frac{f(s,\theta)}{s^{n+1}}\,d\theta\,ds\right)
        =
        \int_{\mathbb S^1}
        \Fp_{\delta\to0^+}
        \left(\int_\delta^{r_0}
        \frac{f(s,\theta)}{s^{n+1}}\,ds\right)
        d\theta.
    \end{equation}
\end{lemma}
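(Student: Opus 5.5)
The plan is to reduce everything to one-variable statements in $\theta$ by Taylor expanding $f$ in $s$ with an integral remainder. For every $\theta$ we write
\begin{equation}
    f(s,\theta)=\sum_{j=0}^{n}\frac{\partial_s^jf(0,\theta)}{j!}s^j+s^{n+1}R(s,\theta),\qquad R(s,\theta)=\frac1{n!}\int_0^1(1-\tau)^n\partial_s^{n+1}f(\tau s,\theta)\,d\tau .
\end{equation}
Here $R$ is continuous (indeed smooth) on $[0,r_0]\times\mathbb S^1$, because $f$ is smooth up to $s=0$. Dividing by $s^{n+1}$, the integrand separates into two parts. The first is a finite sum of pure powers $s^{j-n-1}$ multiplied by the smooth angular coefficients $c_j(\theta)=\partial_s^jf(0,\theta)/j!$. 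The second is the bounded remainder $R(s,\theta)$.

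First, we handle the explicit terms. For $j<n$, integrating in $s$ gives
\begin{equation}
    \int_\delta^{r_0}s^{j-n-1}\,ds=\frac{r_0^{j-n}-\delta^{j-n}}{j-n}.
\end{equation}
For $j=n$ it gives $\log r_0-\log\delta$. In both cases the result is the product of a function of $\delta$ alone and $c_j(\theta)$. Integrating over $\mathbb S^1$, with or without first taking the finite part in $\delta$, therefore gives the same answer. In each variant the expansion has the form \eqref{eq:finite-part-definition} with coefficients $\int_{\mathbb S^1}c_j$. By linearity of $\Fp$ (\cref{rmk:anti_matter}) and uniqueness of coefficients (\cref{lem:log_expansion_intrinsic}), the finite part of the $\theta$-integrated expression equals the $\theta$-integral of the pointwise finite parts. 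Both are equal to
\begin{equation}
    \sum_{j<n}\frac{r_0^{j-n}}{j-n}\int_{\mathbb S^1}c_j\,d\theta+\log r_0\int_{\mathbb S^1}c_n\,d\theta .
\end{equation}
Fubini applies trivially here, since for fixed $\delta>0$ every integrand is bounded.

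Second, we handle the remainder term. Since $R$ is bounded on the compact set $[0,r_0]\times\mathbb S^1$, the following limit exists:
\begin{equation}
    \int_\delta^{r_0}\int_{\mathbb S^1}R\,d\theta\,ds\;\longrightarrow\;\int_0^{r_0}\int_{\mathbb S^1}R\,d\theta\,ds \qquad \text{as } \delta\to0^+ .
\end{equation}
So its finite part is this limit. Pointwise in $\theta$, the finite part of $\int_\delta^{r_0}R(s,\theta)\,ds$ is likewise $\int_0^{r_0}R(s,\theta)\,ds$. The two quantities coincide by Fubini.

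Summing the two contributions proves both the existence of all the finite parts and the claimed identity. The only delicate point is making sure the $\delta$-dependence of the singular terms factors out of the $\theta$-integral, so that no uniformity issue in $\theta$ arises. The Taylor expansion with a jointly smooth remainder gives exactly this, so I do not expect a genuine obstacle.
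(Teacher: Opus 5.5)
Your proof is correct and follows the same route as the paper. Both Taylor expand $f$ in $s$ to order $n$ with a jointly smooth remainder $s^{n+1}R(s,\theta)$. In both, the singular terms factor as explicit functions of $\delta$ times the angular coefficients, and the remainder contributes a convergent integral.

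The paper packages this as a single expansion whose $O(\delta)$ error is uniform in $\theta$ (which follows from boundedness of $R$ on the compact set), and then integrates that expansion in $\theta$. This amounts to your splitting via linearity of $\Fp$ together with Fubini.
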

\begin{proof}
    Computing a Taylor expansion of $f$ gives
    \begin{equation}
        f(s,\theta)
        =\sum_{k=0}^n f_k(\theta)s^k
        +s^{n+1}R(s,\theta),
        \qquad
        f_k(\theta):=\frac{\partial_s^kf(0,\theta)}{k!},
    \end{equation}
    where $R$ is smooth. Integrating term by term yields
    \begin{equation}
        \int_\delta^{r_0}
        \frac{f(s,\theta)}{s^{n+1}}\,ds
        =
        \sum_{k=0}^{n-1}
        \frac{f_k(\theta)}{n-k}\delta^{k-n}
        -f_n(\theta)\log\delta
        +C(\theta)+O(\delta),
    \end{equation}
    where $C$ is the following smooth function 
    \begin{equation}
    \begin{aligned}
        C(\theta)
        =\int_0^{r_0}
        \frac{f(s,\theta)-\sum_{k=0}^n f_k(\theta)s^k}
        {s^{n+1}}\,ds-\sum_{k=0}^{n-1}
        \frac{f_k(\theta)}{n-k}r_0^{k-n}
        +f_n(\theta)\log r_0,
    \end{aligned}
\end{equation}
    and the remainder $O(\delta)$ is uniform in
    $\theta$, by compactness of $\mathbb S^1$.
    Thus the finite part for each fixed $\theta$ is $C(\theta)$.
    Integrating the expansion in $\theta$, its constant term
    is $\int_{\mathbb S^1}C(\theta)\,d\theta$, proving the identity.
\end{proof}
\section{Legendrian fronts}\label{app:Legendrian}
In this section, we give some details around Legendrian fronts. Let $q,p\in \mathbb R^n$, $\ell\in\mathbb R$, and, in coordinates $(q,\ell,p)\in \mathbb R^{2n+1}$, consider the smooth $1$-form 
\begin{equation}
    \alpha:=d\ell-p\cdot dq.
\end{equation}
We define the \emph{front projection} as the map $\pi_f:\R^{2n+1}\to\R^{n+1}$, $\pi_f(q,\ell,p)=(q,\ell).$
\begin{defi}\label{def:front}
    Let $M$ be a smooth manifold of $\dim(M)=n$, possibly with boundary. A smooth map $F:M\to \mathbb R^{n+1}$ is called a \emph{Legendrian front} if there exists a smooth immersion $\widetilde{F}:M\to \mathbb R^{2n+1}$ such that 
    \begin{equation}
        \widetilde{F}^*\alpha=0,\qquad F=\pi_f\circ \widetilde{F}.
    \end{equation}
    The map $\widetilde{F}$ is called a \emph{Legendrian lift} of $F$.
\end{defi}

Since, in the main body of the paper, we mostly encounter Legendrian fronts defined on $2$-dimensio\-nal manifolds, we restrict our analysis to this situation. Let $F:M\to\mathbb R^3$ be a smooth Legendrian front. We denote by $\operatorname{Crit}(F):=\{q\in M: \ker(d_qF)\neq \{0\}\}$ the set of critical points. Then, following the notations of \cite{SingularFronts}, a critical point $q\in M$ of $F$ is called \emph{non-degenerate} if, in local coordinates
$(u,v)$ near $q$, the map
\begin{equation}
	G=\partial_uF\times\partial_vF:M\to\mathbb R^3
\end{equation}
satisfies $\operatorname{rank}(d_qG)=1$.
This condition is independent of the choice of coordinates.

\begin{remark}\label{rmk:non_deg_sing}
	Write $F=(x,y,\ell)\in\R^3$ and let $(p_x,p_y)\in\R^2$ be the components of the momenta of $\widetilde F$. Since, $\widetilde F$ is a Legendrian lift of $F$, it holds 
	\begin{equation}\label{eq:contact_id}
	    d\ell=p_x\,dx+p_y\,dy.
	\end{equation}
    Set $W:=\det\bigl(\partial(x,y)/\partial(u,v)\bigr)$.
	Then the contact identity \eqref{eq:contact_id} gives $G=W(-p_x,-p_y,1)$.
	At a singular point $q\in M$, we have $W(q)=0$ and hence
	\begin{equation}
		d_qG=(-p_x(q),-p_y(q),1)\,d_qW.
	\end{equation}
	Thus $q$ is non-degenerate if and only if $d_qW\neq0$.
\end{remark}
Let us recall the models of non-degenerate singularities for $2$-dimensional Legendrian fronts.

\begin{example}[Cuspidal edge]\label{ex:cusp_edge} The smooth map $F_{ce}:\mathbb R^2\to\mathbb R^3$ defined as
\begin{equation}
    F_{ce}(u,v)=(u,\;v^2,\;v^3),
\end{equation}
is a Legendrian front. The critical set and double points locus are 
\begin{equation}\label{eq:crit_ce}
    \mathrm{Crit}(F_{ce})=\{(u,0)\in\mathbb R^2\},\qquad \mathrm{Doub}(F_{ce})=\emptyset.
\end{equation}
\end{example}
\begin{example}[Swallowtail]\label{ex:swallow_tail} The smooth map $F_{st}:\mathbb R^2\to\mathbb R^3$ defined as
\begin{equation}
    F_{st}(u,v)=\left(4u^3+2uv,\;v,\;3u^4+u^2v\right)
\end{equation}
is a Legendrian front. The critical set and double points locus are 
\begin{equation}\label{eq:crit_st}
    \mathrm{Crit}(F_{st})=\{(u,-6u^2)\in\mathbb R^2\,:\,u\in\mathbb R\},\qquad \mathrm{Doub}(F_{st})=\{(u,-2u^2)\in\mathbb R^2\,:\,u\neq 0\}.
\end{equation}
\end{example}
A point $q$ in the domain of a Legendrian front  $F$ is called a \emph{cuspidal edge} (resp.\,\emph{swallowtail}) if there exists a chart centered at $q$ and target coordinates in which $F$ has the normal form \cref{ex:cusp_edge} (resp.\,\cref{ex:swallow_tail}).
\begin{lemma}[{\cite[Prop.~1.3]{SingularFronts}}]
	\label{lem:crit_recon}
	Let $q\in M$ be a non-degenerate singular point of a smooth
	Legendrian front $F:M^2\to\mathbb R^3$.
	Then $\mathrm{Crit}(F)$ is a smooth curve near $q$,
	and $dF$ has rank one along it.
	Let $\gamma:(-t_0,t_0)\to M$, $t_0>0$, be a regular
	parametrization of this curve with $\gamma(0)=q$, and let
	$\eta(t)$ be a smooth non-vanishing vector field along
	$\gamma$ spanning $\ker d_{{\gamma(t)}}F$. Set
	\begin{equation}
		\Delta(t)=\det\bigl(\dot\gamma(t),\eta(t)\bigr),
	\end{equation}
	where the determinant is computed in local coordinates
	on $M$. Then:
	\begin{enumerate}
		\item $q$ is a cuspidal edge if and only if
		$\Delta(0)\neq0$,
		\item $q$ is a swallowtail if and only if
		$\Delta(0)=0$ and $\Delta'(0)\neq0$.
	\end{enumerate}
\end{lemma}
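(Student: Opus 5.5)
We plan to reduce the statement to Arnold's classification of Legendrian singularities via generating families, with the non-degeneracy assumption supplying exactly the transversality needed. First, the local structure of $\mathrm{Crit}(F)$. By \cref{rmk:non_deg_sing}, writing $F=(x,y,\ell)$ and $W=\det(\partial(x,y)/\partial(u,v))$, we have $\mathrm{Crit}(F)=\{W=0\}$ and $d_qW\neq0$. The implicit function theorem then shows that $\mathrm{Crit}(F)$ is a smooth regular curve near $q$. For the rank statement, note first that by the contact identity $d\ell=p_x\,dx+p_y\,dy$ we have $\operatorname{rank} dF=\operatorname{rank} d(x,y)$. If $d(x,y)$ vanished at some point of the curve, then every $2\times2$ minor would vanish to second order there, forcing $dW=0$ and contradicting non-degeneracy (which persists near $q$). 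Hence $\operatorname{rank} dF=1$ along the curve, and $\ker dF$ is spanned by a smooth non-vanishing field $\eta$.

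Second, a generating family. Since $dF$ has rank one, we choose coordinates so that $\eta(q)=\partial_v$ and, after a linear change in the target, $x$ is a submersion with $\partial_u x(q)\neq0$. Because $\widetilde F$ is an immersion while $d_q(x,y)$ has rank one, the map $(x,p_y)$ has rank two at $q$, so $(x,p_y)$ are local coordinates on $M$. The standard construction then gives the front as the discriminant (big front) of a one-parameter generating family $\Phi(t;x,y)$, with $t=p_y$ and $\Phi=\ell-p_y\,y$ written in these coordinates, so that $\partial_t\Phi=0$ cuts out the Legendrian. In this description a point of $\mathrm{Crit}(F)$ has type $A_k$ when $t\mapsto\Phi(t;x_0,y_0)$ has a critical point of multiplicity $k$. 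Along the curve $\mathrm{Crit}(F)$ the multiplicity is at least $2$, i.e.\ type $A_{\ge2}$, and the multiplicity increases exactly when the kernel direction $\eta$ becomes tangent to the critical curve.

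Third, identifying the types. In the generating-family coordinates one checks that $\Delta(t)=\det(\dot\gamma,\eta)$ is, up to a non-vanishing factor, $\partial_t^3\Phi$ restricted to the critical curve. Consequently $\Delta(0)\neq0$ corresponds to $A_2$, while $\Delta(0)=0$ together with $\Delta'(0)\neq0$ corresponds to $A_3$ with simple degeneration. We then need versality of the unfolding $\Phi$: the germs $\partial_x\Phi,\partial_y\Phi$ must span the local algebra $\mathbb R[t]/(t^{k})$ modulo constants. For $k=2$ this reduces to $d_qW\neq0$. For $k=3$ it combines $d_qW\neq0$ with $\Delta'(0)\neq0$. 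Once versality is established, Arnold's theorem (\cite[\S 6]{arnold}) gives that $\Phi$ is $\mathcal R^+$-equivalent to $t^3+xt$, respectively $t^4+xt^2+yt$. Their discriminants are exactly the normal forms of \cref{ex:cusp_edge} and \cref{ex:swallow_tail}, and the converse implications follow because the normal forms themselves have the stated values of $\Delta$, a quantity invariant under diffeomorphisms up to a non-vanishing factor.

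The main obstacle is the versality verification in the $A_3$ case, that is, translating $\Delta'(0)\neq0$ and $d_qW\neq0$ into the independence of $\partial_x\Phi$ and $\partial_y\Phi$ modulo $(t^3)$ in the local algebra. We would try to handle it by expressing $W$ and $\Delta$ through the Taylor coefficients of $\Phi$ in $t$, namely the coefficients of $t^2$ and $t$ as functions of $(x,y)$, and reading off the required Jacobian condition.
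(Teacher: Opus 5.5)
The paper does not prove this lemma; it quotes it from \cite{SingularFronts}. Your argument is essentially the standard generating-family and versality proof of that criterion, and it is correct in outline.

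The one real slip is the generating family. It should be $\Phi(t;x,y)=S(x,t)+t\,y$, with $S:=\ell-p_y\,y$ written in the coordinates $(x,t)=(x,p_y)$. Then $dS=p_x\,dx-y\,dt$, $\partial_t\Phi=y-y(x,t)$ and $\partial_t^2\Phi=-W$. With this $\Phi$, the step you flag as the main obstacle is immediate. $\partial_y\Phi=t$ supplies the linear term, so $A_2$ versality is automatic. $\partial_x\Phi=p_x(x,t)$ has $(t-t_0)^2$-coefficient $-\tfrac12\partial_xW(q)$. At an $A_3$ point $\partial_tW(q)=-\partial_t^3\Phi(q)=0$, so this coefficient is nonzero exactly by non-degeneracy. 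The condition $\Delta'(0)\neq0$ only ensures that the germ is exactly $A_3$, not versality.
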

\subsection{Stability lemmas}
We now prove a stability lemma for families of Legendrian fronts. We say that a family of maps $F^\ve:M\to\mathbb R^3$ is a \emph{smooth family of Legendrian fronts}, if it admits a smooth family of Legendrian lifts $\widetilde F^\ve:M\to\mathbb R^5$ (cf. \cref{def:front}). 

\noindent The following lemma is a well known consequence of the theory of Legendrian singularities (see \cite[$\S$ 20 and $\S$ 21]{arnold}). We include a proof for completeness. 

\begin{lemma}\label{lem:stability}
	Let $F:M\to \mathbb R^3$ be a Legendrian front and let $\mathscr D:=\mathrm{Doub}(F)$. Assume that
    \begin{enumerate}[i)]
        \item every point in $F(\mathscr D)$ has exactly two distinct preimages in $M$, which are regular points whose image tangent spaces are transverse,
        \item $\overline{\mathscr D}$ is a compact subset of $M\setminus \partial M$ and every point in $\overline {\mathscr D}\setminus \mathscr D$ is a swallowtail.
    \end{enumerate}
    Then, for every $F^\ve:M\to \mathbb R^3$ smooth family of fronts such that $F^0=F$, there exists $\ve_0>0$, and smooth families of diffeomorphisms
	$$
	h_\ve:M\to M,
	\qquad
	k_\ve:\mathbb R^3\to\mathbb R^3,
	\qquad
	h_0=\mathrm{id}_{M},
	\quad k_0=\mathrm{id}_{\mathbb R^3},
	$$
	and an open neighborhood $U$ of $\overline{\mathscr D}$ such that
	\begin{equation}\label{eq:front_global_trivialization}
		F^\ve\circ h_\ve=k_\ve\circ F^0
		\qquad\text{on }U,
		\qquad 0\leq\ve\leq\ve_0.
	\end{equation}
\end{lemma}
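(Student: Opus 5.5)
The argument is local-to-global: first build the trivialization near each point of $\overline{\mathscr D}$ using stability of the three local multigerm types, then glue the local pieces. By hypothesis ii) and compactness, $\overline{\mathscr D}$ is covered by finitely many neighborhoods of two kinds.

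Type A is a neighborhood of a pair of preimages $z_1,z_2$ of a point of $F(\mathscr D)$. By i) this is a bi-germ of two regular sheets meeting transversally.

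Type B is a neighborhood of a swallowtail point $q\in\overline{\mathscr D}\setminus\mathscr D$.

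The key input, from Arnold's theory of Legendrian singularities (\cite[\S 20--21]{arnold}), is that these multigerms are stable among Legendrian fronts.
\begin{itemize}
\item For the transverse double point, stability is elementary. Each sheet is an embedded surface, so under a $C^1$-small perturbation it stays an embedded surface. Transversality of the two tangent planes then persists, and the implicit function theorem straightens both perturbed sheets simultaneously by a target diffeomorphism $k_\ve$ close to the identity, smooth in $\ve$.
\item For the swallowtail, we use that $A_3$ Legendrian singularities are stable and have no moduli. Via a generating family, a smooth family $\widetilde F^\ve$ of Legendrian lifts corresponds to a deformation of the generating family $x^4+\lambda_1x^2+\lambda_2x$. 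Versality of this deformation gives contactomorphisms $\Theta_\ve$ of $\mathbb R^5$ and diffeomorphisms of $M$, smooth in $\ve$ and equal to the identity at $\ve=0$, that conjugate $\widetilde F^\ve$ to $\widetilde F^0$ near $q$. Contactomorphisms of $J^1\mathbb R^2$ preserving the fibration of the front projection induce the target maps $k_\ve$; one first applies a fibre-preserving normalization.
\end{itemize}

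After this step we have a finite cover of $\overline{\mathscr D}$ by open sets $U_i$ and local families with $F^\ve\circ h^i_\ve=k^i_\ve\circ F^0$ on $U_i$.

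Gluing is the main obstacle, since the local $h^i_\ve,k^i_\ve$ need not agree on overlaps. I would not glue the diffeomorphisms directly. Instead I would use the infinitesimal (Mather) method along the path $\ve\mapsto F^\ve$. Differentiating the desired identity in $\ve$ gives the homological equation
\[
\partial_\ve F^\ve = dF^\ve(\xi_\ve)-\eta_\ve\circ F^\ve ,
\]
to be solved for time-dependent vector fields $\xi_\ve$ on $M$ and $\eta_\ve$ on $\mathbb R^3$, with $\eta_\ve$ generated by a contact Hamiltonian so that the Legendrian condition is preserved.

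Local solvability of the homological equation near each point of $\overline{\mathscr D}$ is exactly the infinitesimal stability obtained in the previous step. These local solutions can be patched with partitions of unity, because the equation is linear in $(\xi,\eta)$. The one delicate point is patching $\eta$: it must be patched on the target near $F(\overline{\mathscr D})$ in a way compatible with both preimage sheets. This works because the two sheets are the only preimages nearby and $F$ restricted to a neighborhood of $\overline{\mathscr D}$ is proper.

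Integrating the time-dependent fields $\xi_\ve$ and $\eta_\ve$ for $\ve\in[0,\ve_0]$ produces $h_\ve$ and $k_\ve$ with $h_0=\mathrm{id}_M$ and $k_0=\mathrm{id}_{\mathbb R^3}$. Shrinking $U$ and $\ve_0$ keeps all the flows defined on a neighborhood of the compact set $\overline{\mathscr D}$, which lies in the interior of $M$ by ii). This gives \eqref{eq:front_global_trivialization}.
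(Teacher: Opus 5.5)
Your proposal is correct and follows essentially the same route as the paper. The ingredients match: local stability of the transverse bi-germs and of the swallowtails (the paper cites Arnold for existence and Ishikawa for smooth dependence on $\ve$, where you invoke versality of the generating family); differentiation in $\ve$ to get the local homological equation $\partial_\ve F^\ve=dF^\ve X^\ve+Y^\ve\circ F^\ve$; patching with a target partition of unity via $X^\ve=\sum_i(\rho_i\circ F^\ve)X^\ve_i$ and $Y^\ve=\sum_i\rho_iY^\ve_i$, using $(F^\ve)^{-1}(\mathcal V_i)\subset\mathcal U_i$, which is exactly your properness/compatibility remark; and integration of the flows. The only superfluous element is requiring $\eta_\ve$ to be generated by a contact Hamiltonian: $\eta_\ve$ lives on the front space $\mathbb R^3$, and any vector field there works, since the statement only asks for diffeomorphisms $k_\ve$.
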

\begin{proof}
	Up to replacing the maps $F^\ve$ by their restriction to a suitable compact neighborhood of $\overline{\mathscr D}$, we may assume without loss of generality that $M$ is compact. The assumptions on $\mathscr D$ imply $F^{-1}(F(\overline {\mathscr D}))=\overline{\mathscr D}$. Indeed, a point outside $\overline{\mathscr D}$ with the same image as a point of $\overline {\mathscr D}$ is itself a double point, and thus belongs to $\mathscr D$.
	Moreover, for every $y\in F(\overline {\mathscr D}\setminus \mathscr D)$, the fiber $F^{-1}(y)$ is a singleton, since
	$(\overline {\mathscr D}\setminus \mathscr D)\cap \mathscr D=\emptyset $. Therefore, for every $y\in F(\overline {\mathscr D})$, the fiber $F^{-1}(y)$ lies in $\overline {\mathscr D}$ and consists
	either of one swallowtail or of two regular points whose image tangent spaces intersect transversely. We now claim that for any $y\in F(\overline {\mathscr D})$ and $\ve_0>0$ sufficiently small, there exist two open neighborhoods $V_y,V_{y,\ve}\subset\R^3$ of $y$, and two open neighborhoods $U_y, U_{y,\ve}\subset M\setminus \partial M$ of $F^{-1}(y)$, and diffeomorphisms 
    \begin{equation}\label{eq:the_diffeos}
        \varphi_{y,\ve}:U_y\to U_{y,\ve},\qquad \psi_{y,\ve}:V_y\to V_{y,\ve},\qquad \varphi_{y,0}=\mathrm{Id}_{U_y}, \qquad \psi_{y,0}=\mathrm{Id}_{V_y}\qquad \ve\in[0,\ve_0],
    \end{equation}
    depending smoothly on $\ve$ such that the following equality holds
    \begin{equation}\label{eq:local_identity}
	F^\ve\circ \varphi_{y,\ve}=\psi_{y,\ve}\circ F^0,\qquad \text{on $U_{y}$}.
	\end{equation}
   If $F^{-1}(y)$ is a pair of distinct regular double points whose image tangent spaces are transverse, the claim follows by standard transversality theory. If $F^{-1}(y)$ is a single swallowtail see \cite[$\S$ 21]{arnold} for existence for every fixed $\ve$, and \cite[Thm.\,4.1]{Ishikawa2005} applied to the lifts $\widetilde F^\ve$ for smoothness in $\ve$. A standard partition of unity argument can be used to patch together the diffeomorphisms of \eqref{eq:local_identity}. First, we define the vector fields
	\begin{equation}\label{eq:i_campi_ve}
	    X_y^\ve
	=-(\partial_\ve \varphi_{y,\ve})\circ \varphi_{y,\ve}^{-1}\in\Gamma(TU_{y,\ve}),
	\qquad
	Y_y^\ve
	=(\partial_\ve \psi_{y,\ve})\circ \psi_{y,\ve}^{-1}\in\Gamma(TV_{y,\ve}).
	\end{equation}
	Differentiating the local identity \eqref{eq:local_identity} and composing with
	$\varphi_{y,\ve}^{-1}$ yields
	\begin{equation}\label{eq:local_front_velocity}
		\partial_\ve F^\ve=dF^\ve X_y^\ve+Y_y^\ve\circ F^\ve,\qquad \text{on $U_{y,\ve}$}.
	\end{equation}
    By compactness of the parameter interval $[0,\ve_0]$, and by continuity of the diffeomorphisms \eqref{eq:the_diffeos} with respect to $\ve$, it is possible to construct independent of $\ve$ refinements $\mathcal U_y\subset U_{y,\ve}$ and $\mathcal V_y\subset V_{y,\ve}$ over which the fields $X_{y}^\ve$ and $Y_{y}^\ve$ are well defined and \eqref{eq:local_front_velocity} holds. 
    Since $\mathcal U_y$ is a neighborhood of the unique $F^0$-preimages of $y$, up to shrinking $\mathcal V_y$ we have $(F^0)^{-1}(\mathcal V_y)\subset \mathcal U_y$. By compactness of $M$ and continuity of $F^\ve$ with respect to $\ve$, up to shrinking $\mathcal V_y$ we may assume that $(F^\ve)^{-1}(\mathcal V_y)\subset \mathcal U_y$ for all $y\in F^0(\overline{\mathscr D})$. Summarizing we have 
    \begin{equation}\label{eq:inclusion_F_ve}
        \mathcal U_y\subset U_{y,\ve},\qquad\mathcal V_y\subset V_{y,\ve},\qquad (F^{\ve})^{-1}(\mathcal V_y)\subset\mathcal U_y,\qquad\forall\, y\in F^0(\overline{\mathscr D}),\qquad\,\ve\in [0,\ve_0].
    \end{equation}
    By compactness of $\overline{\mathscr D}$, we may take finitely many $\{\mathcal V_i:=\mathcal V_{y_i}\}_{1}^n$ covering $F^0(\overline{\mathscr D})$ and we denote
    \begin{equation}
      X_i^\ve=X_{y_i}^\ve, \qquad  Y_i^\ve=Y_{y_i}^\ve.
    \end{equation}
    We now choose nonnegative functions $\rho_i\in C_c^\infty(\mathcal V_i)$
	such that $\sum_i\rho_i=1$ on a neighborhood of $F(\overline{\mathscr D})$ and we define
	$$
	X^\ve=\sum_{i=1}^n(\rho_i\circ F^\ve)X_i^\ve,\qquad Y^\ve=\sum_{i=1}^n\rho_iY_i^\ve,
	$$
	Note that, since $\rho_i\in C_c^\infty(\mathcal V_i)$ and by \eqref{eq:inclusion_F_ve}, $X^\ve$ is a smooth vector field defined globally on $M$, which is zero outside of $\bigcup_i\mathcal U_i$, while $Y^\ve$ is a smooth vector field on $\R^3$ which is zero outside of $\bigcup_i \mathcal V_i$. In particular $X^\ve$ and $Y^\ve$ are non-autonomous vector fields with uniformly compact support for $\ve\in [0,\ve_0]$. Let $\mathcal V\subset \bigcup_i \mathcal V_i $ be a neighborhood of $F(\overline{\mathscr D})$ over which $\sum_i\rho_i=1$. Since $\overline{\mathscr D}\subset(F^{\ve})^{-1}(\mathcal V)$ for all $\ve\in[0,\ve_0]$, by compactness of the parameter interval $[0,\ve_0]$, and by continuity of $F^\ve$ with respect to $\ve$, we can find $\mathcal U\subset \bigcup_i\mathcal U_i$ open neighborhood of $\overline{\mathscr D}$ such that $\mathcal U\subset (F^{\ve})^{-1}(\mathcal V)$ for all $\ve\in [0,\ve_0]$. Multiplying the local equations \eqref{eq:local_front_velocity} by $\rho_i\circ F^\ve$ and summing over $i$ we obtain
	\begin{equation}\label{eq:global_front_velocity}
		\partial_\ve F^\ve=dF^\ve X^\ve+Y^\ve\circ F^\ve,\qquad \text{on $\mathcal U$}.
	\end{equation}
	Let $h_\ve$ and $k_\ve$ be the time-dependent flows defined by
	$$
	\partial_\ve h_\ve=-X^\ve\circ h_\ve,
	\qquad
	\partial_\ve k_\ve=Y^\ve\circ k_\ve,
	\qquad
	h_0=\mathrm{id}_M,\quad k_0=\mathrm{id}_{\mathbb R^3}.
	$$
	The uniformly compact supports of $X_\ve$ and $Y_\ve$ ensure that the flows are
	defined, on the entire $M$ and $\R^3$, throughout the whole
	parameter interval $[0,\ve_0]$. They are smoothly varying diffeomorphisms and they are the identity outside fixed compact sets. Let $U\subset \mathcal U$ be an open neighborhood of $\overline{\mathscr D}$ with the property that for any $q\in U$, $h_\ve(q)$ remains in $\mathcal U$ for every $\ve\in[0,\ve_0]$. Then, equation \eqref{eq:global_front_velocity} holds evaluated at $h_\ve(q)$ for every $\ve\in[0,\ve_0]$. Consequently 
	$$
	\frac{d}{d\ve}\bigl(F^\ve\circ h_\ve(q)\bigr)
	=\bigl(\partial_\ve F^\ve-dF^\ve X^\ve\bigr)\circ h_\ve(q)
	=Y^\ve\circ (F^\ve\circ h_\ve(q)).
	$$
	Thus, both $F^\ve\circ h_\ve(q)$ and $k_\ve\circ F^0(q)$ solve
	the same ODE with initial value $F^0(q)$. In other words $F^\ve\circ h_\ve(q)=k_\ve\circ F^0(q)$ for all $q\in U$.
\end{proof}
\begin{corollary}\label{cor:stability}
    Let $M=I\times\mathbb S^1$, where $I\subset \R$ is an interval. Let $F:M\to \R^3$ be a Legendrian front satisfying the hypotheses of \cref{lem:stability} with 
    \begin{equation}
        \mathscr D=\{s_0\}\times(\mathbb S^1\setminus\{\phi_1^0,\dots,\phi_k^0\}),
    \end{equation}
    where $s_0\in\mathrm{int}\, I$, and $(s_0,\phi_i^0)\in\{s_0\}\times\mathbb S^1$ are the swallowtails of $F$. Let $F^\ve:M\to \R^3$ be a smooth family of Legendrian fronts, with $F^0=F$. Then, up to shrinking the interval $I$, there exists a smooth function $f:[0,\ve_0]\times\mathbb S^1\to I$ such that 
    \begin{equation}
        \mathrm{Doub}(F^\ve,I\times\mathbb S^1)=\bigl\{(f(\ve,\phi),\phi)\,:\,\phi\in\mathbb S^1\setminus\{\phi_1^\ve,\dots,\phi_k^\ve\}\bigr\},\qquad \ve\in [0,\ve_0],
    \end{equation}
    where $\ve \mapsto\phi_i^\ve\in\mathbb S^1$ is smooth and $f(0,\phi)\equiv s_0$.
\end{corollary}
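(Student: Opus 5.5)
The plan is to combine \cref{lem:stability} with a transversality argument along the double curve. First apply \cref{lem:stability} to the family $F^\ve$. This gives $\ve_0>0$, smooth families of diffeomorphisms $h_\ve$ of $M$ and $k_\ve$ of $\R^3$ with $h_0=\mathrm{id}$ and $k_0=\mathrm{id}$, and a neighborhood $U$ of $\overline{\mathscr D}=\{s_0\}\times\mathbb S^1$ on which $F^\ve\circ h_\ve=k_\ve\circ F^0$.

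Next we locate the double points. For $\ve$ small, $\mathrm{Doub}(F^\ve,I\times\mathbb S^1)$ lies in $h_\ve(U)$, after shrinking $I$ to a thin band $\{|s-s_0|<\eta\}\subset U$. Indeed, away from $U$ the map $F^0$ has no double points, and this is stable under $C^1$-small perturbation by compactness. There are two sources of trouble. Pairs of points near each other are excluded using injectivity of $dF^\ve$ off the critical set together with the local normal forms of \cref{ex:cusp_edge,ex:swallow_tail}. Pairs of far-apart points are excluded because $F^0$ separates them. Inside $h_\ve(U)$, conjugation gives $\mathrm{Doub}(F^\ve)\cap h_\ve(U)=h_\ve(\mathscr D\cap U)$. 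Hence
\begin{equation}
\mathrm{Doub}(F^\ve,I\times\mathbb S^1)=h_\ve\bigl(\{s_0\}\times(\mathbb S^1\setminus\{\phi^0_1,\dots,\phi^0_k\})\bigr),
\end{equation}
provided the points $k_\ve F^0(q)$ are not also hit from outside $h_\ve(U)$, which the previous step already ensures.

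It remains to write the curve $h_\ve(\{s_0\}\times\mathbb S^1)$ as a graph over $\phi$. The map $(\ve,\phi)\mapsto h_\ve(s_0,\phi)=(a(\ve,\phi),b(\ve,\phi))$ is smooth, with $a(0,\phi)=s_0$ and $b(0,\phi)=\phi$. So $\phi\mapsto b(\ve,\phi)$ is a diffeomorphism of $\mathbb S^1$ for $\ve$ small, by compactness and $\partial_\phi b\to1$ uniformly. Set $f(\ve,\phi):=a(\ve,b(\ve,\cdot)^{-1}(\phi))$ and $\phi_i^\ve:=b(\ve,\phi_i^0)$. Smoothness of $f$ and of $\phi_i^\ve$ in $\ve$ follows from the implicit function theorem applied to $b(\ve,\psi)=\phi$, and $f(0,\cdot)\equiv s_0$. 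The swallowtails are mapped by $h_\ve$ to the swallowtails of $F^\ve$, which are excluded from $\mathrm{Doub}$.

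The main obstacle is the localization step: proving that no new double points of $F^\ve$ appear outside $U$ or between $U$ and its complement. I would argue by contradiction, as in the proof of \cref{lem:singular_pos}. Take sequences $\ve_n\to0$ and distinct pairs $z_n\neq z_n'$ with $F^{\ve_n}(z_n)=F^{\ve_n}(z_n')$, not both in $h_{\ve_n}(U)\cap\mathscr D$-images, and pass to limits $z,z'$. If $z\neq z'$, then $(z,z')$ is a double pair of $F^0$, so both lie in $\overline{\mathscr D}\subset U$, and the conjugacy gives the contradiction. If $z=z'$, we use local injectivity: at regular points by the inverse function estimate, and at critical points by the normal forms, which are stable via \cref{lem:stability} applied locally.
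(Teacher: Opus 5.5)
Your overall route is the paper's. You apply \cref{lem:stability}, shrink $I$, read off $\mathrm{Doub}(F^\ve,I\times\mathbb S^1)=h_\ve(\mathscr D)$ from the conjugacy, and turn $h_\ve(\{s_0\}\times\mathbb S^1)$ into a graph by inverting its angular component. That last step, $f(\ve,\phi)=a(\ve,b(\ve,\cdot)^{-1}(\phi))$ and $\phi_i^\ve=b(\ve,\phi_i^0)$, is exactly the paper's.

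The divergence is the localization step you single out as the main obstacle. It is not an obstacle, and the arguments you sketch for it should be dropped.

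\textbf{Why it is automatic.} The conclusion concerns $\mathrm{Doub}(F^\ve,I\times\mathbb S^1)$ for the \emph{shrunken} $I$. Both members of a double pair must therefore lie in $I\times\mathbb S^1$, so pairs with a member outside the band simply do not count. It suffices to do the following.
\begin{enumerate}
\item Pick $U_1$ with $\overline{U_1}\subset U$.
\item Shrink $I$ and $\ve_0$ so that $h_\ve(\overline{\mathscr D})\subset I\times\mathbb S^1\subset h_\ve(U_1)$ for all $\ve\in[0,\ve_0]$. This is possible because $h_\ve\to\mathrm{id}$ uniformly.
\item Any double pair of $F^\ve$ in the band then pulls back by $h_\ve^{-1}$, using injectivity of $k_\ve$, to a double pair of $F^0$ in $U$, hence lies in $\mathscr D$.
\item Conversely, $h_\ve$ maps every double pair on $\mathscr D$ into the band.
\end{enumerate}
This is precisely the paper's argument. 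Note two requirements you did not state: the band must sit inside $h_\ve(U)$, not merely inside $U$; and $h_\ve(\overline{\mathscr D})$ must stay inside the band, so that partners of points of $h_\ve(\mathscr D)$ are admissible.

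\textbf{Two claims not supported by the hypotheses.}
\begin{itemize}
\item You claim that absence of double points of $F^0$ away from $U$ persists under $C^1$-small perturbation. \cref{lem:stability} assumes nothing about critical points of $F^0$ away from $\overline{\mathscr D}$, and near a degenerate one a small perturbation within fronts can create swallowtails carrying new double curves.
\item You claim that local injectivity at critical points follows from \cref{lem:stability} ``applied locally''. That lemma only produces a conjugacy on a neighborhood of $\overline{\mathscr D}$, so it says nothing at, for example, an isolated cuspidal edge or a degenerate critical point elsewhere.
\end{itemize}
Your remark that nearby pairs are excluded is also inaccurate close to the swallowtails, where the double curve consists precisely of nearby pairs.

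None of this is needed once the band is chosen as above. Working in a closed band contained in $h_\ve(U)$, your contradiction argument reduces to the conjugacy argument in both cases $z\neq z'$ and $z=z'$. For large $n$, $h_{\ve_n}^{-1}(z_n)$ and $h_{\ve_n}^{-1}(z_n')$ are distinct points of $U$ with the same $F^0$-image, so they lie in $\mathscr D$.
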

\begin{proof}
    Let $U_1$ be a neighborhood of $\overline{\mathscr D}$ whose closure is contained in the neighborhood $U$ of validity of
	\eqref{eq:front_global_trivialization}. By reducing $I$ and $\ve_0$, we may arrange that
	$$
	h_\ve(\overline{\mathscr D})\subset I\times\mathbb S^1
	\subset h_\ve(U_1)
	\qquad0\leq\ve\leq\ve_0.
	$$
	Then, injectivity of $h_\ve$ and $k_\ve$ together with
	\eqref{eq:front_global_trivialization}, gives
	$$
	\mathrm{Doub}(F^\ve,I\times\mathbb S^1)
	=h_\ve(\mathscr D).
	$$
	Indeed, every double pair of $F^\ve$ in $I\times\mathbb S^1$ pulls back through $h_\ve$ to a double pair
	of $F^0$ in $U_1$, and both members of every double pair on $\mathscr D$
	are mapped into $I\times\mathbb S^1$. We write $h_\ve(s_0,\phi)=(r(\ve,\phi),g(\ve,\phi))$ for the components of $h_\ve$ restricted to $s=s_0$. Since $g(0,\phi)=\phi$, the maps $g_\ve=g(\ve,\cdot)$
	are diffeomorphisms of $\mathbb S^1$ for small $\ve$, with
	smoothly varying inverses. Thus the map $f$ defined by
	$$
	f(\ve,\phi):=r(\ve,g_\ve^{-1}(\phi))
	$$
	is smooth and satisfies
	\begin{equation}\label{eq:conclusion_front_graph1}
		h_\ve(\mathscr D)=\{(f(\ve,\phi),\phi):\phi\in\mathbb S^1\}\setminus\{h_\ve(s_0,\phi_1^0),\dots,h_\ve(s_0,\phi_k^0)\},
		\qquad f(0,\phi)=s_0.
	\end{equation}
    Setting $\phi_i^\ve:=g(\ve,\phi_i^0)$ we conclude the proof.
\end{proof}
\bibliographystyle{alphaabbr}

    \bibliography{bibliography11}

\newcommand{\etalchar}[1]{$^{#1}$}
\begin{thebibliography}{GGPT03}

\bibitem[ABB20]{Agrachev}
A.~Agrachev, D.~Barilari, and U.~Boscain.
\newblock {\em A comprehensive introduction to sub-{R}iemannian geometry}, volume 181 of {\em Cambridge Studies in Advanced Mathematics}.
\newblock Cambridge University Press, Cambridge, 2020.
\newblock From the Hamiltonian viewpoint, With an appendix by Igor Zelenko.

\bibitem[ABBR25]{ABBR2025}
A.~A. Agrachev, S.~Baranzini, E.~Bellini, and L.~Rizzi.
\newblock Quantitative tightness for three-dimensional contact manifolds: a sub-{R}iemannian approach.
\newblock {\em Nonlinearity}, 38(11):Paper No. 115011, 62, 2025.

\bibitem[ABPT26]{Lucia}
R.~Adami, U.~Boscain, D.~Prandi, and L.~Tessarolo.
\newblock Schr\"{o}dinger evolution on surfaces in 3{D} contact sub-{R}iemannian manifolds.
\newblock {\em J. Differential Equations}, 453(part 5):Paper No. 113915, 29, 2026.

\bibitem[Agr96]{agrexp}
A.~A. Agrachev.
\newblock Exponential mappings for contact sub-{R}iemannian structures.
\newblock {\em J. Dynam. Control Systems}, 2(3):321--358, 1996.

\bibitem[AGZV12]{arnold}
V.~I. Arnold, S.~M. Gusein-Zade, and A.~N. Varchenko.
\newblock {\em Singularities of differentiable maps. {V}olume 1}.
\newblock Modern Birkh\"{a}user Classics. Birkh\"{a}user/Springer, New York, 1985 edition, 2012.
\newblock Classification of critical points, caustics and wave fronts, Translated from the Russian by Ian Porteous based on a previous translation by Mark Reynolds.

\bibitem[AS10]{book_control}
A.~A. Agrachev and Y.~L. Sachkov.
\newblock {\em Control Theory from the Geometric Viewpoint}.
\newblock Springer Berlin, Heidelberg, 2010.

\bibitem[Bal03]{Bal03}
Z.~M. Balogh.
\newblock Size of characteristic sets and functions with prescribed gradient.
\newblock {\em Journal f\"{u}r die Reine und Angewandte Mathematik. [Crelle's Journal]}, 564:63--83, 2003.

\bibitem[BB23]{BellBosc}
E.~Bellini and U.~Boscain.
\newblock Surfaces of genus $g\geq 1$ in 3{D}-contact sub-{R}iemannian manifolds.
\newblock {\em ESAIM: COCV}, 29(79), 2023.

\bibitem[BB24]{BarBoss2024}
D.~Barilari and T.~Bossio.
\newblock Steiner and tube formulae in 3{D} contact sub-{R}iemannian geometry.
\newblock {\em Commun. Contemp. Math.}, 26(7):Paper No. 2350034, 40, 2024.

\bibitem[BBC22]{Daniele}
D.~Barilari, U.~Boscain, and D.~Cannarsa.
\newblock On the induced geometry on surfaces in 3{D} contact sub-{R}iemannian manifolds.
\newblock {\em Control Optim. Calc. Var. 28}, 109, 2022.

\bibitem[BBP25]{BBP25}
D.~Barilari, E.~Bellini, and A.~Pinamonti.
\newblock Curvature measures and the sub-{R}iemannian {G}auss-{B}onnet theorem.
\newblock {\em ArXiv Preprint 2509.26460}, 2025.

\bibitem[Bel26]{Bellini2026}
E.~Bellini.
\newblock On the tightness of left-invariant contact structures.
\newblock {\em Bull. Lond. Math. Soc.}, 58(6):Paper No. e70402, 12, 2026.

\bibitem[BFF{\etalchar{+}}15]{BaloghSteiner}
Z.~M. Balogh, F.~Ferrari, B.~Franchi, E.~Vecchi, and K.~Wildrick.
\newblock Steiner's formula in the {H}eisenberg group.
\newblock {\em Nonlinear Anal.}, 126:201--217, 2015.

\bibitem[BR19]{BarRiz2019}
D.~Barilari and L.~Rizzi.
\newblock Sub-{R}iemannian interpolation inequalities.
\newblock {\em Invent. Math.}, 215(3):977--1038, 2019.

\bibitem[BRR26]{BossioRizziRossi2024}
T.~Bossio, L.~Rizzi, and T.~Rossi.
\newblock Tubes in sub-{R}iemannian geometry and a {W}eyl's invariance result for curves in the {H}eisenberg groups.
\newblock {\em Annali della Scuola Normale Superiore di Pisa, Classe di Scienze}, 2026.
\newblock To appear.

\bibitem[dC92]{DoCarmo}
M.~P. do~Carmo.
\newblock {\em Riemannian Geometry}.
\newblock Mathematics: Theory and Applications. Birkhäuser Boston, MA, 1992.

\bibitem[Fed59]{FedererCM}
H.~Federer.
\newblock Curvature measures.
\newblock {\em Trans. Amer. Math. Soc.}, 93:418--491, 1959.

\bibitem[GGPT03]{GPT2003}
R.~Giamb\`o, F.~Giannoni, P.~Piccione, and D.~V. Tausk.
\newblock Morse theory for normal geodesics in sub-{R}iemannian manifolds with codimension one distributions.
\newblock {\em Topol. Methods Nonlinear Anal.}, 21(2):273--291, 2003.

\bibitem[GKOS01]{Grosser}
M.~Grosser, M.~Kunzinger, M.~Oberguggenberger, and R.~Steinbauer.
\newblock {\em Geometric theory of generalized functions with applications to general relativity}, volume 537 of {\em Mathematics and its Applications}.
\newblock Kluwer Academic Publishers, Dordrecht, 2001.

\bibitem[Gra04]{GrayTubes}
A.~Gray.
\newblock {\em Tubes}, volume 221 of {\em Progress in Mathematics}.
\newblock Birkh\"{a}user Verlag, Basel, second edition, 2004.
\newblock With a preface by Vicente Miquel.

\bibitem[Ish05]{Ishikawa2005}
G.~Ishikawa.
\newblock Infinitesimal deformations and stability of singular {Legendre} submanifolds.
\newblock {\em Asian Journal of Mathematics}, 9(1):133--166, 2005.

\bibitem[KRS{\etalchar{+}}05]{SingularFronts}
M.~Kokubu, W.~Rossman, K.~Saji, M.~Umehara, and K.~Yamada.
\newblock Singularities of flat fronts in hyperbolic space.
\newblock {\em Pacific J. Math.}, 221(2):303--351, 2005.

\bibitem[Rit21]{RitoreTubes}
M.~Ritor\'{e}.
\newblock Tubular neighborhoods in the sub-{R}iemannian {H}eisenberg groups.
\newblock {\em Adv. Calc. Var.}, 14(1):1--36, 2021.

\bibitem[Ros23]{RossiIntegrability}
T.~Rossi.
\newblock Integrability of the sub-{R}iemannian mean curvature at degenerate characteristic points in the {H}eisenberg group.
\newblock {\em Adv. Calc. Var.}, 16(1):99--110, 2023.

\bibitem[RR21]{RizziRossi}
L.~Rizzi and T.~Rossi.
\newblock Heat content asymptotics for sub-riemannian manifolds.
\newblock {\em Journal de Mathématiques Pures et Appliquées}, 148:267--307, 2021.

\bibitem[RS17]{RizziSerres}
L.~Rizzi and U.~Serres.
\newblock On the cut locus of free, step two {C}arnot groups.
\newblock {\em Proc. Amer. Math. Soc.}, 145(12):5341--5357, 2017.

\bibitem[Ste40]{Steiner}
J.~Steiner.
\newblock \"uber parallele fl\"achen.
\newblock {\em Monatsber. Preu. Akad. Wiss}, page 114–118, 1840.

\bibitem[Wey39]{WeylTubes}
H.~Weyl.
\newblock On the {V}olume of {T}ubes.
\newblock {\em Amer. J. Math.}, 61(2):461--472, 1939.

\end{thebibliography}

\end{document}